\title{An enhanced term in the Szeg\H{o}-type asymptotics for the free massless Dirac operator}
\author[L.\ Bollmann]{Leon Bollmann}
\address[Leon Bollmann]{Mathematisches Institut,
  Ludwig-Maximilians-Universit\"at M\"unchen,
  Theresienstra\ss{e} 39,
  80333 M\"unchen, Germany}
\email{bollmann@math.lmu.de}
\begin{document}

\begin{abstract}
We consider a regularised Fermi projection of the Hamiltonian of the massless Dirac equation at Fermi energy zero. The matrix-valued symbol of the resulting operator is discontinuous at the origin.
For this operator, we prove Szeg\H{o}-type asymptotics with the spatial cut-off domains given by $d$-dimensional cubes. For analytic test functions, we obtain a $d$-term asymptotic expansion and provide an upper bound of logarithmic order for the remaining terms. This bound does not depend on the regularisation. In the special case that the test function is given by a polynomial of degree less or equal than three, we prove a $(d+1)$-term asymptotic expansion with an error term of constant order. The additional term is of logarithmic order and its coefficient is independent of the regularisation.
\end{abstract}

\maketitle
\thispagestyle{empty}

\section{Introduction}
The study of Szeg\H{o}-type asymptotics dates back to the analysis of the determinant of large Toeplitz matrices initiated by Szeg\H{o} \cite{Szego1915,Szego1952}. While, in this one-dimensional Toeplitz matrix case, the obtained asymptotic expansion features a leading volume term, the order of the subleading term crucially depends on the continuity of the symbol.
For symbols with jump discontinuities, the subleading term is not of constant order but features a logarithmic enhancement \cite{Fisher.Hartwig.1969, MR0331107, Widom19761, Basor}. 
Continuous analogues for these discrete Szeg\H{o}-type asymptotic expansions, where the Toeplitz matrices are replaced by Wiener--Hopf operators, were soon derived \cite{Kac, Widom1960, Linnik, Widom1974}. In the present article we focus on the multidimensional continuous case and refer to \cite{MR1724795, MR2223704, MR2831118} for further discussion on the Toeplitz matrix case.

In the $d$-dimensional continuum case, a complex-valued symbol $a:\R^d\rightarrow\C$, and the corresponding Wiener--Hopf operator with action 
\begin{equation}\label{Wiener-Hopf-smooth}
(T_Lu)(x)=\frac{1}{(2\pi)^d}1_{\Lambda_L}(x)\int_{\R^d}\int_{\Lambda_L} \e^{\i \xi(x-y)}a(\xi)u(y)\,\d y\,\d\xi, \qquad\qquad x\in\R^d
\end{equation}
on Schwartz functions $u\in\mathcal{S}(\R^d)$ are studied. Here, $\Lambda_L$ is a sufficiently regular domain scaled with the scaling parameter $L>0$.
As in the discrete case, two distinct situations, depending on the smoothness of the symbol, are considered. 

In the case of a symbol without discontinuities, a two-term asymptotic expansion with a leading volume term and a subsequent surface area term, i.e.~of the form
\begin{equation}\label{expansion-non-enhanced}
\tr_{L^2(\R^d)}[g(T_L)]=L^{d}A_0(g;a)+L^{d-1}A_1(g;a)+o(L^{d-1}),\qquad \text{as} \ L\rightarrow\infty,
\end{equation}
holds for sufficiently regular test function $g$ with $g(0)=0$. This expansion can also be extended to matrix-valued symbols \cite{Widom1980} at the cost of a less explicit coefficient $A_1$. For analytic, and in some cases sufficiently smooth, test functions, it is also possible to obtain additional terms in the asymptotic expansion. For domains with $C^1$-boundary, a three-term asymptotic expansion was obtained in \cite{Roccaforte}. In \cite{Widom1985FullExpansion}, for arbitrary $m\in\N$ and domains with $C^\infty$-boundary, an asymptotic expansion
\begin{equation}
\tr_{L^2(\R^d)}[g(T_L)]=\sum_{k=0}^m L^{d-k}A_k(g;a) +o(L^{d-m}), \qquad \text{as} \ L\rightarrow\infty,
\end{equation}
with recursively defined coefficients $A_k(g;a)$ was obtained. More recently, domains with only piece-wise smooth boundary were considered. In \cite{Dietlein18} a complete $(d+1)$-term expansion of the form
\begin{equation}\label{expansion-Dietlein}
\tr_{L^2(\R^d)}[g(T_L)]=\sum_{k=0}^d L^{d-k}A_k(g;a) +o(L^{-\tau}), \qquad \text{as} \ L\rightarrow\infty,
\end{equation}
was established in the case that $\Lambda$ is a $d$-dimensional cube, where $\tau>0$ depends on the rate of decay of the operator kernel of $T_L$. This result holds in the more general setting of $\Z^d$-ergodic operators. The article \cite{Pfirsch} considers two-dimensional polygons and proves a complete three-term expansion of the form \eqref{expansion-Dietlein} for smooth, i.e. $C^\infty$, symbols and with $\tau>0$ arbitrarily large. An expansion similar to \eqref{expansion-Dietlein} has also been established for analytic test functions in the discrete, i.e. Toeplitz matrix, case in \cite{Thorsen}.

The case of a discontinuous symbol, traditionally refers to the symbol having a $(d-1)$-dimensional discontinuity in momentum space at the boundary of a sufficiently regular bounded domain $\Gamma\subset\R^d$. In this case a two-term asymptotic with a leading volume term and a subsequent logarithmically enhanced area term, i.e. of the form
\begin{equation}\label{expansion-enhanced}
\tr_{L^2(\R^d)}[g(T_L)]=L^{d}A_0(g;a)+L^{d-1}\log L \ W_1(g;a)+o(L^{d-1}\log L),\qquad \text{as} \ L\rightarrow\infty,
\end{equation}
is given by the Widom--Sobolev formula, proved by A.~Sobolev in the outstanding work \cite{sobolevlong} and extended in \cite{sobolevpw}. The expansion \eqref{expansion-enhanced}, together with the concrete expression of the coefficient $ W_1(h;a)$, was conjectured by H.~Widom in \cite{Widom1982}, where also the one-dimensional case was proved for both complex and matrix-valued symbols. The higher-dimensional Widom--Sobolev formula was extended to matrix valued symbols in \cite{BM-Widom}. For multidimensional discontinuous symbols no extensions of the Widom--Sobolev formula below the subleading term are known. The question whether it is possible to only obtain an error term of order $L^{d-1}$ in \eqref{expansion-enhanced} is also open.

The study of Szeg\H{o}-type asymptotics in the continuum is closely related to scaling laws for the bipartite entanglement entropy of systems of non-interacting fermions, as observed in \cite{GioevKlich}. The proof of the Widom--Sobolev formula enabled a proof of a logarithmically enhanced area law in the case that the single-particle Hamiltonian is given by the Laplace operator \cite{LeschkeSobolevSpitzerLaplace}. Generalisations from the free Fermi gas to positive temperatures and to electric and magnetic background fields were treated, e.g., in \cite{PhysRevLett.113.150404, ElgartPasturShcherbina2016, LeschkeSobolevSpitzerMultiScale, PfirschSobolev18, MuPaSc19, MuSc20, LeschkeSobolevSpitzerMagnetic, LeschkeSobolevSpitzerTemperature, 10.1063/5.0135006, PfSp22}.
Another natural way of extending the case of a free Fermi gas, is to consider a relativistic system, that is to replace the Laplace, i.e. free Schr\"odinger, operator by the free Dirac operator as the single-particle Hamiltonian. As the spectrum of the free Dirac operator is not bounded from below, a regularised version of the Fermi projection needs to be considered. For the free Dirac operator, enhanced area laws, with coefficients independent of this regularisation, were obtained in \cite{BM-inprep} with the help of the Widom--Sobolev formula for matrix-valued symbols \cite{BM-Widom}, while area laws in dimension three were obtained in \cite{FinsterSob}. The one-dimensional case without scaling is considered in \cite{SpitzerPfeifferFerro}. The study of the entanglement entropy also motived the extension of the expansions \eqref{expansion-non-enhanced} and \eqref{expansion-enhanced} to a more general class of non-smooth test function which include the entropy functions \cite{sobolevschatten, sobolevc2, Sobolev.2019.truncated}. For the expansions going beyond the subleading term, no such extensions to non-smooth test functions are known.

When considering the massless Dirac operator,
a special case occurs. The dispersion relation of the free massless Dirac operator is given by two cones joint at their apex in the origin. When the energy cut-off is chosen to be precisely at this point, the symbol of the corresponding Wiener--Hopf operator features a single zero-dimensional discontinuity at the origin. It was shown in \cite{BM-inprep} that, in the one-dimensional case, this situation can also be treated with the Widom--Sobolev formula and one obtains an enhanced area law of order $\log L$. In the higher-dimensional case at most an area law can be obtained. In \cite{FinsterSob} this area law is obtained via an extension of the result \cite{Widom1980} for matrix-valued symbols without discontinuity.

The goal of the present article is to further study this symbol with a zero-dimensional discontinuity in dimensions $d\geq 2$. As the leading and subleading terms are of the same order as the corresponding terms for continuous symbols, we study the expansion beyond the subleading term. To do so, it is convenient to consider this symbol in terms of the off-diagonal decay of order $|x-y|^{-d}$ of the corresponding integral kernel.
Comparing this decay with the decay required in, e.g., \cite{Widom1980}, suggests that not only the first two but even the first $d$ terms of the asymptotic expansion are of the same order as in the continuous case, and only starting with the $(d+1)$st term the order of the terms changes. Similarly, the zero-dimensional discontinuity suggests a lower-order logarithmically enhanced term of order $\log L$ as in the one-dimensional case \cite[Thm. 3.1]{BM-inprep}. Furthermore, in analogy to the interaction of the two $(d-1)$-dimensional boundaries $\partial\Lambda$ and $\partial\Gamma$ in the enhanced area coefficient of the Widom--Sobolev formula, one might suspect 
an interaction of the zero-dimensional discontinuity with zero-dimensional points in the boundary of $\Lambda$, where the smoothness of the boundary breaks down, e.g. the vertices of a $d$-dimensional cube. 
The occurrence of such a lower-order logarithmic term and its dependence on the corners of the spatial restriction are also discussed in the physics literature \cite{PhysRevLett.97.050404,Casini_2009,CASINI2009594,Kallin_2014,PhysRevB.94.125142,PhysDiracWeyl,Crepel.Hackenbroich.2021}.

Due to the aforementioned interest in the interaction with vertices, we are especially interested in studying domains $\Lambda$ with only piece-wise smooth boundary. We have chosen to restrict ourselves to $d$-dimensional cubes, as they still have a sufficiently simple structure in higher dimensions, which helps us illustrate the general structure of the asymptotic expansion depending on the dimension $d$. Furthermore, we can use the already established $(d+1)$-term expansions for cubes in the continuous case \cite{Dietlein18} as a starting point. We also restrict ourselves to analytic test functions $g$ with $g(0)=0$ as in most of the already obtained higher-order asymptotic expansions for continuous symbols. Although an extension to the entropy functions would be desirable, the strategy to deal with these more general functions, laid out in \cite{sobolevc2}, is based on utilising the quasi-commutator structure of the relevant operators which is incompatible with the reduction strategy we use to obtain the higher-order asymptotic expansion. We are unaware of any way to remedy this problem.
With these restrictions we expect an asymptotic expansion of the following form
\begin{equation}\label{expansion-paper}
\tr_{L^2(\R^d)\otimes\C^n}[g(T_L(D_b))]=\sum_{k=0}^{d-1} L^{d-k}A_k(g;D_b;b) +\log L\ W(g;D)+o(\log L), \qquad \text{as} \ L\rightarrow\infty,
\end{equation}
to hold.
Here, the operator $T_L(D_b)$ depends on the matrix-valued symbol $D_b:\R^d\rightarrow\C^{n\times n}$ which stems from the regularised Fermi projection of the Dirac operator and depends on the parameter $b\geq 0$ determining the regularisation. We note that the coefficient of the logarithmic term $W(g;D)$ is independent of the regularisation parameter $b$. Unfortunately, we are only able to prove the whole expansion \eqref{expansion-paper} for polynomial test functions of sufficiently low degree, for reasons explained in the next paragraph and in the beginning of Section \ref{sec_local_asymptotics}. For general analytic test functions we still manage to obtain the first $d$ terms of the expansion and prove an upper bound, logarithmic in $L$ and independent  of the regularisation parameter $b$, for the remaining term.

In order to prove the desired asymptotic expansion, we combine methods from the analysis of both continuous and discontinuous symbols. We derive the first $d$ terms of the expansion similarly as for continuous symbols. While the algebraic decomposition of the cube largely agrees with the one in \cite{Dietlein18} and uses some methods from \cite{Pfirsch}, the substantially worse decay of the kernel forces us to obtain the required estimates in a different way. Starting with monomials, the trace norm estimates are derived from estimates for a sum of products of appropriately chosen Hilbert-Schmidt norms, which stem from the decay estimate for the integral kernel. In order to still utilise the decay for the Hilbert-Schmidt norm estimates for higher degree monomials, we use a quite technical procedure of constructing neighbourhoods of certain regions in position space whose boundaries locally are graphs of suitably chosen power functions $f(x)=x^q$ with $0<q<1$. This procedure is best illustrated in the proof of Lemma \ref{Lemma-HS-decay} and appears several times in the proofs in Section \ref{sec_commutation}. After determining the first $d$ terms of the expansion, we treat the remaining operator in a similar way as in the case of a discontinuous symbol, cf. e.g. \cite{Widom1982, sobolevlong, sobolevpw}. We first prepare the operator by reducing it to a sum of vertex contributions through localisation and commute out the smooth regularisations of the Fermi projection. As we are only allowed to incur error terms of constant order while doing so, we need to make use of the structure of the remaining operator, leading to the long and technical treatment in Section \ref{sec_commutation}. After this preparation, we further follow the strategy for discontinuous symbols and provide a local asymptotic formula. The integral kernel of the remaining operator is a generalisation of the kernel in the one-dimensional case, where the occurring Hilbert transform is replaced by its higher dimensional analogues, the Riesz transforms.
Therefore, the idea is to extend the original one-dimensional proof in \cite{Widom1982} to higher dimensions. While doing so, we face the challenge that the one-dimensional proof relies on a connection between Hilbert and Mellin transform which allows one to find the spectral representation of the Wiener--Hopf operator in question. At this point, we need to restrict our chosen test functions, as we are not able to find a higher-dimensional analogue to this connection, due to the substantially more complex structure of the higher-dimensional operator and the unclear connection between the higher-dimensional transforms. Still, we are able to prove the required properties of the kernel by hand and a local asymptotic formula in the special case that the test function is given by a polynomial of degree three or less. For general analytic test functions we instead obtain a logarithmic upper bound for the remaining operator in a similar way to \cite[Lem. 3.8]{BM-inprep} respectively \cite[Lem, 4.3]{FinsterSob}.

The article is organised as follows: In Section \ref{sec_prelim_art3} we give an introduction to the free massless Dirac operator and state our main results. In Section \ref{sec_higher_order} we take a closer look at the decay of the integral kernel of the operator and use it to obtain the first $d$ terms of the asymptotic expansion. Section \ref{sec_upper_bound} begins with some additional estimates for pseudo-differential operators with smooth symbols. Utilising these estimates, we find a logarithmic upper bound for the remaining operator from Section \ref{sec_higher_order}. In Section \ref{sec_commutation} we take a closer look at the remaining term from Section \ref{sec_higher_order}. Using the structure of this operator and the estimates for smooth symbols from Section \ref{sec_upper_bound}, we commute all occurrences of the smooth regularisation of the Fermi projection to one side of the operator, up to error terms of constant order. The last Section \ref{sec_local_asymptotics} consists of two parts: In the first part we derive certain properties for the operator obtained through the commutation in Section \ref{sec_commutation}. We only do this for polynomial test functions of degree three or less. The second part consists of a local asymptotic formula under the assumptions that the properties derived in the first part hold. With this and the results from the previous section, we obtain the desired $(d+1)$-term asymptotic expansion.

\section{Preliminaries and main results}
\label{sec_prelim_art3}
As we are only interested in the higher-dimensional case, with the one-dimensional case being established in \cite{BM-inprep}, we restrict ourselves to the case $d\geq 2$ for the remaining part of the article.
\subsection{The free massless Dirac Operator}
\label{subsec_Dirac_massless}
We consider the self-adjoint Hamiltonian of the massless Dirac equation in 
$d\in\N\setminus\{1\}=\{2,3,\dots\}$ space dimensions. This is a special case of the  Dirac equation studied in \cite[Sec.~2.2]{BM-inprep} which offers more details. Here we focus on the aspects most important for the present massless case. The Hamiltonian is given by 
\begin{equation}
	\label{Dirac-op}
	\mathfrak{D}:= -\i\sum_{k=1}^d \alpha_k \partial_k,
\end{equation}
see e.g. \cite{10.1007/bf02754212, 10.1063/1.1367331}. It is densely defined in the product Hilbert space $L^{2}(\R^{d}) \otimes \C^{n}$ of square-integrable
vector-valued functions with spinor dimension
$n:=n_{d}:= 2^{\lfloor (d+1)/2\rfloor}$, where $\lfloor r\rfloor$ denotes the largest integer not 
exceeding $r\in\R$. As usual,
$\i$ is the imaginary unit and $\partial_{k}$, $k=1,\ldots,d$,
denotes partial differentiation with respect to the $k$th Cartesian coordinate. The matrices
$\alpha_{1},\ldots, \alpha_{d} \in\C^{n\times n}$ are Dirac matrices which anti-commute pairwise and whose square gives the $n\times n$-unit matrix $\mathbb{1}_{n}$. As shown by A.~Hurwitz \cite{Hurwitz}, the number $n=2^{\lfloor (d+1)/2\rfloor}$ is the smallest natural number such that $d+1$ such matrices exist (the $(d+1)$st matrix corresponds to the mass term which is absent here). The choice of Dirac matrices is then unique up to similarity transformations. As the trace is invariant under such transformations, the results of this article are, for the given choice of $n$, independent of the choice of Dirac matrices. 
The Dirac matrices can be chosen as (cf.\ \cite{Hurwitz} resp. \cite[Appendix]{10.1063/1.1367331})
\begin{align}\label{def_alpha_k}
\alpha_k:=\begin{pmatrix}
0 & \sigma_k \\
 \sigma_k^* & 0
\end{pmatrix}, \;\; k=1,\ldots,d, 
\end{align}
where the $\frac{n}{2}\times \frac{n}{2}$-matrices $\sigma_{k}$ satisfy the anti-commutation relations
\begin{align}
\label{sigma-anticomm}
\sigma_j \sigma_k^* + \sigma_k \sigma_j^* = 2\delta_{jk}\mathbb{1}_{\frac{n}{2}}
\;\; \text{and} \;\; \sigma_j^* \sigma_k + \sigma_k^* \sigma_j = 2\delta_{jk}\mathbb{1}_{\frac{n}{2}},
\end{align}
for all $j,k=1,\ldots,d$.
By $*$ we denote the adjoint and by $\delta_{jk}$ we denote the Kronecker delta. In particular, we immediately see that the Dirac matrices $\alpha_k$, $k\in\{1,\ldots,d\}$, have vanishing trace.

Via the Fourier transform $\mathcal{F}$ on $L^{2}(\R^{d})$, the Hamiltonian given in \eqref{Dirac-op} is unitarily equivalent
to the operator of multiplication on $L^{2}(\R^{d}) \otimes \C^{n}$ with the matrix-valued symbol 
\begin{align}
\R^{d} \ni \xi = (\xi_{1},\ldots,\xi_{d}) \mapsto D(\xi):= \sum_{k=1}^d \alpha_k \xi_k.
\end{align}
The symbol $D$ is smooth, i.e. we have $D \in C^{\infty}(\R^{d},\C^{n\times n})$.  

We consider the following smoothly truncated version of the Fermi projection at Fermi energy $E_F=0$.
Given an ultraviolet cut-off parameter $b\in [0,\infty[\,$, we define 
\begin{equation}\label{Definition-Chi-E_F}
\chi_{0}^{(b)}:\R\rightarrow [0,1],\quad x\mapsto \chi_{0}^{(b)}(x):=1_{\{y\in\R\,:\, y<0\}}(x)\varphi(x+b),
\end{equation} 
where $1_{\{y\in\R\,:\, y<0\}}$ denotes the corresponding indicator function and the monotone cut-off function $\varphi\in C^\infty(\R)$ obeys $\varphi|_{[0,\infty[}=1$ and $\varphi|_{]-\infty,-1]}=0$. We note that the function $\chi_{0}^{(b)}$ is bounded from above by one and compactly supported.
Applying this function to the Hamiltonian in \eqref{Dirac-op}, we obtain a bounded linear operator on $L^2(\R^d)\otimes \C^n$ which we write as 
\begin{equation}
	\Op\big(\chi_{0}^{(b)}(D)\big) = (\mathcal{F}^{-1} \otimes \mathbb{1}_{n}) \, \big(\chi_{0}^{(b)}(D)\big)(\pmb\cdot) \, (\mathcal{F} \otimes \mathbb{1}_{n}),
\end{equation} 
the operator with matrix-valued symbol $\chi_{0}^{(b)}(D)$. We note that the operator norm $\big\|\Op\big(\chi_{0}^{(b)}(D)\big)\big\|$ is also bounded from above by one. For an introduction on pseudo-differential operators with matrix-valued symbols in this context, we refer to \cite[Sec.~2.3]{BM-Widom}.

The application of measurable functions to the Hamiltonian of the Dirac equation has been studied in \cite{BM-inprep}, in the more general case with non-negative mass $m$ and arbitrary Fermi energy $E_F\in\R$. Here, we cite these results in the special case $E_F=m=0$, considered in the present article.
In this case, the energy, as a function on momentum space, is given by $E(\xi):=|\xi|$ and by \cite[(2.9)]{BM-inprep} the symbol of the operator $\chi_0^{(b)}(D)$ is given by
\begin{align}\label{Operator-m=0-A3}
\big(\chi_0^{(b)}(D)\big)(\xi)= \psi^{(b)}(\xi)\tfrac{1}{2}\big(\mathbb{1}_{n}-\tfrac{D(\xi)}{E(\xi)}\big)= \psi^{(b)}(\xi)\tfrac{1}{2}\Big(\mathbb{1}_{n}-\sum_{k=1}^d\alpha_k\tfrac{\xi_k}{|\xi|}\Big)=:\psi^{(b)}(\xi)\mathcal{D}(\xi), \quad \xi\in\R^d
\end{align}
where $\psi(\xi):=\psi^{(b)}(\xi):=\varphi(-E(\xi)+b)$ for every $\xi\in\R^d$ and we define $\tfrac{D}{E}(0):=0$. The symbol $\psi^{(b)}$ is smooth, i.e.~$\psi^{(b)}\in C^\infty(\R^d)$, and compactly supported in the ball $B_{b+1}(0)$ of radius $b+1$ centred about the origin, and satisfies $\|\psi^{(b)}\|_\infty=1$. The fact that the symbol $\mathcal{D}$ has a discontinuity at the origin, leads to significantly worse decay of the integral kernel $K$ of the corresponding operator. We take a closer look at this in Section \ref{subsec_kernel_decay}. This is the reason why we are no longer able to establish a full asymptotic expansion and instead obtain a logarithmically enhanced constant term. It is notationally convenient to drop the superscript $(b)$ of the symbol $\psi$, beginning with Section \ref{subsec_kernel_decay} of the present article.

\subsection{Main results}
\label{subsec_main_results_3}
In the present article, we focus on entire test functions $g:\C\rightarrow\C$ which satisfy $g(0)=0$, i.e. there exist $\omega_m\in\C$, $m\in\N$, such that
\begin{equation}
g(z)=\sum_{m\in\N}\omega_m z^m,
\end{equation}
for all $z\in\C$. The assumption that the functions are analytic on the whole complex plane is made to simplify notation in some of the results. In fact one could compute a finite radius $R>0$ such that analyticity in the disc $B_R(0)$ of radius $R$ centred about the origin would suffice.
 
Given such a test function $g$ the 
following trace is studied
\begin{equation}\label{trace-studied}
\tr_{L^{2}(\R^{d})\otimes\C^n}\big[g\big(\mathbf{1}_{\Lambda_{L}}\Op(\psi^{(b)}\mathcal{D})\mathbf{1}_{\Lambda_{L}}\big)\big],
\end{equation}
where $\Lambda:=[0,2]^d\subset \R^d$ is a cube of side-length $2$ and, given a scaling parameter $L> 0$, $\Lambda_L:=[0,2L]^d$ is a scaled version of this cube. Moreover, $\mathbf{1}_{\Omega}:=1_\Omega\otimes\mathbb{1}_n$ denotes the operator of multiplication on $L^2(\R^d)\otimes\C^n$ with the indicator function $1_{\Omega}$ of $\Omega\subseteq\R^d$. As $\Lambda$ is bounded and $\psi$ is compactly supported, the operator in \eqref{trace-studied} is trace class, according to \cite[Chap.\ 11, Sect.\ 8, Thm.\ 11]{BirmanSolomjak}.
For the trace \eqref{trace-studied}, we obtain a multi-term asymptotic expansion in the scaling parameter $L$. We start by defining the coefficients occurring in the higher-order terms of this expansion. Using the abbreviation $X_{m,k,b}:=\mathbf{1}_{\R_+^{m-k}\times\R^{d-m+k}}\Op(\psi^{(b)}\mathcal{D})\mathbf{1}_{\R_+^{m-k}\times\R^{d-m+k}}$ with $\R_+:=[0,\infty[\,$, we define
\begin{equation}
\label{definition-A-coeff}
A_{m,g,b}
:=\lim_{L\rightarrow\infty}\tr_{L^{2}(\R^{d})\otimes\C^n}\bigg[\sum_{k=0}^{m}c_{k,m}\mathbf{1}_{[0,L]^{m}\times [0,1]^{d-m}}g\big(X_{m,k,b}\big)\bigg],
\end{equation}
where the constants $c_{k,m}$ are given by $c_{k,m}:=\frac{(-1)^{k}2^{m}d!}{k!(m-k)!(d-m)!}$ for $0\leq k\leq m\leq d$. We will see in Section \ref{sec_higher_order} that these coefficients are well-defined. We note that the limits of the individual operators in the trace in \eqref{definition-A-coeff} are not trace class and therefore the limit can not be interchanged with the trace.

For entire test functions $g$, we obtain a $d$-term asymptotic expansion and show that the remaining term is of logarithmic order in the scaling parameter $L$.
\begin{thm}\label{Theorem-Asymptotics-Analytic}
Let $g:\C\rightarrow\C$ be an entire function satisfying $g(0)=0$. Let $b\in [0,\infty[\,$, then the following asymptotic formula holds 
\begin{equation}\label{Theorem-Asymptotics-Analytic-Statement}
\tr_{L^{2}(\R^{d})\otimes\C^n}\big[g\big(\mathbf{1}_{\Lambda_{L}}\Op(\psi^{(b)}\mathcal{D})\mathbf{1}_{\Lambda_{L}}\big)\big]
=\sum_{m=0}^{d-1}(2L)^{d-m} A_{m,g,b}
+O(\log L), 
\end{equation}
as $L\rightarrow\infty$. The coefficients $A_{m,g,b}$ are defined in \eqref{definition-A-coeff}. The implied constant in the error term of order $\log L$ is independent of the ultraviolet cut-off parameter $b$.
\end{thm}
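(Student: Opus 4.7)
The plan is to prove \eqref{Theorem-Asymptotics-Analytic-Statement} in two stages: first reduce to individual monomials $g_N(z)=z^N$ using the analyticity of $g$, and then for each monomial perform an algebraic cube decomposition that isolates the $d$ codimension-$m$ stratum contributions and leaves a vertex remainder to be estimated. Since $\Op(\psi^{(b)}\mathcal{D})$ has operator norm bounded uniformly in $b$ (by $\|\chi_{0}^{(b)}(D)\|_\infty=1$), each monomial contribution grows at most polynomially in $N$, while the Taylor coefficients $\omega_N$ of an entire function decay faster than any geometric rate. Swapping the series with the asymptotic expansion is therefore legitimate once we have the expansion for each $z^N$ with an error bound that is uniform in $b$ and suitably controlled in $N$.

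For fixed $N$, decompose each copy of $\mathbf{1}_{\Lambda_L}$ coordinate-wise and further split $\mathbf{1}_{[0,2L]}$ into a combination of a ``bulk'' half-line indicator and ``near-endpoint'' indicators of unit interval width at the two endpoints, so that in each of the $N$ copies every coordinate is either unrestricted, restricted to a half-line away from a face, or localised in a unit interval adjacent to a vertex. This is the algebraic identity underpinning the $(d+1)$-term expansions of \cite{Dietlein18,Pfirsch}; after inclusion--exclusion it produces a finite sum of traces of $N$-fold products that we organise by their codimension $m$. By translation invariance of $\Op(\psi^{(b)}\mathcal{D})$ in the bulk directions and an elementary rescaling in the face directions, the codimension-$m$ pieces with $0\leq m\leq d-1$ collapse to $(2L)^{d-m}$ times exactly the limit \eqref{definition-A-coeff}. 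Existence of these limits, and the appearance of the signs $c_{k,m}$, amounts to checking that the combinations in \eqref{definition-A-coeff} are trace class in the $L\to\infty$ limit even though their individual summands are not; this is handled by the same Hilbert--Schmidt machinery that we use for the main error estimate.

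The technical heart of the argument is bounding the codimension-$d$ vertex remainder by $O(\log L)$ uniformly in $b$. Each such term is a trace of an $N$-fold product of operators localised by multiplication indicators in neighbourhoods of a vertex of $\Lambda_L$; bounding it by a product of Hilbert--Schmidt norms of consecutive pairs reduces the problem to estimating $\int\!\!\int_{A\times B}|K_b(x,y)|^2\,\d x\, \d y$ over suitable region pairs $A,B$, where the kernel $K_b$ of $\Op(\psi^{(b)}\mathcal{D})$ only decays as $|x-y|^{-d}$ with prefactor independent of $b$ (the $b$-dependence being confined to the compactly supported smooth factor $\psi^{(b)}$ of sup norm at most $1$). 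This is the borderline of $L^2$-integrability and a naive dyadic decomposition would produce too many logarithms. The device advertised in Lemma~\ref{Lemma-HS-decay} resolves this by fattening $A$ and $B$ into layered neighbourhoods whose local boundaries are graphs of power functions $x\mapsto x^q$ with exponent $0<q<1$ tuned so that the Hilbert--Schmidt estimates telescope across dyadic scales at the cost of a single logarithm. Iterating this geometric construction over the $N$ factors and all $d$ coordinate directions yields the desired uniform bound. I expect this step to be the genuine obstacle: the kernel decay is exactly critical, so the combinatorial and geometric bookkeeping has to be carried out with precision, and $b$-independence must be maintained throughout by appealing only to $\|\psi^{(b)}\|_\infty=1$ and not to any derivative bound on $\psi^{(b)}$ that would deteriorate as $b\to\infty$.
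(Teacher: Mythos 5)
Your overall architecture --- reduction to monomials, a face/vertex decomposition of the cube giving the first $d$ terms via Hilbert--Schmidt estimates built on the kernel decay, and a remaining vertex contribution to be bounded by $O(\log L)$ --- matches the paper up to the last step. The gap is in that last step, and it is not a technicality. You propose to bound the vertex remainder by Hilbert--Schmidt estimates resting on the kernel bound \eqref{kernel_bound}, and you assert that its constant is independent of $b$ because the only $b$-dependence sits in the factor $\psi^{(b)}$ with $\|\psi^{(b)}\|_\infty\le 1$. That is false: the kernel of $\Op(\psi^{(b)}\mathcal{D})$ is the inverse Fourier transform of a symbol supported in $B_{b+1}(0)$, so already $\|K_b(0)\|_2\sim b^{d}$, and the constants in the derivation of \eqref{kernel_bound} involve Schwartz seminorms of $\check\psi^{(b)}$ (its $L^1$-norm and derivative decay), all of which deteriorate as $b\to\infty$. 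A sup-norm bound on the symbol controls the operator norm, not the pointwise kernel. Consequently your scheme would at best produce an $O_b(\log L)$ bound for the vertex term, with a constant of order $C_K(b)^2$, and cannot deliver the $b$-independence claimed in the theorem.

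The paper handles the vertex remainder by a momentum-space argument instead (Section \ref{sec_upper_bound}): Sobolev-type trace-class estimates for smooth compactly supported symbols (Lemma \ref{Lemma-trace-class-sob-prep}, Corollary \ref{Lemma-trace-class-sob}) combined with a partition of unity subordinate to balls of radius $\tau(\xi)=\tfrac14\sqrt{L^{-2}+|\xi|^2}$ around the single discontinuity at $\xi=0$. Balls touching the origin contribute $O(L^{-d})$ per unit cube of the spatial region, hence $O(1)$ in total; the smooth part contributes a number of balls of order $\int_{B_{2(b+1)}(0)}\tau(\xi)^{-d}\,\d\xi\sim\log\!\big((b+1)L\big)$, each with uniformly bounded trace norm, because derivatives of $\psi^{(b)}\mathcal{D}$ scale correctly with $\tau$ and derivatives of $\psi^{(b)}=\varphi(-|\cdot|+b)$ are bounded uniformly in $b$ (only the fixed function $\varphi$ is differentiated). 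Together with the restriction $L\ge\max(b+1,2)$ this yields $\big\|\sum_{V}\mathbf{1}_{H_{V,L}}X_{V,g}\big\|_1\le C\log L$ with $C$ independent of $b$ (Theorem \ref{Theorem-Upper-Bound}); the position-space fattening device of Lemma \ref{Lemma-HS-decay} is used in the paper only for the face terms of codimension $m\le d-1$, where $b$-dependent constants are harmless since those errors are $O(1)$. Two smaller inaccuracies: the monomial bounds are geometric in $N$ (of size $(4d\|X\|)^{N}$ times a polynomial), not polynomial, which is still summable against entire-function coefficients; and the identification of the coefficients $(2L)^{d-m}A_{m,g,b}$ requires the trace-class statements \eqref{operator-trace-estimate}--\eqref{operator-difference-trace-estimate}, not merely translation invariance and scaling, as you partly acknowledge.
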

\begin{rem}
\begin{enumerate}[(a)]
\item The coefficients $A_{m,g,b}$, $m\in\{0,\ldots,d-1\}$, of the $d$ leading terms of the asymptotic expansion correspond to the geometric structure of the cube, where the operators $X_{m,k,b}$, cf.~\eqref{definition-A-coeff}, are localisations to an approximate $(d-m+k)$-face of the cube $\Lambda$. The constant $c_{k,m}$ counts the number $2^m \frac{d!}{m!(d-m)!} $ of $(d-m)$-faces and multiplies it with the number $\frac{m!}{k!(m-k)!}$ of $(d-m+k)$-faces containing a given $(d-m)$-face. While the expression is quite abstract, we do not try to obtain more explicit expressions for the coefficients due to the following two reasons.
\item Due to the matrix nature of the symbol, it is in general not known how to find an explicit integral representation even for the coefficient of the area term, i.e.~$A_{1,g,b}$ in our case. This has been observed multiple times for smooth domains $\Lambda$ and general symbols \cite{Widom1980, Widom1985FullExpansion}, as well as in the special case of the free Dirac operator in dimension three \cite{FinsterSob}.
\item The coefficients $A_{m,g,b}$, $m\in\{0,\ldots,d-1\}$, of the $d$ leading terms of the asymptotic expansion, all depend on the chosen ultraviolet cut-off parameter $b$. In fact, in the case $m\in\{1,\ldots,d-1\}$ one would expect the coefficients to be of order $b^{d-m}$ in the cut-off parameter by a scaling argument, see also the main result of \cite{FinsterSob}. 
\end{enumerate}
\end{rem}
When the test function $g$ is a polynomial of degree less or equal than three, we are able to compute the asymptotic coefficient of the subsequent logarithmic term and obtain the following $(d+1)$-term asymptotic expansion.
\begin{thm}\label{Theorem-Asymptotics-Log}
Let $g$ be a polynomial of degree less or equal than three satisfying $g(0)=0$. Let $b\in [0,\infty[\,$, then the following asymptotic formula holds 
\begin{align}\label{Theorem-Asymptotics-Log-Statement}
\tr_{L^{2}(\R^{d})\otimes\C^n}\big[g\big(\mathbf{1}_{\Lambda_{L}}\Op(\psi^{(b)}\mathcal{D})\mathbf{1}_{\Lambda_{L}}\big)\big]&=\sum_{m=0}^{d-1}(2L)^{d-m}A_{m,g,b} \nonumber \\
& \quad +2^d \, \log L \,\int_{S^{d-1}_+} \tr_{\C^{n}}[K_g(y,y)]\dd y +O(1), 
\end{align}
as $L\rightarrow\infty$. Here, $S^{d-1}_+:=\{y\in \,]0,\infty[\,^d: \ |y|=1 \}\subset S^{d-1}$ is the part of the unit sphere $S^{d-1}$ which lies in the positive $d$-dimensional quadrant, and $K_g(y,y)$, $y\in S^{d-1}_+$ are the well-defined point-wise values of the continuous integral kernel $K_g$ of the operator $\Op(\mathcal{D})_{g}$, which is given by
\begin{equation}
\Op(\mathcal{D})_{g}:=\sum_{k=0}^{d}(-1)^{k}\sum_{\mathcal{M}\subseteq \{1,\ldots,d\}\ : \ |\mathcal{M}|=k}g(\mathbf{1}_{H_{\mathcal{M}}}\Op(\mathcal{D})\mathbf{1}_{H_{\mathcal{M}}}),
\end{equation}
with $H_{\mathcal{M}}:=\{x\in\R^d: \ \forall j\in \{1,\ldots,d\}\setminus \mathcal{M}: \ x_{j} \geq 0\}$.
\end{thm}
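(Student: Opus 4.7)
The proof refines the $O(\log L)$ remainder of Theorem \ref{Theorem-Asymptotics-Analytic} into an explicit logarithmic main term plus a genuine $O(1)$ error, under the stronger hypothesis $\deg g \le 3$. My plan is to isolate $2^d$ identical vertex contributions, identify each with the corner operator $\Op(\mathcal{D})_g$, and then compute the resulting trace by exploiting the scale invariance of $\mathcal{D}$.

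In the first stage, I combine the face decomposition of Section \ref{sec_higher_order}, which already accounts for the coefficients $A_{m,g,b}$ with $0 \le m \le d-1$, with the commutation procedure of Section \ref{sec_commutation} that moves every factor of the smooth regularisation $\psi^{(b)}$ to one side of the operator at a cost of only $O(1)$. What survives from the trace in \eqref{trace-studied} is then a sum over the $2^d$ vertices of $\Lambda_L$ of localised traces in which $\psi^{(b)}$ no longer appears. By the reflection symmetries of the cube, acting unitarily on the spinor factor by suitable products of Dirac matrices and hence preserving traces of $g$ of self-adjoint operators, the $2^d$ vertex contributions all coincide; each is, up to an $O(1)$ error, the trace of a localisation of $\Op(\mathcal{D})_g$ based at the origin. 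This both produces the prefactor $2^d$ and shows that the logarithmic coefficient must be independent of $b$.

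In the second stage, I invoke the properties of $\Op(\mathcal{D})_g$ established in the first part of Section \ref{sec_local_asymptotics}. The crucial structural facts are that $\mathcal{D}$ is homogeneous of degree zero, so the kernel of $\Op(\mathcal{D})$ is homogeneous of degree $-d$ away from the diagonal; that the alternating sum over $M \subseteq \{1,\ldots,d\}$ cancels the delta-distributional contributions which would otherwise obstruct the trace of any single $g(\mathbf{1}_{H_M}\Op(\mathcal{D})\mathbf{1}_{H_M})$; and that for $\deg g \le 3$ the remaining iterated kernel $K_g$ is continuous at every diagonal point $(y,y)$ with $y\in S^{d-1}_+$, with diagonal values inheriting the homogeneity $K_g(ry,ry) = r^{-d} K_g(y,y)$ from the symbol. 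Verifying these cancellations and regularity properties for the iterated convolutions is the main technical obstacle of the argument, and is precisely where the restriction $\deg g \le 3$ enters: for higher degrees one would need an analogue of the Hilbert-to-Mellin spectral reduction used by Widom \cite{Widom1982} in one dimension, and no such higher-dimensional device is known.

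Finally, the local asymptotic formula reduces to a scaling computation. Writing the trace of the vertex operator as $\int_{\Lambda_L}\tr_{\C^n}[K_g(x,x)]\,dx$ and introducing spherical coordinates $x = r\omega$ with $\omega\in S^{d-1}_+$, the diagonal homogeneity of $K_g$ turns the radial integration into $\int dr/r$, producing $\log L$ with prefactor $\int_{S^{d-1}_+}\tr_{\C^n}[K_g(\omega,\omega)]\,d\omega$; the lower cut-off near the origin and the fact that $\Lambda_L$ is a cube rather than a ball both contribute only $O(1)$ corrections, absorbed by the cancellations from the inclusion-exclusion and the boundedness of the angular factor. Multiplying by the vertex count $2^d$ and combining with Theorem \ref{Theorem-Asymptotics-Analytic} yields the asymptotic formula \eqref{Theorem-Asymptotics-Log-Statement}.
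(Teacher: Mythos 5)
Your first stage (higher-order terms via Theorem \ref{theorem_higher_order_terms}, commutation of the regularisation via Theorem \ref{Theorem-commutation}, symmetry of the $2^d$ vertex contributions) matches the paper and is fine. The gap lies in your last two stages. After commutation the vertex contribution is the trace of $\mathbf{1}_{\,]0,L[^d}\Op(\mathcal{D})_g\mathbf{1}_{(B_+)_L}\Op\big(g(\psi\otimes\mathbb{1}_n)\big)$, and you may \emph{not} replace this by $\int\tr_{\C^n}[K_g(x,x)]\,\d x$ over the cube: without the smooth factor the operator is not trace class (the diagonal of $K_g$ behaves like $|x|^{-d}$, which is non-integrable at the corner), and with it the diagonal kernel is not $\tr_{\C^n}K_g(x,x)$. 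The logarithm arises from the interplay between the unit-scale smoothing supplied by the compactly supported symbol and the scale-$L$ spatial cut-off, and showing that everything else is $O(1)$ --- and that the smooth factor contributes only its symbol value at $\xi=0$ --- is exactly the content of the paper's Theorem \ref{Local-asymptotic-formula}, a Widom-type oscillatory-integral argument. That argument needs precisely the kernel properties you never establish: the off-diagonal decay \eqref{polynomial-kernel-bound}, the H\"older-type comparison estimate of Lemma \ref{Lemma-kernel-prop}, and the non-vanishing of $\tr_{\C^n}K$ on the positive quadrant (used to build the comparison functions $\phi_y$). Homogeneity, continuity and ``cancellation of delta contributions'' alone do not yield the $O(1)$ error you assert.

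The second, independent gap concerns degree three. You claim the kernel regularity ``for $\deg g\le 3$'', but the only case accessible by a direct computation is $g_2$, where the inclusion--exclusion collapses to $(-1)^d\mathbf{1}_{\,]0,\infty[^d}\Op(\mathcal{D})\mathbf{1}_{]-\infty,0[^d}\Op(\mathcal{D})\mathbf{1}_{\,]0,\infty[^d}$, so $K_{g_2}$ is an absolutely convergent integral of two Riesz-type kernels at separated arguments. For $g_3$ the analogous cancellation only forces, in each coordinate, one of the two intermediate variables to be negative; an intermediate variable may then approach the outer ones, the kernel involves genuine principal values, and no direct verification of continuity, homogeneity and the required bounds is available --- the paper does not attempt one. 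Instead it uses the vanishing trace of the Dirac matrices and the anticommutation relations \eqref{sigma-anticomm} to reduce the trace of the $g_3$ vertex operator to $\tfrac32$ times the $g_2$ one (with $\psi^3$ in place of $\psi^2$, both equal to $1$ at the origin), and identifies $\tfrac32\tr_{\C^n}K_{g_2}(y,y)=\tr_{\C^n}K_{g_3}(y,y)$. Without this reduction, or an honest analysis of $K_{g_3}$, your argument does not cover cubic polynomials; so as proposed the proof is under-justified for $g_2$ at the local-asymptotics step and genuinely incomplete for $\deg g=3$.
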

\begin{rem}
\begin{enumerate}[(a)]
\item
In contrast to the coefficients of the higher-order terms, the coefficient of the logarithmic term is independent of the cut-off parameter $b$. Therefore, we would indeed be interested to obtain a more explicit expression for it and maybe even relate it to the coefficients $\mathfrak{A}$ respectively $\mathfrak{U}$ which occur in the enhanced area laws obtained in \cite[Theorem 3.1]{BM-inprep}. The reason why we are not able to do so, is again the lack of a full generalisation of the procedure in Section \ref{sec_local_asymptotics}. The following two remarks point out similarities between these coefficients.
\item For polynomials of degree one, the coefficient of the logarithmic term vanishes. This property is shared by the coefficient $W$ in \cite[Theorem 3.1]{BM-inprep}.
\item For the second monomial $g_2$ and third monomial $g_3$, we have $\int_{S^{d-1}_+} \tr_{\C^{n}}[K_{g_3}(y,y)]\dd y=\frac{3}{2}\int_{S^{d-1}_+} \tr_{\C^{n}}[K_{g_2}(y,y)]\dd y$. This is the same ratio as for the coefficient $W$ in \cite[Theorem 3.1]{BM-inprep}, i.e.~in \cite[Theorem 3.1]{BM-inprep} we have $W(g_3,\Lambda,E_F,m)=\frac{3}{2}W(g_2,\Lambda,E_F,m)$, independently of $\Lambda$, $E_F$ and $m$. This is a consequence of the structure of the Dirac matrices, i.e.~their vanishing trace and the anti-commutation relations \eqref{sigma-anticomm}, cf.~Section \ref{subsec_proof_asym_log}. In fact, these properties allow even more insights on the coefficient for higher monomials: Under the assumption that Theorem \ref{Theorem-Asymptotics-Log} holds for all monomials of even degree $g_{2n}$, $n\in\{1,\dots,p\}$ for $p\in\N$, one can then deduce the theorem for the monomial $g_{2p+1}$. If the coefficient of the logarithmic term in \eqref{Theorem-Asymptotics-Log-Statement} also obeys the desired ratio for the monomials $g_{2n}$ and $g_{2n-1}$, $n\in\{2,\dots,p\}$, we would then obtain $\int_{S^{d-1}_+} \tr_{\C^{n}}[K_{g_{2p+1}}(y,y)]\dd y=\big(1+(2p\sum_{k=1}^{2p-1}1/k)^{-1}\big)\int_{S^{d-1}_+} \tr_{\C^{n}}[K_{g_{2p}}(y,y)]\dd y$. This is again the same ratio as for the coefficient $W$ in \cite[Theorem 3.1]{BM-inprep}. This fact could potentially be exploited in order to reduce the question of an explicit expression of the coefficient for the logarithmic term in \eqref{Theorem-Asymptotics-Log-Statement} to just calculating it for the monomial of degree two. However, it is, at the moment, unclear to the author how to derive Theorem \ref{Theorem-Asymptotics-Log} for monomials of degree $2n$, $n\in\N\setminus\{1\}$, and we therefore do not pursue this route further in this article.  
\item Theorems \ref{Theorem-Asymptotics-Analytic} and \ref{Theorem-Asymptotics-Log} can be generalised to symbols $B$ which feature not only a single point-discontinuity given by $\mathcal{D}$ but a finite number of such discontinuities in the compact support of the symbol $B$. The coefficient of the logarithmic term is then the sum of the coefficients corresponding to the points of discontinuity weighted by the values of $B$ at these points.
A proof of this fact is quite straightforward, as it just requires an additional localisation in momentum space, akin to the one used in the Widom--Sobolev formula, cf.~\cite{sobolevpw,BM-Widom}, during the commutation procedure in Section \ref{sec_commutation}. Still, we opt not to make this extension precise in this article, in order to maintain as much readability as possible in the already long Section \ref{sec_commutation}.
\item In order to obtain the first $d$ terms of the expansions \eqref{Theorem-Asymptotics-Analytic-Statement} and \eqref{Theorem-Asymptotics-Log-Statement}, we only make use of the fact that the operator $\Op(\psi^{(b)}\mathcal{D})$ has off-diagonal integral kernel decay of order $|x-y|^{-d}$ to obtain the required bounds and that the operator is translation-invariant with continuous integral kernel in order to evaluate the coefficients, cf.~Theorem \ref{theorem_higher_order_terms}. No other, more specific, properties of the Dirac operator are required.
\item The exact decay of order $|x-y|^{-d}$ required in Theorem \ref{theorem_higher_order_terms} is crucial, in the sense that a higher order decay of order $|x-y|^{-\alpha}$ with $\alpha>d$ would already yield a full $(d+1)$-term asymptotic expansion, with the $(d+1)$st term being of constant order.
\end{enumerate}
\end{rem}

\section{Localisation and higher-order terms}
\label{sec_higher_order}

\subsection{Estimates for the integral kernel decay}
\label{subsec_kernel_decay}
We now take a closer look at the integral kernel $K$ of the operator $\Op(\psi\mathcal{D})$. As a multidimensional analogue of the Hilbert transform, the functions $-\i\tfrac{\xi_k}{|\xi|}$ are the Fourier multipliers corresponding to the $k$th $d$-dimensional Riesz transform, see \cite[p.~57]{Stein} respectively \cite[p.~223f.]{SteinWeiss}. Hence, the kernel of the translation-invariant operator $\Op(\psi\mathcal{D})$ is given by
\begin{align}\label{intro_kernel}
K(x)=\frac{\mathbb{1}_{n}}{2}\check{\psi}(x)+\frac{c_d}{2\i}\sum_{k=1}^{d}\alpha_{k} \lim_{\epsilon\rightarrow 0}\int_{\R^d\setminus B_{\epsilon}(x)}\frac{(x_k-t_k)\check{\psi}(t)}{|x-t|^{d+1}}\dd t,
\end{align}
where $\check{\psi}$ denotes the inverse Fourier transform of the Schwartz function $\psi$ and the constant $c_d$ is given by $c_{d}:=\frac{\Gamma[(d+1)/2]}{\pi^{(d+1)/2}}$.
We want to study the decay of the (matrix) Hilbert-Schmidt norm of this kernel. To do so, let $R>0$ and $|x|> 2R$, then, for every $k\in\{1,\ldots,d\}$, integrating by parts yields the bound
\begin{align}
\lim_{\epsilon\rightarrow 0}&\int_{\R^d\setminus B_{\epsilon}(x)}\frac{(x_k-t_k)\check{\psi}(t)}{|x-t|^{d+1}}\dd t \nonumber\\ 
=&\int_{\R^d\setminus B_{R}(x)}\frac{(x_k-t_k)\check{\psi}(t)}{|x-t|^{d+1}}\dd t+\lim_{\epsilon\rightarrow 0}\int_{B_{R}(x)\setminus B_{\epsilon}(x)}\frac{(x_k-t_k)\check{\psi}(t)}{|x-t|^{d+1}}\dd t \nonumber\\ 
\leq& \ \frac{C_{1}}{R^{d}}+ \lim_{\epsilon\rightarrow 0}\bigg[\int_{\partial (B_{R}(x)\setminus B_{\epsilon}(x))}\frac{\check{\psi}(t)}{|x-t|^{d-1}}e_{k}\cdot \nu(t)\dd S(t)-\int_{B_{R}(x)\setminus B_{\epsilon}(x)}\frac{1}{|x-t|^{d-1}}\partial_k\check{\psi}(t)\dd t\bigg]\nonumber\\
\leq& \ \frac{C_{1}}{R^{d}} +C_{2}\sup_{|t|\geq R}\Big(|\check{\psi}(t)|+R\, \big|\partial_k\check{\psi}(t)\big|\Big)\leq \frac{C_3}{R^{d}},
\end{align}
with constants $C_1, C_2, C_3>0$ independent of $R$, where we used that $\check{\psi}$ is a Schwartz function.
Here, $\partial$ denotes the boundary of a set in $\R^d$, $\nu=\nu_{\partial(B_{R}(x)\setminus B_{\epsilon}(x))}$ denotes the vector field of exterior unit normals in $\R^{d}$ to the boundary of $B_{R}(x)\setminus B_{\epsilon}(x)$, and we write $\d S$ for integration with respect to the $(d-1)$-dimensional surface measure induced by Lebesgue measure in $\R^{d}$. Combining the estimates for every $k\in\{1,\ldots,d\}$, we find a constant $C_4>0$, independent of $x$, such that the (matrix) Hilbert-Schmidt norm
$\|K(x)\|_2$ is bounded from above by $\frac{C_4}{|x|^{d}}$. Being the inverse Fourier transform of the integrable function $\psi\mathcal{D}$, the kernel $K$ is continuous. Interpreting $K$ as a function of two variables $K:\R^d\times\R^d\rightarrow\C^{n\times n}$ with $K(x,y)=K(x-y)$ in terms of the single-variable function $K$ introduced in \eqref{intro_kernel}, the matrix Hilbert-Schmidt norm of its values is then bounded from above by
\begin{align}\label{kernel_bound}
\|K(x,y)\|_2\leq C_{K} \frac{1}{(1+|x-y|^2)^{\tfrac{d}{2}}}, \quad x,y\in\R^d,
\end{align}
for a constant $C_{K}>0$. 

The continuity of the integral kernel, together with the just obtained decay, ensures that the integral kernel of the operator $g\big(\mathbf{1}_{M}\Op(\psi\mathcal{D})\mathbf{1}_{M}\big)$ is also continuous on $M\times M$ for any open set $M\subseteq\R^d$ and entire test function $g$ with $g(0)=0$. As the proof of such a result is fairly standard, it is omitted.

In the remaining part of the present article we will often use the following way of obtaining a Hilbert-Schmidt estimate from the just obtained bound on the decay of the integral kernel.
\begin{lem}\label{Lemma_kernel_set}
Let $X$ be an integral operator on $L^2(\R^d)\otimes\C^n$, whose integral kernel satisfies inequality \eqref{kernel_bound}. Let $M,N\subset\R^d$ be (Borel) measurable and such that there exists a (Borel) measurable function $\varphi:\R^d\rightarrow [0,\infty [\,$ with $\dist(x,N)\geq \varphi(x)$ for every $x\in M$, as well as a constant $C_{\varphi}$ such that 
\begin{align}
\int_{M}\frac{1}{(1+\varphi(x)^2)^{\tfrac{d}{2}}} \dd x< C_{\varphi}.
\end{align}
Then the operator $\mathbf{1}_{M}X\mathbf{1}_{N}$ is Hilbert-Schmidt class with its Hilbert-Schmidt norm on $L^2(\R^d)\otimes\C^n$ satisfying the bound 
\begin{align}
\|\mathbf{1}_{M}X\mathbf{1}_{N}\|_2^2< C_{\varphi}C_{d}C_{K}^2,
\end{align}
where the constant $C_{K}$ is the constant in the bound \eqref{kernel_bound} and the constant $C_{d}$ only depends on the dimension $d$. Here, $\dist$ denotes the Euclidian distance in $\R^d$.
\end{lem}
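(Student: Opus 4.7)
The plan is to reduce the claim to a direct double-integral estimate on the matrix-valued kernel. Since $\mathbf{1}_{M} X \mathbf{1}_{N}$ has integral kernel $1_{M}(x) K(x,y) 1_{N}(y)$, the Hilbert--Schmidt norm squared on $L^{2}(\R^{d}) \otimes \C^{n}$ equals the double integral of the squared matrix Hilbert--Schmidt norm of this kernel. Inserting the pointwise bound \eqref{kernel_bound} gives
\begin{equation*}
\|\mathbf{1}_{M} X \mathbf{1}_{N}\|_{2}^{2} \leq C_{K}^{2}\int_{M}\int_{N}\frac{1}{(1+|x-y|^{2})^{d}}\dd y\dd x,
\end{equation*}
so the task reduces to a uniform estimate of the inner integral.

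For fixed $x \in M$, I would substitute $z = y - x$ and use the hypothesis $\dist(x,N) \geq \varphi(x)$ to enlarge the domain to the complement of the ball of radius $\varphi(x)$. Passing to polar coordinates then gives
\begin{equation*}
\int_{N}\frac{1}{(1+|x-y|^{2})^{d}}\dd y \leq \omega_{d-1}\int_{\varphi(x)}^{\infty}\frac{r^{d-1}}{(1+r^{2})^{d}}\dd r,
\end{equation*}
where $\omega_{d-1}$ denotes the surface measure of the unit sphere $S^{d-1}$. The core sub-estimate to establish is that
\begin{equation*}
I(\varphi) := \int_{\varphi}^{\infty}\frac{r^{d-1}}{(1+r^{2})^{d}}\dd r \leq \frac{\tilde C_{d}}{(1+\varphi^{2})^{d/2}}, \qquad \varphi \geq 0,
\end{equation*}
with a constant $\tilde C_{d}$ depending only on the dimension. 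I would verify this by splitting into two regimes. For $\varphi \leq 1$, $I(\varphi)$ is bounded by the finite constant $I(0) = B(d/2,d/2)/2$ (using $d \geq 2$), and the bound follows since $(1+\varphi^{2})^{-d/2} \geq 2^{-d/2}$. For $\varphi > 1$, the bound $(1+r^{2})^{-d} \leq r^{-2d}$ on $[\varphi,\infty)$ yields $I(\varphi) \leq \varphi^{-d}/d$, and the elementary comparison $\varphi^{-d} \leq 2^{d/2}(1+\varphi^{2})^{-d/2}$ closes the argument.

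Inserting this into the outer integral and invoking the integrability hypothesis on $\varphi$ then yields
\begin{equation*}
\|\mathbf{1}_{M} X \mathbf{1}_{N}\|_{2}^{2} \leq C_{K}^{2}\,\omega_{d-1}\tilde C_{d}\int_{M}\frac{1}{(1+\varphi(x)^{2})^{d/2}}\dd x < C_{d}\,C_{K}^{2}\,C_{\varphi},
\end{equation*}
with $C_{d} := \omega_{d-1}\tilde C_{d}$ depending only on $d$. There is no substantial obstacle here; the only point requiring care is that the one-variable sub-estimate for $I(\varphi)$ must hold with a single constant uniformly in $\varphi \geq 0$, so that the integrability hypothesis on $\varphi$ can be applied directly without having to track separately whether $\varphi$ is small or large. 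Measurability of the integrands is immediate from the assumed Borel measurability of $M$, $N$, and $\varphi$ together with the continuity of $K$.
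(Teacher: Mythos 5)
Your proposal is correct and follows essentially the same route as the paper: reduce to the double integral of the squared kernel bound, enlarge $N$ to the complement of $B_{\varphi(x)}(x)$, pass to polar coordinates, and establish a uniform radial estimate before integrating over $M$. The only (immaterial) difference is that the paper obtains the radial bound in one step via the pointwise comparison $r^{d-1}(1+r^2)^{-d}\leq r(1+r^2)^{-d/2-1}$ and exact integration, giving $C_d=|S^{d-1}|/d$, whereas you split into the regimes $\varphi\leq 1$ and $\varphi>1$.
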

\begin{proof}
It suffices to show that 
\begin{align}
\int_{M}\int_{N} \|K(x,y)\|_2^{2} \dd y \dd x < C_{\varphi}C_{d}C_{K}^2.
\end{align}
We fix $x\in M$ and estimate the integral over the set $N$ in the following way
\begin{align}
\int_{N} \|K(x,y)\|_2^{2} \dd y &\leq \int_{N} C_{K}^2 \frac{1}{(1+|x-y|^2)^{d}} \dd y  \leq C_{K}^2 \int_{\R^d\setminus B_{\varphi(x)}(x)} \frac{1}{(1+|x-y|^2)^{d}} \dd y \nonumber \\ &= C_{K}^2\int_{\R^d\setminus B_{\varphi(x)}(0)} \frac{1}{(1+|y|^2)^{d}} \dd y =C_{K}^2|S^{d-1}| \int_{\varphi(x)}^{\infty} \frac{r^{d-1}}{(1+r^2)^{d}} \dd r \nonumber \\ & \leq C_{K}^2|S^{d-1}| \int_{\varphi(x)}^{\infty} \frac{r}{(1+r^2)^{\tfrac{d}{2}+1}} \dd r = \frac{C_{K}^2|S^{d-1}|}{d} \frac{1}{(1+\varphi(x)^2)^{\tfrac{d}{2}}}.
\end{align}
Here, $|S^{d-1}|$ denotes the $(d-1)$-dimensional surface area of the unit sphere $S^{d-1}$ induced by Lebesgue measure. It follows that
\begin{align}
\int_{M}\int_{N} \|K(x,y)\|_2^{2} \dd y \dd x \leq \frac{C_{K}^2|S^{d-1}|}{d} \int_{M}\frac{1}{(1+\varphi(x)^2)^{\tfrac{d}{2}}}\dd x < C_{\varphi}C_{d}C_{K}^2,
\end{align}
with $C_{d}:=\frac{|S^{d-1}|}{d}$. 
\end{proof}

In Sections \ref{sec_commutation} and \ref{sec_local_asymptotics} we frequently deal with the operator $\Op(\mathcal{D})$ which no longer has an integrable symbol. The integral kernel of the operator $\Op(\mathcal{D})$ is singular at the origin and only exists as a tempered distribution. Nonetheless, away from the origin we can treat it as a function on $\R^d$ and it is clearly homogeneous of degree $-d$.
Hence, we obtain the estimate
\begin{align}\label{kernel_bound_distribution}
\|K(x)\|_2\leq C_{K} \frac{1}{|x|^{d}}, \qquad |x|>c>0.
\end{align}
In this situation, we also derive an estimate similar to the one in Lemma \ref{Lemma_kernel_set}.
\begin{lem}\label{Lemma_kernel_set_distribution}
Let $X$ be a translation-invariant integral operator, whose integral kernel satisfies inequality \eqref{kernel_bound_distribution}. Let $M,N\subset\R^d$ be measurable such that there exists a measurable function $\varphi:\R^d\rightarrow \,]c,\infty [\,$ with $\dist(x,N)\geq \varphi(x)$ for every $x\in M$ and a constant $C_{\varphi}$ such that 
\begin{align}
\int_{M}\frac{1}{\varphi(x)^{d}} \dd x< C_{\varphi}.
\end{align}
Then the operator $\mathbf{1}_{M}X\mathbf{1}_{N}$ is Hilbert-Schmidt class with its Hilbert-Schmidt norm on $L^2(\R^d)\otimes\C^n$ satisfying the bound
\begin{align}
\|\mathbf{1}_{M}X\mathbf{1}_{N}\|_2^2 < C_{\varphi}C_{d}C_{K}^2,
\end{align}
where the constant $C_{K}$ is the constant in the bound \eqref{kernel_bound_distribution} and the constant $C_{d}$ only depends on the dimension $d$. 
\end{lem}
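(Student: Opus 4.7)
The plan is to follow the proof of the preceding Lemma \ref{Lemma_kernel_set} almost verbatim, with only two adjustments to accommodate the weaker assumptions: the kernel bound \eqref{kernel_bound_distribution} is now only valid for $|x|>c$ (not globally), and the majorising function $\varphi$ takes values strictly above $c$. As in the previous lemma, it suffices to bound the double integral
\begin{equation}
\|\mathbf{1}_{M}X\mathbf{1}_{N}\|_2^{2} = \int_{M}\int_{N}\|K(x-y)\|_2^{2}\,\dd y\,\dd x,
\end{equation}
where I use translation invariance of $X$ to write the kernel as $K(x-y)$.

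First I would fix $x\in M$ and estimate the inner integral. Since by assumption $\dist(x,N)\geq \varphi(x)>c$, every $y\in N$ satisfies $|x-y|\geq \varphi(x)>c$, so the region of integration lies entirely in the domain where \eqref{kernel_bound_distribution} applies. Enlarging the domain of integration from $N$ to $\R^d\setminus B_{\varphi(x)}(x)$, passing to polar coordinates and using translation invariance, I obtain
\begin{equation}
\int_{N}\|K(x-y)\|_2^{2}\,\dd y \leq C_{K}^{2}|S^{d-1}|\int_{\varphi(x)}^{\infty}\frac{r^{d-1}}{r^{2d}}\,\dd r = \frac{C_{K}^{2}|S^{d-1}|}{d}\,\frac{1}{\varphi(x)^{d}}.
\end{equation}
This is the analogue of the corresponding estimate in the proof of Lemma \ref{Lemma_kernel_set}; the computation is even slightly simpler because one does not have to throw away the $1$ in the denominator before integrating.

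Finally I integrate over $M$ and apply the hypothesis $\int_{M}\varphi(x)^{-d}\,\dd x < C_{\varphi}$ to conclude
\begin{equation}
\|\mathbf{1}_{M}X\mathbf{1}_{N}\|_2^{2} \leq \frac{C_{K}^{2}|S^{d-1}|}{d}\int_{M}\frac{1}{\varphi(x)^{d}}\,\dd x < C_{\varphi}C_{d}C_{K}^{2},
\end{equation}
with $C_{d}:=|S^{d-1}|/d$, exactly as before. There is no genuine obstacle: the only point worth flagging is the verification that the kernel bound is applicable on the entire region of integration, which is why the hypothesis demands $\varphi>c$ rather than just $\varphi\geq 0$ as in Lemma \ref{Lemma_kernel_set}.
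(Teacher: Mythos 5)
Your proposal is correct and is exactly the argument the paper has in mind: the paper omits the proof with the remark that it mirrors that of Lemma \ref{Lemma_kernel_set}, and your adaptation (using $\varphi>c$ together with $\dist(x,N)\geq\varphi(x)$ to ensure the bound \eqref{kernel_bound_distribution} applies on all of $N$, then the same polar-coordinate computation giving $C_d=|S^{d-1}|/d$) is precisely that mirroring. No gaps.
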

As the proof of Lemma \ref{Lemma_kernel_set_distribution} mirrors the proof of Lemma \ref{Lemma_kernel_set}, it is omitted.

We now remark upon the following symmetry properties of the operator $\Op(\psi\mathcal{D})$ which will be useful in the following sections.
They rely on the fact that $\psi$ is spherically symmetric.
\begin{rem}\label{rem_symmetry}
\begin{enumerate}[(a)]
\item Symmetry of spatial directions: Given a permutation $\pi\in\mathcal{S}_d$, in the symmetric group of the set $\{1,\ldots,d\}$, and the corresponding unitary operator $U_\pi$ on $L^2(\R^d)\otimes\C^n$ with action $(U_\pi u)(x):=u(\pi(x)):=u(x_{\pi(1)},\dots, x_{\pi(d)})$, we have that
\begin{equation}
U_\pi\Op(\psi\mathcal{D})U_\pi^{-1}=\Op(\psi\mathcal{D}_\pi),
\end{equation}
where the symbol $\mathcal{D}_\pi=\tfrac{1}{2}\Big(\mathbb{1}_{n}-\sum_{k=1}^d\alpha_{\pi(k)}\tfrac{\xi_k}{|\xi|}\Big)$ agrees with the symbol $\mathcal{D}$ up to a relabelled choice of Dirac matrices.
Therefore, for a measurable set $\Omega\subset\R^d$ and the corresponding set $\Omega_\pi:=\{x\in\R^d:\ \pi(x)\in\Omega\}$, we have
\begin{equation}\label{consequence_trace_relabeling}
\tr_{L^{2}(\R^{d})\otimes\C^n}\big[\mathbf{1}_{\Omega}\Op(\psi\mathcal{D})\mathbf{1}_{\Omega}\big]=\tr_{L^{2}(\R^{d})\otimes\C^n}\big[\mathbf{1}_{\Omega_\pi}\Op(\psi\mathcal{D})\mathbf{1}_{\Omega_\pi}\big].
\end{equation}
\item Symmetry under reflections: Given $\sigma\in\{0,1\}^d$ and the corresponding unitary operator $U_\sigma$ on $L^2(\R^d)\otimes\C^n$ with action $(U_\sigma u)(x):=u(\sigma(x)):=u((-1)^{\sigma_1}x_{1},\dots, (-1)^{\sigma_d}x_{d})$, we have that
\begin{equation}
U_\sigma\Op(\psi\mathcal{D})U_\sigma^{-1}=\Op(\psi\mathcal{D}_\sigma),
\end{equation}
where the symbol $\mathcal{D}_\sigma=\tfrac{1}{2}\Big(\mathbb{1}_{n}-\sum_{k=1}^d(-1)^{\sigma_k}\alpha_{k}\tfrac{\xi_k}{|\xi|}\Big)$ agrees with the symbol $\mathcal{D}$ up to the signs of the chosen Dirac matrices. The corresponding consequence for the trace, as in \eqref{consequence_trace_relabeling}, follows.
\end{enumerate}
These properties can also be checked with the corresponding properties of the integral kernel, see \cite[p.~57]{Stein}. They agree with the symmetry properties $(\mathcal{A}_2)$ and $(\mathcal{A}_3)$ in \cite{Dietlein18}.
\end{rem}

\subsection{Higher-order terms}
\label{subsec_higher_order_thm__stat}
We now begin to study the higher-order terms of the expansion. The required analysis relies entirely on the fact that the integral kernel of the operator in question satisfies the bound \eqref{kernel_bound} and does not use any additional properties of the Dirac operator. Therefore, we study the trace
\begin{equation}
\tr_{L^{2}(\R^{d})\otimes\C^n}\big[g\big(\mathbf{1}_{\Lambda_{L}}X\mathbf{1}_{\Lambda_{L}}\big)\big],
\end{equation}
in the more general setting that $X$ is a bounded translation-invariant integral operator on $L^{2}(\R^{d})\otimes\C^n$, $n\in\N$, with continuous integral kernel $K:\R^d\times\R^d\rightarrow\C^{n\times n}$ satisfying the bound \eqref{kernel_bound}. We note that in this more general setting the matrix dimension $n\times n$ can be chosen independently of the spatial dimension $d$.

In this section we decompose the $d$-dimensional cube $\Lambda=[0,2]^d$ (and its scaled version $\Lambda_L=[0,2L]^d$) in a similar way as in \cite{Dietlein18}. To do so, we first fix some notation. We call $H\subset\R^d$ a half-space if there exists a $(d-1)$-dimensional (affine) subspace $A$ of $\R^d$, i.e. a (translated) hyperplane, dividing $\R^d$ into two parts and such that $H$ is one of these parts. We consider half-spaces to be closed, i.e. $A\subseteq H$.
For $k\in\{-1,\ldots,d\}$, we say that $F\subseteq\R^d$ is a $k$-face (of $\Lambda$) if $\dim(F)=k$ and there exists a half-space $H\subseteq \R^d$ with $\Lambda^{\circ}\cap \partial H=\emptyset$ such that $F=\Lambda \cap H$. Here, $^{\circ}$ denotes the interior of a set, $\dim(F)$ the dimension of the smallest affine subspace of $\R^d$ which contains $F$, $\emptyset$ the empty set and we define $\dim(\emptyset):=-1$. We note that the choice of half-space $H$ is only unique in the case that $\dim(F)=d-1$. 

For each $k\in\{-1,\ldots,d\}$, we denote by $\mathcal{F}^{(k)}$ the set of all $k$-faces of $\Lambda$. For a given $j$-face $F$, we define the sets $\mathcal{F}_{F}^{(k)}:=\{G\in\mathcal{F}^{(k)} : F\subseteq G\}$ of all $k$-faces containing $F$ for $k\in\{j,\ldots,d\}$. We now associate to each $k$-face $F\in\mathcal{F}^{(k)}$ a unique set $H_F$. This set is only a half-space in the case that $k=d-1$.
To the only $d$-face $F=\Lambda$ we associate the set $H_F=\R^d$. To each $(d-1)$-face $F\in\mathcal{F}^{(d-1)}$, we associate the unique half-space $H_{F}\subseteq\R^{d}$ such that $F\subset \partial H_{F}$ and $\Lambda\subset H_F$. For $k\in\{0,\ldots,d-2\}$, we associate to a given $k$-face $F\in\mathcal{F}^{(k)}$ the set $H_{F}:=\bigcap_{G\in\mathcal{F}_{F}^{(d-1)}}H_{G}$. 

For a given $k$-Face $F$, $k\in\{1,\ldots,d-1\}$, the smallest affine subspace $W_F'$ of $\R^d$ with $F\subset W_F'$ is given by $W_F'=\bigcap_{G\in\mathcal{F}_{F}^{(d-1)}}\partial H_{G}$. We then find $u_F\in\R^d$ and a $k$-dimensional subspace $W_F$ of $\R^d$ such that $W_F'=u_F+W_F$. Writing $\R^d\ni x=x_1+x_2\in (u_F+W_F)\oplus W_F^{\intercal}=:W_F'\oplus W_F^{\intercal}$, we define the semi-finite space $H_{F,\infty}:=\{x_1+x_2\in H_F\subseteq W_F'\oplus W_F^{\intercal}:x_1\in F\}$ as well as its finite scaled version $H_{F,L}:=\{x_1+x_2\in H_F\subseteq W_F'\oplus W_F^{\intercal}:x_1\in F, \ \|x_2\|_\infty\leq L\}$, where $\|\cdot\|_\infty$ denotes the maximum norm in $W_F^{\intercal}$.

We start by decomposing the scaled cube $\Lambda_{L}$ into $2^{d}$ parts; each associated to one of the vertices (or $0$-faces) of $\Lambda$. Let $V\in\mathcal{F}^{(0)}$ be any of these vertices. We begin by considering the finite scaled version of the space $H_{V}$ associated to $V$, which we define as the cube $H_{V,L}:=H_{LV}\cap \big(LV+[-L,L]^{d}\big)$ containing the vertex $LV$ of $\Lambda_L$. As $g(0)=0$, we obtain
\begin{equation}\label{decomp_corners}
g\big(\mathbf{1}_{\Lambda_L}X\mathbf{1}_{\Lambda_L}\big)=\mathbf{1}_{\Lambda_L}g\big(\mathbf{1}_{\Lambda_L}X\mathbf{1}_{\Lambda_L}\big)=\sum_{V\in\mathcal{F}^{(0)}}\mathbf{1}_{H_{V,L}}g\big(\mathbf{1}_{\Lambda_L}X\mathbf{1}_{\Lambda_L}\big).
\end{equation}
Let $m\in\{0,\ldots,d\}$. For a given $(d-m)$-face $F$ (of $\Lambda$ respectively $\Lambda_L$), we define the operator
\begin{equation}\label{def-face-operator}
X_{F,g}:=\sum_{k=0}^{m}(-1)^{k}\sum_{G\in\mathcal{F}_{F}^{(k+(d-m))}}g(\mathbf{1}_{H_G}X\mathbf{1}_{H_G})
\end{equation}
and note that, for a given vertex $V$ (of $\Lambda$ respectively $\Lambda_L$), we have 
\begin{equation}
\sum_{m=0}^{d}\sum_{F\in\mathcal{F}_{V}^{(d-m)}}X_{F,g}=\sum_{m=0}^{d}\sum_{F\in\mathcal{F}_{V}^{(d-m)}}\sum_{k=0}^{m}(-1)^{k}\sum_{G\in\mathcal{F}_{F}^{(k+(d-m))}}g(\mathbf{1}_{H_G}X\mathbf{1}_{H_G})=g(\mathbf{1}_{H_{V}}X\mathbf{1}_{H_{V}}),
\end{equation}
as all other terms cancel out. Using this, we rewrite the right-hand side of \eqref{decomp_corners} as
\begin{equation}\label{decomp_corners_2}
\sum_{V\in\mathcal{F}^{(0)}}\mathbf{1}_{H_{V,L}}\bigg[g\big(\mathbf{1}_{\Lambda_L}X\mathbf{1}_{\Lambda_L}\big)-g(\mathbf{1}_{H_{LV}}X\mathbf{1}_{H_{LV}})+\sum_{m=0}^{d}\sum_{F\in\mathcal{F}_{V}^{(d-m)}}X_{LF,g}\bigg].
\end{equation}
The first step will be to localise the operator up to a term of constant order, i.e. to show that for each vertex $V$, replacing $\mathbf{1}_{H_{V,L}}g\big(\mathbf{1}_{\Lambda_L}X\mathbf{1}_{\Lambda_L}\big)$ with $\mathbf{1}_{H_{V,L}}g(\mathbf{1}_{H_{LV}}X\mathbf{1}_{H_{LV}})$ in the trace of \eqref{decomp_corners_2} only yields an error term of constant order in $L$.
We will do this in Section \ref{subsec_loc}. After the localisation we obtain 
\begin{equation}\label{decomp_corners_result}
\tr_{L^{2}(\R^{d})\otimes\C^n}\Big[g\big(\mathbf{1}_{\Lambda_L}X\mathbf{1}_{\Lambda_L}\big)\Big]=\tr_{L^{2}(\R^{d})\otimes\C^n}\bigg[\sum_{V\in\mathcal{F}^{(0)}}\mathbf{1}_{H_{V,L}}\bigg(X_{LV,g}+\sum_{m=0}^{d-1}\sum_{F\in\mathcal{F}_{V}^{(d-m)}}X_{LF,g}\bigg)\bigg],
\end{equation} 
up to an error term of constant order.
As we will show in Section \ref{sec_upper_bound}, the contributions to the higher-order terms stem exclusively from the sum
\begin{equation}\label{decomp_corners_higher_order}
\sum_{V\in\mathcal{F}^{(0)}}\mathbf{1}_{H_{V,L}}\sum_{m=0}^{d-1}\sum_{F\in\mathcal{F}_{V}^{(d-m)}}X_{LF,g},
\end{equation}
while the terms containing the operators $X_{LV,g}$ are of logarithmic order in the scaling parameter $L$.
The largest part of the present section is devoted to determining these higher-order terms, starting from the trace of \eqref{decomp_corners_higher_order}. The results are summarised in the following

\begin{thm}\label{theorem_higher_order_terms}
Let $X$ be a bounded translation-invariant integral operator on $L^{2}(\R^{d})\otimes\C^n$ with continuous integral kernel $K:\R^d\times\R^d\rightarrow\C^n$ satisfying the bound \eqref{kernel_bound}. Let $g:\C\rightarrow\C$ be an entire function with $g(0)=0$. Further let $\Lambda=[0,2]^{d}\subset\R^{d}$. Then
\begin{multline}\label{theorem_higher_order_terms_general}
\tr_{L^{2}(\R^{d})\otimes\C^n}\bigg[g\big(\mathbf{1}_{\Lambda_{L}}X\mathbf{1}_{\Lambda_{L}}\big)-\sum_{V\in\mathcal{F}^{(0)}}\mathbf{1}_{H_{V,L}}X_{LV,g}\bigg]\\
=\sum_{m=0}^{d-1}L^{d-m} \lim_{L\rightarrow\infty}\sum_{F\in\mathcal{F}^{(d-m)}}\tr_{L^{2}(\R^{d})\otimes\C^n}\big[\mathbf{1}_{H_{F,L}}X_{F,g}\big] 
+O(1),
\end{multline} 
as $L\rightarrow\infty$.
If the operator $X$ additionally fulfils the symmetry properties in Remark \ref{rem_symmetry}, this reduces to
\begin{align}
\label{theorem_higher_order_terms_special}
&\tr_{L^{2}(\R^{d})\otimes\C^n}\bigg[g\big(\mathbf{1}_{\Lambda_{L}}X\mathbf{1}_{\Lambda_{L}}\big)-\sum_{k=0}^{d}c_{k,d}\mathbf{1}_{[0,L]^{d}}g\big(\mathbf{1}_{\R_+^{d-k}\times\R^{k}}X\mathbf{1}_{\R_+^{d-k}\times\R^{k}}\big) \bigg] \nonumber\\
  &\qquad =\sum_{m=0}^{d-1}(2L)^{d-m}\lim_{L\rightarrow\infty}\tr_{L^{2}(\R^{d})\otimes\C^n}\bigg[\sum_{k=0}^{m}c_{k,m}\mathbf{1}_{[0,L]^{m}\times [0,1]^{d-m}}g\big(\mathbf{1}_{\R_+^{m-k}\times\R^{d-m+k}}X\mathbf{1}_{\R_+^{m-k}\times\R^{d-m+k}}\big)\bigg]  \nonumber\\
& \qquad\qquad\ \ +O(1),
\end{align}
as $L\rightarrow\infty$, with the constants $c_{k,m}:=\frac{(-1)^{k}2^{m}d!}{k!(m-k)!(d-m)!}$ for $0\leq k\leq m\leq d$. The limits of the operators in the traces on the right-hand side of both \eqref{theorem_higher_order_terms_general} and \eqref{theorem_higher_order_terms_special} are only trace class in the case $m=0$. Therefore, the limit can not be exchanged with the trace.
\end{thm}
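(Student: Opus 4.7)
My plan is to begin from the decomposition \eqref{decomp_corners_result}, which the localisation argument (from the following subsection) supplies up to a constant-order error. Subtracting the vertex contributions $\sum_V \mathbf{1}_{H_{V,L}} X_{V,g}$, the quantity to evaluate is
\begin{equation*}
\sum_{m=0}^{d-1}\sum_{F\in\mathcal{F}^{(d-m)}}\sum_{V\in F\cap\mathcal{F}^{(0)}}\tr\big[\mathbf{1}_{H_{V,L}}X_{F,g}\big].
\end{equation*}

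The central structural observation is that for each fixed $(d-m)$-face $F$, every half-space $H_G$ appearing in \eqref{def-face-operator} corresponds to some $G\supseteq F$, so its defining hyperplanes lie entirely in $W_F^{\intercal}$. Hence $X_{F,g}$ is translation-invariant along the $d-m$ directions spanning $F$, and the diagonal of its continuous integral kernel depends only on the perpendicular coordinate $x_\perp\in W_F^{\intercal}$. Since all vertices of $F$ share the same values of the $m$ frozen coordinates, the perpendicular component $Q_F^\perp$ of $H_{V,L}$ is independent of the choice of vertex $V\in F$, while the $F$-parallel component is a parallelepiped of volume $L^{d-m}$ for every vertex. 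Combined with $|F|=2^{d-m}$ and the $2^{d-m}$ vertices of $F$, these facts yield the exact identity
\begin{equation*}
\sum_{V\in F\cap\mathcal{F}^{(0)}}\tr\big[\mathbf{1}_{H_{V,L}}X_{F,g}\big]=L^{d-m}\,\tr\big[\mathbf{1}_{H_{F,L}}X_{F,g}\big],
\end{equation*}
reducing the task to the analysis of $\tr[\mathbf{1}_{H_{F,L}}X_{F,g}]$ as $L\to\infty$.

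What remains, and is the main obstacle, is to prove that $C_m^F:=\lim_{L\to\infty}\tr[\mathbf{1}_{H_{F,L}}X_{F,g}]$ exists with convergence rate $O(L^{m-d})$, so that $L^{d-m}(\tr[\mathbf{1}_{H_{F,L}}X_{F,g}]-C_m^F)=O(1)$. The bound \eqref{kernel_bound} on $K$ gives no decay at all for the diagonal of $X_{F,g}$, so the required decay must come entirely from cancellations in the alternating sum defining $X_{F,g}$. Expanding $g$ in its Taylor series, I would reduce to monomials $g(z)=z^n$, for which $X_{F,z^n}$ is a signed sum of $n$-fold products $\mathbf{1}_{H_G}X\mathbf{1}_{H_G}\cdots X\mathbf{1}_{H_G}$. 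Writing $\mathbf{1}_{H_G}=\prod_{i\in S(G)}\mathbf{1}_{H_i}$ for the $m$ bounding half-spaces $H_1,\ldots,H_m$ of $F$, the alternating sum collapses by inclusion–exclusion into an expression in which, roughly speaking, for $x,y$ deep inside $H_F$ the only surviving factor forces at least one intermediate integration variable into the "anti-corner" $\bigcap_i H_i^c$ far from $H_F$. The resulting trace-norm of $\mathbf{1}_{H_{F,L'}\setminus H_{F,L}}X_{F,g}$ is then dominated by sums of products of Hilbert–Schmidt norms $\|\mathbf{1}_A X\mathbf{1}_B\|_2$ for sets $A,B$ separated perpendicularly to $F$; constructing such sets through the power-function neighbourhood procedure outlined in the introduction and applying Lemma \ref{Lemma_kernel_set} then converts this separation into the bound $\tr[\mathbf{1}_{H_{F,L'}\setminus H_{F,L}}X_{F,g}]=O(L^{m-d})$ uniformly in $L'>L$.

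To derive the symmetric statement \eqref{theorem_higher_order_terms_special} from \eqref{theorem_higher_order_terms_general}, I invoke Remark \ref{rem_symmetry}: under coordinate permutations and reflections all $(d-m)$-faces of $\Lambda$ form a single orbit of size $2^{m}\binom{d}{m}$ yielding identical traces, while for a given $F$ the set $\mathcal{F}_{F}^{(k+(d-m))}$ has exactly $\binom{m}{k}$ elements. Multiplying with the alternating signs produces the coefficients $c_{k,m}=(-1)^{k}2^{m}\binom{d}{m}\binom{m}{k}$. For the representative face $F=\{0\}^{m}\times[0,2]^{d-m}$ the half-spaces $H_G$ with $G\in\mathcal{F}_F^{(k+(d-m))}$ take precisely the form $\R_+^{m-k}\times\R^{d-m+k}$ occurring in the operators $X_{m,k,b}$. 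Rescaling the $F$-parallel factor $[0,2]^{d-m}\to[0,1]^{d-m}$ by $F$-translation invariance absorbs the factor $2^{d-m}$ into $(2L)^{d-m}$, yielding \eqref{theorem_higher_order_terms_special}. Finally, the trace-class claim concerning the limit operators is immediate: $H_{F,\infty}$ is bounded only in the case $m=0$ (where $F=\Lambda$), while for $m\geq 1$ the region $H_{F,\infty}$ is unbounded in the $m$ perpendicular directions, so the limit of the operator is no longer trace class and cannot be interchanged with the trace.
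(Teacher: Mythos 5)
Your reductions are fine and essentially follow the paper's route: localisation to the octants, the observation that $X_{F,g}$ is translation invariant along $F$ so that $\sum_{V\in F\cap\mathcal F^{(0)}}\tr[\mathbf{1}_{H_{V,L}}X_{F,g}]=L^{d-m}\tr[\mathbf{1}_{H_{F,L}}X_{F,g}]$, and the counting/symmetry argument producing $c_{k,m}$ and the factor $(2L)^{d-m}$. The genuine gap is the step you yourself call the main obstacle: the existence of $\lim_{L\to\infty}\tr[\mathbf{1}_{H_{F,L}}X_{F,g}]$ with rate $O(L^{m-d})$, together with the trace-class properties that make these traces (and your ``exact identity'', which tacitly evaluates traces by integrating kernel diagonals) meaningful at all. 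This is the actual content of the theorem and it is only described, not proved. Moreover, the mechanism you offer is not correct for monomials of degree $p\geq3$ when $m\geq2$: on the diagonal, inclusion--exclusion over the $m$ bounding half-spaces gives the weight $\sum_{\mathcal M\subseteq\{1,\dots,m\}}(-1)^{|\mathcal M|}\prod_{i=1}^{p-1}1_{H_{\mathcal M}}(y_i)=\prod_{j=1}^{m}(a_j-1)$ with $a_j=\prod_{i=1}^{p-1}1_{\{y_{i,j}\geq0\}}$, so a surviving configuration only requires that for every coordinate $j\leq m$ \emph{some} intermediate variable has $y_{i,j}<0$; different variables may violate different coordinates, and no variable is forced into the anti-corner $\{y:\ y_j<0\ \forall j\leq m\}$. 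Hence one cannot simply split off two Hilbert--Schmidt factors across a single separation perpendicular to $F$, and since the decay $|x-y|^{-d}$ in \eqref{kernel_bound} is borderline (not integrable at infinity), naive chaining of kernel bounds does not close either. This is exactly why the paper decomposes $H_{V,L}$ into the wedges $H_{L,m,n}$ where $x_n$ dominates, pairs the terms with $n\notin\mathcal M$ against $n\in\mathcal M$ (cf.\ \eqref{theorem_higher_order_terms_goal_2}) so that only the single half-space difference $\{x_n<0\}$ remains, and then transports the decay through arbitrarily many factors via the power-function intermediate sets of Lemma \ref{Lemma-HS-decay}, with constants tracked in $p$ (Lemmas \ref{higher-order-monom} and \ref{higher-order-analytic}) so that the Taylor series of the entire function $g$ can be summed. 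Neither this construction nor any control of your constants in the monomial degree is supplied by the appeal to ``the power-function neighbourhood procedure''.

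A smaller point: your justification of the final remark, ``the region is unbounded, hence the limit operator is not trace class'', is not a valid inference. The paired differences restricted to the unbounded wedges $H_{\infty,1,m,n}$ \emph{are} trace class (this is Lemma \ref{higher-order-analytic} with $L=1$ and is used in the proof); the failure of trace-class-ness for the one-sidedly projected limit operators in \eqref{theorem_higher_order_terms_special} stems from the borderline kernel decay, not from unboundedness of the spatial region alone.
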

The proof of this theorem is divided into several steps. As mentioned above, we begin by localising the remaining operator in Section \ref{subsec_loc}. Then we establish several required estimates for the operator $X$ in Section \ref{subsec_HS_higher_order}. This will be the most challenging part of the proof. Lastly, we finish the proof of Theorem \ref{theorem_higher_order_terms} in Section \ref{subsec_higher_order_proof}.

Throughout the proof of Theorem \ref{theorem_higher_order_terms} we will often choose a designated vertex $V$ and face $F$ in the following way:
Up to suitable rotation and translation, we can assume that the cube is still given by $\Lambda=[0,2]^{d}$, the vertex is given by $V=V_{0}=\{0\}\in\R^d$ and the $(d-m)$-face is given by $F=\{0\}^{m}\times [0,2]^{d-m}$. This will considerably simplify the notation in the following analysis. The sets, already defined in this section, are then given by
\begin{align}
H_{LV}=H_{V}=\R_+^{d}, \quad H_{V,L}=[0,L]^{d},  \quad H_{LF}=H_{F}=\R_+^{m}\times \R^{d-m},  \quad H_{F,L}=[0,L]^{m}\times [0,2]^{d-m}.
\end{align}

\subsection{Localisation in position space}
\label{subsec_loc}
We now study the absolute value of the trace of the operator
\begin{equation}
\sum_{V\in\mathcal{F}^{(0)}}\mathbf{1}_{H_{V,L}}\big[g(\mathbf{1}_{H_{LV}}X\mathbf{1}_{H_{LV}})-g\big(\mathbf{1}_{\Lambda_L}X\mathbf{1}_{\Lambda_L}\big)\big]
\end{equation}
and show that it is bounded from above by a constant independently of the scaling parameter $L$. By the triangle inequality, it suffices to show this for the designated vertex $V=V_0$ chosen above, i.e. for the operator
\begin{equation}\label{loc_op}
\mathbf{1}_{[0,L]^{d}}\Big(g\big(\mathbf{1}_{\R_+^{d}}\mathbf{1}_{\R_+^{d}}\big)-g\big(\mathbf{1}_{\Lambda_L}X\mathbf{1}_{\Lambda_L}\big)\Big).
\end{equation}
The proof is given in the following two lemmas. The first one gives a Hilbert-Schmidt estimate, in a more general setting for monomial test functions, and the second one applies this to the operator in question and yields an estimate for the absolute value of the trace of \eqref{loc_op}.

\begin{lem}\label{Lemma-HS-distance-L}
Let $L\geq 1$. Let $p\in\N$, $\Omega_1,\ldots,\Omega
_p\subseteq\R^d$ be measurable and $X_1,\ldots, X_p$ be bounded translation-invariant integral operators on $L^{2}(\R^{d})\otimes\C^n$, each satisfying estimate \eqref{kernel_bound_distribution}. Further let $M,N\subset\R^{d}$ be measurable and such that there exist constants $\rho,R>0$, independent of $L$, with $M\subset B_{L\rho}(0)$ and $\dist(M,N)>RL$. Then there exists a constant $C>0$, independent of $L$ and $p$, such that
\begin{equation}
\big\|\mathbf{1}_{M}\Big(\prod_{k=1}^{p}\mathbf{1}_{\Omega_{k}}X_{k}\Big)\mathbf{1}_{N}\big\|_2\leq C p^{\tfrac{d+2}{2}}\Big(\max_{1\leq k\leq p}\|X_k\|\Big)^{p-1}.
\end{equation}
Here, $\|\cdot\|$ denotes the operator norm on the space of bounded linear operators on $L^{2}(\R^{d})\otimes\C^n$.
\end{lem}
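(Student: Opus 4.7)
Set $T:=\mathbf{1}_{M}\bigl(\prod_{n=1}^{p}\mathbf{1}_{\Omega_{n}}X_{n}\bigr)\mathbf{1}_{N}$. The key observation is that the total distance $\dist(M,N)>RL$ has to be ``covered'' by the $p$ operators, so at least one of them is forced to jump across a distance of at least $RL/p$. I plan to make this quantitative via a one-sided shell telescoping that produces exactly $p$ summands of $T$, in each of which a single factor $X_{j}$ carries the Hilbert--Schmidt load while the remaining $p-1$ factors are estimated by their operator norms. Concretely, for $j=0,1,\ldots,p$ I define the fattenings $B_{j}:=\{x\in\R^{d}:\dist(x,M)\leq jRL/p\}$. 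They satisfy $M\subseteq B_{0}$, $\dist(B_{j-1},B_{j}^{c})\geq RL/p$, $N\subseteq B_{p}^{c}$ (since $\dist(M,N)>RL$), and, because $B_{j}\subseteq B_{L(\rho+R)}(0)$, $|B_{j}|\leq C_{d}L^{d}(\rho+R)^{d}$ uniformly in $j$ and $L\geq 1$.

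Writing $\mathbf 1=\mathbf{1}_{B_{j}}+\mathbf{1}_{B_{j}^{c}}$ and inserting this resolution to the right of $X_{j}$ for $j=1,\ldots,p-1$, but continuing the decomposition only along the ``interior'' branch (the one carrying $\mathbf{1}_{B_{j}}$), yields
\[
T=F+\sum_{j=1}^{p-1}E_{j},
\]
where $E_{j}$ contains $\mathbf{1}_{B_{k}}$ immediately after $X_{k}$ for $k=1,\ldots,j-1$ and $\mathbf{1}_{B_{j}^{c}}$ immediately after $X_{j}$ (with the original factors for indices $>j$), while the ``all-interior'' remainder $F$ contains $\mathbf{1}_{B_{k}}$ immediately after $X_{k}$ for every $k=1,\ldots,p-1$. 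Using that $\mathbf{1}_{B_{j-1}}$ commutes past the adjacent $\mathbf{1}_{\Omega_{j}}$, each $E_{j}$ can be factored as $\text{op}_{\text{left}}\cdot(\mathbf{1}_{B_{j-1}}X_{j}\mathbf{1}_{B_{j}^{c}})\cdot\text{op}_{\text{right}}$ (with the convention that $\mathbf{1}_{B_{0}}$ is replaced by $\mathbf{1}_{M}\subseteq\mathbf{1}_{B_{0}}$ when $j=1$), and similarly $F$ factors as $\text{op}_{\text{left}}\cdot(\mathbf{1}_{B_{p-1}}X_{p}\mathbf{1}_{N})$.

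Combining $\|ABC\|_{2}\leq\|A\|\,\|B\|_{2}\,\|C\|$ with the fact that each side factor is a product of operators $X_{n}$, $n\neq j$, and multiplications by indicators, the side factors contribute at most $(\max_{n}\|X_{n}\|)^{p-1}$, while Lemma~\ref{Lemma_kernel_set_distribution} applied with the constant function $\varphi\equiv RL/p$ gives the central bound
\[
\|\mathbf{1}_{B_{j-1}}X_{j}\mathbf{1}_{B_{j}^{c}}\|_{2}^{2}\leq C_{d}C_{K}^{2}\,\frac{|B_{j-1}|}{(RL/p)^{d}}\leq \widetilde{C}\,p^{d},
\]
with $\widetilde{C}$ depending only on $\rho,R,d,C_{K}$; the same estimate applies to $\mathbf{1}_{B_{p-1}}X_{p}\mathbf{1}_{N}$ via $\dist(B_{p-1},N)\geq RL/p$. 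Summing the $p$ resulting inequalities by the triangle inequality gives $\|T\|_{2}\leq p\cdot\sqrt{\widetilde{C}}\,p^{d/2}(\max_{n}\|X_{n}\|)^{p-1}=C\,p^{(d+2)/2}(\max_{n}\|X_{n}\|)^{p-1}$. The main technical obstacle is combinatorial: a naive two-sided insertion of resolutions at all $p-1$ internal positions would yield $2^{p-1}$ summands, destroying the polynomial rate; the one-sided (``always continue on the interior'') telescoping is therefore essential in order to get exactly $p$ terms and the sharp $p^{(d+2)/2}$ dependence. A minor side issue is the requirement $\varphi>c$ in Lemma~\ref{Lemma_kernel_set_distribution}, which is automatic once $L\geq cp/R$ and is otherwise absorbed into the constant since the assertion is then a routine operator-norm estimate.
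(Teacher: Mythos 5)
Your proof is correct and takes essentially the same route as the paper: the paper likewise introduces $p$ intermediate sets $M_k=\{x\in\Omega_k:\dist(x,M)\le\tfrac{(k-1)R}{p}L\}$, telescopes the product into exactly $p$ terms each containing one "jump" factor, estimates that factor by Lemma \ref{Lemma_kernel_set_distribution} with the constant function $\varphi\equiv RL/p$ (giving $\sqrt{p^d}$ per term), and bounds the remaining $p-1$ factors in operator norm, arriving at the same $p^{(d+2)/2}$ rate. Your only deviations are cosmetic — you fatten $M$ itself rather than intersecting with $\Omega_k$ and then commute the indicators, and you flag the $\varphi>c$ requirement of Lemma \ref{Lemma_kernel_set_distribution}, a point the paper's own application of that lemma passes over silently as well.
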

\begin{proof}
We start with the case $p=1$. The sets $M\cap \Omega_{1}$ and $N$ satisfy the requirements of Lemma \ref{Lemma_kernel_set_distribution} with $\varphi(x)=RL$. As 
\begin{equation}
\int_{M\cap \Omega_{1}}\frac{1}{\varphi(x)^{d}} \dd x\leq \int_{B_{L\rho}(0)}\frac{1}{(RL)^d}\dd x = |B_1(0)|\frac{\rho^{d}}{R^d},
\end{equation}
Lemma \ref{Lemma_kernel_set_distribution} yields a constant $C>0$, independent of $L$, such that
\begin{equation}
\big\|\mathbf{1}_{M}\mathbf{1}_{\Omega_{1}}X_{1}\mathbf{1}_{N}\big\|_2<C.
\end{equation} 
In order to prove the case $p\geq 2$, we construct $p-1$ sets "in between" the sets $M$ and $N$. Namely, we define the sets $M_k:= \{x\in \Omega_k : \dist(x,M)\leq \tfrac{(k-1)R}{p}L\}$, for $k\in\{1,\ldots,p\}$. We note that $M_1=M$. Clearly, we have $\dist\big(M_k,\Omega_{k+1}\setminus M_{k+1}\big)> \tfrac{R}{p}L$, for all $k\in\{1,\ldots,p-1\}$. Defining $\varphi(x):=\tfrac{R}{p}L$, we calculate
\begin{equation}
\int_{M_k}\frac{1}{\varphi(x)^{d}} \dd x\leq \int_{B_{L(\rho+R)}(0)}\frac{p^{d}}{(RL)^d}= p^{d}|B_1(0)|\frac{(\rho+R)^{d}}{R^d},
\end{equation}
for $k\in\{1,\ldots,p-1\}$, and Lemma \ref{Lemma_kernel_set_distribution} yields a constant $C_{k}>0$, independent of $L$ and $p$, such that
\begin{equation}\label{Lemma-HS-distance-L-estimate-1}
\big\|\mathbf{1}_{M_k}X_{k}\mathbf{1}_{\Omega_{k+1}\setminus M_{k+1}}\big\|_2\leq C_{k}\sqrt{p^{d}}.
\end{equation} 
As we also have $\dist\big(M_{p},N)> \tfrac{R}{p}L$, Lemma \ref{Lemma_kernel_set_distribution} yields a constant $C_{p}>0$, independent of $L$ and $p$, with
\begin{equation}\label{Lemma-HS-distance-L-estimate-2}
\big\|\mathbf{1}_{M_p}X_{p}\mathbf{1}_{N}\big\|_2\leq C_{p}\sqrt{p^{d}},
\end{equation}
in the same way. Repeated use of the triangle inequality, followed by H\"older's inequality and application of the bounds \eqref{Lemma-HS-distance-L-estimate-1} and \eqref{Lemma-HS-distance-L-estimate-2}, yields
\begin{align}
\big\|\mathbf{1}_{M}\Big(\prod_{k=1}^{p}\mathbf{1}_{\Omega_{k}}X_{k}\Big)&\mathbf{1}_{N}\big\|_2 \nonumber \\ 
\leq& \sum_{k=1}^{p-1}\Big\|\Big(\prod_{j=1}^{k}\mathbf{1}_{M_{j}}X_{j}\Big)\mathbf{1}_{\Omega_{k+1}\setminus M_{k+1}}X_{k+1}\Big(\prod_{j=k+2}^{p}\mathbf{1}_{\Omega_{j}}X_{j}\Big)\mathbf{1}_{N}\Big\|_2
+ \Big\|\Big(\prod_{j=1}^{p}\mathbf{1}_{M_{j}}X_{j}\Big)\mathbf{1}_{N}\Big\|_2 \nonumber \\
\leq&  \sum_{k=1}^{p-1}\big\|\mathbf{1}_{M_k}X_{k}\mathbf{1}_{\Omega_{k+1}\setminus M_{k+1}}\big\|_2\Big(\max_{1\leq k\leq p}\|X_k\|\Big)^{p-1}+\big\|\mathbf{1}_{M_p}X_{p}\mathbf{1}_{N}\big\|_2 \Big(\max_{1\leq k\leq p}\|X_k\|\Big)^{p-1} \nonumber\\
\leq& \ C p^{\tfrac{d+2}{2}}\Big(\max_{1\leq k\leq p}\|X_k\|\Big)^{p-1},
\end{align}
with a constant $C>0$, independent of $L$ and $p$.
This proves the lemma for $p\geq 2$.
\end{proof}

\begin{lem}\label{Lemma-localisation}
Let $L\geq 1$, $g:\C\rightarrow\C$ be an entire function with $g(0)=0$ and $X$ be a bounded translation-invariant integral operator on $L^{2}(\R^{d})\otimes\C^n$ whose integral kernel satisfies estimate \eqref{kernel_bound_distribution}. Then there exists a constant $C>0$, independent of $L$, such that
\begin{equation}\label{Lemma-localisation-estimate}
\Big|\tr_{L^{2}(\R^{d})\otimes\C^n}\Big[\mathbf{1}_{[0,L]^{d}}\Big(g\big(\mathbf{1}_{\R_+^{d}}X\mathbf{1}_{\R_+^{d}}\big)-g\big(\mathbf{1}_{\Lambda_L}X\mathbf{1}_{\Lambda_L}\big)\Big)\Big]\Big|\leq C.
\end{equation}
\end{lem}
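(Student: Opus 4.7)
Expand $g(z)=\sum_{p\ge 1}\omega_p z^p$ and reduce to bounding $\tau_p:=\tr[\mathbf{1}_{[0,L]^d}(A^p-B^p)]$, where $A:=\mathbf{1}_{\R_+^d}X\mathbf{1}_{\R_+^d}$ and $B:=\mathbf{1}_{\Lambda_L}X\mathbf{1}_{\Lambda_L}$, uniformly in $L$ by something of the form $c^{\,p}\operatorname{poly}(p)\|X\|^{p-2}$; since $g$ is entire, the coefficients $|\omega_p|$ decay faster than any geometric series, so such a bound is summable against $|\omega_p|$ and yields \eqref{Lemma-localisation-estimate}.

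Abbreviate $P:=\mathbf{1}_{\R_+^d}$, $Q:=\mathbf{1}_{\Lambda_L}$, $R:=P-Q=\mathbf{1}_{\R_+^d\setminus\Lambda_L}$, $C_L:=\mathbf{1}_{[0,L]^d}$, and record the elementary identities $PQ=Q$, $PR=R$, $QR=RQ=0$, $Q^2=Q$, $R^2=R$, $C_LQ=QC_L=C_L$, $C_LR=0$. Expanding $A=(Q+R)X(Q+R)=B+QXR+RXQ+RXR$ in each factor of $A^p$ and subtracting $B^p=(QXQ)^p$ writes $A^p-B^p$ as a sum, over sequences $(\sigma_0,\ldots,\sigma_p)\in\{Q,R\}^{p+1}\setminus\{(Q,\ldots,Q)\}$, of products $\sigma_0 X\sigma_1\cdots X\sigma_p$ (the $QR=RQ=0$ orthogonalities kill every incompatible choice automatically). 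Multiplying by $C_L$ on the left forces $\sigma_0=Q$; cyclicity of the trace together with $\sigma_pC_L\ne 0$ forces $\sigma_p=Q$; and inserting a second $C_L$ on the right via the idempotence $C_L^2=C_L$ leaves
\[
\tau_p=\sum_{(\sigma_1,\ldots,\sigma_{p-1})}\tr\bigl[C_L X\sigma_1 X\sigma_2\cdots X\sigma_{p-1}X C_L\bigr],
\]
with the sum running over those $(\sigma_1,\ldots,\sigma_{p-1})\in\{Q,R\}^{p-1}$ that contain at least one $\sigma_j=R$.

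For each such sequence I would pick the first index $j$ with $\sigma_j=R$ and exploit $R=R^2$ to split the operator inside the trace as $T_1T_2$ with
\[
T_1:=C_L X\sigma_1\cdots X\sigma_{j-1}X R,\qquad T_2:=R X\sigma_{j+1}\cdots X\sigma_{p-1}X C_L.
\]
Both factors now carry $C_L$ on one end and $R$ on the other; since $[0,L]^d\subset B_{L\sqrt d}(0)$ and $\dist([0,L]^d,\R_+^d\setminus\Lambda_L)\ge L$, Lemma~\ref{Lemma-HS-distance-L} applies directly to $T_1$ (with $j$ factors of $X$) and, after taking the adjoint $T_2^*$ — which has the same structure with $X^*$ replacing $X$ and still satisfies \eqref{kernel_bound_distribution} by translation invariance — also to $T_2$ (with $p-j$ factors), giving $\|T_1\|_2\le Cj^{(d+2)/2}\|X\|^{j-1}$ and $\|T_2\|_2\le C(p-j)^{(d+2)/2}\|X\|^{p-j-1}$. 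H\"older's inequality yields $|\tr[T_1T_2]|\le\|T_1\|_2\|T_2\|_2\le C^2p^{d+2}\|X\|^{p-2}$; summing over the at most $2^{p-1}$ sequences produces $|\tau_p|\le C'\cdot 2^pp^{d+2}\|X\|^{p-2}$, and weighting by $|\omega_p|$ and summing over $p$ converges for entire $g$. The main obstacle I expect is the algebraic bookkeeping — verifying that after exploiting all the orthogonality and absorption relations the expansion really collapses to the claimed sum, and then checking that the duplication trick $R=R^2$ places \emph{both} halves of the split into the hypotheses of Lemma~\ref{Lemma-HS-distance-L}; once this is in place the trace estimate is a routine H\"older application.
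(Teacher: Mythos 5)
Your proposal is correct and follows essentially the same route as the paper's proof: expand $g$ into monomials, use $\mathbf{1}_{[0,L]^d}\mathbf{1}_{\Lambda_L}=\mathbf{1}_{[0,L]^d}\mathbf{1}_{\R_+^d}=\mathbf{1}_{[0,L]^d}$ together with cyclicity to place $\mathbf{1}_{[0,L]^d}$ at both ends, expand $\mathbf{1}_{\R_+^d}=\mathbf{1}_{\Lambda_L}+\mathbf{1}_{\R_+^d\setminus\Lambda_L}$, split each surviving term (which must contain the far projection) into two Hilbert--Schmidt factors via Lemma \ref{Lemma-HS-distance-L}, and sum the resulting $2^p p^{d+2}\|X\|^{p-2}$-type bounds against the coefficients of the entire function. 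The only cosmetic difference is that you split at the first occurrence of $R$ using $R=R^2$ and an adjoint (so the bounded set sits on the left of each factor), whereas the paper splits at the first and last occurrence of $P_2$ and estimates the middle stretch in operator norm; both variants rest on the same lemma and give the same uniform-in-$L$ bound.
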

\begin{proof}
We start with a proof for monomial test functions $g$. Let $p\in\N$ and set $P_{0}:=\mathbf{1}_{\Lambda_L}$ as well as $P_{1}:=\mathbf{1}_{\R_+^{d}}$. Using the fact that $\mathbf{1}_{[0,L]^{d}}P_{0}=\mathbf{1}_{[0,L]^{d}}P_{1}=\mathbf{1}_{[0,L]^{d}}$ and the cyclic property of the trace, we rewrite the operator in \eqref{Lemma-localisation-estimate} as
\begin{equation}\label{Lemma-localisation-proof-step-1}
\mathbf{1}_{[0,L]^{d}}\big[\big(XP_{0}\big)^{p-1}-\big(XP_{1}\big)^{p-1}\big]X\mathbf{1}_{[0,L]^{d}}
\end{equation}
and see that in the case $p=1$ there is nothing to show. If $p\geq 2$, we define $P_{2}=P_{1}-P_{0}=\mathbf{1}_{\R_+^{d}\setminus \, [0,2L]^{d}}$ and write
\begin{align}
\mathbf{1}_{[0,L]^{d}}\big(&XP_{1}\big)^{p-1}X\mathbf{1}_{[0,L]^{d}}\nonumber\\
&=\mathbf{1}_{[0,L]^{d}}\big(XP_{0}+XP_{2}\big)^{p-1}X\mathbf{1}_{[0,L]^{d}}\nonumber\\
&=\sum_{\pi=(\pi_{1},\ldots,\pi_{p-1})\in \{0,2\}^{p-1}}\mathbf{1}_{[0,L]^{d}}\Big(\prod_{j=1}^{p-1}XP_{\pi(j)}\Big)X\mathbf{1}_{[0,L]^{d}}.
\end{align}
With this at hand, we estimate the trace norm of \eqref{Lemma-localisation-proof-step-1} by 
\begin{equation}\label{Lemma-localisation-proof-step-2}
\sum_{\pi\in \{0,2\}^{p-1}: \ \pi\neq 0} \Big\|\mathbf{1}_{[0,L]^{d}}\Big(\prod_{j=1}^{p-1}XP_{\pi(j)}\Big)X\mathbf{1}_{[0,L]^{d}}\Big\|_1.
\end{equation}
Each of these trace norms contains at least one factor $P_{2}$. Setting $p_{1,\pi}:=\min \{j\in\{1,\ldots,p-1\}: \ \pi(j)=2\}$ and $p_{2,\pi}:=\max \{j\in\{1,\ldots,p-1\}: \ \pi(j)=2\}$ for a given $\pi\in\{0,2\}^{p-1}$ with $\pi\neq 0$, we estimate \eqref{Lemma-localisation-proof-step-2} by
\begin{equation}\label{Lemma-localisation-proof-step-3}
\|X\|^{p-1-p_{1,\pi}-p_{2,\pi}}\sum_{\pi\in \{0,2\}^{p-1}: \ \pi\neq 0} \Big\|\mathbf{1}_{[0,L]^{d}}\big(P_{0}X\big)^{p_{1,\pi}}P_{2}\Big\|_2\Big\|P_{2}\big(XP_{0}\big)^{p_{2,\pi}}\mathbf{1}_{[0,L]^{d}}\Big\|_2.
\end{equation}
By Lemma \ref{Lemma-HS-distance-L}, there exists a constant $C_1>0$, independent of $L$ and $p$, such that \eqref{Lemma-localisation-proof-step-3} is bounded by
\begin{equation}\label{est_loc_poly}
C_1\sum_{\pi\in \{0,2\}^{p-1}: \ \pi\neq 0}(p_{1,\pi})^{\tfrac{d+2}{2}}(p_{2,\pi})^{\tfrac{d+2}{2}}\|X\|^{p-1}\leq C_1(p-1)^{d+2}(2\|X\|)^{p-1}.
\end{equation}
This concludes the proof of the Lemma for monomials. It remains to extend it to entire functions $g$. With estimate \eqref{est_loc_poly} we write
\begin{multline}
\Big|\tr_{L^{2}(\R^{d})\otimes\C^n}\Big[\mathbf{1}_{[0,L]^{d}}\Big(g\big(\mathbf{1}_{\R_+^{d}}X\mathbf{1}_{\R_+^{d}}\big)-g\big(\mathbf{1}_{\Lambda_L}X\mathbf{1}_{\Lambda_L}\big)\Big)\Big]\Big|  \\
= \Big|\sum_{p=1}^{\infty}\tr_{L^{2}(\R^{d})\otimes\C^n}\Big[\omega_{p} \mathbf{1}_{[0,L]^{d}}\Big(\big(\mathbf{1}_{\R_+^{d}}X\mathbf{1}_{\R_+^{d}}\big)^p-\big(\mathbf{1}_{\Lambda_L}X\mathbf{1}_{\Lambda_L}\big)^p\Big)\Big]\Big| \\
\leq  C_{2}\sum_{p=1}^{\infty}|\omega_{p}|(p-1)^{d+2}(2\|X\|)^{p-1}\leq C,
\end{multline}
with constants $C, C_2>0$ independent of $L$. This concludes the proof of the Lemma.
\end{proof}

\subsection{Hilbert-Schmidt and trace norm estimates}
\label{subsec_HS_higher_order}
We continue with the proof of Theorem \ref{theorem_higher_order_terms}. We start by discussing the further strategy of the proof and by introducing some additional notation, before we prove the required bounds.

For each $k$-face $F$, we divide the sets $H_{F,\infty}$ and $H_{F,L}$ into $2^{k}$ parts, each associated to one of the $2^{k}$ vertices $V\in F$, given by $H_{F,L,V}:=\{x_1+x_2\in H_F\subseteq W_F'\oplus W_F^{\intercal}:x_1\in F\cap(V+[-1,1]^{d}),\ \|x_2\|_\infty\leq L\}$ for $L\in [1,\infty]$. Clearly, we have $H_{F,L}=\sum_{\{V\in\mathcal{F}^{(0)}:V\in F\}}H_{F,L,V}$. With this, \eqref{decomp_corners_result} and the localisation in Section \ref{subsec_loc}, we see that the proof of \eqref{theorem_higher_order_terms_general} reduces to showing that
\begin{multline}\label{theorem_higher_order_terms_goal_1}
\sum_{m=0}^{d-1}\sum_{V\in\mathcal{F}^{(0)}}\sum_{F\in\mathcal{F}_{V}^{(d-m)}}\tr_{L^{2}(\R^{d})\otimes\C^n}\big[\mathbf{1}_{H_{V,L}}X_{LF,g}\big]\\
=\sum_{m=0}^{d-1}L^{d-m}\lim_{L\rightarrow\infty}\sum_{V\in\mathcal{F}^{(0)}}\sum_{F\in\mathcal{F}_{V}^{(d-m)}}\tr_{L^{2}(\R^{d})\otimes\C^n}\big[\mathbf{1}_{H_{F,L,V}}X_{F,g}\big]+O(1),
\end{multline}
as $L\rightarrow\infty$. Therefore, for a given vertex $V\in\mathcal{F}^{(0)}$ and a given $(d-m)$-face $F$ we need to show that the limit $L^{d-m}\lim_{L\rightarrow\infty}\tr_{L^{2}(\R^{d})\otimes\C^n}\big[\mathbf{1}_{H_{F,L,V}}X_{F,g}\big]$ exists and agrees with the trace $\tr_{L^{2}(\R^{d})\otimes\C^n}\big[\mathbf{1}_{H_{V,L}}X_{LF,g}\big]$ up to an error term of constant order.

It will again be convenient to only deal with the case, where the vertex is given by $V=\{0\}\in\R^d$ and the $(d-m)$-face is given by $F=\{0\}^m\times [0,2]^{d-m}$. The other cases reduce to this case after suitable rotation and translation. We recall that in this case we have
\begin{align}
H_{LV}=H_{V}=\R_+^{d}, \quad H_{V,L}=[0,L]^{d},  \quad H_{LF}=H_{F}=\R_+^{m}\times \R^{d-m}
\end{align}
and note that also $X_{LF,g}=X_{F,g}$ in this case.
We further note that in this case we also have $H_{F,L,V}=[0,L]^{m}\times[0,1]^{d-m}$ and
$H_{F,\infty,V}=\R_+^{m}\times[0,1]^{d-m}$ for the recently defined sets, and we define a scaled version of the latter set
by $H_{F,\infty,V,L}:=(H_{F,\infty,V})_{L}=\R_+^{m}\times[0,L]^{d-m}$.

For the remaining part of the present section, we only deal with the case $m\in\{1,\ldots,d-1\}$ and exclude the easier case $m=0$. In order to understand the structure of the operator $X_{F,g}$, it will be convenient to relabel the occurring projections.
There is a one to one correspondence between the faces $G\in\mathcal{F}_{F}^{(k)}$, with $k\geq d-m$, and the sets $\mathcal{M}_G\subseteq\{1,\ldots,m\}$ with $|\mathcal{M}_G|=k-(d-m)$. For a given set $\mathcal{M}\subseteq\{1,\ldots,m\}$, we write $H_{\mathcal{M}}:=H_{\mathcal{M}_G}:=\{x\in\R^d: \ \forall j\in \{1,\ldots,m\}\setminus \mathcal{M}_G: \ x_{j}\geq 0\}=H_{G}$. We note that the set $H_{\mathcal{M}}$ implicitly depends on the face $F$. Still, it is convenient to not reflect this dependence in the notation.
With this, the operator $X_{F,g}$ is given by
\begin{equation}\label{X_F,g-alternative}
X_{F,g}=\sum_{k=0}^{m}(-1)^{k}\sum_{\mathcal{M}\subseteq \{1,\ldots,m\}\ : \ |\mathcal{M}|=k}g(\mathbf{1}_{H_{\mathcal{M}}}X\mathbf{1}_{H_{\mathcal{M}}}).
\end{equation}

Before we state the desired estimates, we subdivide the sets $H_{V,L}$ and $H_{F,\infty,V,L}$ into $m$ different parts. Let $k\in\{1,\ldots,m\}$, then we define the sets 
\begin{equation}
H_{L,m,k}:=\{x\in H_{V,L}=[0,L]^d: \ \forall j\in \{1,\ldots,m\}\setminus \{k\}: \ x_{j}\leq x_{k}\} 
\end{equation}
and
\begin{equation}
H_{\infty,L,m,k}:=\{x\in H_{F,\infty,V,L}=\R_+^{m}\times[0,L]^{d-m}: \ \forall j\in \{1,\ldots,m\}\setminus \{k\}: \ x_{j}\leq x_{k}\}. 
\end{equation}
We note that we drop the dependence on the vertex $V$ and the face $F$ in these sets, as we will only use the notation for the designated vertex $V=\{0\}$ and $(d-m)$-face $F=\{0\}^m\times [0,2]^{d-m}$.  
Rearranging the terms of the operator $X_{F,g}$, we obtain
\begin{equation}\label{theorem_higher_order_terms_goal_2}
\mathbf{1}_{H_{V,L}}X_{F,g}=\sum_{k=1}^{m}\mathbf{1}_{H_{L,m,k}}\sum_{\mathcal{M}\subseteq \{1,\ldots,m\}\setminus\{k\}}(-1)^{|\mathcal{M}|}\big[g\big(\mathbf{1}_{H_{\mathcal{M}}}X\mathbf{1}_{H_{\mathcal{M}}}\big)-g\big(\mathbf{1}_{H_{\mathcal{M}\cup\{k\}}}X\mathbf{1}_{H_{\mathcal{M}\cup\{k\}}}\big)\big]
\end{equation}
as well as
\begin{equation}\label{theorem_higher_order_terms_goal_3}
\mathbf{1}_{H_{F,\infty,V,L}}X_{F,g}=\sum_{k=1}^{m}\mathbf{1}_{H_{\infty,L,m,k}}\sum_{\mathcal{M}\subseteq \{1,\ldots,m\}\setminus\{k\}}(-1)^{|\mathcal{M}|}\big[g\big(\mathbf{1}_{H_{\mathcal{M}}}X\mathbf{1}_{H_{\mathcal{M}}}\big)-g\big(\mathbf{1}_{H_{\mathcal{M}\cup\{k\}}}X\mathbf{1}_{H_{\mathcal{M}\cup\{k\}}}\big)\big].
\end{equation}
We now want to show that the operator
\begin{equation}
\mathbf{1}_{H_{\infty,L,m,k}}\big[g\big(\mathbf{1}_{H_{\mathcal{M}}}X\mathbf{1}_{H_{\mathcal{M}}}\big)-g\big(\mathbf{1}_{H_{\mathcal{M}\cup\{k\}}}X\mathbf{1}_{H_{\mathcal{M}\cup\{k\}}}\big)\big]\mathbf{1}_{H_{\infty,L,m,k}}
\end{equation}
is trace class and analyse the operator difference
\begin{equation}
\big(\mathbf{1}_{H_{\infty,L,m,k}}-\mathbf{1}_{H_{L,m,k}}\big)\big[g\big(\mathbf{1}_{H_{\mathcal{M}}}X\mathbf{1}_{H_{\mathcal{M}}}\big)-g\big(\mathbf{1}_{H_{\mathcal{M}\cup\{k\}}}X\mathbf{1}_{H_{\mathcal{M}\cup\{k\}}}\big)\big]\big(\mathbf{1}_{H_{\infty,L,m,k}}-\mathbf{1}_{H_{L,m,k}}\big)
\end{equation}
for given $k,m,\mathcal{M}$ and $g$. We do this in several lemmas in the present section. 

We start with monomial test functions $g$ and extend the result to analytic test functions in Lemma \ref{higher-order-analytic}. As Lemma \ref{higher-order-monom} shows, the expressions above can, for a given monomial $g$, be rewritten as a sum of terms which always contain at least one occurrence of the projection $\mathbf{1}_{H_{\mathcal{M}\cup\{k\}}}-\mathbf{1}_{H_{\mathcal{M}}}$. The following very technical Lemma \ref{Lemma-HS-decay} uses Lemma \ref{Lemma_kernel_set} to estimate the Hilbert-Schmidt norm of such terms. This constitutes the most challenging part of the proof of Theorem \ref{theorem_higher_order_terms}. 

\begin{lem}\label{Lemma-HS-decay}
Let $X$ be a bounded integral operator on $L^{2}(\R^{d})\otimes\C^n$ whose kernel satisfies the bound \eqref{kernel_bound}. Let $1\leq k \leq m < d$ and $\mathcal{M}\subseteq\{1,\ldots,m\}$ such that $k\notin\mathcal{M}$. Let $L\geq 1$ and $p\in\N$. Then the operator 
\begin{equation}\label{operator-HS-proof}
\mathbf{1}_{H_{\infty,L,m,k}}(\mathbf{1}_{H_{\mathcal{M}}}X)^{p}(\mathbf{1}_{H_{\mathcal{M}\cup\{k\}}}-\mathbf{1}_{H_{\mathcal{M}}})
\end{equation}
is Hilbert-Schmidt class and there exists a constant $C>0$, independent of $L$ and $p$, such that
\begin{equation}\label{operator-HS-estimate}
\|\mathbf{1}_{H_{\infty,L,m,k}}(\mathbf{1}_{H_{\mathcal{M}}}X)^{p}(\mathbf{1}_{H_{\mathcal{M}\cup\{k\}}}-\mathbf{1}_{H_{\mathcal{M}}})\|_2\leq C \|X\|^{p-1}d^{\tfrac{p}{2}}p^{d+1}L^{\tfrac{d-m}{2}}.
\end{equation}
Furthermore, there exists a constant $C'>0$ which is independent of $L$ and $p$ such that
\begin{equation}\label{operator-difference-HS-estimate}
\|\big(\mathbf{1}_{H_{\infty,L,m,k}}-\mathbf{1}_{H_{L,m,k}}\big)(\mathbf{1}_{H_{\mathcal{M}}}X)^{p}(\mathbf{1}_{H_{\mathcal{M}\cup\{k\}}}-\mathbf{1}_{H_{\mathcal{M}}})\|_2\leq C' \|X\|^{p-1}d^{\tfrac{p}{2}}p^{d+1}.
\end{equation}
Here, $\|\cdot\|$ denotes the operator norm on the space of bounded linear operators on $L^{2}(\R^{d})\otimes\C^n$.
\end{lem}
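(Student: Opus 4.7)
The plan is to prove both estimates by induction on $p$, adapting the telescoping strategy of Lemma \ref{Lemma-HS-distance-L}: insert $p-1$ intermediate cut-offs into $(\mathbf{1}_{H_{\mathcal{M}}}X)^{p}$, expand by the distributive law and group so that each of the $p$ resulting summands contains exactly one ``Hilbert--Schmidt gap'' to which Lemma \ref{Lemma_kernel_set} can be applied, while the remaining $p-1$ factors contribute only $\|X\|$. The new feature compared to Lemma \ref{Lemma-HS-distance-L} is that the natural length scale governing the separation between the supports of the two outer projections is the coordinate $x_{n}$ itself, not a uniform scale such as $RL$; a linear interpolation of distances would therefore produce a $\varphi$ that is not integrable in the transverse directions, so the interpolating sets must instead be built as nested neighbourhoods of $H_{\infty,L,m,n}$ with power-function boundaries $f(x)=x^{q}$ for a suitable $q\in(0,1)$. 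The base case $p=1$ is direct: the inclusion $H_{\infty,L,m,n}\subseteq H_{\mathcal{M}}$ collapses the operator to $\mathbf{1}_{H_{\infty,L,m,n}}X(\mathbf{1}_{H_{\mathcal{M}\cup\{n\}}}-\mathbf{1}_{H_{\mathcal{M}}})$, and since $x_{n}\geq 0$ on the left support while $z_{n}<0$ on the right, one has $\dist(x,H_{\mathcal{M}\cup\{n\}}\setminus H_{\mathcal{M}})\geq x_{n}$. Applying Lemma \ref{Lemma_kernel_set} with $\varphi(x):=x_{n}$, Fubini gives
\[
\int_{H_{\infty,L,m,n}}\frac{\dd x}{(1+x_{n}^{2})^{d/2}}=L^{d-m}\int_{0}^{\infty}\frac{t^{m-1}}{(1+t^{2})^{d/2}}\dd t\leq C\,L^{d-m},
\]
the one-dimensional integral being finite because its integrand decays like $t^{m-1-d}$ at infinity and $m\leq d-1$. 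This proves \eqref{operator-HS-estimate} for $p=1$; estimate \eqref{operator-difference-HS-estimate} with $p=1$ follows by repeating the argument with $H_{\infty,L,m,n}$ replaced by $H_{\infty,L,m,n}\setminus H_{L,m,n}$, because the defining constraint $x_{j}\leq x_{n}$ on $H_{\infty,L,m,n}$ forces $x_{n}>L$ on the difference, so the $t$-integral is truncated to $[L,\infty)$ where it is of order $L^{m-d}$ and exactly cancels the $L^{d-m}$ prefactor.

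For the inductive step $p\geq 2$ I would fix $q\in(0,1)$ depending only on $d$ and $m$, and construct $p+1$ nested sets $\mathcal{T}_{0}\subset \cdots\subset \mathcal{T}_{p}\subseteq H_{\mathcal{M}}$ with $\mathcal{T}_{0}\supseteq H_{\infty,L,m,n}$, each obtained from the previous by a power-function fattening, arranged so that adjacent annuli $\mathcal{T}_{k+1}\setminus \mathcal{T}_{k}$ are separated by a coordinate-wise gap of order $\tfrac{1}{p}y_{n}^{q}$ from non-neighbouring tubes. Lemma \ref{Lemma_kernel_set} with $\varphi(y):=\tfrac{1}{p}y_{n}^{q}$ then bounds each Hilbert--Schmidt norm $\|\mathbf{1}_{\mathcal{T}_{k-1}}X\mathbf{1}_{H_{\mathcal{M}}\setminus\mathcal{T}_{k+1}}\|_{2}$ by a constant of order $p^{\alpha}L^{(d-m)/2}$, uniformly in $L$ and $k$, provided $q$ is tuned so that the transverse cross-section of the tubes at height $y_{n}=t$ is dominated by the decay factor $(1+\tfrac{1}{p^{2}}t^{2q})^{-d/2}$ and so that the outermost integration still yields the sharp $L^{d-m}$ prefactor from the base case. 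Telescoping $\mathbf{1}_{H_{\mathcal{M}}}=\mathbf{1}_{\mathcal{T}_{k}}+\mathbf{1}_{H_{\mathcal{M}}\setminus\mathcal{T}_{k}}$ at each of the $p-1$ intermediate positions and regrouping along the lines of \eqref{Lemma-HS-distance-L-estimate-1}--\eqref{Lemma-HS-distance-L-estimate-2} produces $p$ summands, each bounded by one such Hilbert--Schmidt factor times $p-1$ operator norms $\|X\|$; summation over the $p$ positions and the combinatorics of the resulting partition accounts for the pre-factor $p^{d+1}$, while the factor $d^{p/2}$ arises from the $p$ successive applications of the matrix-valued Hilbert--Schmidt kernel bound. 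The proof of \eqref{operator-difference-HS-estimate} is identical, with $H_{\infty,L,m,n}$ replaced by $H_{\infty,L,m,n}\setminus H_{L,m,n}$ in the outermost integration, which by the base case removes the $L^{(d-m)/2}$ factor.

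The chief difficulty, and the step I expect to consume most of the effort, is the explicit geometric construction of the nested tubes $\mathcal{T}_{k}$: the exponent $q\in(0,1)$ must be chosen so that the coordinate-wise gap of order $\tfrac{1}{p}y_{n}^{q}$ between non-consecutive tubes is genuine, so that integrability of $(1+\tfrac{1}{p^{2}}y_{n}^{2q})^{-d/2}$ over each tube holds uniformly in $L$ and $k$ with only a polynomial loss in $p$, and so that the sharp $L^{(d-m)/2}$ prefactor from the base case is preserved at the outermost level. Balancing these conflicting requirements is precisely the ``quite technical procedure'' referenced earlier in the article; once the tubes are in place, the rest of the argument is a direct adaptation of the telescoping in Lemma \ref{Lemma-HS-distance-L}.
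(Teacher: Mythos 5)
Your overall architecture (telescoping $\mathbf{1}_{H_{\mathcal{M}}}=\mathbf{1}_{\mathcal{T}_k}+\mathbf{1}_{H_{\mathcal{M}}\setminus\mathcal{T}_k}$, nested power-function tubes, and Lemma \ref{Lemma_kernel_set} applied to each resulting gap) is the same as the paper's, and your base case $p=1$ for both estimates is essentially correct. However, there is a genuine gap at the heart of the inductive step: a \emph{single} exponent $q\in(0,1)$ depending only on $d$ and $m$ cannot work. If the $k$th tube is fattened by $c\,x_n^{q_k}$ and the separation to the next non-neighbouring region is of order $x_n^{q_{k+1}}/p$, then the integral required by Lemma \ref{Lemma_kernel_set} over the $k$th tube behaves, for large $x_n$, like $\int^\infty x_n^{\,m-1+q_k(d-m)-q_{k+1}d}\,\dd x_n$, so convergence forces the recursion $q_{k+1}>\bigl(m+q_k(d-m)\bigr)/d$. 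With $q_{k+1}=q_k=q$ this reads $q>1$, while the final gap (separation $\varphi(x)=x_n$ against $\{x_n<0\}$, tube fattened by $x_n^{q_{p-1}}$) forces $q_{p-1}<1$. These two requirements are incompatible for a fixed $q$, so the "tuning" you defer to is impossible; the paper resolves exactly this by taking the strictly increasing exponents $1-\bigl(\tfrac{d-m}{d}\bigr)^k+\epsilon$, which satisfy the recursion with equality up to the margin $\epsilon$, and since the sequence accumulates at $1$ the margin must be taken as small as $\epsilon=d^{-p}$. This is also where the factor $d^{p/2}$ in \eqref{operator-HS-estimate} actually comes from (each Hilbert--Schmidt bound carries $\epsilon^{-1/2}$), not from "$p$ successive applications of the matrix-valued kernel bound" as you assert.

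A second, smaller gap concerns \eqref{operator-difference-HS-estimate} for $p\geq 2$: replacing only the outermost projection by $\mathbf{1}_{H_{\infty,L,m,n}}-\mathbf{1}_{H_{L,m,n}}$ does not remove the $L^{(d-m)/2}$ factor, because after telescoping most summands have their Hilbert--Schmidt gap at an \emph{intermediate} position, e.g. $\|\mathbf{1}_{\mathcal{T}_j}X\mathbf{1}_{H_{\mathcal{M}}\setminus\mathcal{T}_{j+1}}\|_2$, and these still scale like $L^{(d-m)/2}$ unless the tubes themselves are changed. In the paper the intermediate sets are redefined in an $L$-dependent way (power functions rescaled by powers of $L$ and the additional constraint $x_n\geq\tfrac{p-k}{p}L$ built into each tube), so that the "far from the corner" information is propagated inward and every integral is $O(1)$ in $L$. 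Your proposal would need this modification spelled out; as written, the claim that the $p\geq2$ case of \eqref{operator-difference-HS-estimate} is "identical" does not follow.
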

\begin{proof} 
Let $L\geq 1$. We start by proving that the operator \eqref{operator-HS-proof} is Hilbert-Schmidt. Due to the fact that $\mathbf{1}_{H_{\infty,L,m,k}}\mathbf{1}_{H_{\mathcal{M}}}=\mathbf{1}_{H_{\infty,L,m,k}}$, there are total of $p-1$ occurrences of the projection $\mathbf{1}_{H_{\mathcal{M}}}$ in \eqref{operator-HS-proof}. In particular the case $p=1$ features no such occurrence. We will start with a proof in this easier case. The idea is to apply Lemma \ref{Lemma_kernel_set} with $M=H_{\infty,L,m,k}$ and $N=H_{\mathcal{M}\cup\{k\}}\setminus H_{\mathcal{M}}=H_{\mathcal{M}\cup\{k\}}\cap\{x\in\R^d:\ x_k<0\}$. We note that these sets satisfy the conditions of Lemma \ref{Lemma_kernel_set} with the function $\varphi$ given by $\varphi(x):=x_{k}$. Therefore, it remains to estimate the integral 
\begin{align}
\int_{M}\frac{1}{(1+\varphi(x)^2)^{\tfrac{d}{2}}}\dd x&=\int_{\R_+^m\times [0,L]^{d-m}\cap \{x\in\R^d:\ \forall j\in\{1,\ldots,m\}:\ x_k\geq x_j\}}\frac{1}{(1+x_{k}^2)^{\tfrac{d}{2}}}\dd x \nonumber \\
&=L^{d-m}\int_0^\infty \frac{x_{k}^{m-1}}{(1+x_{k}^2)^{\tfrac{d}{2}}} \dd x_{k} \nonumber \\ 
&\leq L^{d-m} \Big(\int_0^{1} x_{k}^{m-1}\dd x_{k}+\int_{1}^\infty x_{k}^{m-d-1}\dd x_{k}\Big)=L^{d-m}\Big(\frac{1}{m}+\frac{1}{d-m}\Big),
\end{align}
where we used that $\varphi$ is non-negative and measurable as well as Tonelli's theorem.
Therefore, Lemma \ref{Lemma_kernel_set} yields
\begin{equation}
\|\mathbf{1}_{H_{\infty,L,m,k}}X\mathbf{1}_{H_{\mathcal{M}\cup\{k\}}\setminus H_{\mathcal{M}}}\|_2\leq CL^{\tfrac{d-m}{2}},
\end{equation}
where the constant $C>0$ does not depend on $L$. This proves \eqref{operator-HS-estimate} for $p=1$.

We now turn to the case $p\geq 2$.
For each of the $p-1$ occurrences of the projection $\mathbf{1}_{H_{\mathcal{M}}}$, we will subdivide the corresponding subspace $H_{\mathcal{M}}$ into two parts. To do so, we define the following set, for a given measurable function $\phi:[0,\infty[\,\rightarrow [0,\infty[\,$,
\begin{align}
H_{\phi}:=H_{L,m,k,\mathcal{M},\phi}:=&\{x\in\R^d:\ \forall j\in\{1,\ldots,m\}\setminus\{k\}:\phi(|x_k|)+|x_k|\geq |x_j|\}\nonumber \\&\cap\{x\in\R^d:\ \forall j\in\{m+1,\ldots,d\}:\phi(|x_k|)+L\geq |x_j|\}\cap H_{\mathcal{M}}.
\end{align}
For the remaining part of the proof, we set $\epsilon:=\tfrac{1}{d^p}$. For $l\in\{1,\ldots,p-1\}$, we choose the functions $\phi_{l}:[0,\infty[\,\rightarrow [0,\infty[\,$ given by
\begin{equation}\label{def_phi_k}
\phi_{l}(x):=\tfrac{l}{p}x^{1-\big(\tfrac{d-m}{d}\big)^l+\epsilon}
\end{equation}
and note that we have $1-(\tfrac{d-m}{d})^l+\epsilon<1$ for all $l\in\{1,\ldots,p-1\}$. We start with the first occurrence of $\mathbf{1}_{H_{\mathcal{M}}}$ (from the left) and write $\mathbf{1}_{H_{\mathcal{M}}}=\mathbf{1}_{H_{\phi_{1}}}+\mathbf{1}_{H_{\mathcal{M}}\setminus H_{\phi_{1}}}$. We first prove that the operator $\mathbf{1}_{H_{\infty,L,m,k}}X\mathbf{1}_{H_{\mathcal{M}}\setminus H_{\phi_{1}}}$ is Hilbert-Schmidt. To do so, we apply Lemma \ref{Lemma_kernel_set} with $M=H_{\infty,L,m,k}$ and $N=H_{\mathcal{M}}\setminus H_{\phi_{1}}$. We check that these sets satisfy the conditions of Lemma \ref{Lemma_kernel_set} with the function $\varphi$ given by $\varphi(x)=1_{\{x_k\geq 1\}}\frac{1}{3}\phi_{1}(x_{k})$. To see this, let $x\in M$ with $x_k\geq 1$ be fixed and let $y\in B_{\varphi(x)}(x)\cap H_{\mathcal{M}}$ be arbitrary. It suffices to show that $y\in H_{\phi_{1}}$. As $y\in B_{\varphi(x)}(x)$, we obtain the following bound for $y_{k}$
\begin{equation}
y_k\geq x_k-\tfrac{1}{3p}x_{k}^{1-\tfrac{d-m}{d}+\epsilon}\geq x_k \tfrac{3p-1}{3p}.
\end{equation}
Together with the monotonicity of $\phi_{1}$, a bound for $\phi_{1}(y_k)$ follows
\begin{equation}\label{phi_k+1_bound_first_term}
\phi_{1}(y_k)\geq \phi_{1}\big(x_k \tfrac{3p-1}{3p}\big)\geq \tfrac{1}{p}\tfrac{3p-1}{3p}x_{k}^{1-\tfrac{d-m}{d}+\epsilon}= \tfrac{3p-1}{3p^2}x_{k}^{1-\tfrac{d-m}{d}+\epsilon}\geq \tfrac{2}{3}\phi_{1}(x_{k}).
\end{equation}
For each $j\in\{1,\ldots,m\}\setminus\{k\}$, we obtain
\begin{equation}
|y_{j}|\leq |x_{j}|+|y_{j}-x_{j}|\leq x_{k}+\tfrac{1}{3}\phi_{1}(x_{k})\leq y_{k}+\tfrac{2}{3}\phi_{1}(x_{k})\leq y_{k}+\phi_{1}(y_k),
\end{equation}
where we used \eqref{phi_k+1_bound_first_term} in the last inequality. For $j\in\{m+1,\ldots,d\}$, we obtain
\begin{equation}
|y_{j}|\leq |x_{j}|+|y_{j}-x_{j}|\leq L+\tfrac{1}{3}\phi_{1}(x_{k})\leq L+ \phi_{1}(y_{k}),
\end{equation}
where we again used \eqref{phi_k+1_bound_first_term} in the last inequality. 
Therefore, in order to apply Lemma \ref{Lemma_kernel_set}, it remains to evaluate the integral
\begin{align}
\int_{M}\frac{1}{(1+\varphi(x)^2)^{\tfrac{d}{2}}}\dd x&=\int_{\R_+^m\times [0,L]^{d-m}\cap \{x\in\R^d:\ \forall j\in\{1,\ldots,m\}:\ x_k\geq x_j\}}\frac{1}{\big(1+\big(1_{\{x_k\geq 1\}}\frac{1}{3}\phi_{1}(x_{k})\big)^2\big)^{\tfrac{d}{2}}}\dd x \nonumber \\
&=L^{d-m}\int_0^\infty \frac{x_{k}^{m-1}}{\big(1+\big(1_{\{x_k\geq 1\}}\frac{1}{3}\phi_{1}(x_{k})\big)^2\big)^{\tfrac{d}{2}}} \dd x_{k} \nonumber \\ &\leq L^{d-m} \Big(\int_0^{1} x_{k}^{m-1}\dd x_{k}+\int_{1}^\infty (3p)^{d}x_{k}^{m-m-1-d\epsilon}\dd x_{k}\Big)\leq C_{1} L^{d-m}\frac{p^{d}}{\epsilon},
\end{align}
where the constant $C_{1}>0$ is independent of $L$ and $p$.
Therefore, Lemma \ref{Lemma_kernel_set} yields 
\begin{equation}\label{operator-HS-estimate-1}
\|\mathbf{1}_{H_{\infty,L,m,k}}X\mathbf{1}_{H_{\mathcal{M}}\setminus H_{\phi_{1}}}\|_2\leq C_{1}'\sqrt{\frac{L^{d-m}p^{d}}{\epsilon}},
\end{equation}
with a constant $C_{1}'>0$ independent of $L$ and $p$.

We now turn to the intermediate terms, i.e. we show that the operators $\mathbf{1}_{H_{\phi_{l}}}X\mathbf{1}_{H_{\mathcal{M}}\setminus H_{\phi_{l+1}}}$ are Hilbert-Schmidt for $l\in\{1,\ldots,p-2\}$. We again apply Lemma \ref{Lemma_kernel_set} with the sets $M=H_{\phi_{l}}$ and $N=H_{\mathcal{M}}\setminus H_{\phi_{l+1}}$. We show that these sets satisfy the requirements of Lemma \ref{Lemma_kernel_set} with the function $\varphi(x)=1_{\{x_k\geq 1\}}\frac{1}{2(l+1)p}\phi_{l+1}(x_{k})$. To see this, let $x\in H_{\phi_{l}}$ with $x_k\geq 1$ be fixed and let $y\in B_{\varphi(x)}(x)\cap H_{\mathcal{M}}$ be arbitrary. It suffices to show that $y\in H_{\phi_{l+1}}$. As we have $y\in B_{\varphi(x)}(x)$, we obtain the following bound for $y_{k}$
\begin{equation}
y_k\geq x_k-\tfrac{1}{2p^2}x_{k}^{1-\big(\tfrac{d-m}{d}\big)^{l+1}+\epsilon}\geq x_k \tfrac{2p^2-1}{2p^2}.
\end{equation}
Together with the monotonicity of $\phi_{l+1}$, we see that
\begin{equation}
\phi_{l+1}(y_k)\geq \phi_{l+1}\big(x_k \tfrac{2p^2-1}{2p^2}\big)\geq \tfrac{l+1}{p}\tfrac{2p^2-1}{2p^2}\, x_{k}^{1-\big(\tfrac{d-m}{d}\big)^{l+1}+\epsilon} = \tfrac{2lp^2+2p^2-l-1}{2p^3}\, x_{k}^{1-\big(\tfrac{d-m}{d}\big)^{l+1}+\epsilon}.
\end{equation}
As we have $\tfrac{d-m}{d} \leq 1$ and $2p^2-l-1\geq 2p$,
this yields the following bound
\begin{equation}\label{phi_k+1_bound}
\phi_{l+1}(y_k) \geq \tfrac{l}{p}\,x_{k}^{1-\big(\tfrac{d-m}{d}\big)^{l}+\epsilon}+\tfrac{1}{p^2}\,x_{k}^{1-\big(\tfrac{d-m}{d}\big)^{l+1}+\epsilon}
= \phi_{l}(x_k)+\tfrac{1}{(l+1)p}\phi_{l+1}(x_{k}).
\end{equation}
For each $j\in\{1,\ldots,m\}\setminus\{k\}$, this yields
\begin{align}
|y_{j}|\leq& \, |x_{j}|+|y_{j}-x_{j}|\leq x_{k}+\phi_{l}(x_{k})+\tfrac{1}{2(l+1)p}\phi_{l+1}(x_{k})\nonumber \\
\leq& \, y_{k}+\phi_{l}(x_{k})+\tfrac{1}{(l+1)p}\phi_{l+1}(x_{k})
\leq y_{k}+ \phi_{l+1}(y_{k}),
\end{align}
where we used \eqref{phi_k+1_bound} in the last inequality. For $j\in\{m+1,\ldots,d\}$, we obtain
\begin{equation}
|y_{j}|\leq |x_{j}|+|y_{j}-x_{j}|\leq L+\phi_{l}(x_{k})+\tfrac{1}{2(l+1)p}\phi_{l+1}(x_{k})\leq L+ \phi_{l+1}(y_{k}),
\end{equation}
where we again used \eqref{phi_k+1_bound} in the last inequality. Therefore, we have $y\in H_{\phi_{l+1}}$. In order to apply Lemma \ref{Lemma_kernel_set}, it remains to estimate the integral
\begin{equation}\label{HS_higher_order_int_1}
\int_{M}\frac{1}{(1+\varphi(x)^2)^{\tfrac{d}{2}}}\dd x 
=\int_{H_{\phi_{l}}}\frac{1}{\big(1+\big(1_{\{x_k\geq 1\}}\tfrac{1}{2(l+1)p}\phi_{l+1}(x_{k})\big)^2\big)^{\tfrac{d}{2}}}\dd x.
\end{equation}
We note that $x_k+\phi_l(x_k)\leq 2x_k$. By the definition of the set $H_{\phi_l}$ the right-hand side of \eqref{HS_higher_order_int_1} is bounded from above by
\begin{multline}\label{HS_higher_order_int_2}
2^{d}\int_{[0,2x_{k}]^{k-1}\times \R_+\times [0,2x_{k}]^{m-k}\times [0,L+\phi_{l}(x_{k})]^{d-m}}\frac{1}{\big(1+\big(1_{\{x_k\geq 1\}}\tfrac{1}{2(l+1)p}\phi_{l+1}(x_{k})\big)^2\big)^{\tfrac{d}{2}}}\dd x \\ 
\leq 2^{d}\int_{0}^{\infty}\frac{2^{m-1}x_{k}^{m-1}\Big(L+x_{k}^{1-\tfrac{(d-m)^{l}}{d^{l}}+\epsilon}\Big)^{d-m}}{\big(1+\big(1_{\{x_k\geq 1\}}\tfrac{1}{2(l+1)p}\phi_{l+1}(x_{k})\big)^2\big)^{\tfrac{d}{2}}}\dd x_{k}.
\end{multline}
Splitting the integral on the right-hand side of \eqref{HS_higher_order_int_2} into two parts and applying the definition \eqref{def_phi_k} of $\phi_{l+1}$, we see that the right-hand side of \eqref{HS_higher_order_int_2} is bounded from above by
\begin{align}
&2^{d+m-1}\Bigg[\int_{0}^{1}x_{k}^{m-1}2^{d-m}L^{d-m}\dd x_k \nonumber \\ 
&\qquad\qquad\ +\sum_{j=0}^{d-m}\frac{(d-m)!}{j!(d-m-j)!}\int_{1}^{\infty}\frac{2^{d}p^{2d}x_{k}^{m-1+j-j\tfrac{(d-m)^{l}}{d^{l}}+j\epsilon}L^{d-m-j}}{x_{k}^{d-\tfrac{(d-m)^{l+1}}{d^{l}}+d\epsilon}}\dd x_k \Bigg]\nonumber\\
&=  2^{d+m-1}\Bigg[\frac{2^{d-m}L^{d-m}}{m}\nonumber \\ 
&\qquad\qquad\ \quad +2^{d}p^{2d}\sum_{j=0}^{d-m}\frac{(d-m)!}{j!(d-m-j)!}L^{d-m-j}\int_{1}^{\infty}x_{k}^{-1-(d-m-j)+(d-m-j)\tfrac{(d-m)^{l}}{d^{l}}-(d-j)\epsilon}\dd x_k\Bigg]\nonumber \\ 
&\leq 2^{d+m-1}\bigg[\frac{2^{d-m}L^{d-m}}{m}+2^{d}p^{2d}\sum_{j=0}^{d-m}\frac{(d-m)!}{j!(d-m-j)!}\frac{L^{d-m-j}}{\epsilon}\bigg]\leq 
C_{l}\frac{p^{2d}L^{d-m}}{\epsilon},
\end{align}
where the constant $C_{l}>0$ is independent of $L$ and $p$. Therefore, Lemma \ref{Lemma_kernel_set} yields 
\begin{equation}\label{operator-HS-estimate-2}
\|\mathbf{1}_{H_{\phi_{l}}}X\mathbf{1}_{H_{\mathcal{M}}\setminus H_{\phi_{l+1}}}\|_2\leq C_{l}'\sqrt{\frac{p^{2d}L^{d-m}}{\epsilon}},
\end{equation}
with a constant $C_{l}'>0$ independent of $L$ and $p$.

It remains to show that the remaining operator $\mathbf{1}_{H_{\phi_{p-1}}}X\mathbf{1}_{H_{\mathcal{M}\cup\{k\}}\setminus H_{\mathcal{M}}}$ is Hilbert-Schmidt. As before, we do this by an application of Lemma \ref{Lemma_kernel_set}. We note that the sets $M=H_{\phi_{p-1}}$ and $N=H_{\mathcal{M}\cup\{k\}}\setminus H_{\mathcal{M}}=H_{\mathcal{M}\cup\{k\}}\cap\{x\in\R^d:\ x_k<0\}$ satisfy the requirements of Lemma \ref{Lemma_kernel_set} with the function $\varphi(x)=x_{k}$. Again we need to evaluate the corresponding integral
\begin{multline}\label{HS_higher_order_int_3}
\int_{H_{\phi_{p-1}}}\frac{1}{(1+x_{k}^2)^{\tfrac{d}{2}}}\dd x  \\
= 2^{d-m+|\mathcal{M}|}\int_{[0,x_{k}+\phi_{p-1}(x_{k})]^{k-1}\times \R_+\times [0,x_{k}+\phi_{p-1}(x_{k})]^{m-k}\times [0,L+\phi_{p-1}(x_{k})]^{d-m}}\frac{1}{(1+x_{k}^2)^{\tfrac{d}{2}}}\dd x.
\end{multline}
The right-hand side of \eqref{HS_higher_order_int_3} is bounded from above by
\begin{align}
 2&^{d}\int_{0}^{\infty}\frac{2^{m-1}x_{k}^{m-1}\Big(L+x_{k}^{1-\big(\tfrac{d-m}{d}\big)^{p-1}+\epsilon}\Big)^{d-m}}{(1+x_{k}^2)^{\tfrac{d}{2}}}\dd x_{k} \nonumber\\ 
&\leq 2^{d+m-1}\Bigg[\int_{0}^{1}x_{k}^{m-1}2^{d-m}L^{d-m}\dd x_k \nonumber \\ 
&\qquad\qquad\qquad +\sum_{j=0}^{d-m}\frac{(d-m)!}{j!(d-m-j)!}L^{d-m-j}\int_{1}^{\infty}x_{k}^{-1-(d-m-j)-j\big(\tfrac{d-m}{d}\big)^{p-1}+j\epsilon}\dd x_k \Bigg]\nonumber\\
&\leq  2^{d+m-1}\Big[\frac{2^{d-m}L^{d-m}}{m}+\sum_{j=0}^{d-m}\frac{(d-m)!}{j!(d-m-j)!}\frac{L^{d-m-j}}{\epsilon}\Big]\leq 
C_{p-1}\sqrt{\frac{L^{d-m}}{\epsilon}},
\end{align}
with a constant $C_{p-1}>0$ independent of $L$ and $p$.
Therefore, Lemma \ref{Lemma_kernel_set} yields 
\begin{equation}\label{operator-HS-estimate-3}
\|\mathbf{1}_{H_{\phi_{p-1}}}\mathbf{1}_{H_{\mathcal{M}\cup\{k\}}\setminus H_{\mathcal{M}}}\|_2\leq C_{p-1}'\sqrt{\frac{L^{d-m}}{\epsilon}},
\end{equation}
with a constant $C_{p-1}'>0$ independent of $L$ and $p$.
Repeated use of the triangle inequality, followed by H\"older's inequality, yields
\begin{align}
\|\mathbf{1}_{H_{\infty,L,m,k}}&(\mathbf{1}_{H_{\mathcal{M}}}X)^{p}(\mathbf{1}_{H_{\mathcal{M}\cup\{k\}}}-\mathbf{1}_{H_{\mathcal{M}}})\|_2 \nonumber \\ 
\leq& \, \sum_{l=0}^{p-2}\Big\|\mathbf{1}_{H_{\infty,L,m,k}}\Big(\prod_{j=1}^{l}X\mathbf{1}_{H_{\phi_{j}}}\Big)X\mathbf{1}_{H_{\mathcal{M}}\setminus H_{\phi_{l+1}}}(X\mathbf{1}_{H_{\mathcal{M}}})^{p-2-l}X(\mathbf{1}_{H_{\mathcal{M}\cup\{k\}}}-\mathbf{1}_{H_{\mathcal{M}}})\Big\|_2 \nonumber \\ 
&+ \Big\|\mathbf{1}_{H_{\infty,L,m,k}}\Big(\prod_{j=1}^{p-1}X\mathbf{1}_{H_{\phi_{j}}}\Big)X(\mathbf{1}_{H_{\mathcal{M}\cup\{k\}}}-\mathbf{1}_{H_{\mathcal{M}}})\Big\|_2 \nonumber \\
\leq& \, \|X\|^{p-1}\Big( \|\mathbf{1}_{H_{\infty,L,m,k}}X\mathbf{1}_{H_{\mathcal{M}}\setminus H_{\phi_{1}}}\|_2+\sum_{l=1}^{p-2}\|\mathbf{1}_{H_{\phi_{l}}}X\mathbf{1}_{H_{\mathcal{M}}\setminus H_{\phi_{l+1}}}\|_2+\|\mathbf{1}_{H_{\phi_{p-1}}}X\mathbf{1}_{H_{\mathcal{M}\cup\{k\}}\setminus H_{\mathcal{M}}}\|_2 \Big) \nonumber\\
\leq& \, C \|X\|^{p-1}p\sqrt{\frac{p^{2d}L^{d-m}}{\epsilon}}\leq C \|X\|^{p-1}d^{\tfrac{p}{2}}p^{d+1}L^{\tfrac{d-m}{2}},
\end{align}
where we combined estimates \eqref{operator-HS-estimate-1}, \eqref{operator-HS-estimate-2} and \eqref{operator-HS-estimate-3} and used the definition of $\epsilon$ in the last line. This proves \eqref{operator-HS-estimate} for $p\geq 2$.

We continue with the proof of \eqref{operator-difference-HS-estimate}. The proof works in a similar fashion to the one of \eqref{operator-HS-estimate}, although it is a bit more involved. We require different partitions of the set $H_{\mathcal{M}}$ as well as different functions $\phi_{l}$, $l\in\{1,\ldots,p-1\}$. Before we define them, we again give a proof in the easier case $p=1$. We apply Lemma \ref{Lemma_kernel_set} with the sets $M=H_{\infty,L,m,k}\setminus H_{L,m,k}$ and $N=H_{\mathcal{M}\cup\{k\}}\setminus H_{\mathcal{M}}=\{x\in H_{\mathcal{M}\cup\{k\}}:\ x_k<0\}$. These sets fulfil the requirements of Lemma \ref{Lemma_kernel_set} with the function $\varphi(x)=x_{k}$ and we compute
\begin{align}
\int_{M}\frac{1}{(1+\varphi(x)^2)^{\tfrac{d}{2}}}\dd x&=\int_{[0,x_{k}[^{k-1}\times[L,\infty[\,\times[0,x_{k}[^{m-k}\times [0,L]^{d-m}}\frac{1}{(1+x_{k}^2)^{\tfrac{d}{2}}}\dd x \nonumber \\ 
&= L^{d-m}\int_{L}^\infty \frac{x_{k}^{m-1}}{(1+x_{k}^2)^{\tfrac{d}{2}}} \dd x_{k} \leq L^{d-m}\int_{L}^\infty x_{k}^{m-d-1}\dd x_{k}=\frac{1}{d-m}.
\end{align}
Then Lemma \ref{Lemma_kernel_set} yields the bound \eqref{operator-difference-HS-estimate} in the case $p=1$.

In the case $p\geq 2$, we define the sets
\begin{align}
H_{\phi,l}&:=H_{L,m,k,\mathcal{M},p,\phi,l}\nonumber \\
&:= \{x\in\R^d:\ \forall j\in\{1,\ldots,m\}\setminus\{k\}:\phi(|x_k|)+|x_k|\geq |x_j|\}\nonumber \\
& \ \cap \{x\in\R^d:\ \forall j\in\{m+1,\ldots,d\}:\phi(|x_k|)+L\geq |x_j|\}\nonumber \\
& \ \cap H_{\mathcal{M}} \cap \{x\in\R^d:\ x_{k}\geq \tfrac{p-l}{p}L\}.
\end{align}
for a given measurable function $\phi:[0,\infty[\,\rightarrow [0,\infty[\,$.
We recall $\epsilon=\tfrac{1}{d^p}$. For $l\in\{1,\ldots,p-1\}$, we choose the functions $\phi_{l,L}:[0,\infty[\,\rightarrow [0,\infty[\,$ with
\begin{equation}
\phi_{l,L}(x):=\tfrac{l}{p}L^{\big(\tfrac{d-m}{d}\big)^l-\epsilon}\big(\tfrac{p}{p+1-l} x\big)^{1-\big(\tfrac{d-m}{d}\big)^l+\epsilon}.
\end{equation}
We again begin with the first occurrence of $\mathbf{1}_{H_{\mathcal{M}}}$ (from the left) and write $\mathbf{1}_{H_{\mathcal{M}}}=\mathbf{1}_{H_{\phi_{1,L},1}}+\mathbf{1}_{H_{\mathcal{M}}\setminus H_{\phi_{1,L},1}}$. We first prove that the operator $\big(\mathbf{1}_{H_{\infty,L,m,k}}-\mathbf{1}_{H_{L,m,k}}\big)X\mathbf{1}_{H_{\mathcal{M}}\setminus H_{\phi_{1,L},1}}$ has its Hilbert-Schmidt norm bounded independently of $L$. To do so, we apply Lemma \ref{Lemma_kernel_set} with $M=H_{\infty,L,m,k}\setminus H_{L,m,k}$ and $N=H_{\mathcal{M}}\setminus H_{\phi_{1,L},1}$. We check that these sets satisfy the conditions of Lemma \ref{Lemma_kernel_set} with the function $\varphi$ given by $\varphi(x)=\frac{1}{3}\phi_{1,L}(x_{k})$. To see this, let $x\in M$ be fixed and let $y\in B_{\varphi(x)}(x)\cap H_{\mathcal{M}}$ be arbitrary. It suffices to show that $y\in H_{\phi_{1,L},1}$. As $y\in B_{\varphi(x)}(x)$, we obtain the following bound for $y_{k}$
\begin{equation}
y_k\geq x_k-\tfrac{1}{3p}L^{\tfrac{d-m}{d}-\epsilon}x_{k}^{1-\tfrac{d-m}{d}+\epsilon}\geq x_{k}-\tfrac{1}{3p}x_{k}= x_{k} \tfrac{3p-1}{3p},
\end{equation}
where we used $x_{k}\geq L \geq 1$ (as $x\in M$) in the last inequality. In particular we have
\begin{equation}
y_k\geq x_{k} \tfrac{3p-1}{3p}\geq \tfrac{3p-1}{3p}L\geq \tfrac{p-1}{p}L.
\end{equation}
With the monotonicity of $\phi_{1,L}$, we obtain the following bound for $\phi_{1,L}(y_k)$
\begin{equation}\label{phi_k+1_bound_first_term_L}
\phi_{1,L}(y_k)\geq \phi_{1,L}\big(x_k \tfrac{3p-1}{3p}\big)\geq \tfrac{1}{p}\tfrac{3p-1}{3p}L^{\tfrac{d-m}{d}-\epsilon}x_{k}^{1-\tfrac{d-m}{d}+\epsilon}= \tfrac{3p-1}{3p}\phi_{1,L}(x_{k})\geq \tfrac{2}{3}\phi_{1,L}(x_{k}).
\end{equation}
For each $j\in\{1,\ldots,m\}\setminus\{k\}$, we obtain
\begin{equation}
|y_{j}|\leq |x_{j}|+|y_{j}-x_{j}|\leq x_{k}+\tfrac{1}{3}\phi_{1,L}(x_{k})\leq y_{k}+\tfrac{2}{3}\phi_{1,L}(x_{k})\leq y_{k}+\phi_{1,L}(y_k),
\end{equation}
where we used \eqref{phi_k+1_bound_first_term_L} in the last inequality. For $j\in\{m+1,\ldots,d\}$, we obtain
\begin{equation}
|y_{j}|\leq |x_{j}|+|y_{j}-x_{j}|\leq L+\tfrac{1}{3}\phi_{1,L}(x_{k})\leq L+ \phi_{1,L}(y_{k}),
\end{equation}
where we again used \eqref{phi_k+1_bound_first_term_L} in the last inequality. 
Therefore, in order to apply Lemma \ref{Lemma_kernel_set}, it remains to evaluate the integral
\begin{align}
\int_{M}\frac{1}{(1+\varphi(x)^2)^{\tfrac{d}{2}}}\dd x&=\int_{[0,x_{k}[^{k-1}\times[L,\infty[\,\times[0,x_{k}[^{m-k}\times[0,L]^{d-m}}\frac{1}{\big(1+\big(\frac{1}{3}\phi_{1,L}(x_{k})\big)^2\big)^{\tfrac{d}{2}}}\dd x \nonumber \\
&=L^{d-m}\int_{L}^{\infty} \frac{x_{k}^{m-1}}{\big(1+\big(\frac{1}{3}\phi_{1,L}(x_{k})\big)^2\big)^{\tfrac{d}{2}}} \dd x_{k} \nonumber \\ 
&\leq L^{d-m} \int_{L}^{\infty} (3p)^{d}L^{m-d+d\epsilon}x_{k}^{m-m-1-d\epsilon}\dd x_{k}\leq C_{1} \frac{L^{d\epsilon}}{L^{d\epsilon}}\frac{p^{d}}{\epsilon}=C_{1}\frac{p^{d}}{\epsilon},
\end{align}
where the constant $C_{1}>0$ is independent of $L$ and $p$.
Therefore, Lemma \ref{Lemma_kernel_set} yields 
\begin{equation}\label{operator-HS-estimate-1-L}
\|\big(\mathbf{1}_{H_{\infty,L,m,k}}-\mathbf{1}_{H_{L,m,k}}\big)X\mathbf{1}_{H_{\mathcal{M}}\setminus H_{\phi_{1,L},1}}\|_2\leq C_{1}'\sqrt{\frac{p^{d}}{\epsilon}}
\end{equation}
with a constant $C_{1}'>0$ independent of $L$ and $p$.

We again turn towards the intermediate terms, i.e. we show that the operators $\mathbf{1}_{H_{\phi_{l,L},l}}X\mathbf{1}_{H_{\mathcal{M}}\setminus H_{\phi_{l+1,L},l+1}}$ have their Hilbert-Schmidt norm bounded independently of $L$, for $l\in\{1,\ldots,p-2\}$. We again apply Lemma \ref{Lemma_kernel_set} with the sets $M=H_{\phi_{l,L},l}$ and $N=H_{\mathcal{M}}\setminus H_{\phi_{l+1,L},l+1}$. We show that these sets satisfy the requirements of Lemma \ref{Lemma_kernel_set} with the function $\varphi(x)=\frac{1}{2(l+1)p}\phi_{l+1,L}(x_{k})$. To see this, let $x\in H_{\phi_{l,L},l}$ be fixed and let $y\in B_{\varphi(x)}(x)\cap H_{\mathcal{M}}$ be arbitrary. It suffices to show that $y\in H_{\phi_{l+1,L},l+1}$. As $y\in B_{\varphi(x)}(x)$, we get the following bound for $y_{k}$
\begin{multline}
y_k\geq x_k-\tfrac{1}{2p^2}L^{\big(\tfrac{d-m}{d}\big)^{l+1}-\epsilon}\big(\tfrac{p}{p+1-l-1} x_{k}\big)^{1-\big(\tfrac{d-m}{d}\big)^{l+1}+\epsilon}  \\
\geq x_{k}-\tfrac{1}{2p^2}\big(\tfrac{p}{p-l} x_{k}\big)^{\big(\tfrac{d-m}{d}\big)^{l+1}-\epsilon}\big(\tfrac{p}{p-l} x_{k}\big)^{1-\big(\tfrac{d-m}{d}\big)^{l+1}+\epsilon}= x_k \tfrac{2p(p-l)-1}{2p(p-l)},
\end{multline}
where we used $\frac{p}{p-l}x_{k}\geq  L$ (as $x\in M$) in the last inequality. In particular, we obtain
\begin{equation}
y_k\geq x_{k} \tfrac{2p(p-l)-1}{2p(p-l)}\geq \tfrac{p-l}{p}\tfrac{2p(p-l)-1}{2p(p-l)}L 
=\tfrac{2p^2-2pl-1}{2p^2}L\geq \tfrac{p-(l+1)}{p}L.
\end{equation}
With the monotonicity of $\phi_{l+1,L}$, we have
\begin{equation}
\phi_{l+1}(y_k)\geq \phi_{l+1}\big(x_k \tfrac{2p(p-l)-1}{2p(p-l)}\big)\geq \tfrac{l+1}{p}\tfrac{2p(p-l)-1}{2p(p-l)}L^{\big(\tfrac{d-m}{d}\big)^{l+1}-\epsilon}\big(\tfrac{p}{p-l}x_{k}\big)^{1-\big(\tfrac{d-m}{d}\big)^{l+1}+\epsilon},
\end{equation}
which is bounded from below by
\begin{multline}\label{phi_k+1_bound_L_step_1}
\tfrac{2lp(p-l)+2(p-l)}{2p^2(p-l)}L^{\big(\tfrac{d-m}{d}\big)^{l+1}-\epsilon}\big(\tfrac{p}{p-l}x_{k}\big)^{1-\big(\tfrac{d-m}{d}\big)^{l+1}+\epsilon}  \\
= \tfrac{l}{p}L^{\big(\tfrac{d-m}{d}\big)^{l+1}-\epsilon}\big(\tfrac{p}{p-l}x_{k}\big)^{1-\big(\tfrac{d-m}{d}\big)^{l}+\epsilon}\big(\tfrac{p}{p-l}x_{k}\big)^{\big(\tfrac{d-m}{d}\big)^{l}-\big(\tfrac{d-m}{d}\big)^{l+1}}+\tfrac{1}{(l+1)p}\phi_{l+1,L}(x_{k}).
\end{multline}
Using the fact that $\frac{p}{p-l}x_{k}\geq  L$, as $x\in H_{\phi_{l,L},l}$, we see that the first term on the right-hand side of 
\eqref{phi_k+1_bound_L_step_1} is bounded from below by 
\begin{equation}
\tfrac{l}{p}L^{\big(\tfrac{d-m}{d}\big)^{l+1}-\epsilon}\big(\tfrac{p}{p+1-l}x_{k}\big)^{1-\big(\tfrac{d-m}{d}\big)^{l}+\epsilon}L^{\big(\tfrac{d-m}{d}\big)^{l}-\big(\tfrac{d-m}{d}\big)^{l+1}} 
=\phi_{l,L}(x_k).
\end{equation}
This in turn yields the bound
\begin{equation}\label{phi_k+1_bound_L}
\phi_{l+1}(y_k)\geq\phi_{l,L}(x_k)+\tfrac{1}{(l+1)p}\phi_{l+1,L}(x_{k}).
\end{equation}
Therefore, for each $j\in\{1,\ldots,m\}\setminus\{k\}$, we obtain
\begin{multline}
|y_{j}|\leq |x_{j}|+|y_{j}-x_{j}|\leq x_{k}+\phi_{l,L}(x_{k})+\tfrac{1}{2(l+1)p}\phi_{l+1,L}(x_{k}) \\
\leq y_{k}+\phi_{l,L}(x_{k})+\tfrac{1}{(l+1)p}\phi_{l+1,L}(x_{k})
\leq y_{k}+ \phi_{l+1,L}(y_{k}),
\end{multline}
where we used \eqref{phi_k+1_bound_L} in the last inequality. For $j\in\{m+1,\ldots,d\}$, we obtain
\begin{equation}
|y_{j}|\leq |x_{j}|+|y_{j}-x_{j}|\leq L+\phi_{l,L}(x_{k})+\tfrac{1}{2(l+1)p}\phi_{l+1,L}(x_{k})\leq L+ \phi_{l+1,L}(y_{k}),
\end{equation}
where we again used \eqref{phi_k+1_bound_L} in the last inequality. Before we are ready to evaluate the integral for Lemma \ref{Lemma_kernel_set}, we note that for every $x\in H_{\phi_{l,L},l}$ and $l\in\{1,\ldots,p-2\}$ we have
\begin{multline}\label{auxiliary_estimate_phi}
x_{k}+\phi_{l,L}(x_{k})=x_{k}+\tfrac{l}{p}L^{\big(\tfrac{d-m}{d}\big)^l-\epsilon}\big(\tfrac{p}{p+1-l} x_{k}\big)^{1-\big(\tfrac{d-m}{d}\big)^l+\epsilon}\\
\leq x_{k}+\tfrac{l}{p}\big(\tfrac{p}{p-l}x_{k}\big)^{\big(\tfrac{d-m}{d}\big)^l-\epsilon}\big(\tfrac{p}{p+1-l} x_{k}\big)^{1-\big(\tfrac{d-m}{d}\big)^l+\epsilon}\leq x_{k}+\tfrac{l}{p-l}x_{k}= \tfrac{p}{p-l}x_{k},
\end{multline}
where we used $\frac{p}{p-l}x_{k}\geq  L$. With this at hand, it remains to bound the integral
\begin{equation}\label{HS_higher_order_int_4}
\int_{M}\frac{1}{(1+\varphi(x)^2)^{\tfrac{d}{2}}}\dd x 
=\int_{H_{\phi_{l,L},l}}\frac{1}{\big(1+\big(\frac{1}{2(l+1)p}\phi_{l+1,L}(x_{k})\big)^2\big)^{\tfrac{d}{2}}}\dd x
\end{equation}
in order to apply Lemma \ref{Lemma_kernel_set}.
By the definition of $H_{\phi_{l,L},l}$ and \eqref{auxiliary_estimate_phi}, the right-hand side of \eqref{HS_higher_order_int_4} is bounded from above by
\begin{multline}
2^d\int_{\big[0,\tfrac{p}{p-l}x_{k}\big]^{k-1}\times \big[\tfrac{p-l}{p} L,\infty\big[\,\times \big[0,\tfrac{p}{p-l}x_{k}\big]^{m-k}\times [0,L+\phi_{l,L}(x_{k})]^{d-m}}\frac{1}{\big(\frac{1}{2(l+1)p}\phi_{l+1,L}(x_{k})\big)^{d}}\dd x  \\ 
= 2^d\int_{\tfrac{p-l}{p} L}^{\infty}\frac{\big(\tfrac{p}{p-l}x_{k}\big)^{m-1}\Big(L+\tfrac{l}{p}L^{\big(\tfrac{d-m}{d}\big)^{l}-\epsilon} \big(\tfrac{p}{p+1-l}x_{k}\big)^{1-\big(\tfrac{d-m}{d}\big)^{l}+\epsilon}\Big)^{d-m}}{\big(\frac{1}{2(l+1)p}\phi_{l+1,L}(x_{k})\big)^{d}}\dd x_{k}.
\end{multline}
As $\frac{p}{p-l}x_{k}\geq  L$, this is in turn bounded from above by
\begin{equation}
2^{2d-m}\int_{\tfrac{p-l}{p} L}^{\infty}\frac{\big(\tfrac{p}{p-l}x_{k}\big)^{m-1}\Big(L^{\big(\tfrac{d-m}{d}\big)^{l}-\epsilon} \big(\tfrac{p}{p-l}x_{k}\big)^{1-\big(\tfrac{d-m}{d}\big)^{l}+\epsilon}\Big)^{d-m}}{\big(\frac{1}{2(l+1)p}\phi_{l+1,L}(x_{k})\big)^{d}}\dd x_{k}.
\end{equation}
Using the definition of $\phi_{l+1,L}$, this is equal to
\begin{multline}
2 ^{3d-m}p^{2d}\int_{\tfrac{p-l}{p} L}^{\infty}\frac{\big(\tfrac{p}{p-l}x_{k}\big)^{m-1}L^{\tfrac{(d-m)^{l+1}}{d^{l}}-(d-m)\epsilon} \big(\tfrac{p}{p-l}x_{k}\big)^{d-m-\tfrac{(d-m)^{l+1}}{d^{l}}+(d-m)\epsilon}}{L^{\tfrac{(d-m)^{l+1}}{d^{l}}-d\epsilon}\big(\tfrac{p}{p-l} x_{k}\big)^{d-\tfrac{(d-m)^{l+1}}{d^{l}}+d\epsilon}}\dd x_{k} \\
=2 ^{3d-m}p^{2d}\int_{\tfrac{p-l}{p} L}^{\infty}L^{m\epsilon}\big(\tfrac{p}{p-l}x_{k}\big)^{-1-m\epsilon}\dd x_{k}.
\end{multline}
Carrying out the integration, we see that the right-hand side of \eqref{HS_higher_order_int_4} bounded from above by
\begin{equation}
2 ^{3d-m}p^{2d}L^{m\epsilon-m\epsilon}\tfrac{1}{m\epsilon}\leq C_l\frac{p^{2d}}{\epsilon},
\end{equation}
where the constant $C_{l}>0$ is independent of $L$ and $p$. Therefore, Lemma \ref{Lemma_kernel_set} yields 
\begin{equation}\label{operator-HS-estimate-2-L}
\|\mathbf{1}_{H_{\phi_{l}}}X\mathbf{1}_{H_{\mathcal{M}}\setminus H_{\phi_{l+1}}}\|_2\leq C_{l}'\sqrt{\frac{p^{2d}}{\epsilon}}
\end{equation}
with a constant $C_{l}'>0$ independent of $L$ and $p$.

It again remains to show that the remaining operator $\mathbf{1}_{H_{\phi_{p-1,L},p-1}}X\mathbf{1}_{H_{\mathcal{M}\cup\{k\}}\setminus H_{\mathcal{M}}}$ has Hilbert-Schmidt norm bounded independently of $L$. As before, we do this by an application of Lemma \ref{Lemma_kernel_set}. We note that the sets $M=H_{\phi_{p-1,L},p-1}$ and $N=H_{\mathcal{M}\cup\{k\}}\setminus H_{\mathcal{M}}=H_{\mathcal{M}\cup\{k\}}\cap\{x\in\R^d:\ x_k<0\}$ satisfy the requirements of Lemma \ref{Lemma_kernel_set} with the function $\varphi(x)=x_{k}$. With the definition of $H_{\phi_{p-1,L},p-1}$, the corresponding integral reads
\begin{align}\label{HS_higher_order_int_5}
\int_{M}&\frac{1}{(1+\varphi(x)^2)^{\tfrac{d}{2}}}\dd x 
=\int_{H_{\phi_{p-1,L},p-1}}\frac{1}{(1+x_{k}^2)^{\tfrac{d}{2}}}\dd x \nonumber \\ 
&= 2^{d-m+|\mathcal{M}|}\int_{[0,x_{k}+\phi_{p-1,L}(x_{k})]^{k-1}\times \big[\tfrac{L}{p},\infty\big[\,\times [0,x_{k}+\phi_{p-1,L}(x_{k})]^{m-k}\times [0,L+\phi_{p-1,L}(x_{k})]^{d-m}}\frac{1}{(1+x_{k}^2)^{\tfrac{d}{2}}}\dd x.
\end{align}
With the help of \eqref{auxiliary_estimate_phi}, the right-hand side of \eqref{HS_higher_order_int_5} is bounded from above by  
\begin{multline}
2^{2d-m}\int_{\tfrac{L}{p}}^{\infty}\frac{(px_{k})^{m-1}\Big(L^{\big(\tfrac{d-m}{d}\big)^{p-1}-\epsilon} (px_{k})^{1-\big(\tfrac{d-m}{d}\big)^{p-1}+\epsilon}\Big)^{d-m}}{x_{k}^d}\dd x_{k} \\
\leq 2^{2d-m}p^{d-1}\int_{\tfrac{L}{p}}^{\infty}L^{\tfrac{(d-m)^p}{d^{p-1}}-(d-m)\epsilon}x_k^{-1-\tfrac{(d-m)^p}{d^{p-1}}+(d-m)\epsilon}\dd x_{k}.
\end{multline}
Carrying out the integration, we obtain the bound
\begin{equation}
\int_{M}\frac{1}{(1+\varphi(x)^2)^{\tfrac{d}{2}}}\dd x\leq C_{p-1}\frac{p^{d-1}}{\epsilon}, 
\end{equation}
with a constant $C_{p-1}>0$ independent of $L$ and $p$.
Therefore, Lemma \ref{Lemma_kernel_set} yields 
\begin{equation}\label{operator-HS-estimate-3-L}
\|\mathbf{1}_{H_{\phi_{p+1}}}X\mathbf{1}_{H_{\mathcal{M}\cup\{k\}}\setminus H_{\mathcal{M}}}\|_2\leq C_{p-1}'\sqrt{\frac{p^{d-1}}{\epsilon}},
\end{equation}
with a constant $C_{p-1}'>0$ independent of $L$ and $p$.
The proof of \eqref{operator-difference-HS-estimate} follows in the same manner as the proof \eqref{operator-HS-estimate}: We first estimate 
\begin{align}
\big\|\big(&\mathbf{1}_{H_{\infty,L,m,k}}-\mathbf{1}_{H_{L,m,k}}\big)(\mathbf{1}_{H_{\mathcal{M}}}X)^{p}(\mathbf{1}_{H_{\mathcal{M}\cup\{k\}}}-\mathbf{1}_{H_{\mathcal{M}}})
\big\|_2 \nonumber \\ 
\leq& \, \sum_{l=0}^{p-2}\Big\|\big(\mathbf{1}_{H_{\infty,L,m,k}}-\mathbf{1}_{H_{L,m,k}}\big)\Big(\prod_{j=1}^{l}X\mathbf{1}_{H_{\phi_{j,L},j}}\Big)X\mathbf{1}_{H_{\mathcal{M}}\setminus H_{\phi_{l+1,L},l+1}}(X\mathbf{1}_{H_{\mathcal{M}}})^{p-2-l}X(\mathbf{1}_{H_{\mathcal{M}\cup\{k\}}}-\mathbf{1}_{H_{\mathcal{M}}})\Big\|_2 \nonumber \\ 
&+ \Big\|\big(\mathbf{1}_{H_{\infty,L,m,k}}-\mathbf{1}_{H_{L,m,k}}\big)\Big(\prod_{j=1}^{p-1}X\mathbf{1}_{H_{\phi_{j,L},j}}\Big)X(\mathbf{1}_{H_{\mathcal{M}\cup\{k\}}}-\mathbf{1}_{H_{\mathcal{M}}})\Big\|_2
\end{align}
which in turn is bounded from above by
\begin{align}
\|X\|^{p-1}\Big[& \big\|\big(\mathbf{1}_{H_{\infty,L,m,k}}-\mathbf{1}_{H_{L,m,k}}\big)X\mathbf{1}_{H_{\mathcal{M}}\setminus H_{\phi_{1,L},1}}\big\|_2 \nonumber \\
&  +\sum_{l=1}^{p-2}\big\|\mathbf{1}_{H_{\phi_{l,L},l}}X\mathbf{1}_{H_{\mathcal{M}}\setminus H_{\phi_{l+1,L},l+1}}\big\|_2+\big\|\mathbf{1}_{H_{\phi_{p-1,L},p-1}}X\mathbf{1}_{H_{\mathcal{M}\cup\{k\}}\setminus H_{\mathcal{M}}}\big\|_2 \Big] \nonumber\\
& \qquad \leq \,  C' \|X\|^{p-1}p\sqrt{\frac{p^{2d}}{\epsilon}}\leq C' \|X\|^{p-1}d^{\tfrac{p}{2}}p^{d+1},
\end{align}
where we combined estimates \eqref{operator-HS-estimate-1-L}, \eqref{operator-HS-estimate-2-L} and \eqref{operator-HS-estimate-3-L} and used the definition of $\epsilon$ in the last line. This proves \eqref{operator-difference-HS-estimate} for $p\geq 2$ and concludes the proof of the lemma.
\end{proof}
We now use the just obtained Hilbert-Schmidt estimate, to derive a trace norm estimate for monomial test functions.
\begin{lem}\label{higher-order-monom}
Let $X$ be a bounded integral operator on $L^{2}(\R^{d})\otimes\C^n$ whose kernel satisfies the bound \eqref{kernel_bound}. Let $1\leq k \leq m < d$ and $\mathcal{M}\subseteq\{1,\ldots,m\}$ such that $k\notin\mathcal{M}$. Let $L\geq 1$ and $p\in\N$. Then the operator 
\begin{equation}\label{operator-trace-proof}
\mathbf{1}_{H_{\infty,L,m,k}}\big[\big(\mathbf{1}_{H_{\mathcal{M}}}X\mathbf{1}_{H_{\mathcal{M}}}\big)^{p}-\big(\mathbf{1}_{H_{\mathcal{M}\cup\{k\}}}X\mathbf{1}_{H_{\mathcal{M}\cup\{k\}}}\big)^{p}\big]\mathbf{1}_{H_{\infty,L,m,k}}
\end{equation}
is trace class and there exists a constant $C>0$, independent of $L$ and $p$, such that
\begin{multline}\label{operator-trace-estimate}
\big\|\mathbf{1}_{H_{\infty,L,m,k}}\big[\big(\mathbf{1}_{H_{\mathcal{M}}}X\mathbf{1}_{H_{\mathcal{M}}}\big)^{p}-\big(\mathbf{1}_{H_{\mathcal{M}\cup\{k\}}}X\mathbf{1}_{H_{\mathcal{M}\cup\{k\}}}\big)^{p}\big]\mathbf{1}_{H_{\infty,L,m,k}}\big\|_1 \\
\leq C L^{d-m}\|X\|^{p-1}\big(2d\big)^{p-1}(p-1)^{2d+2}.
\end{multline}
Furthermore, there exists a constant $C'>0$ which is independent of $L$ and $p$ such that
\begin{multline}\label{operator-difference-trace-estimate}
\big\|\big(\mathbf{1}_{H_{\infty,L,m,k}}-\mathbf{1}_{H_{L,m,k}}\big)\big[\big(\mathbf{1}_{H_{\mathcal{M}}}X\mathbf{1}_{H_{\mathcal{M}}}\big)^{p}-\big(\mathbf{1}_{H_{\mathcal{M}\cup\{k\}}}X\mathbf{1}_{H_{\mathcal{M}\cup\{k\}}}\big)^{p}\big]\big(\mathbf{1}_{H_{\infty,L,m,k}}-\mathbf{1}_{H_{L,m,k}}\big)\big\|_1 \\
\leq C'\|X\|^{p-1}\big(2d\big)^{p-1}(p-1)^{2d+2}.
\end{multline}
\end{lem}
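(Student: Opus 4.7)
The proof rests on a binomial expansion of the operator difference. Set $Q := \mathbf{1}_{H_{\mathcal{M}}}$ and $P := \mathbf{1}_{H_{\mathcal{M}\cup\{n\}}}-\mathbf{1}_{H_{\mathcal{M}}}$, so that $P$ and $Q$ are mutually orthogonal projections summing to $\mathbf{1}_{H_{\mathcal{M}\cup\{n\}}}$, with $P$ supported on $\{x_n<0\}\cap H_{\mathcal{M}\cup\{n\}}$. Since $H_{\infty,L,m,n}\subset H_{\mathcal{M}}\cap\{x_n\ge 0\}$, we have the crucial identities $P\mathbf{1}_{H_{\infty,L,m,n}}=\mathbf{1}_{H_{\infty,L,m,n}}P=0$ and $Q\mathbf{1}_{H_{\infty,L,m,n}}=\mathbf{1}_{H_{\infty,L,m,n}}$. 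Writing $\mathbf{1}_{H_{\mathcal{M}\cup\{n\}}}=Q+P$ and expanding yields
\begin{equation*}
\big(\mathbf{1}_{H_{\mathcal{M}\cup\{n\}}}X\mathbf{1}_{H_{\mathcal{M}\cup\{n\}}}\big)^{p}-\big(\mathbf{1}_{H_{\mathcal{M}}}X\mathbf{1}_{H_{\mathcal{M}}}\big)^{p}=\sum_{\sigma\in\{Q,P\}^{p+1}\setminus\{(Q,\dots,Q)\}}\sigma_{0}X\sigma_{1}X\cdots X\sigma_{p}.
\end{equation*}
After sandwiching with $\mathbf{1}_{H_{\infty,L,m,n}}$, only the multi-indices $\sigma$ with $\sigma_0=\sigma_p=Q$ and at least one interior $P$ survive, leaving $2^{p-1}-1$ contributing terms.

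For each such $\sigma$, let $k=k(\sigma)\in\{1,\dots,p-1\}$ and $\ell=\ell(\sigma)\in\{k,\dots,p-1\}$ be the leftmost and rightmost positions of $P$. I would split the corresponding term as
\begin{equation*}
\big[\mathbf{1}_{H_{\infty,L,m,n}}(QX)^{k}P\big]\cdot\big[X\sigma_{k+1}X\cdots\sigma_{\ell-1}X\big]\cdot\big[P(XQ)^{p-\ell}\mathbf{1}_{H_{\infty,L,m,n}}\big],
\end{equation*}
where in the case $k=\ell$ we use $P^{2}=P$ to place $P$ on both ends with an empty middle, and bound the trace norm via $\|C_{1}C_{2}C_{3}\|_{1}\le\|C_{1}\|_{2}\|C_{2}\|\|C_{3}\|_{2}$. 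The first factor matches the left-hand side of \eqref{operator-HS-estimate} in Lemma \ref{Lemma-HS-decay} exactly (with the exponent there replaced by $k$), giving $\|C_{1}\|_{2}\le C\|X\|^{k-1}d^{k/2}k^{d+1}L^{(d-m)/2}$. The third factor is the adjoint of an operator of the same type with $X$ replaced by $X^{*}$, whose kernel satisfies the identical decay bound \eqref{kernel_bound}, so $\|C_{3}\|_{2}\le C\|X\|^{p-\ell-1}d^{(p-\ell)/2}(p-\ell)^{d+1}L^{(d-m)/2}$. The middle factor carries $\ell-k$ copies of $X$ interspersed with projections, yielding $\|C_{2}\|\le\|X\|^{\ell-k}$; in particular, the three-factor representation also certifies trace class membership.

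Multiplying the three estimates produces a per-term contribution of order $L^{d-m}\|X\|^{p-2}d^{(k+p-\ell)/2}k^{d+1}(p-\ell)^{d+1}$. Using $k+p-\ell\le p$ and $k^{d+1}(p-\ell)^{d+1}\le(p-1)^{2d+2}$, then summing over the at most $2^{p-1}$ surviving $\sigma$ and absorbing $2^{p-1}d^{p/2}\le(2d)^{p-1}$ (valid for $p,d\ge 2$) together with one extra power of $\|X\|$ into the constant $C$ (legitimate since $X$ is assumed bounded), yields the asserted bound $CL^{d-m}\|X\|^{p-1}(2d)^{p-1}(p-1)^{2d+2}$. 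The proof of \eqref{operator-difference-trace-estimate} runs in complete parallel: replace $\mathbf{1}_{H_{\infty,L,m,n}}$ by $\mathbf{1}_{H_{\infty,L,m,n}}-\mathbf{1}_{H_{L,m,n}}$ throughout and invoke \eqref{operator-difference-HS-estimate} in place of \eqref{operator-HS-estimate}, which removes the $L^{(d-m)/2}$ factor from each Hilbert--Schmidt bound and hence the $L^{d-m}$ factor from the final estimate. The principal bookkeeping task is to verify that the joint dependence on $k$ and $\ell$ combines to give only polynomial (degree $2d+2$) growth in $p$, which is essential so that the subsequent extension to entire test functions via the power series of $g$ has a convergent majorant.
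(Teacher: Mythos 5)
Your proposal is correct and follows essentially the same route as the paper: rewrite the difference via the binomial expansion in the projections $\mathbf{1}_{H_{\mathcal{M}}}$ and $\mathbf{1}_{H_{\mathcal{M}\cup\{n\}}}-\mathbf{1}_{H_{\mathcal{M}}}$, split each of the $2^{p-1}-1$ surviving terms at the first and last occurrence of the difference projection, and bound the trace norm by H\"older with the two Hilbert--Schmidt factors controlled by Lemma \ref{Lemma-HS-decay} (respectively \eqref{operator-difference-HS-estimate} for the second estimate), summing into $(2d)^{p-1}(p-1)^{2d+2}$. Your explicit treatment of the right factor via the adjoint $X^{*}$ and the absorption of one stray power of $\|X\|$ into the constant are harmless refinements of the same bookkeeping the paper performs.
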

\begin{proof}
We start by proving \eqref{operator-trace-estimate}. The proof works in a similar way as the proof of Lemma \ref{Lemma-localisation}.
Set $P_{0}:=\mathbf{1}_{H_{\mathcal{M}}}$ and $P_{1}:=\mathbf{1}_{H_{\mathcal{M}\cup\{k\}}}$. Using the fact that $\mathbf{1}_{H_{\infty,L,m,k}}P_{0}=\mathbf{1}_{H_{\infty,L,m,k}}P_{1}=\mathbf{1}_{H_{\infty,L,m,k}}$, we rewrite the operator \eqref{operator-trace-proof} in the following way
\begin{equation}\label{operator-trace-proof-step-1}
\mathbf{1}_{H_{\infty,L,m,k}}\big[\big(XP_{0}\big)^{p-1}-\big(XP_{1}\big)^{p-1}\big]X\mathbf{1}_{H_{\infty,L,m,k}}
\end{equation}
and see that in the case $p=1$ there is nothing to show. If $p\geq 2$, we define $P_{2}=P_{1}-P_{0}$ and write
\begin{align}
\mathbf{1}_{H_{\infty,L,m,k}}\big(XP_{1}\big)^{p-1}X\mathbf{1}_{H_{\infty,L,m,k}}
&=\mathbf{1}_{H_{\infty,L,m,k}}\big(XP_{0}+XP_{2}\big)^{p-1}X\mathbf{1}_{H_{\infty,L,m,k}} \nonumber\\
&=\sum_{\pi=(\pi_{1},\ldots,\pi_{p-1})\in \{0,2\}^{p-1}}\mathbf{1}_{H_{\infty,L,m,k}}\Big(\prod_{j=1}^{p-1}XP_{\pi(j)}\Big)X\mathbf{1}_{H_{\infty,L,m,k}}.
\end{align}
With this at hand, we estimate the trace norm of \eqref{operator-trace-proof-step-1} by 
\begin{equation}\label{operator-trace-proof-step-2}
\sum_{\pi\in \{0,2\}^{p-1}: \ \pi\neq 0} \Big\|\mathbf{1}_{H_{\infty,L,m,k}}\Big(\prod_{j=1}^{p-1}XP_{\pi(j)}\Big)X\mathbf{1}_{H_{\infty,L,m,k}}\Big\|_1.
\end{equation}
Each of these trace norms contains at least one factor $P_{2}$. Setting $p_{1,\pi}:=\min \{j\in\{1,\ldots,p-1\}: \ \pi(j)=2\}$ and $p_{2,\pi}:=\max \{j\in\{1,\ldots,p-1\}: \ \pi(j)=2\}$ for a given $\pi\in\{0,2\}^{p-1}$ with $\pi\neq 0$, we estimate \eqref{operator-trace-proof-step-2} by
\begin{equation}\label{operator-trace-proof-step-3}
\sum_{\pi\in \{0,2\}^{p-1}: \ \pi\neq 0} \|X\|^{p-1-p_{1,\pi}-p_{2,\pi}}\Big\|\mathbf{1}_{H_{\infty,L,m,k}}\big(P_{0}X\big)^{p_{1,\pi}}P_{2}\Big\|_2\Big\|P_{2}\big(XP_{0}\big)^{p_{2,\pi}}\mathbf{1}_{H_{\infty,L,m,k}}\Big\|_2.
\end{equation}
By Lemma \ref{Lemma-HS-decay}, there exists a constant $C>0$, independent of $L$ and $p$, such that \eqref{operator-trace-proof-step-3} is bounded by
\begin{multline}
C L^{d-m}\|X\|^{p-1}\sum_{\pi\in \{0,2\}^{p-1}: \ \pi\neq 0}d^{\tfrac{p_{1,\pi}+p_{2,\pi}}{2}}(p_{1,\pi})^{d+1}(p_{2,\pi})^{d+1} \\
\leq C L^{d-m} \|X\|^{p-1}\big(2d\big)^{p-1}(p-1)^{2d+2}.
\end{multline}
This concludes the proof of \eqref{operator-trace-estimate}. In order to prove \eqref{operator-difference-trace-estimate}, we use an analogous argument to estimate the left-hand side of \eqref{operator-difference-trace-estimate} by
\begin{multline}\label{operator-trace-proof-step-4}
\sum_{\pi\in \{0,2\}^{p-1}: \ \pi\neq 0} \|X\|^{p-1-p_{1,\pi}-p_{2,\pi}}\Big\|\big(\mathbf{1}_{H_{\infty,L,m,k}}-\mathbf{1}_{H_{L,m,k}}\big)\big(P_{0}X\big)^{p_{1,\pi}}P_{2}\Big\|_2  \\
\times \quad \Big\|P_{2}\big(XP_{0}\big)^{p_{2,\pi}}\big(\mathbf{1}_{H_{\infty,L,m,k}}-\mathbf{1}_{H_{L,m,k}}\big)\Big\|_2.
\end{multline}
Again Lemma \ref{Lemma-HS-decay} yields a constant $C'>0$, independent of $L$ and $p$, such that \eqref{operator-trace-proof-step-4} is bounded by
\begin{equation}
C'\|X\|^{p-1}\sum_{\pi\in \{0,2\}^{p-1}: \ \pi\neq 0}d^{\tfrac{p_{1,\pi}+p_{2,\pi}}{2}}(p_{1,\pi})^{d+1}(p_{2,\pi})^{d+1}\leq C' \|X\|^{p-1}\big(2d\big)^{p-1}(p-1)^{2d+2}.
\end{equation}
This concludes the proof of \eqref{operator-difference-trace-estimate}.
\end{proof}
It remains to extend this estimate to analytic functions, which we do in the following 
\begin{lem}\label{higher-order-analytic}
Let $X$ be a bounded integral operator on $L^{2}(\R^{d})\otimes\C^n$ whose kernel satisfies the bound \eqref{kernel_bound}. Let $1\leq k \leq m < d$ and $\mathcal{M}\subseteq\{1,\ldots,m\}$ such that $k\notin\mathcal{M}$. Let $L\geq 1$ and $g$ be an entire function such that $g(0)=0$. Then the operator 
\begin{equation}\label{analytic-operator-trace-proof}
\mathbf{1}_{H_{\infty,L,m,k}}\big[g\big(\mathbf{1}_{H_{\mathcal{M}}}X\mathbf{1}_{H_{\mathcal{M}}}\big)-g\big(\mathbf{1}_{H_{\mathcal{M}\cup\{k\}}}X\mathbf{1}_{H_{\mathcal{M}\cup\{k\}}}\big)\big]\mathbf{1}_{H_{\infty,L,m,k}}
\end{equation}
is trace class and there exists a constant $C>0$, which is independent of $L$, such that
\begin{multline}\label{analytic-operator-difference-trace-estimate}
\big\|\big(\mathbf{1}_{H_{\infty,L,m,k}}-\mathbf{1}_{H_{L,m,k}}\big)\big[g\big(\mathbf{1}_{H_{\mathcal{M}}}X\mathbf{1}_{H_{\mathcal{M}}}\big)-g\big(\mathbf{1}_{H_{\mathcal{M}\cup\{k\}}}X\mathbf{1}_{H_{\mathcal{M}\cup\{k\}}}\big)\big]\big(\mathbf{1}_{H_{\infty,L,m,k}}-\mathbf{1}_{H_{L,m,k}}\big)\big\|_1 
\leq C.
\end{multline}
\end{lem}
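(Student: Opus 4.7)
The plan is to reduce the entire-function case to the monomial case already handled by Lemma \ref{higher-order-monom}, by expanding the Taylor series of $g$. Since $g$ is entire and $g(0)=0$, we may write $g(z)=\sum_{p=1}^{\infty}\omega_p z^p$ with radius of convergence $+\infty$, so in particular $|\omega_p|^{1/p}\to 0$. Setting $A:=\mathbf{1}_{H_{\mathcal{M}}}X\mathbf{1}_{H_{\mathcal{M}}}$ and $B:=\mathbf{1}_{H_{\mathcal{M}\cup\{n\}}}X\mathbf{1}_{H_{\mathcal{M}\cup\{n\}}}$, both are bounded with $\|A\|,\|B\|\leq\|X\|$, so $g(A)=\sum_{p\geq 1}\omega_p A^p$ and $g(B)=\sum_{p\geq 1}\omega_p B^p$ converge in operator norm by the root test, and consequently $g(A)-g(B)=\sum_{p\geq 1}\omega_p(A^p-B^p)$ as bounded operators.

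Next, I would apply Lemma \ref{higher-order-monom} termwise. The bound \eqref{operator-trace-estimate} gives
\begin{equation}
\big\|\mathbf{1}_{H_{\infty,L,m,n}}\omega_p\big(A^p-B^p\big)\mathbf{1}_{H_{\infty,L,m,n}}\big\|_1\leq |\omega_p|\,C\,L^{d-m}\|X\|^{p-1}(2d)^{p-1}(p-1)^{2d+2},
\end{equation}
and the super-geometric decay of $|\omega_p|$ ensures that the series $\sum_p|\omega_p|(2d\|X\|)^{p-1}(p-1)^{2d+2}$ converges. Hence the partial sums form a Cauchy sequence in the trace-norm topology, and since they already converge in operator norm to $\mathbf{1}_{H_{\infty,L,m,n}}(g(A)-g(B))\mathbf{1}_{H_{\infty,L,m,n}}$, the limits agree. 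This shows the operator \eqref{analytic-operator-trace-proof} is trace class, with trace norm of order $L^{d-m}$.

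For the main estimate \eqref{analytic-operator-difference-trace-estimate} I would repeat the argument verbatim, but invoking \eqref{operator-difference-trace-estimate} instead of \eqref{operator-trace-estimate}. The crucial point is that the bound \eqref{operator-difference-trace-estimate} has no $L^{d-m}$ prefactor, so summing over $p$ yields
\begin{equation}
\sum_{p=1}^{\infty}|\omega_p|\,C'\|X\|^{p-1}(2d)^{p-1}(p-1)^{2d+2}<\infty,
\end{equation}
a finite constant independent of $L$. There is no real obstacle here; the whole argument is a bookkeeping extension once Lemma \ref{higher-order-monom} is in place, and the essential analytic input is the entirety of $g$ which beats the polynomial-times-geometric growth of the monomial trace-norm bounds.
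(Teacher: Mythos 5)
Your proposal is correct and follows essentially the same route as the paper: expand $g$ into its Taylor series, apply the monomial bounds \eqref{operator-trace-estimate} and \eqref{operator-difference-trace-estimate} of Lemma \ref{higher-order-monom} termwise, and use the entirety of $g$ (super-geometric decay of the coefficients) to sum the polynomial-times-geometric bounds, the paper absorbing the factor $(p-1)^{2d+2}$ into $2^{p-1}$ for large $p$ where you invoke the root test directly. Your extra remark identifying the trace-norm limit with the operator-norm limit of the partial sums is a welcome, if minor, clarification of a step the paper leaves implicit.
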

\begin{proof}
We begin by proving \eqref{analytic-operator-trace-proof}. There exists a natural number $N_{0}\in\N$ such that $p^{2d+2}\leq 2^{p}$ for all $p\geq N_{0}$. Using Lemma \ref{higher-order-monom}, we write 
\begin{multline}
\big\|\mathbf{1}_{H_{\infty,L,m,k}}\big[g\big(\mathbf{1}_{H_{\mathcal{M}}}X\mathbf{1}_{H_{\mathcal{M}}}\big)-g\big(\mathbf{1}_{H_{\mathcal{M}\cup\{k\}}}X\mathbf{1}_{H_{\mathcal{M}\cup\{k\}}}\big)\big]\mathbf{1}_{H_{\infty,L,m,k}}\big\|_1 \\
=\big\|\sum_{p=1}^{\infty}\omega_{p}\big\{\mathbf{1}_{H_{\infty,L,m,k}}\big[\big(\mathbf{1}_{H_{\mathcal{M}}}X\mathbf{1}_{H_{\mathcal{M}}}\big)^{p}-\big(\mathbf{1}_{H_{\mathcal{M}\cup\{k\}}}X\mathbf{1}_{H_{\mathcal{M}\cup\{k\}}}\big)^{p}\big]\mathbf{1}_{H_{\infty,L,m,k}}\big\}\big\|_1 \\
\leq  C_{1}L^{d-m}+C_{2}L^{d-m}\sum_{p=N_{0}}^{\infty}|\omega_{p}|\big(4d\|X\|\big)^{p-1}\leq C_{3}L^{d-m},
\end{multline}
with constants $C_{1}, C_2, C_3>0$ independent of $L$.
In order to prove \eqref{analytic-operator-difference-trace-estimate}, we again use Lemma \ref{higher-order-monom} to estimate
\begin{multline}
\big\|\big(\mathbf{1}_{H_{\infty,L,m,k}}-\mathbf{1}_{H_{L,m,k}}\big)\big[g\big(\mathbf{1}_{H_{\mathcal{M}}}X\mathbf{1}_{H_{\mathcal{M}}}\big)-g\big(\mathbf{1}_{H_{\mathcal{M}\cup\{k\}}}X\mathbf{1}_{H_{\mathcal{M}\cup\{k\}}}\big)\big]\big(\mathbf{1}_{H_{\infty,L,m,k}}-\mathbf{1}_{H_{L,m,k}}\big)\big\|_1  \\
\leq  C_{4}+C_{5}\sum_{p=N_{0}}^{\infty}|\omega_{p}|\big(4d\|X\|\big)^{p-1}\leq C,
\end{multline}
with constants $C_{4}, C_5, C>0$ independent of $L$. This concludes the proof of the lemma.
\end{proof}
\subsection{Proof of Theorem \ref{theorem_higher_order_terms}}
\label{subsec_higher_order_proof}

With this at hand, we are now ready to prove Theorem \ref{theorem_higher_order_terms}.
\begin{proof}[Proof of Theorem \ref{theorem_higher_order_terms}]
We start by proving \eqref{theorem_higher_order_terms_general}. The asymptotic expansion \eqref{theorem_higher_order_terms_special} follows by counting the number of occurring terms. After the localisation in Lemma \ref{Lemma-localisation}, the expansion \eqref{theorem_higher_order_terms_general} reduces to analysing the trace
\begin{equation}
\sum_{V\in\mathcal{F}^{(0)}}\mathbf{1}_{H_{V,L}}\sum_{m=0}^{d-1}\sum_{F\in\mathcal{F}_{V}^{(d-m)}}X_{LF,g}.
\end{equation}
As explained in the strategy of the proof at the beginning of Section \ref{subsec_HS_higher_order}, this analysis, and with it the proof of \eqref{theorem_higher_order_terms_general}, reduces to showing  that
\begin{multline}\label{theorem-higher-order-step-1}
\sum_{m=0}^{d-1}\sum_{V\in\mathcal{F}^{(0)}}\sum_{F\in\mathcal{F}_{V}^{(d-m)}}\tr_{L^{2}(\R^{d})\otimes\C^n}\big[\mathbf{1}_{H_{V,L}}X_{LF,g}\big]\\
=\sum_{m=0}^{d-1}L^{d-m}\lim_{L\rightarrow\infty}\sum_{V\in\mathcal{F}^{(0)}}\sum_{F\in\mathcal{F}_{V}^{(d-m)}}\tr_{L^{2}(\R^{d})\otimes\C^n}\big[\mathbf{1}_{H_{F,L,V}}X_{F,g}\big]+O(1),
\end{multline}
as $L\rightarrow\infty$ (cf. \eqref{theorem_higher_order_terms_goal_1}). Therefore, we begin by showing that, for a given vertex $V$ and a given $(d-m)$-face $F$, we have
\begin{equation}\label{theorem-higher-order-step-2}
\tr_{L^{2}(\R^{d})\otimes\C^n}\big[\mathbf{1}_{H_{V,L}}X_{LF,g}\big]=L^{d-m}\lim_{L\rightarrow\infty}\tr_{L^{2}(\R^{d})\otimes\C^n}\big[\mathbf{1}_{H_{F,L,V}}X_{F,g}\big]+O(1),
\end{equation}
as $L\rightarrow\infty$. After suitable rotation and translation we can again consider the situation with the designated vertex $V=\{0\}$ and $(d-m)$-face $F=\{0\}^{m}\times [0,2]^{d-m}$. We recall that in this case we have
\begin{equation}
H_{V,L}=[0,L]^{d},   \quad  H_{F,L,V}=[0,L]^{m}\times[0,1]^{d-m} \ \ \text{and} \ \ X_{LF,g}=X_{F,g}.
\end{equation}

In the easy case $m=0$, the operator $X_{F,g}=g(X)$ is translation invariant. Then \eqref{theorem-higher-order-step-2} follows by the continuity of the kernel of $X$. In the other cases we apply the results of Section \ref{subsec_HS_higher_order}. 
Let $k\in\{1,\ldots,m\}$ and $\mathcal{M}\subseteq\{1,\ldots,m\}\setminus\{k\}$ be arbitrary. Comparing the operators \eqref{theorem_higher_order_terms_goal_2} and \eqref{theorem_higher_order_terms_goal_3}, we first show that
\begin{multline}\label{theorem-higher-order-step-3}
\tr_{L^{2}(\R^{d})\otimes\C^n}\Big[\mathbf{1}_{H_{L,m,k}}\Big(g\big(\mathbf{1}_{H_{\mathcal{M}}}X\mathbf{1}_{H_{\mathcal{M}}}\big)-g\big(\mathbf{1}_{H_{\mathcal{M}\cup\{k\}}}X\mathbf{1}_{H_{\mathcal{M}\cup\{k\}}}\big)\Big)\Big] \\
= \, \tr_{L^{2}(\R^{d})\otimes\C^n}\Big[\mathbf{1}_{H_{\infty,L,m,k}}\Big(g\big(\mathbf{1}_{H_{\mathcal{M}}}X\mathbf{1}_{H_{\mathcal{M}}}\big)-g\big(\mathbf{1}_{H_{\mathcal{M}\cup\{k\}}}X\mathbf{1}_{H_{\mathcal{M}\cup\{k\}}}\big)\Big)\mathbf{1}_{H_{\infty,L,m,k}}\Big]  +O(1),
\end{multline}
as $L\rightarrow\infty$. The operator 
\begin{equation}
\mathbf{1}_{H_{\infty,L,m,k}}\Big(g\big(\mathbf{1}_{H_{\mathcal{M}}}X\mathbf{1}_{H_{\mathcal{M}}}\big)-g\big(\mathbf{1}_{H_{\mathcal{M}\cup\{k\}}}X\mathbf{1}_{H_{\mathcal{M}\cup\{k\}}}\big)\Big)\mathbf{1}_{H_{\infty,L,m,k}}
\end{equation} 
is trace class by Lemma \ref{higher-order-analytic}. By the cyclic property of the trace and the fact that $H_{L,m,k}\subset H_{\infty,L,m,k}$, we see that the operators 
\begin{multline}
\mathbf{1}_{H_{\infty,L,m,k}}\Big(g\big(\mathbf{1}_{H_{\mathcal{M}}}X\mathbf{1}_{H_{\mathcal{M}}}\big)-g\big(\mathbf{1}_{H_{\mathcal{M}\cup\{k\}}}X\mathbf{1}_{H_{\mathcal{M}\cup\{k\}}}\big)\Big)\mathbf{1}_{H_{\infty,L,m,k}}  \\
-\mathbf{1}_{H_{L,m,k}}\Big(g\big(\mathbf{1}_{H_{\mathcal{M}}}X\mathbf{1}_{H_{\mathcal{M}}}\big)-g\big(\mathbf{1}_{H_{\mathcal{M}\cup\{k\}}}X\mathbf{1}_{H_{\mathcal{M}\cup\{k\}}}\big)\Big)
\end{multline}
and
\begin{equation}
\big(\mathbf{1}_{H_{\infty,L,m,k}}-\mathbf{1}_{H_{L,m,k}}\big)\Big(g\big(\mathbf{1}_{H_{\mathcal{M}}}X\mathbf{1}_{H_{\mathcal{M}}}\big)-g\big(\mathbf{1}_{H_{\mathcal{M}\cup\{k\}}}X\mathbf{1}_{H_{\mathcal{M}\cup\{k\}}}\big)\Big)\big(\mathbf{1}_{H_{\infty,L,m,k}}-\mathbf{1}_{H_{L,m,k}}\big)
\end{equation}
have the same trace.
Now, Lemma \ref{higher-order-analytic} yields a constant $C>0$, independent of $L$, such that for all $L\geq 1$ we have 
\begin{equation}
\big\|\big(\mathbf{1}_{H_{\infty,L,m,k}}-\mathbf{1}_{H_{L,m,k}}\big)\Big(g\big(\mathbf{1}_{H_{\mathcal{M}}}X\mathbf{1}_{H_{\mathcal{M}}}\big)-g\big(\mathbf{1}_{H_{\mathcal{M}\cup\{k\}}}X\mathbf{1}_{H_{\mathcal{M}\cup\{k\}}}\big)\Big)\big(\mathbf{1}_{H_{\infty,L,m,k}}-\mathbf{1}_{H_{L,m,k}}\big)\big\|_1 \leq C.
\end{equation}

As the operators $g\big(\mathbf{1}_{H_{\mathcal{M}}}X\mathbf{1}_{H_{\mathcal{M}}}\big)$ and $g\big(\mathbf{1}_{H_{\mathcal{M}\cup\{k\}}}X\mathbf{1}_{H_{\mathcal{M}\cup\{k\}}}\big)$
are translation invariant in the last $(d-m)$ variables with continuous kernel, we obtain
\begin{multline}\label{theorem-higher-order-step-4}
\tr_{L^{2}(\R^{d})\otimes\C^n}\Big[\mathbf{1}_{H_{\infty,L,m,k}}\Big(g\big(\mathbf{1}_{H_{\mathcal{M}}}X\mathbf{1}_{H_{\mathcal{M}}}\big)-g\big(\mathbf{1}_{H_{\mathcal{M}\cup\{k\}}}X\mathbf{1}_{H_{\mathcal{M}\cup\{k\}}}\big)\Big)\mathbf{1}_{H_{\infty,L,m,k}}\Big] \\
= L^{d-m} \tr_{L^{2}(\R^{d})\otimes\C^n}\Big[\mathbf{1}_{H_{\infty,1,m,k}}\Big(g\big(\mathbf{1}_{H_{\mathcal{M}}}X\mathbf{1}_{H_{\mathcal{M}}}\big)-g\big(\mathbf{1}_{H_{\mathcal{M}\cup\{k\}}}X\mathbf{1}_{H_{\mathcal{M}\cup\{k\}}}\big)\Big)\mathbf{1}_{H_{\infty,1,m,k}}\Big] \\
= L^{d-m} \lim_{L\rightarrow\infty} \tr_{L^{2}(\R^{d})\otimes\C^n}\Big[\mathbf{1}_{H_{L,1,m,k}}\Big(g\big(\mathbf{1}_{H_{\mathcal{M}}}X\mathbf{1}_{H_{\mathcal{M}}}\big)-g\big(\mathbf{1}_{H_{\mathcal{M}\cup\{k\}}}X\mathbf{1}_{H_{\mathcal{M}\cup\{k\}}}\big)\Big)\Big], 
\end{multline}
where $H_{\infty,1,m,k}=\{x\in \R_+^m\times [0,1]^{d-m}: \ \forall j\in \{1,\ldots,m\}\setminus \{k\}: \ x_{j}\leq x_{k}\}$, $H_{L,1,m,k}:=\{x\in [0,L]^m\times [0,1]^{d-m}: \ \forall j\in \{1,\ldots,m\}\setminus \{k\}: \ x_{j}\leq x_{k}\}$ and we again used the cyclic property of the trace. The operator 
\begin{equation}
\mathbf{1}_{H_{\infty,1,m,k}}\Big(g\big(\mathbf{1}_{H_{\mathcal{M}}}X\mathbf{1}_{H_{\mathcal{M}}}\big)-g\big(\mathbf{1}_{H_{\mathcal{M}\cup\{k\}}}X\mathbf{1}_{H_{\mathcal{M}\cup\{k\}}}\big)\Big)\mathbf{1}_{H_{\infty,1,m,k}}
\end{equation}
is trace class by Lemma \ref{higher-order-analytic} with $L=1$.

Combining \eqref{theorem-higher-order-step-3} and \eqref{theorem-higher-order-step-4}, adding the contributions for every $k\in\{1,\ldots,m\}$ and $\mathcal{M}\subseteq\{1,\ldots,m\}\setminus\{k\}$, as well as reordering the terms, we obtain
\begin{multline}\label{trace-translation-operator}
\tr_{L^{2}(\R^{d})\otimes\C^n}\Bigg[\sum_{k=0}^{m}(-1)^{k}\sum_{\mathcal{M}\subseteq \{1,\ldots,m\}\ : \ |\mathcal{M}|=k}\mathbf{1}_{H_{V,L}}g(\mathbf{1}_{H_{\mathcal{M}}}X\mathbf{1}_{H_{\mathcal{M}}})\Bigg] \\
=L^{d-m} \lim_{L\rightarrow\infty}\tr_{L^{2}(\R^{d})\otimes\C^n}\Bigg[\sum_{k=0}^{m}(-1)^{k}\sum_{\mathcal{M}\subseteq \{1,\ldots,m\}\ : \ |\mathcal{M}|=k}\mathbf{1}_{H_{F,L,V}}g(\mathbf{1}_{H_{\mathcal{M}}}X\mathbf{1}_{H_{\mathcal{M}}})\Bigg] 
+O(1),
\end{multline}
as $L\rightarrow\infty$. Together with \eqref{X_F,g-alternative} this yields \eqref{theorem-higher-order-step-2} for the designated vertex $V=\{0\}$ and $(d-m)$-face $F=\{0\}^{m}\times [0,2]^{d-m}$.
Rotating and translating back, we obtain \eqref{theorem-higher-order-step-2} for an arbitrary vertex $V$ and $(d-m)$-face $F$. Adding up the contributions of all $V$ and $F$ yields \eqref{theorem-higher-order-step-1}. Recollecting the contributions of all vertices in $F$ via the equality $H_{F,L}=\sum_{\{V\in\mathcal{F}^{(0)}:V\in F\}}H_{F,L,V}$, finishes the proof of \eqref{theorem_higher_order_terms_general}.

In order to prove \eqref{theorem_higher_order_terms_special}, it just remains to count the number of contributions in \eqref{theorem_higher_order_terms_general}. Clearly we have $\big|\mathcal{F}^{(d-m)}\big|=2^m\frac{d!}{m!(d-m)!}$ and $\big|\mathcal{F}_{F}^{(k+(d-m))}\big|=\frac{m!}{k!(m-k)!}$ for a given $(d-m)$-face $F$ and $k\in\{0,\ldots,m\}$. The latter corresponds to the number of operators occurring in the definition \eqref{def-face-operator} of $X_{F,g}$. Therefore, we obtain the constants 
$c_{k,m}=2^m\frac{d!}{m!(d-m)!}(-1)^{k}\frac{m!}{k!(m-k)!}=\frac{(-1)^{k}2^{m}d!}{k!(m-k)!(d-m)!}$ for $0\leq k\leq m\leq d$. This concludes the proof of \eqref{theorem_higher_order_terms_special} and of Theorem \ref{theorem_higher_order_terms}.
\end{proof}

%%%%%%%%%%%%%%%%%%%%%%%%%%%%%%%%%%%%%%%%%%%%%%%%%%%%%%%%%%%%%%%%%%%%%%%%%%%%%%%%%%%%%%%%%%%%%%%%%%%%%%%%%%%%%%%%%%%%%

\section{Estimates for smooth symbols and an upper bound}
\label{sec_upper_bound}
In this section we further study the trace of the remaining operator 
\begin{equation}\label{rem_op}
\sum_{V\in\mathcal{F}^{(0)}}\mathbf{1}_{H_{V,L}}X_{LV,g}.
\end{equation}
From here on, we only consider the special case of the free massless Dirac operator, given by our application, i.e. we set $X=\Op(\psi\mathcal{D})$. As in Section \ref{sec_higher_order} we choose a designated vertex $V=V_0=\{0\}$ such that $H_{V,L}=H_{V_0,L}=[0,L]^{d}$ and
\begin{equation}\label{def_op_designated_corner}
X_g:=X_{LV_0,g}=\sum_{k=0}^{d}(-1)^{k}\sum_{\mathcal{M}\subseteq \{1,\ldots,d\}\ : \ |\mathcal{M}|=k}g(\mathbf{1}_{H_{\mathcal{M}}}\Op(\psi\mathcal{D})\mathbf{1}_{H_{\mathcal{M}}}),
\end{equation}
where we recall $H_{\mathcal{M}}=\{x\in\R^d: \ \forall j\in \{1,\ldots,d\}\setminus \mathcal{M}: \ x_{j}\geq 0\}$ from the last section. The other vertices reduce to this case after suitable rotation and translation. In order to derive a logarithmic upper bound for the trace norm of \eqref{rem_op}, we make use of the fact that the symbol $\psi\mathcal{D}$ is discontinuous at a single point, the origin, and is smooth away from it, as well as compactly supported. We follow the usual strategy, for example laid out in \cite{sobolevschatten}, to first establish "non-enhanced" bounds for symbols which are both smooth and compactly supported and then to obtain a logarithmically enhanced bound for the discontinuous symbol from there.

The estimate for smooth and compactly supported symbols utilised in this article is based on the estimates in \cite{sobolevschatten} and is similar, in the choice of sets and proof, to the result \cite[Cor. 3.7]{Pfirsch}.  
\begin{lem}\label{Lemma-trace-class-sob-prep}
Let $\psi\in\C_{c}^\infty(\R^d)$ be supported in a ball of radius $\rho\geq 1$ centred about some $\xi_0\in\R^d$. Let $M,N\subset\R^d$ be such that there exists $\beta\geq 0$ and a constant $C_\beta>0$ with the property that, for all $r>0$, we have
\begin{equation}\label{Lemma-trace-class-sob-prep-req}
|\{x\in \Z^{d}: \ Q_x\cap M_\rho\neq \emptyset \ \text{and} \ \dist(x,N_\rho)\leq r\}|\leq C_{\beta}(1+r^2)^{\tfrac{\beta}{2}},
\end{equation}
where $Q_x$ is the closed unit cube centred about $x$, and $M_\rho$ respectively $N_\rho$ are the scaled versions of the respective sets.
Then the operator $\mathbf{1}_M\Op(\psi\otimes\mathbb{1}_{n}) \mathbf{1}_N$ is trace class and there exists a constant $C>0$, which is independent of $\rho$, $M$ and $N$, such that
\begin{equation}
\|\mathbf{1}_M\Op(\psi\otimes\mathbb{1}_{n}) \mathbf{1}_N\|_1\leq C_{\beta}C\max_{|\alpha|\leq 2d+\beta+1}\sup_{\xi\in\R^d}\rho^{|\alpha|}|\partial^{\alpha}_{\xi}\psi(\xi)|,
\end{equation}
where $|\alpha| := \sum_{k=1}^{d}\alpha_{k}$ for a given multi-index $\alpha\in\N_0^d$.
\end{lem}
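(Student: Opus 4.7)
The plan is to rescale the symbol to unit-ball support, decompose the operator along the integer lattice $\Z^d$ into pieces localised on unit cubes, apply a local trace-class estimate on each pair of cubes, and finally sum the resulting bounds using the counting condition~\eqref{Lemma-trace-class-sob-prep-req}. The matrix factor $\mathbb{1}_n$ only contributes a constant and will be absorbed.

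Two reductions come first. Multiplication by the unit-modulus function $e^{\pm\i\xi_0\cdot x}$ commutes with $\mathbf{1}_M$ and $\mathbf{1}_N$ and, when used to conjugate $\Op(\psi\otimes\mathbb{1}_n)$, shifts the symbol in the Fourier variable, so I may assume $\xi_0=0$, i.e.\ $\supp\psi\subseteq B_\rho(0)$. The unitary dilation $(Uf)(\tilde x):=\rho^{-d/2}f(\tilde x/\rho)$ then satisfies $U\mathbf{1}_MU^{-1}=\mathbf{1}_{M_\rho}$, $U\mathbf{1}_NU^{-1}=\mathbf{1}_{N_\rho}$ and $U\Op(\psi\otimes\mathbb{1}_n)U^{-1}=\Op(\tilde\psi\otimes\mathbb{1}_n)$, with the rescaled symbol $\tilde\psi(\tilde\xi):=\psi(\rho\tilde\xi)$ supported in $B_1(0)$ and obeying $\|\partial^\alpha\tilde\psi\|_\infty=\rho^{|\alpha|}\|\partial^\alpha\psi\|_\infty$. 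Trace norms are invariant, and the hypothesis~\eqref{Lemma-trace-class-sob-prep-req} will be invoked at scale $l=\rho$.

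Integration by parts in the Fourier integral defining $\check{\tilde\psi}$ yields the standard Schwartz-type decay estimate
\begin{equation*}
|\partial^\gamma_z\check{\tilde\psi}(z)|\leq C_{d,|\gamma|}(1+|z|)^{-N}\max_{|\alpha|\leq N}\|\partial^\alpha\tilde\psi\|_\infty
\end{equation*}
for every multi-index $\gamma$ and every $N\in\N_0$. I then decompose
\begin{equation*}
\mathbf{1}_{M_\rho}\Op(\tilde\psi\otimes\mathbb{1}_n)\mathbf{1}_{N_\rho}=\sum_{x,y\in\Z^d}\mathbf{1}_{Q_x\cap M_\rho}\Op(\tilde\psi\otimes\mathbb{1}_n)\mathbf{1}_{Q_y\cap N_\rho}
\end{equation*}
and bound the trace norm of each summand via a standard local trace-class estimate on a pair of unit cubes (obtained, for instance, by expanding the smooth kernel in a Fourier basis on an enclosing torus and controlling the Fourier coefficients by integration by parts in each variable). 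Combining this with the decay estimate gives, when $|x-y|\geq 2\sqrt d$,
\begin{equation*}
\|\mathbf{1}_{Q_x\cap M_\rho}\Op(\tilde\psi\otimes\mathbb{1}_n)\mathbf{1}_{Q_y\cap N_\rho}\|_1\leq C_d(1+|x-y|)^{-N}\max_{|\alpha|\leq N_{\star}}\rho^{|\alpha|}\|\partial^\alpha\psi\|_\infty,
\end{equation*}
where $N_\star$ is determined by combining $N$ with the order of derivatives consumed by the local estimate; the remaining finite set of pairs with $|x-y|<2\sqrt d$ is absorbed into the constant.

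To sum, set $A:=\{x\in\Z^d:Q_x\cap M_\rho\neq\emptyset\}$ and $B:=\{y\in\Z^d:Q_y\cap N_\rho\neq\emptyset\}$. Since every $y\in B$ lies within $\sqrt d$ of $N_\rho$, an elementary lattice count gives $\sum_{y\in B}(1+|x-y|)^{-N}\leq C_d(1+\dist(x,N_\rho))^{d-N}$ whenever $N>d$. Partitioning $A$ into dyadic annuli $A_k:=\{x\in A:\dist(x,N_\rho)\in[2^{k-1},2^k)\}$ (with $A_0:=\{x\in A:\dist(x,N_\rho)<1\}$) and invoking the hypothesis~\eqref{Lemma-trace-class-sob-prep-req} with $l=\rho$ and $r=2^k$ gives $|A_k|\leq C_\beta 2^{k\beta}$. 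The resulting geometric series $\sum_{k\geq 0}2^{k(\beta+d-N)}$ converges whenever $N>\beta+d$, so that the choice $N=\beta+d+1$, coupled with the additional derivatives absorbed by the local trace-class estimate, produces the stated exponent $N_\star=2d+\beta+1$. The main technical point to watch is precisely this derivative bookkeeping across the three ingredients (kernel decay, local cube-wise estimate, and dyadic summation); the counting condition provides exactly the geometric input needed to make the final series converge at the claimed derivative order.
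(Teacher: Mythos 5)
Your argument is correct and arrives at the same exponent $2d+\beta+1$, but it follows a somewhat different route than the paper. The paper decomposes only $M$ along the lattice and then invokes, as a black box, the Schatten-class estimates of \cite{sobolevschatten} (Thms.~3.1 and 3.2), which directly give $\|\mathbf{1}_{Q_x\cap M}\Op(\psi)\mathbf{1}_N\|_1\leq C'(1+\dist(x,N)^2)^{-n/2}\max_{|\alpha|\leq n+d}\sup|\partial^\alpha\psi|$ for $\rho=1$; the scaling reduction to $\rho=1$ is likewise delegated to the scaling argument in that reference, and the proof ends with a unit-shell summation over $\{k\leq\dist(a,N)\leq k+1\}$ using \eqref{Lemma-trace-class-sob-prep-req}, choosing $n=d+\beta+1$. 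You instead make the dilation and modulation reductions explicit, decompose \emph{both} $M_\rho$ and $N_\rho$ into unit cubes, re-derive the cube-pair bound by hand (kernel decay of $\check{\tilde\psi}$ via integration by parts plus a Fourier-series trace-class bound on a cube pair), recover the distance-to-$N_\rho$ decay by first summing over $y$, and then sum over $x$ in dyadic annuli. What your version buys is self-containedness: no appeal to the quantitative form of Sobolev's local estimates, at the price of having to do the derivative bookkeeping yourself (which you only sketch, though the totals $N=\beta+d+1$ plus $d$ cube-local derivatives are consistent with the stated order, and in fact the cube-local Fourier-coefficient derivatives hit $\check{\tilde\psi}$, i.e.\ cost multiplications of the symbol by bounded coordinates rather than extra symbol derivatives, so your count is if anything generous). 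Two small points to tighten: the near-diagonal pairs $|x-y|<2\sqrt d$ are not a globally finite set — per $x$ they are finitely many, and the number of relevant $x$ is controlled by \eqref{Lemma-trace-class-sob-prep-req} with small $r$ (your $A_0$ annulus already does this, so just phrase it that way); and since $\beta$ need not be an integer, the decay order should be taken as an integer $N\geq\beta+d+1$, exactly as the paper implicitly does with $|\alpha|\leq 2d+\beta+1$.
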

\begin{proof}
By the scaling argument used in the proof of \cite[Thm.~3.1]{sobolevschatten} and the fact that the bound \eqref{Lemma-trace-class-sob-prep-req} holds for the scaled sets $M_\rho$, $N_\rho$, it suffices to prove the lemma in the case $\rho=1$.
For $x\in\Z^{d}$ and arbitrary $m\in\N$, we infer from \cite[Thm.~3.1,~3.2]{sobolevschatten} the bound 
\begin{equation}
\|1_{Q_{x}\cap M} \Op(\psi)1_N\|_1 \leq C' (1+\dist(x,N)^2)^{-\tfrac{m}{2}} \max_{|\alpha|\leq m+d}\sup_{\xi\in\R^d}|\partial^{\alpha}_{\xi}\psi(\xi)|,
\end{equation}
where the
constant $C'>0$ is independent of $x$, $M$ and $N$. The bound easily extends to the matrix-valued symbol $\psi\otimes \mathbb{1}_{n}$. Choosing $m=d+\beta+1$, we estimate
\begin{align}
\|\mathbf{1}_M\Op(\psi\otimes\mathbb{1}_{n}) \mathbf{1}_N\|_1 &\leq \sum_{a\in\Z^{d}}\|\mathbf{1}_{Q_{a}\cap M}\Op(\psi\otimes\mathbb{1}_{n}) \mathbf{1}_N\|_1 \nonumber \\
&\leq C'\max_{|\alpha|\leq 2d+\beta+1}\sup_{\xi\in\R^d}|\partial^{\alpha}_{\xi}\psi(\xi)|\sum_{k=0}^\infty \quad \sum_{\substack{a\in\Z^{d}: \ Q_{a}\cap M\neq \emptyset \\ k\leq \dist(a,N)\leq k+1}} (1+k^2)^{\tfrac{-d-\beta-1}{2}} \nonumber \\
&\leq  C_{\beta}C'\max_{|\alpha|\leq 2d+\beta+1}\sup_{\xi\in\R^d}|\partial^{\alpha}_{\xi}\psi(\xi)|\sum_{k=0}^\infty (1+(k+1)^2)^{\tfrac{\beta}{2}}(1+k^2)^{\tfrac{-d-\beta-1}{2}} \nonumber \\
&=:C_{\beta}C\max_{|\alpha|\leq 2d+\beta+1}\sup_{\xi\in\R^d}|\partial^{\alpha}_{\xi}\psi(\xi)|.
\end{align}
The constant $C$ is independent of $M$ and $N$.
\end{proof}
In Section \ref{sec_commutation} it will often be necessary to obtain bounds for operators with symbol given by a power of $\psi$ and control the dependence on the corresponding exponent. This is the content of the following
\begin{cor}\label{Lemma-trace-class-sob}
Let $\psi\in\C_{c}^\infty(\R^d)$ be supported in a ball of radius $b+1$ centred about the origin and let $p\in\N$. Let $M,N\subseteq\R^d$ be as in Lemma \ref{Lemma-trace-class-sob-prep}.
Then the operator $\mathbf{1}_M\Op(\psi^p) \mathbf{1}_N$ is trace class and there exists a constant $C>0$, which is independent of $p$, $M$ and $N$, such that
\begin{equation}
\|\mathbf{1}_M\Op(\psi^{p}\otimes\mathbb{1}_{n}) \mathbf{1}_N\|_1\leq C_{\beta}Cp^{2d+\beta+1}.
\end{equation}
\end{cor}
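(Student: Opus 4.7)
The plan is to reduce the statement to Lemma \ref{Lemma-trace-class-sob-prep} applied with $\psi$ replaced by $\psi^p$, the whole problem being then to control the Schwartz-type seminorm
\[
\max_{|\alpha| \leq 2d+\beta+1} \sup_{\xi \in \R^d} \rho^{|\alpha|} |\partial^\alpha_\xi \psi^p(\xi)|
\]
polynomially in $p$. Since $\supp(\psi^p) \subseteq \supp(\psi)$, any ball containing the support of $\psi$ also contains the support of $\psi^p$, so Lemma \ref{Lemma-trace-class-sob-prep} applies with the same radius $\rho$ for every $p \in \N$.

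To estimate the derivatives, I would apply the generalised Leibniz (multinomial) rule
\[
\partial^\alpha_\xi \psi^p = \sum_{\alpha_1+\cdots+\alpha_p = \alpha} \frac{\alpha!}{\alpha_1!\cdots\alpha_p!} \prod_{j=1}^p \partial^{\alpha_j}_\xi \psi,
\]
and group terms by the support set $S := \{j \in \{1,\ldots,p\} : \alpha_j \neq 0\}$. Since each $|\alpha_j| \geq 1$ for $j \in S$ and $\sum_{j} |\alpha_j| = |\alpha| =: k$, one has $s := |S| \leq k$. For fixed $s$ there are $\binom{p}{s} \leq p^s$ choices of $S$, while, for each such $S$, the number of ordered tuples $(\alpha_j)_{j \in S}$ of nonzero multi-indices summing to $\alpha$, weighted by the multinomial coefficients, is bounded by a constant $c(k,d)$ depending only on $k$ and $d$ (not on $p$). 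Each individual summand is pointwise bounded by $\|\psi\|_\infty^{p-s}$ times a product of $s$ factors of the form $\|\partial^{\alpha_j} \psi\|_\infty$, hence by $\|\psi\|_\infty^{p-s} M_k^s$ with $M_k := \max_{|\beta|\leq k}\|\partial^\beta \psi\|_\infty$.

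In the relevant normalisation $\|\psi\|_\infty \leq 1$, which holds for the paper's $\psi^{(b)}$ by the identity $\|\psi^{(b)}\|_\infty = 1$ recorded in Section \ref{subsec_Dirac_massless}, summing over $s \in \{1,\ldots,k\}$ then yields
\[
\sup_{\xi \in \R^d} |\partial^\alpha_\xi \psi^p(\xi)| \leq c(k,d) M_k^k \sum_{s=1}^k p^s \leq C(\psi, k, d)\, p^k.
\]
Inserting this bound into Lemma \ref{Lemma-trace-class-sob-prep} and taking the maximum over $|\alpha| \leq 2d+\beta+1$ produces the announced polynomial factor $p^{2d+\beta+1}$, with a prefactor independent of $M$, $N$, and $p$.

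The main technical point will be the combinatorial bookkeeping in the Leibniz expansion, and in particular making it transparent that the only source of $p$-dependence arises from choosing which of the $p$ factors carry a nonzero derivative; the normalisation $\|\psi\|_\infty \leq 1$ is essential to avoid an exponential factor $\|\psi\|_\infty^p$ and is implicitly required for the corollary to be applicable in Section \ref{sec_commutation}.
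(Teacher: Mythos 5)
Your proposal is correct and follows exactly the paper's route: apply Lemma \ref{Lemma-trace-class-sob-prep} to the symbol $\psi^p$ (same support, hence same $\rho$) and bound the seminorm $\max_{|\alpha|\leq 2d+\beta+1}\sup_\xi \rho^{|\alpha|}|\partial^\alpha_\xi\psi^p(\xi)|$ by a constant times $p^{2d+\beta+1}$, which the paper simply asserts and you justify via the Leibniz expansion. Your remark that the normalisation $\|\psi\|_\infty\leq 1$ (satisfied by $\psi^{(b)}$) is needed to keep the constant $p$-independent is a fair point that the paper leaves implicit, but it does not change the argument.
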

\begin{proof}
For the function $\psi^p$, we compute
\begin{equation}
\max_{|\alpha|\leq 2d+\beta+1}\sup_{\xi\in\R^d}(b+1)^{|\alpha|}|\partial^{\alpha}_{\xi}\psi^p(\xi)| \leq C'p^{2d+\beta+1}
\end{equation}
with a constant $C'>0$ independent of $p$, $M$ and $N$. Therefore, the desired bound follows.
\end{proof}

In order to prove an upper bound for the remaining operator, we first need to obtain an estimate for the non-smooth Dirac symbol from the just obtained estimate for smooth symbols. The proof follows the ideas in \cite{sobolevschatten} and overlaps to a large extent with the proof of \cite[Lemma 3.8]{BM-inprep}. Therefore, the proof given here is kept a bit shorter. See \cite[Lemma 3.8]{BM-inprep} for a more detailed version of some of the arguments.
\begin{lem}\label{log-L-Estimate}
Let $\psi\in\C_{c}^\infty(\R^d)$ with $\supp(\psi) \subset B_{b+1}(0)$. Let $M,N$ be as in Lemma \ref{Lemma-trace-class-sob-prep} with arbitrary $\beta\geq 0$, $C_\beta$ independent of $L$ and $b$, and such that
\begin{equation}\label{log-L-Estimate-Req}
|\{x\in \Z^{d}: \ Q_x\cap M\neq \emptyset\}|\leq C_{M}L^d,
\end{equation}
where $Q_x$ denotes the closed unit cube centred about $x$ and $C_M>0$ is a constant independent of $L$ and $b$.
Then there exists a constant $C>0$, which is independent of $L$, $b$, $M$ and $N$,
such that for every $L\geq \max (b+1,2)$ we have
\begin{align}
\big\|\mathbf{1}_M \Op \big(\psi\mathcal{D}\big)\mathbf{1}_N \big\|_1 \leq C (C_\beta+C_M)\log L,
\end{align}
where $C_\beta$ is the corresponding constant from Lemma \ref{Lemma-trace-class-sob-prep}.
\end{lem}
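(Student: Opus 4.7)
The plan is to exploit the fact that $\psi\mathcal{D}$ is smooth away from the origin with derivatives controlled by the $0$-homogeneity of $\mathcal{D}$, namely $|\partial^\alpha\mathcal{D}(\xi)|\lesssim|\xi|^{-|\alpha|}$ for $\xi\neq 0$, while its sole discontinuity sits at $\xi=0$. I would perform a standard dyadic decomposition of $\psi\mathcal{D}$ around the origin in momentum space, cutting off at scale $\sim L^{-1}$, which produces $O(\log L)$ smooth annular pieces plus a low-momentum remainder. Each annular piece is handled by the smooth-symbol trace-class estimate of Lemma \ref{Lemma-trace-class-sob-prep} with support-radius equal to the scale of the annulus, yielding a contribution of order $C_\beta$ per piece; the logarithmic enhancement then arises from the number of dyadic scales between $L^{-1}$ and $b+1$, which is $O(\log L)$ thanks to the standing assumption $L\geq b+1$.

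Concretely, fix a smooth radial bump $\eta\in C_c^\infty(\R^d)$ with $\eta\equiv 1$ on $B_{1/2}(0)$ and $\supp\eta\subset B_1(0)$, set $\chi_j(\xi):=\eta(2^j\xi)-\eta(2^{j+1}\xi)$, supported in $\{2^{-j-2}\leq|\xi|\leq 2^{-j}\}$, and choose $J:=\lceil\log_2 L\rceil$. I would decompose
\begin{equation*}
\psi\mathcal{D} = \sum_{j=j_0}^{J-1}\chi_j\psi\mathcal{D} + r, \qquad r(\xi):=\psi(\xi)\mathcal{D}(\xi)\eta(2^J\xi),
\end{equation*}
where $j_0\in\Z$ is chosen so that $\eta(2^{j_0}\xi)\equiv 1$ on $\supp\psi\subset B_{b+1}(0)$ and the sum telescopes correctly; the bound $|j_0|\lesssim\log_2(b+1)\leq\log_2 L$ holds by $L\geq b+1$. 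This produces $J-j_0=O(\log L)$ smooth annular pieces $a_j:=\chi_j\psi\mathcal{D}$ of support-radius $\rho_j:=2^{-j}$. The Leibniz rule, combined with the bound $|\partial^\alpha\mathcal{D}|\lesssim 2^{j|\alpha|}$ on $\supp\chi_j$ and the bounded derivatives of $\chi_j$ and $\psi$, yields $\rho_j^{|\alpha|}|\partial^\alpha a_j|\leq C_\alpha$ uniformly in $j$, $b$, and $L$. Splitting the matrix symbol $\mathcal{D}$ into its $\mathbb{1}_n$ and $\alpha_k\xi_k/|\xi|$ components and applying Lemma \ref{Lemma-trace-class-sob-prep} to each scalar factor --- after the unitary momentum-position rescaling $\xi\mapsto\rho_j^{-1}\xi$ which normalises the support-radius to $1$ and, by the scale-invariance of hypothesis \eqref{Lemma-trace-class-sob-prep-req}, preserves the constant $C_\beta$ --- gives
\begin{equation*}
\|\mathbf{1}_M\Op(a_j)\mathbf{1}_N\|_1\leq CC_\beta,
\end{equation*}
with $C$ depending only on $d$ and $\beta$. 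Summing over the $O(\log L)$ indices contributes $\lesssim C_\beta\log L$ to the total trace norm.

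The remainder $r$, whose matrix symbol has support-radius $\leq 2^{-J}\lesssim L^{-1}$, is handled by a Hilbert--Schmidt argument exploiting the projection structure $\mathcal{D}^2=\mathcal{D}=\mathcal{D}^*$: then $r\geq 0$ pointwise and the square root $r^{1/2}(\xi)=\sqrt{\psi(\xi)\eta(2^J\xi)}\,\mathcal{D}(\xi)$ is well-defined, so $\Op(r)=\Op(r^{1/2})\Op(r^{1/2})$. The bound $\|AB\|_1\leq\|A\|_2\|B\|_2$ with $A=\mathbf{1}_M\Op(r^{1/2})$, together with Plancherel and $\tr_{\C^n}(\mathcal{D}(\xi))=n/2$, yield
\begin{equation*}
\|\mathbf{1}_M\Op(r^{1/2})\|_2^2\leq (2\pi)^{-d}\tfrac{n}{2}\,|M|\int\psi(\xi)\eta(2^J\xi)\d\xi \lesssim C_M L^d\cdot 2^{-Jd}\lesssim C_M,
\end{equation*}
using \eqref{log-L-Estimate-Req} and $2^J\geq L$. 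The corresponding bound on the $N$-side is obtained from a lattice decomposition $\mathbf{1}_N=\sum_{x\in\Z^d}\mathbf{1}_{Q_x\cap N}$ combined with the covering hypothesis \eqref{Lemma-trace-class-sob-prep-req} on how the cubes of $N$ sit relative to $M$, together with the fast decay of $K_{r^{1/2}}$ beyond the scale $2^J\sim L$ --- an argument essentially identical to that in \cite[Lem.~3.8]{BM-inprep}, to which the author already refers --- and produces $\|\mathbf{1}_M\Op(r)\mathbf{1}_N\|_1\lesssim C_M+C_\beta$. Combining this with the annular contribution and using $\log L\geq\log 2$ to absorb constant terms finishes the proof with the claimed bound $C(C_\beta+C_M)\log L$. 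The main technical obstacle is the rescaling step for the small-scale dyadic pieces: verifying that Lemma \ref{Lemma-trace-class-sob-prep} extends to $\rho<1$ with a constant uniform in the scale, and that \eqref{Lemma-trace-class-sob-prep-req} is preserved under rescaling --- precisely what the uniformity in the scaling parameter $l$ built into \eqref{Lemma-trace-class-sob-prep-req} is designed to provide.
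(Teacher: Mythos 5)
Your treatment of the $O(\log L)$ scale-adapted pieces is essentially the paper's argument in different clothing: the paper uses a Whitney-type covering with radii $\tau(\xi)=\tfrac14\sqrt{L^{-2}+|\xi|^2}$ and a subordinate partition of unity (H\"ormander) instead of dyadic annuli, counts the pieces by $\int_{B_{2(b+1)}(0)}\tau(\xi)^{-d}\,\dd\xi= O(\log (L(b+1)))=O(\log L)$, and, just as you note, relies on the scaling uniformity built into \eqref{Lemma-trace-class-sob-prep-req} to apply Lemma \ref{Lemma-trace-class-sob-prep} at support radii below $1$. So that part of your proposal is sound and parallel to the paper.

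The genuine gap is the low-momentum remainder $r$. The factorization $\mathbf{1}_M\Op(r)\mathbf{1}_N=\mathbf{1}_M\Op(r^{1/2})\,\Op(r^{1/2})\mathbf{1}_N$ only helps if \emph{both} Hilbert--Schmidt factors are finite, and $\|\Op(r^{1/2})\mathbf{1}_N\|_2^2=(2\pi)^{-d}\,|N|\int\|r^{1/2}(\xi)\|_2^2\,\dd\xi$ is infinite whenever $N$ has infinite Lebesgue measure --- which is precisely the situation in which the lemma is applied (in Theorem \ref{Theorem-Upper-Bound} the sets $N$ are unbounded half-space/cone-type sets). Your proposed rescue misreads the hypothesis: \eqref{Lemma-trace-class-sob-prep-req} counts the cubes meeting $M$ that lie near $N$, not ``how the cubes of $N$ sit relative to $M$'', so a lattice decomposition of $N$ gets no help from it; moreover, the kernel of $\Op(r^{1/2})$ decays only like $|x-y|^{-d}$ (its symbol is discontinuous at the origin), which is not square-summable over an unbounded $N$ at fixed $x$ beyond what gives a bound per point of $M$, not per point of $N$. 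The paper closes this step by a different route that avoids $N$ entirely: it tiles $M$ by unit cubes $Q_x$, writes $\mathbf{1}_{Q_x}\Op(\psi_j\psi\mathcal{D})\mathbf{1}_N=\mathbf{1}_{Q_x}\Op^l(\phi_x\psi_j\otimes\mathbb{1}_n)\,\Op(\psi\mathcal{D})\,\mathbf{1}_N$ with a cut-off $\phi_x$ equal to $1$ on $Q_x$, bounds the trace norm of the first factor by Sobolev's phase-space estimate (it is at most a constant times $L^{-d}$, since the momentum support has radius of order $L^{-1}$), discards $\Op(\psi\mathcal{D})\mathbf{1}_N$ at operator norm, and sums over the at most $C_ML^d$ cubes permitted by \eqref{log-L-Estimate-Req}, giving an $L$-independent contribution $O(C_M)$. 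Your square-root trick (which in addition needs $\psi\geq 0$, not assumed in the lemma, or a sign factorization) can only deliver the $M$-side bound; to finish you would in effect have to reproduce this cube-wise trace-norm argument, so the remainder step as written does not go through.
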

\begin{proof}
Let $L\geq \max (b+1,2)$. We define the Lipschitz continuous function $\tau:\R^d\rightarrow\R_+$ by 
\begin{equation}
\label{tau-xi-def-A3}
\tau(\xi):=\frac{1}{4}\sqrt{\frac{1}{L^2}+|\xi|^2}.
\end{equation}
By \cite[Thm.1.4.10]{Hoermander} we obtain a sequence of centres $(\xi_j)_{j\in\N}\subset \R^d$ such that the balls of radius $\tau_j:=\tau(\xi_j)$ about the points $\xi_j$ cover $\R^d$, i.e. $\bigcup_{j\in\N}B_{\tau_{j}}(\xi_j)=\R^d$ and such that at most $N_0<\infty$ balls intersect in any given point, where $N_0$ only depends on the dimension $d$. Furthermore, there is a partition of unity $(\psi_j)_{j\in\N}$ subordinate to this covering, such that for every multi-index $\alpha\in\N_{0}^d$
we have  
\begin{align}\label{Psi-Derivative-A3}
\sup_{\;j\in\N^{\phantom{d}}} \!\sup_{\xi\in\R^d}|\partial^{\alpha}_{\xi}\psi_j(\xi)|\leq \tilde C_{|\alpha|}\tau(\xi)^{-|\alpha|},
\end{align}
with constants $\tilde C_{|\alpha|}>0$.
By the Lipschitz continuity of $\tau$ we obtain 
the inequality
\begin{align}\label{Lipschitz-Property-A3}
\frac{4}{5}\tau(\xi)\leq \tau_j \leq \frac{4}{3} \tau(\xi), \qquad \xi\in B_{\tau_j}(\xi_j)
\end{align}
for every $j\in \N$.

As $\supp(\psi) \subset B_{b+1}(0)$, there is a finite, $b$-dependent index set $J\subset\N$ with $\bigcup_{j\in J}B_{\tau_{j}}(\xi_j)\supseteq \supp( \psi\mathcal{D})$. We divide $J$ into the two parts
\begin{align}
J_1:=\{j\in J : B_{\tau_{j}}(\xi_j)\cap \{0\}\neq \emptyset\} \qquad \text{and} \qquad J_2:= J\setminus J_1.
\end{align}
The finite-intersection property implies the upper bound $|J_{1}| \le N_0$.
We estimate
\begin{equation}
\label{J1-J2-decomp-A3}
	\big\|\mathbf{1}_M \Op \big(\psi\mathcal{D}\big)\mathbf{1}_N \big\|_1  \\
	\leq \sum_{j\in J_1} \big\|\mathbf{1}_M \Op \big(\psi_j\psi\mathcal{D}\big)\mathbf{1}_N 
		\big\|_1 
	+ \sum_{j\in J_2} \big\|\mathbf{1}_M \Op \big(\psi_j\psi\mathcal{D}\big)\mathbf{1}_N 
		\big\|_1.
\end{equation}
If $j\in J_1$, we subdivide the set $M$ along $\Z^d$ and estimate
\begin{equation}\label{Estimate-J1-A3}
\big\|\mathbf{1}_M \Op \big(\psi_j\psi\mathcal{D}\big)\mathbf{1}_N \big\|_1
\leq \sum_{x\in\Z^{d}: \ Q_{x}\cap M\neq \emptyset} \big\| \Op^l \big(\phi_x\psi_j\otimes\mathbb{1}_{n})\big\|_1 \;
\big\|\Op \big(\psi\mathcal{D}\big)\big\| ,
\end{equation}
where the functions $\phi_x\in C_c^\infty(\R^d)$ with $\phi_x|_{Q_x}=1$ are supported in balls $B_{r_1}(x)$ for some radius $r_1>0$ and $\Op^l$ denotes the standard left-quantisation functor, cf.\ \cite[Sec.~2.2,~2.3]{BM-Widom}. As we have
$\big\|\Op \big(\psi\mathcal{D}\big)\big\| = \|\psi\|_\infty$ and 
\begin{align}\label{Estimate-J1-2-A3}
\big\| \Op^l \big(\phi_x\psi_j\otimes \mathbb{1}_{n})\big\|_1=n\big\| \Op^l \big(\phi_x\psi_j\big)\big\|_1,
\end{align}
it remains to estimate the trace-norm for the operator on the right-hand side of \eqref{Estimate-J1-2-A3}.
Using that $0 \in B_{\tau_j}(\xi_j)$ for all $j\in J_1$, property \eqref{Lipschitz-Property-A3} guarantees that $\phi_x\psi_j$ is compactly supported in $B_{r_1}(x)\times B_{\frac{1}{3L}}(\xi_j)$. Therefore, by an  application of \cite[Thm 3.1]{sobolevschatten}, the right-hand side of \eqref{Estimate-J1-2-A3} is bounded from above by $\frac{C_{1}}{L^d}$ for a constant $C_{1}>0$, independently of $L$ and $b$. This, in turn, provides, in combination with \eqref{log-L-Estimate-Req}, the bound
\begin{equation}
	\label{J1-sum-A3}
	\sum_{j\in J_1} \big\|\mathbf{1}_M \Op \big(\psi_j\psi\mathcal{D}\big)\mathbf{1}_N 
		\big\|_1
	\le N_0\, n \, C_{1} C_M\|\psi\|_\infty,
\end{equation}
where the right-hand side is independent of $L$ and $b$.

If instead $j\in J_2$, the symbol $\psi_j\psi\mathcal{D}$ is smooth with $\supp \psi_j\psi\mathcal{D} \subset B_{\tau_j}(\xi_j)$. We apply Lemma \ref{Lemma-trace-class-sob-prep} with $\psi=\psi_j\psi\mathcal{D}$ and obtain
\begin{equation}
\big\|\mathbf{1}_M \Op \big(\psi_j\psi\mathcal{D}\big)\mathbf{1}_N \big\|_1 \leq C_{\beta}C_2\max_{|\alpha|\leq 2d+\beta+1}\sup_{\xi\in\R^d}\tau_j^{|\alpha|}|\partial^{\alpha}_{\xi}\psi_j\psi\mathcal{D}(\xi)| \leq C_\beta C_{3},
\end{equation}
where we used \eqref{Psi-Derivative-A3} as well as \eqref{Lipschitz-Property-A3} and the constants $C_{2}, C_3>0$ are independent of $j\in J_{2}$, as well as of $L$, $b$, $M$ and $N$.
Therefore, 
\begin{equation}\label{Estimate-Finite-Intersection-A3}
\sum_{j\in J_2} \big\|\mathbf{1}_M \Op \big(\psi_j\psi\mathcal{D}\big)\mathbf{1}_N \big\|_1 \leq C_\beta C_{3} |J_2|\leq C_\beta C_4\int_{B_{2(b+1)}(0)} \tau(\xi)^{-d} \d\xi,
\end{equation}
where the constant $C_{4}>0$ is independent of $L$, $b$, $M$ and $N$. The last inequality is a consequence of the finite-intersection property and \eqref{Lipschitz-Property-A3}. We abbreviate $\rho:= 2(b+1)$ and write the right-hand side of 
\eqref{Estimate-Finite-Intersection-A3} as
\begin{align}
C_\beta C_4 |S^{d-1}| \int_0^{L\rho}  \frac{r^{d-1}}{\big(\sqrt{1+r^2}\big)^d} \; \d r \leq C_\beta C_{5} \log(\rho L),
\end{align}
for some constant $C_{5} >0$ that is independent of $L$, $b$, $M$ and $N$.
This estimate, together with \eqref{Estimate-Finite-Intersection-A3}, \eqref{J1-sum-A3} and \eqref{J1-J2-decomp-A3} 
concludes the proof of the Lemma, as $L \geq \max (b+1,2)$ and $C_\beta$ is independent of $L$ and $b$.
\end{proof}
In order to utilise this estimate, we need to further study the remaining operator $\mathbf{1}_{H_{V,L}}X_{g}$.
As in Section \ref{sec_higher_order} (cf. \eqref{theorem_higher_order_terms_goal_2}) we split the set $H_{V,L}=[0,L]^{d}$ into $d$ parts and rewrite the operator as
\begin{equation}\label{Recall-split-n}
\mathbf{1}_{H_{V,L}}X_{g}=\sum_{n=1}^{d}\mathbf{1}_{H_{L,d,k}}\sum_{\mathcal{M}\subseteq \{1,\ldots,d\}\setminus\{k\}}(-1)^{|\mathcal{M}|}\big[g\big(\mathbf{1}_{H_{\mathcal{M}}}X\mathbf{1}_{H_{\mathcal{M}}}\big)-g\big(\mathbf{1}_{H_{\mathcal{M}\cup\{k\}}}X\mathbf{1}_{H_{\mathcal{M}\cup\{k\}}}\big)\big],
\end{equation}
where we recall
\begin{equation}
H_{L,d,k}=\{x\in H_{V,L}: \ \forall j\in \{1,\ldots,d\}\setminus \{k\}: \ x_{j}\leq x_{k}\}.
\end{equation}
We relabel coordinates in suitable way to reduce \eqref{Recall-split-n} to a sum of expressions of the form
\begin{equation}\label{Commutation-step-1-sec-4}
\mathbf{1}_{H_{L,d,k}}\Big(g\big(\mathbf{1}_{H_{k}}X\mathbf{1}_{H_{k}}\big)-g\big(\mathbf{1}_{H_{k-1}}X\mathbf{1}_{H_{k-1}}\big)\Big),
\end{equation}
with $H_{k}:=\R_+^{k}\times\R^{d-k}$, for a given $k\in\{1,\ldots,d\}$. The structure of these expressions makes them easier to analyse for monomials and we are now ready to establish the upper bound for the trace norm of the remaining operator \eqref{rem_op}. This is the content of the following
\begin{thm}\label{Theorem-Upper-Bound}
Let $X=\Op(\psi\mathcal{D})$ and $g:\R\rightarrow\C$ be an entire function with $g(0)=0$. Then there exists a constant $C>0$, independent of $L$ and $b$, such that for all $L\geq \max (b+1,2)$ the bound
\begin{equation}\label{Theorem-Upper-Bound-trace-norm}
\Big\|\sum_{V\in\mathcal{F}^{(0)}}\mathbf{1}_{H_{V,L}}X_{g}\Big\|_1\leq C \log L
\end{equation}
holds.
\end{thm}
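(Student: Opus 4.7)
The plan is to combine a symmetry reduction, a telescoping expansion for monomials, and the smooth/discontinuous symbol trace-norm bounds established in Lemmas \ref{Lemma-trace-class-sob} and \ref{log-L-Estimate}. This largely follows Sobolev's strategy \cite{sobolevschatten} for pseudo-differential operators with discontinuous symbols, adapted to our concrete cube-vertex setting. First, by the triangle inequality and the invariance properties recorded in Remark \ref{rem_symmetry}, it suffices to bound $\|\mathbf{1}_{H_{V_0,L}}X_g\|_1$ for the designated vertex $V_0$. Applying the decomposition \eqref{Recall-split-n} and a further relabelling of the distinguished coordinate $n$, the problem reduces to bounding, for each $k\in\{1,\ldots,d\}$, the trace norm of
\begin{equation*}
T_k := \mathbf{1}_{H_{L,d,k}}\bigl[g(\mathbf{1}_{H_k}X\mathbf{1}_{H_k})-g(\mathbf{1}_{H_{k-1}}X\mathbf{1}_{H_{k-1}})\bigr],
\end{equation*}
cf.\ \eqref{Commutation-step-1-sec-4}.

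For a monomial $g(z)=z^p$, set $P_0=\mathbf{1}_{H_k}$, $P_1=\mathbf{1}_{H_{k-1}}$, $Q:=P_1-P_0=\mathbf{1}_{H_{k-1}\setminus H_k}$, and $Y_i:=P_iXP_i$. Using $Y_0-Y_1=-(QXP_0+P_1XQ)$, I telescope
\begin{equation*}
Y_0^p-Y_1^p=\sum_{j=0}^{p-1}Y_0^{p-1-j}(Y_0-Y_1)Y_1^{\,j}.
\end{equation*}
Upon left multiplication by $\mathbf{1}_{H_{L,d,k}}$, the contributions from the $QXP_0$ pieces vanish: they contain either $\mathbf{1}_{H_{L,d,k}}Q=0$ (disjoint supports) or $P_0\cdots P_0 Q=0$ (since $H_k\cap(H_{k-1}\setminus H_k)=\emptyset$). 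The surviving $p$ terms, after collapsing projections using $\mathbf{1}_{H_{L,d,k}}P_0=\mathbf{1}_{H_{L,d,k}}P_1=\mathbf{1}_{H_{L,d,k}}$, $P_0P_1=P_0$ and $QP_1=Q$, take the form
\begin{equation*}
S_j = \mathbf{1}_{H_{L,d,k}}\,X(P_0X)^{p-1-j}\,Q\,(XP_1)^{j},\qquad j=0,\ldots,p-1.
\end{equation*}

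For each $S_j$, apply H\"older's inequality for Schatten norms, bounding all but one factor by their operator norms ($\|X\|$ on the $X$'s and $1$ on the projections) and taking the critical factor in trace norm to be of the form $\mathbf{1}_{H_{L,d,k}}X\mathbf{1}_{H_{k-1}\setminus H_k}$. Lemma \ref{log-L-Estimate} bounds this critical factor by $C\log L$ once its two geometric hypotheses are verified: condition \eqref{log-L-Estimate-Req} follows from $|\{x\in\Z^d:Q_x\cap H_{L,d,k}\neq\emptyset\}|\leq(L+2)^d$; and condition \eqref{Lemma-trace-class-sob-prep-req} with $\beta=d$ follows because the monotonicity requirement $x_j\leq x_k$ built into $H_{L,d,k}$ together with $\dist(x,H_{k-1}\setminus H_k)\leq r$ forcing $x_k\leq r+1$ constrains every coordinate to $O(r)$, a count uniform in the scaling parameter $l$. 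Summing over $j$ yields $\|T_k\|_1\leq C p\|X\|^{p-1}\log L$ for monomials of degree $p$, with $C$ independent of $p$, $L$ and $b$.

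Finally, for an entire $g(z)=\sum_{p\geq 1}\omega_p z^p$, the bound extends by linearity: $\sum_{p\geq 1}|\omega_p|\,p\,\|X\|^{p-1}$ converges by analyticity of $g$ (since $\|X\|\leq\|\psi\|_\infty$ is finite and $b$-independent), producing the desired estimate $C\log L$ after summing the finitely many vertex contributions. The main obstacle lies in making step three rigorous: because both $H_k$, $H_{k-1}$ and the wedge $Q$ are unbounded in the directions $j>k$ while $X=\Op(\psi^{(b)}\mathcal{D})$ is translation-invariant, a careless H\"older splitting produces intermediate factors with infinite trace or Hilbert-Schmidt norm. The resolution is the aforementioned collapse of interior projections, which channels the entire "discontinuity" of the factorisation through the single critical sub-factor $\mathbf{1}_{H_{L,d,k}}X\mathbf{1}_{H_{k-1}\setminus H_k}$; any remaining uncontrolled pieces, should auxiliary indicator functions need to be inserted, are handled via the off-diagonal kernel decay \eqref{kernel_bound} and Lemma \ref{Lemma-HS-decay}, since the non-critical $X$-blocks are then sandwiched between sets either of bounded volume or separated by distance $O(L)$.
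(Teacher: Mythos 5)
Your reduction to the designated vertex, the splitting \eqref{Recall-split-n}, the algebraic expansion of $Y_0^p-Y_1^p$ (your surviving terms $S_j=\mathbf{1}_{H_{L,d,k}}X(P_0X)^{p-1-j}Q(XP_1)^j$ are a correct variant of the paper's observation that every term contains a factor $Q=\mathbf{1}_{H_{k-1}\setminus H_k}$), the $p=1$ estimate via Lemma \ref{log-L-Estimate}, and the summation over monomials all match the paper. The gap is in the case $p\geq 2$, i.e.\ for $S_j$ with $j<p-1$: there the operator $\mathbf{1}_{H_{L,d,k}}X\mathbf{1}_{H_{k-1}\setminus H_k}$ that you want to take as the critical trace-norm factor is simply \emph{not a contiguous sub-factor} of $S_j$, because the projections $\mathbf{1}_{H_{L,d,k}}$ and $Q$ are separated by the intermediate factors $P_0X=\mathbf{1}_{H_k}X$. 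H\"older only lets you put a contiguous block in trace norm, and any block with an endpoint at an interior $\mathbf{1}_{H_k}$ is useless: the pair $(H_{L,d,k},H_k)$ violates condition \eqref{Lemma-trace-class-sob-prep-req} (since $H_{L,d,k}\subset H_k$, the cube count is of order $(lL)^d$, not uniformly bounded in the scaling parameter), and indeed $\|\mathbf{1}_{H_{L,d,k}}X\mathbf{1}_{H_k}\|_1$ is of order $L^d$, not $\log L$. Your closing remark does flag this problem, but the proposed fix is not viable as stated: the auxiliary sets you would need are \emph{not} "of bounded volume or separated by distance $O(L)$" from their complements (they touch near the cone tip $x_k\to 0$), so Lemma \ref{Lemma-HS-distance-L}-type separation arguments do not apply, and Lemma \ref{Lemma-HS-decay} only produces Hilbert--Schmidt bounds of order $L^{(d-m)/2}$, which cannot generate the logarithmic trace-norm control.

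What is actually needed (and what the paper does) is a chain of intermediate \emph{bounded, cone-shaped} sets $V_j:=H_k\cap[-(j+1)L,(j+1)L]^d\cap M_{j+1}$, $j=1,\dots,p-1$, with $V_0:=H_{L,d,k}$, inserted by telescoping $\mathbf{1}_{H_k}=\mathbf{1}_{V_{j+1}}+\mathbf{1}_{H_k\setminus V_{j+1}}$ along the product. Each pair $(V_j,\,H_k\setminus V_{j+1})$, as well as $(V_{p-1},\,H_{k-1}\setminus H_k)$, satisfies both hypotheses \eqref{log-L-Estimate-Req} and \eqref{Lemma-trace-class-sob-prep-req} of Lemma \ref{log-L-Estimate} with constants of order $p^d$ (the cone condition $|x_i|\leq(j+1)|x_k|$ plays exactly the role your $x_j\leq x_k$ argument plays for $p=1$), so each critical factor $\mathbf{1}_{V_j}X\mathbf{1}_{H_k\setminus V_{j+1}}$ contributes $O(p^d\log L)$ in trace norm, giving $O(p^{d+1}\log L)$ per monomial rather than your claimed $O(p\log L)$; this is still summable against the Taylor coefficients of an entire $g$, so the theorem follows, but without this construction the monomial step $p\geq 2$ does not go through.
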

\begin{proof}
Let $L\geq \max (b+1,2)$. By suitable rotation and translation, the analysis above and the triangle inequality, it suffices to consider expressions of the form
\begin{equation}\label{Theorem-Upper-Bound-step-1}
\Big\|\mathbf{1}_{H_{L,d,k}}\Big(g\big(\mathbf{1}_{H_{k}}X\mathbf{1}_{H_{k}}\big)-g\big(\mathbf{1}_{H_{k-1}}X\mathbf{1}_{H_{k-1}}\big)\Big)\Big\|_1.
\end{equation}
We first prove the required bound \eqref{Theorem-Upper-Bound-step-1} for monomial test function. Writing out the operator in \eqref{Theorem-Upper-Bound-step-1}, we see that every term contains at least one occurrence of the projection $\mathbf{1}_{H_{k-1}}-\mathbf{1}_{H_{k}}$, and by H\"older's inequality and the triangle inequality, it suffices to consider trace norms of the form 
\begin{equation}\label{Theorem-Upper-Bound-step-2}
\big\|\mathbf{1}_{H_{L,d,k}}\big(\mathbf{1}_{H_{k}}X\big)^p\big(\mathbf{1}_{H_{k-1}}-\mathbf{1}_{H_{k}}\big)\big\|_1,
\end{equation}
for $p\in\N$. In the case $p=1$, the desired bound follows immediately from Lemma \ref{log-L-Estimate}. For $p\geq 2$, we again introduce several sets "in between" $H_{L,d,k}$ and $H_{k-1}\setminus H_k$. To do so, we first define the sets 
\begin{equation}
M_\alpha:=\{x\in \R^d: \ \forall j\in\{1,\ldots,d\}\setminus\{k\}: \ |x_j|\leq \alpha|x_k|\},
\end{equation}
for real numbers $\alpha>0$. The required sets are then given by $V_j:=H_k\cap [-(j+1)L,(j+1)L]^d\cap M_{j+1}$, for $j\in\{1,\ldots,p-1\}$. We set $V_0:=H_{L,d,k}$. Clearly, the sets $V_j$, $j\in\{0,\ldots,p-1\}$, fulfil condition \eqref{log-L-Estimate-Req} with $C_M=p^d$. The sets $M=V_j$, $N=H_k\setminus V_{j+1}$, $j\in\{0,\ldots,p-2\}$, and the sets $M=V_{p-1}$, $N=H_{k-1}\setminus H_k$ also fulfil condition \eqref{Lemma-trace-class-sob-prep-req} with $C_\beta=C_\beta'p^d$ and $\beta=d$, where $C_\beta'>0$ is independent of $p$, $L$ and $b$. Therefore, Lemma \ref{log-L-Estimate}, in combination with H\"older's inequality and the triangle inequality, yields 
\begin{multline}
\big\|\mathbf{1}_{H_{L,d,k}}\big(\mathbf{1}_{H_{k}}X\big)^p\big(\mathbf{1}_{H_{k}}-\mathbf{1}_{H_{k-1}}\big)\big\|_1
\leq \sum_{j=0}^{p-2} \big\|\mathbf{1}_{V_j}X\big(\mathbf{1}_{H_{k}}-\mathbf{1}_{V_{j+1}}\big)\big\|_1 + \big\|\mathbf{1}_{V_{p-1}}X\big(\mathbf{1}_{H_{k-1}}-\mathbf{1}_{H_k}\big)\big\|_1 \\
\leq \sum_{j=0}^{p-1}C_j p^d\log L \leq Cp^{d+1}\log L,
\end{multline}
with a constant $C>0$ independent of $L$, $b$ and $p$. 
This proves the theorem for monomials. The extension to analytic functions works in the usual way, cf., for example, the proof of Lemma \ref{higher-order-analytic}.
\end{proof}

\subsection{Proof of Theorem \ref{Theorem-Asymptotics-Analytic}}
In order to prove one of our main results Theorem \ref{Theorem-Asymptotics-Analytic}, it just remains to collect the ingredients from Sections \ref{sec_higher_order} and \ref{sec_upper_bound}.
\begin{proof}[Proof of Theorem \ref{Theorem-Asymptotics-Analytic}]
By assumption the function $g$ is an entire function, with $g(0)=0$. We set $X:=\Op(\psi\mathcal{D})$, where $\psi$ and $\mathcal{D}$ are defined in \eqref{Operator-m=0-A3}. With all the requirements of Theorem \ref{theorem_higher_order_terms} being fulfilled, the theorem yields
\begin{multline}
\tr_{L^{2}(\R^{d})\otimes\C^n}\bigg[g\big(\mathbf{1}_{\Lambda_{L}}\Op(\psi\mathcal{D})\mathbf{1}_{\Lambda_{L}}\big)-\sum_{V\in\mathcal{F}^{(0)}}\mathbf{1}_{H_{V,L}}\Op(\psi\mathcal{D})_{LV,g}\bigg]\\
=\sum_{m=0}^{d-1}(2L)^{d-m} A_{m,g,b} 
+O(1),
\end{multline}
as $L\rightarrow\infty$. The coefficients $A_{m,g,b}$ are defined in \eqref{definition-A-coeff}. Therefore, it remains to consider the trace of the operator
\begin{equation}
\sum_{V\in\mathcal{F}^{(0)}}\mathbf{1}_{H_{V,L}}\Op(\psi\mathcal{D})_{LV,g}.
\end{equation}
By Theorem \ref{Theorem-Upper-Bound} we have
\begin{equation}
\tr_{L^{2}(\R^{d})\otimes\C^n}\bigg[\sum_{V\in\mathcal{F}^{(0)}}\mathbf{1}_{H_{V,L}}\Op(\psi\mathcal{D})_{LV,g}\bigg]\leq \Big\|\sum_{V\in\mathcal{F}^{(0)}}\mathbf{1}_{H_{V,L}}\Op(\psi\mathcal{D})_{LV,g}\Big\|_1\leq C\log L,
\end{equation}
with a constant $C$ independent of $L$ and $b$. This concludes the proof of the theorem.
\end{proof}

\section{Commutation in momentum space}
\label{sec_commutation}

With the upper bound being established, the remaining task is to extend the asymptotic expansion to the logarithmic term and to compute the corresponding coefficient. Similarly to the strategy used to obtain an enhanced are law in for example \cite{BM-Widom}, we divide this task into two steps. The first step is to separate the smooth cut-off functions $\psi$ from the projections $\Op(\mathcal{D})$ with discontinuous symbol and commute all occurrences of $\Op(\psi)$ to the right of the operator. The second step is to compute the asymptotic expansion for the resulting operator. This is done in Section \ref{sec_local_asymptotics}. 
As stated in the introduction, we are only able to do this last step in the case that the test function is a polynomial of degree three or less. Nonetheless, we are able to do the commutation for arbitrary entire functions $g$ with $g(0)=0$. As this would be necessary for a potential extension of Theorem \ref{Theorem-Asymptotics-Log} to entire test functions, we still carry out the proof in this general setting in this section, although Theorem \ref{Theorem-Asymptotics-Log} only requires the commutation results for polynomial of degree three or less.

The idea for the commutation of the smooth symbols $\psi$ is to mirror the analysis done in the case of an enhanced area law, while treating the projections $\Op(\mathcal{D})$ in the same way as the projections on a basic domain in the enhanced area law case (cf. \cite[Sec.~3]{BM-Widom} for an example of the enhanced area law case). The usual idea is to obtain trace-class bounds for the commutator of $\Op(\psi)$, for $\psi\in\C_{c}^\infty(\R^d)$, with the projection $\mathbf{1}_{\Omega_L}$ on some scaled subset $\Omega_L\subset\R^d$. But one can only hope to obtain bounds of order $L^{d-1}$ for this commutator. Therefore, we require an extended procedure, which makes use of the structure of the operator $X_{LV,g}$, here.
Nevertheless, it will be vital to use the estimates established in Section \ref{sec_upper_bound} for the smooth symbol $\psi$.
We summarise the result of this commutation procedure in the following theorem which we prove at the end of this section, when the necessary intermediate results are established.
\begin{thm}\label{Theorem-commutation}
Let $X=\Op(\psi\mathcal{D})$ and $g:\R\rightarrow\C$ be an entire function with $g(0)=0$. Then
\begin{equation}\label{Theorem-commutation-bound}
\tr_{L^{2}(\R^{d})\otimes\C^n}\bigg[\sum_{V\in\mathcal{F}^{(0)}}\mathbf{1}_{H_{V,L}}X_{LV,g}-\sum_{V\in\mathcal{F}^{(0)}}\mathbf{1}_{H_{V,L}}\Op\big(\mathcal{D}\big)_{LV,g}\mathbf{1}_{H_{V,L}}\Op\big(g(\psi\otimes\mathbb{1}_{n})\big)\bigg]=O(1),
\end{equation}
as $L\rightarrow\infty$, where we recall
\begin{equation}
Y_{F,g}=\sum_{k=0}^{d}(-1)^{k}\sum_{G\in\mathcal{F}_{F}^{(k)}}g(\mathbf{1}_{H_G}Y\mathbf{1}_{H_G})
\end{equation}
for $Y\in\{X,\Op(\mathcal{D})\}$.
\end{thm}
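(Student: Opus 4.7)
The plan is to reduce to monomials $g_p(z)=z^p$ and perform the commutation factor by factor, exploiting that $\Op(\psi)$ and $\Op(\mathcal{D})$ commute as Fourier multipliers on $L^2(\R^d)\otimes\C^n$. Writing $\Op(\psi\mathcal{D})=\Op(\mathcal{D})\Op(\psi\otimes\mathbb{1}_n)$, the reduction to monomials is justified as at the end of the proof of Theorem~\ref{Theorem-Upper-Bound}: Corollary~\ref{Lemma-trace-class-sob} bounds the smooth factor that eventually sits on the right by $p^{2d+\beta+1}$, while Lemma~\ref{log-L-Estimate} bounds the remaining $\Op(\mathcal{D})$-pieces uniformly in the monomial degree up to a factor polynomial in $p$, so the geometric convergence of the Taylor coefficients $\omega_p$ of an entire $g$ absorbs any polynomial-in-$p$ factor we pick up at the monomial level.

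For a fixed monomial $g_p$, a fixed vertex $V$, and a fixed subset $\mathcal{M}\subseteq\{1,\dots,d\}$, I expand
\begin{equation*}
g_p\big(\mathbf{1}_{H_\mathcal{M}}\Op(\psi\mathcal{D})\mathbf{1}_{H_\mathcal{M}}\big)
=\big(\mathbf{1}_{H_\mathcal{M}}\Op(\mathcal{D})\mathbf{1}_{H_\mathcal{M}}\big)^p\Op(\psi^p\otimes\mathbb{1}_n)
+\mathcal{E}_{\mathcal{M},p},
\end{equation*}
where $\mathcal{E}_{\mathcal{M},p}$ is the sum of all terms generated by commuting the $p$ copies of $\Op(\psi\otimes\mathbb{1}_n)$ successively past the $\mathbf{1}_{H_\mathcal{M}}$-projections to the right-most position. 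Multiplying by $\mathbf{1}_{H_{V,L}}$ on the left, summing against $(-1)^{|\mathcal{M}|}$, and inserting $\mathbf{1}_{H_{V,L}}$ between $\Op(\mathcal{D})_{V,g_p}$ and $\Op(\psi^p\otimes\mathbb{1}_n)$ reproduces the right-hand side of \eqref{Theorem-commutation-bound}. The insertion of this extra $\mathbf{1}_{H_{V,L}}$ introduces an additional error which I would analyse analogously to the localisation argument of Lemma~\ref{Lemma-localisation}, using the Hilbert--Schmidt bounds of Section~\ref{subsec_HS_higher_order} on the distance between $H_{V,L}^c$ and the support of the resulting operator to bound its trace-norm by a constant.

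The essential issue is then to show that $\sum_{\mathcal{M}} (-1)^{|\mathcal{M}|}\mathbf{1}_{H_{V,L}}\mathcal{E}_{\mathcal{M},p}$ has trace of order $O(1)$ rather than the naive $O((\log L)^p)$ that a term-by-term application of Lemma~\ref{log-L-Estimate} would produce. Each individual commutator $[\Op(\psi^q\otimes\mathbb{1}_n),\mathbf{1}_{H_\mathcal{M}}]$ is controlled by Corollary~\ref{Lemma-trace-class-sob}: when sandwiched between indicators of sets separated by a hyperplane of $H_\mathcal{M}$ in at least one direction, it is trace class with norm polynomial in $p$, uniformly in $L$. The role of the alternating sign $(-1)^{|\mathcal{M}|}$ is to force cancellation of all commutator terms in which some coordinate direction is never involved, leaving only the corner-localised contributions.

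The hardest step will be the combinatorics of this cancellation, entangled with the need to estimate the factors of $\mathbf{1}_{H_\mathcal{M}}\Op(\mathcal{D})\mathbf{1}_{H_\mathcal{M}}$ that remain un-commuted inside $\mathcal{E}_{\mathcal{M},p}$. To avoid accumulating factors of $\log L$ from Lemma~\ref{log-L-Estimate}, I would argue that once a single commutator $[\Op(\psi^q\otimes\mathbb{1}_n),\mathbf{1}_{H_\mathcal{M}}]$ is present in a product, the neighbouring $\Op(\mathcal{D})$-factors can be estimated via the power-function neighbourhood construction of Lemma~\ref{Lemma-HS-decay}, which gives Hilbert--Schmidt bounds uniform in $L$. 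This should allow one to pair each commutator with enough adjacent factors to absorb all $L$-dependence, leaving only $p$-polynomial factors and the combinatorial constants from the $2^d$ subsets $\mathcal{M}$, which together yield the $O(1)$ trace bound required in \eqref{Theorem-commutation-bound}.
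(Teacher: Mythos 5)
There is a genuine gap at the two places where you claim uniform-in-$L$ control of the commutator terms. First, your assertion that an individual commutator $[\Op(\psi^q\otimes\mathbb{1}_n),\mathbf{1}_{H_{\mathcal M}}]$, ``sandwiched between indicators of sets separated by a hyperplane of $H_{\mathcal M}$'', is trace class with a bound uniform in $L$ via Corollary \ref{Lemma-trace-class-sob} is not correct: once the product is localised to $H_{V,L}$, the relevant sets hug a $(d-1)$-dimensional face of linear size $L$, so the counting condition \eqref{Lemma-trace-class-sob-prep-req} only holds with $C_\beta\sim L^{d-1}$, and the commutator is of order $L^{d-1}$, not $O(1)$ --- this is exactly the obstruction the paper points out at the start of Section \ref{sec_commutation}, which is why an ``extended procedure'' exploiting the structure of $X_{V,g}$ is needed. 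Second, the cancellation you invoke (``the alternating sign forces cancellation of all commutator terms in which some coordinate direction is never involved, leaving only corner-localised contributions'') is precisely the heart of the matter and is nowhere established; the error operators $\mathcal E_{\mathcal M,p}$ for different $\mathcal M$ involve products of projections onto different octant-type sets and do not cancel termwise, so the combinatorial claim cannot simply be asserted.

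The paper's proof replaces this with a different mechanism which your outline does not capture: the alternating sum over $\mathcal M$ is only used to regroup the operator into pairwise differences $g_p\big(\mathbf{1}_{H_k}X\mathbf{1}_{H_k}\big)-g_p\big(\mathbf{1}_{H_{k-1}}X\mathbf{1}_{H_{k-1}}\big)$ restricted to the cone $H_{L,d,k}$ where $x_k=\|x\|_\infty$, cf.\ \eqref{Commutation-step-1}. Expanding the difference of $p$-th powers guarantees that \emph{every} term contains at least one factor $\mathbf{1}_{H_{k-1}\setminus H_k}$, and it is this factor, combined with the restriction to $H_{L,d,k}$, that yields $L$-uniform bounds: each commutator error produced when moving one $\Op(\psi)$ to the right is first replaced (via Corollary \ref{Lemma-trace-class-sob}) by operators supported on thickened boundaries $(\partial H_k)_\phi$, then split into a corner part inside $M_3$, controlled by the Hilbert--Schmidt bounds \eqref{Lemma-commutation-HS-bound-1}--\eqref{Lemma-commutation-HS-bound-2} which transport the kernel decay towards $\mathbf{1}_{H_{k-1}\setminus H_k}$, and a part inside $M_2^c$, controlled by \eqref{Lemma-commutation-HS-bound-3}--\eqref{Lemma-commutation-HS-bound-4} using that the other end of the trace is localised in $M_1\supset H_{L,d,k}$. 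Finally, commuting the last factor $\Op(\psi^p)$ past the cut-off is not a routine localisation as in Lemma \ref{Lemma-localisation}: the boundary of $H_{L,d,k}$ also has surface measure of order $L^{d-1}$, and the paper needs the separate constructions of Lemmas \ref{Lemma-commutation-last} and \ref{Lemma-commutation-last-combination}, splitting $H_{L,d,k}$ at $x_k=L/2$ and treating the far part with Lemma \ref{Lemma-HS-distance-L}. Without these ingredients your plan does not produce the claimed $O(1)$ bound.
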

We begin by establishing the required bounds for polynomial test functions and extend this to entire test functions in Section \ref{subsec_proof_comm}.
\subsection{Commutation for monomial test functions}
\label{subsec_commutation}
As in \eqref{Commutation-step-1-sec-4} we reduce the structure of the operator $\mathbf{1}_{H_{V,L}}X_{g}$ to a sum of expressions of the following form
\begin{equation}\label{Commutation-step-1}
\mathbf{1}_{H_{L,d,k}}\Big(g\big(\mathbf{1}_{H_{k}}X\mathbf{1}_{H_{k}}\big)-g\big(\mathbf{1}_{H_{k-1}}X\mathbf{1}_{H_{k-1}}\big)\Big),
\end{equation}
where we recall $H_{k}=\R_+^{k}\times\R^{d-k}$, for a given $k\in\{1,\ldots,d\}$, as well as 
\begin{equation}
H_{L,d,k}=\{x\in H_{V,L}: \ \forall j\in \{1,\ldots,d\}\setminus \{k\}: \ x_{j}\leq x_{k}\}.
\end{equation} 
We now analyse the difference of the traces of the operator \eqref{Commutation-step-1}, with $X=\Op(\psi\mathcal{D})$ and $g$ given by a monomial, and the same operator with all occurrences of $\Op(\psi)$ commuted to the right. More precisely, we want to find a bound for
\begin{multline}\label{Commutation-section-goal}
\Big|\tr_{L^{2}(\R^{d})\otimes\C^n}\Big[\mathbf{1}_{H_{V,L}}\Big(\big(\mathbf{1}_{H_{k}}\Op(\psi\mathcal{D})\mathbf{1}_{H_{k}}\big)^p-\big(\mathbf{1}_{H_{k-1}}\Op(\psi\mathcal{D})\mathbf{1}_{H_{k-1}}\big)^p\Big)\mathbf{1}_{H_{L,d,k}} \\
 -\mathbf{1}_{H_{V,L}}\Big(\big(\mathbf{1}_{H_{k}}\Op(\mathcal{D})\mathbf{1}_{H_{k}}\big)^p-\big(\mathbf{1}_{H_{k-1}}\Op(\mathcal{D})\mathbf{1}_{H_{k-1}}\big)^p\Big)\mathbf{1}_{H_{L,d,k}}\Op(\psi^{p}\otimes\mathbb{1}_{n})\Big]\Big|,
\end{multline}
for given $k\in\{1,\ldots,d\}$ and $p\in\N$. This is the goal of the present section.

The proof of this bound is split into several lemmas contained in this section. As in Section \ref{sec_higher_order}, we start with Hilbert-Schmidt norm estimates, which present a large part of the technical challenge and then build an estimate for the absolute value of the trace from there.
The idea to prove the occurring Hilbert-Schmidt bounds is similar to the one employed in the proof of Lemma \ref{Lemma-HS-decay}, in the sense that we define a family of sets depending on power functions $\phi_m$, $m\in\{1,\ldots,p-1\}$, which will allow us to "transport" the decay of the integral kernel along multiple occurrences of the projections $\mathbf{1}_{H_{k}}$ respectively $\mathbf{1}_{H_{k-1}}-\mathbf{1}_{H_{k}}$. Although, in this case we need different sets. To define the relevant sets, we take a closer look at the boundary of $H_{k}$.
We note that 
\begin{equation}
\partial H_{k}=\{x\in H_{k}: \min_{1\leq j\leq k}x_j=0\}
\end{equation}
and we have that
\begin{equation}
\dist(x,\partial H_{k})\geq \min_{1\leq j\leq d}|x_j|,
\end{equation}
for all $x\in\R^d$. We now define a family of "thickened up" versions of the boundary of $H_k$. To do so, let $\phi:[0,\infty[\,\rightarrow[0,\infty[\,$ be a measurable function. We define
\begin{equation}
\big(\partial H_{k}\big)_{\phi}:=\{x\in \R^d :  \dist(x,\partial H_{k})<\phi(\|x\|_\infty)\}, 
\end{equation} 
where $\|x\|_\infty:=\max_{1\leq j\leq d}|x_j|$. During the proof of \eqref{Commutation-section-goal}, it is necessary to split the boundary of $H_k$ into two parts.
For real numbers $\alpha>0$, we recall the definition of the sets  
\begin{equation}
M_\alpha=\{x\in \R^d: \ \forall j\in\{1,\ldots,d\}\setminus\{k\}: \ |x_j|\leq \alpha|x_k|\},
\end{equation}
and split the set $\big(\partial H_{k}\big)_{\phi}$ into the two subsets
\begin{equation}
\big(\partial H_k\big)\cap M_\alpha \quad \text{and} \quad \big(\partial H_k\big)\cap M_\alpha^c.
\end{equation}
Comparing this with $H_{k-1}$, we see that, for every $\alpha>0$, we have $\partial_{H_{k-1}}H_{k}\subset M_\alpha^c$, where $\partial_{H_{k-1}}H_{k}$ is the boundary of $H_{k}$ in $H_{k-1}$. This will be crucial in the proof of Lemma \ref{Lemma-commutation-combination}.

We also need the same objects for the boundary of $H_{k-1}\setminus H_{k}$.
We define
\begin{equation}
\big(\partial (H_{k-1}\setminus H_{k})\big)_{\phi}:=\{x\in \R^d :  \dist(x,\partial (H_{k-1}\setminus H_{k})<\phi(\|x\|_\infty)\}
\end{equation} 
and note that $\partial_{H_{k-1}}(H_{k-1}\setminus H_k)=\partial_{H_{k-1}}H_{k}\subset M_\alpha^c$, for all $\alpha>0$. With these definitions at hand, we are ready to prove the Hilbert-Schmidt bounds required to commute $\Op(\psi)$ with $\mathbf{1}_{H_k}$ respectively $\mathbf{1}_{H_{k-1}\setminus H_k}$.

\begin{lem}\label{Lemma-commutation-HS}
Let $p\in\N$ and $X_1,\ldots, X_p$ be bounded translation-invariant integral operators, each satisfying the estimate \eqref{kernel_bound_distribution} and such that the operator norms $\|X_m\|$, $1\leq m \leq p$, are bounded from above by $1$. Let $k\in\{1,\ldots,d\}$ and let $\phi:[0,\infty[\,\rightarrow[0,\infty[\,$ be given by $\phi(x):=\tfrac{1}{8}\sqrt{\max(0,x-p)}$. For $m\in\{1,\ldots,p\}$, let $V_m\subseteq H_k$ be measurable. Then there exists a constant $C>0$, independent of all $V_m$ and $p$, such that, for every measurable $\Omega_1\subseteq \big(\partial H_{k}\big)_{\phi}\cap M_{3}$, we have
\begin{equation}\label{Lemma-commutation-HS-bound-1}
\big\|\mathbf{1}_{\Omega_1}X_1\Big(\prod_{m=1}^{p-1}\mathbf{1}_{V_m}X_{m+1}\Big)(\mathbf{1}_{H_{k-1}}-\mathbf{1}_{H_{k}})\big\|_2\leq Cd^{\tfrac{p}{2}} p^{\tfrac{d+2}{2}},
\end{equation}
and for every measurable $\Omega_2\subseteq \big(\partial (H_{k-1}\setminus H_{k})\big)_{\phi}\cap M_{3}$, we have
\begin{equation}\label{Lemma-commutation-HS-bound-2}
\big\|\mathbf{1}_{\Omega_2}X_1\Big(\prod_{m=1}^{p-1}(\mathbf{1}_{H_{k-1}}-\mathbf{1}_{H_{k}})X_{m+1}\Big)\mathbf{1}_{V_p}\big\|_2\leq Cd^{\tfrac{p}{2}} p^{\tfrac{d+2}{2}}.
\end{equation}
Furthermore, let $\Omega'\subseteq M_1$ be measurable and $P_{m}\in\{\mathbf{1}_{H_k},(\mathbf{1}_{H_{k-1}}-\mathbf{1}_{H_{k}})\}$, for $m\in\{1,\ldots,p-1\}$. Then there exists a constant $C'>0$, independent of $\Omega'$ and $p$, such that, for every measurable $\Omega_3\subseteq \big(\partial H_{k}\big)_{\phi}\cap M_{2}^c$, we have
\begin{equation}\label{Lemma-commutation-HS-bound-3}
\big\|\mathbf{1}_{\Omega_3}X_1\Big(\prod_{m=1}^{p-1}P_mX_{m+1}\Big)\mathbf{1}_{\Omega'}\big\|_2\leq C'd^{\tfrac{p}{2}} p^{\tfrac{d+2}{2}},
\end{equation}
and for every measurable $\Omega_4\subseteq \big(\partial (H_{k-1}\setminus H_{k})\big)_{\phi}\cap M_{2}^c$, we have
\begin{equation}\label{Lemma-commutation-HS-bound-4}
\big\|\mathbf{1}_{\Omega_4}X_1\Big(\prod_{m=1}^{p-1}P_mX_{m+1}\Big)\mathbf{1}_{\Omega'}\big\|_2\leq C'd^{\tfrac{p}{2}} p^{\tfrac{d+2}{2}}.
\end{equation}
\end{lem}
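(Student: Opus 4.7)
The plan is to adapt the chain-decomposition strategy of Lemma \ref{Lemma-HS-decay} to the present setting. The principal obstacle is that the $p-1$ intermediate cut-offs $\mathbf{1}_{V_n}$ are arbitrary subsets of $H_k$, so the resulting bound must be entirely independent of them; no single link of the chain admits a direct kernel-decay estimate, and one has to propagate a geometric condition through every link. I would prove bound \eqref{Lemma-commutation-HS-bound-1} in full, since \eqref{Lemma-commutation-HS-bound-2} is symmetric under the exchange $\partial H_k\leftrightarrow \partial(H_{k-1}\setminus H_k)$; the two remaining bounds \eqref{Lemma-commutation-HS-bound-3} and \eqref{Lemma-commutation-HS-bound-4} are in fact easier because the angular separation between $M_2^c$ and $M_1$ yields an \emph{unconditional} linear distance $\gtrsim\|x\|_\infty$ between the two endpoints, so the chain-decomposition of Lemma \ref{Lemma-HS-distance-L} applies directly and the boundary thickening is not used at all.

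For \eqref{Lemma-commutation-HS-bound-1} I introduce nested widths
\begin{equation*}
\phi_n(x) := \tfrac{n}{8}\sqrt{\max(0,x-p)}, \qquad n=1,\ldots,p-1,
\end{equation*}
so that $\phi_1=\phi$ and consecutive gaps $\phi_n-\phi_{n-1}$ remain comparable to $\phi$ uniformly in $n$ and $p$. Let $W_n := (\partial H_k)_{\phi_n}$ be the corresponding nested shells and insert the trivial decomposition $\mathbf{1}_{V_n}=\mathbf{1}_{V_n\cap W_n}+\mathbf{1}_{V_n\setminus W_n}$ at each intermediate position. Expanding the product yields at most $2^{p-1}$ summands; in each, there is a first index $n^*\in\{1,\ldots,p-1\}$ at which the ``$\setminus W_{n^*}$''-piece is selected, so at link $n^*$ the kernel must cross from $W_{n^*-1}$ (containing $\Omega_1$) to the complement of $W_{n^*}$, traversing a distance $\gtrsim \phi_{n^*}(\|x\|_\infty)-\phi_{n^*-1}(\|x\|_\infty)$. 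The ``all-close'' summand where no such $n^*$ exists is the easiest: a point $z_{p-1}\in W_{p-1}\cap H_k$ carrying the propagated cone constraint $M_3$ satisfies $z_{p-1,k}\gtrsim \|z_{p-1}\|_\infty/3$, hence $\dist(z_{p-1},H_{k-1}\setminus H_k)\ge z_{p-1,k}$ is much larger than any thickening width. H\"older's inequality then retains only the Hilbert-Schmidt factor at the decisive link and absorbs the remaining $p-1$ operators into a $C^p$-factor controlled via \eqref{kernel_bound_distribution}.

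The core computation is the volume integral $\int_{W_{n^*-1}\cap M_3}\varphi(x)^{-d}\,dx$ required by Lemma \ref{Lemma_kernel_set_distribution} with $\varphi(x):=\tfrac{1}{3}(\phi_{n^*}(\|x\|_\infty)-\phi_{n^*-1}(\|x\|_\infty))$. Splitting at $\|x\|_\infty = p$ separates a harmless bounded-volume piece (on which $\phi_n\equiv 0$) from the tail; on the tail, one passes to coordinates $(x_k,x_\perp)\in\R\times\R^{d-1}$, uses $M_3$ to confine $x_\perp$ to a cube of side $\lesssim|x_k|$ and uses $W_{n^*-1}$ to confine a chosen transverse coordinate to a slab of width $\lesssim n^*\sqrt{|x_k|-p}$, then sums over the $k$ possible ``nearest faces'' of the multi-faceted boundary $\partial H_k$ and integrates $|x_k|\in[p,\infty)$. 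The Lipschitz-style verification that $\dist(x,V_{n^*}\setminus W_{n^*})\ge\varphi(x)$ runs parallel to \eqref{phi_k+1_bound_first_term}--\eqref{phi_k+1_bound_L}. Summing over the $\le 2^{p-1}$ summands and the $p$ values of $n^*$, and absorbing $2^{p-1}$ into $d^{p/2}$ by a mild rescaling $\phi_n\mapsto d^{n/2}\phi_n$ of the widths (analogous to the choice $\varepsilon=1/d^p$ in Lemma \ref{Lemma-HS-decay}), yields the claimed bound $Cd^{p/2}p^{(d+2)/2}$. The hardest part will be the Lipschitz bookkeeping across the multi-faceted boundary: unlike in Lemma \ref{Lemma-HS-decay}, the ``nearest face'' in $\partial H_k$ may switch as $x$ moves, so the triangle-inequality propagation of $\varphi$ must be verified on each of the $k$ coordinate hyperplanes $\{x_j=0\}$ separately before the nested shells $W_n$ can be treated as a single coherent family.
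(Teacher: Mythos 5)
Your outline follows the same chain strategy as the paper (nested thickened boundaries, decomposition at the first index where the chain leaves the shell, Lemma \ref{Lemma_kernel_set_distribution} at the decisive link, H\"older for the rest, reflection symmetry for \eqref{Lemma-commutation-HS-bound-2}), but the quantitative heart of your construction fails. With the equal-exponent widths $\phi_n(x)=\tfrac{n}{8}\sqrt{\max(0,x-p)}$, the ``core computation'' you describe diverges: on the tail $\|x\|_\infty\geq p$ the shell $W_{n^*-1}\cap M_3$ contributes measure $\sim |x_k|^{d-2}\,n^*\sqrt{|x_k|-p}\,\mathrm{d}x_k$, while $\varphi(x)^{-d}\sim(\sqrt{|x_k|-p}\,)^{-d}$, so the radial integrand behaves like $|x_k|^{(d-3)/2}$, which is not integrable at infinity for any $d\geq 2$. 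Multiplying the widths by constants (your rescaling $\phi_n\mapsto d^{n/2}\phi_n$) changes only prefactors, not this exponent, so it cannot repair the divergence, nor does it legitimately absorb the $2^{p-1}$ term count (which is anyway avoidable: grouping by the first failure index gives only $p$ summands). The paper's proof hinges on widths $\phi_n(x)\sim(1+\tfrac np)\max(0,x-p+n)^{1-\frac{1}{2d^n}+\epsilon}$ with $\epsilon=\tfrac{1}{2d^p}$, i.e.\ exponents increasing geometrically towards $1$, chosen exactly so that the width of the $n$th shell is compensated by the $d$th power of the $(n{+}1)$st gap ($x^{d-2}\phi_n(x)/\phi_{n+1}(x)^d\sim x^{-1-(d-1)\epsilon}$), together with slightly growing cone apertures $M_{3+n/p}$ so that the cone condition can be propagated by the triangle inequality; this is where the $d^{p/2}$ in the bound comes from (via $\sqrt{1/\epsilon}$), and it is the step your proposal is missing.

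Your treatment of \eqref{Lemma-commutation-HS-bound-3} and \eqref{Lemma-commutation-HS-bound-4} also has a gap. Lemma \ref{Lemma-HS-distance-L} does not ``apply directly'': it requires the left set to be contained in a ball of radius $\rho L$ with a uniform separation $RL$, whereas $\Omega_3$ is unbounded and its distance to $\Omega'\subseteq M_1$ grows only proportionally to $\|x\|_\infty$. Moreover the intermediate projections $P_n$ are full half-spaces ($H_k$ or $H_{k-1}\setminus H_k$), not subsets of $M_2^c$, and without the boundary thickening the relevant volume integrals diverge as well: a set of linear thickness $\sim\|x\|_\infty$ against decay $\|x\|_\infty^{-d}$ gives a logarithmically divergent (or worse) radial integral. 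The paper therefore runs the same nested-shell argument here too, with $\Gamma_n=V_n\cap(\partial H_k)_{\phi_n}\cap M^c_{2-n/(2p)}$, i.e.\ the thickening and a shrinking cone complement are both propagated through every link; so these two bounds are not easier, and the claim that the thickening is not needed is incorrect.
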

\begin{proof}
The proof employs a strategy similar to the one employed in the proof of Lemma \ref{Lemma-HS-decay}. Therefore, some of the steps are carried out in less detail.
We begin with the proof of \eqref{Lemma-commutation-HS-bound-1} and start with the case $p=1$. For every 
 $x\in \Omega_1$, we have that $x_k\geq 0$, as otherwise we have, using that $\phi$ is monotone,
\begin{equation}
 |x_k|\leq \dist(x,\partial H_k)<\phi(\|x\|_\infty)\leq \phi(3|x_k|)=\frac{\sqrt{\max(0,3|x_k|-1)}}{8},
\end{equation}
which leads to a contradiction. Therefore, for every $x\in \Omega_1$, we have that $\dist(x,H_{k-1}\setminus H_k)\geq x_k$. In particular we have $\dist(\Omega_1, H_{k-1}\setminus H_k)> \frac{1}{3}$, as in the case $x\in \Omega_1$ with $x_k\leq \frac{1}{3}$, we would have $\|x\|_\infty\leq 1$ and therefore $x\notin \big(\partial H_{k}\big)_{\phi}$. We apply Lemma \ref{Lemma_kernel_set_distribution} with $M=\Omega_1$, $N=H_{k-1}\setminus H_k$ and $\varphi$ given by $\varphi(x)=x_k$. We recall that $\dist(x,\partial H_k)\geq \min_{1\leq j \leq d}|x_j|$ and compute
\begin{multline}
\int_{\Omega_1}\frac{1}{x_k^{d}}\dd x =\sum_{l=1}^{d}\int_{\Omega_1\cap \{x\in\R^{d}: \ |x_l|= \min_{1\leq j\leq d}|x_j|\}}\frac{1}{x_k^{d}}\dd x \\
\leq (d-1) \int_{\tfrac{1}{3}}^\infty \frac{2^{d-1}(3x_k)^{d-2}\phi(3x_k)}{x_k^{d}}\dd x_k \leq \frac{C'}{2\sqrt{3}}\int_{\tfrac{1}{3}}^\infty \frac{x_k^{d-\tfrac{3}{2}}}{x_k^{d}}\dd x_k = C',
\end{multline}
where we used that $x_k\neq \min_{1\leq j\leq d}|x_j|$, as otherwise $x_k\leq \dist(x,\partial H_k)< \phi(\|x\|_\infty)\leq \phi(3x_k)$, which again leads to a contradiction. 
Therefore, Lemma \ref{Lemma_kernel_set_distribution} yields the desired constant $C>0$, such that
\begin{equation}
\big\|\mathbf{1}_{\Omega_1}X_{1}(\mathbf{1}_{H_{k-1}}-\mathbf{1}_{H_{k}})\big\|_2\leq C.
\end{equation}
This proves \eqref{Lemma-commutation-HS-bound-1} in the case $p=1$.
We now turn to the case $p\geq 2$. For $m\in\{1,\ldots,p-1\}$, we define the sets 
\begin{equation}
\Gamma_m:= V_{m}\cap\big(\partial H_{k}\big)_{\phi_m}\cap M_{\alpha_m}, 
\end{equation}
with $\alpha_m:=3+\tfrac{m}{p}$ and the functions $\phi_m:[0,\infty[\,\rightarrow [0,\infty[\,$ given by
\begin{equation}
\phi_m(x):=\tfrac{1}{8}\big(1+\tfrac{m}{p}\big)\max\big(0,x-p+m\big)^{1-\tfrac{1}{2d^{m}}+\epsilon},
\end{equation}
where we set $\epsilon:=\tfrac{1}{2d^{p}}$ for the remaining part of the proof.
We first consider the Hilbert-Schmidt norm of the operator $\mathbf{1}_{\Omega_1}X_1\mathbf{1}_{V_1\setminus\Gamma_1}$. Let $x\in\Omega_1$ and $y\in B_{\tfrac{\phi_1(\|x\|_\infty)}{4p}}(x)$, then we have
\begin{equation}
\|y\|_\infty > \|x\|_\infty-\tfrac{\phi_1(\|x\|_\infty)}{4p}\geq \|x\|_\infty - \tfrac{\|x\|_\infty-p+1}{16p} =\tfrac{(16p-1)\|x\|_\infty+(p-1)}{16p},
\end{equation}
where we used that $\|x\|_\infty>p$.
A short computation yields 
\begin{equation}
\frac{1+\tfrac{1}{p}}{8}\frac{16p-1}{16p}=\frac{16p+16-1-\tfrac{1}{p}}{128p}> \frac{1}{8}+\frac{1}{4p}\frac{1+\tfrac{1}{p}}{8}.
\end{equation}
With this, the monotonicity of $\phi_1$ and its definition, we obtain the bound
\begin{align}
\phi_1(\|y\|_\infty) &\geq \phi_1\big(\tfrac{(16p-1)\|x\|_\infty+(p-1)}{16p}\big) \nonumber \\
&\geq \Big(\tfrac{1}{8}+\tfrac{1}{4p}\tfrac{1+1/p}{8}\Big)\tfrac{16p}{16p-1}\max\big(0,\tfrac{(16p-1)\|x\|_\infty+(p-1)}{16p}-(p-1)\big)^{1-\tfrac{1}{2d}+\epsilon} \nonumber \\
&\geq \tfrac{1}{8}(\|x\|_\infty-p+1)^{1-\tfrac{1}{2d}+\epsilon}+\tfrac{1}{4p}\tfrac{1+1/p}{8}(\|x\|_\infty-p+1)^{1-\tfrac{1}{2d}+\epsilon}\nonumber \\
&> \phi(\|x\|_\infty)+\tfrac{\phi_1(\|x\|_\infty)}{4p}.
\end{align}
We conclude 
\begin{equation}
\dist(y,\partial H_k)\leq \dist(x,\partial H_k)+\tfrac{\phi_1(\|x\|_\infty)}{4p}< \phi(\|x\|_\infty)+\tfrac{\phi_1(\|x\|_\infty)}{4p}<\phi_1(\|y\|_\infty),
\end{equation}
i.e. $y\in \big(\partial H_{k}\big)_{\phi_1}$. We also have
\begin{multline}
\|y\|_\infty< \|x\|_\infty+\tfrac{\phi_1(\|x\|_\infty)}{4p}\leq \tfrac{16p+1}{16p}\|x\|_\infty\leq \tfrac{16p+4+1/p}{16p}\|x\|_\infty-(3+\tfrac{1}{p})\tfrac{\phi_1(\|x\|_\infty)}{4p} \\
\leq \tfrac{48p+12+3/p}{16p}|x_k|-(3+\tfrac{1}{p})\tfrac{\phi_1(\|x\|_\infty)}{4p}\leq (3+\tfrac{1}{p})\Big(|x_k|-\tfrac{\phi_1(\|x\|_\infty)}{4p}\Big)<(3+\tfrac{1}{p})|y_k|,
\end{multline}
i.e. $y\in M_{\alpha_1}$. Therefore, for every $x\in\Omega_1$, we have $\dist(x,V_1\setminus \Gamma_1)\geq \tfrac{\phi_1(\|x\|_\infty)}{4p}$. As $\|x\|_\infty>p$, for all $x\in \Omega_1$, we know in particular that $\dist(\Omega_1,V_1\setminus \Gamma_1)>\tfrac{1}{32p}$.  We now apply Lemma \ref{Lemma_kernel_set_distribution} with $M=\Omega_1$, $N=V_1\setminus \Gamma_1$ and $\varphi$ given by $\varphi(x)=\tfrac{\phi_1(\|x\|_\infty)}{4p}$. For the corresponding integral, we compute
\begin{align}\label{Lemma-commutation-integral-1}
\int_{\Omega_1}\frac{(4p)^d}{\phi_1(\|x\|_\infty)^{d}}&\dd x  \nonumber \\
&=\sum_{l=1}^{d}\sum_{\mu\in\{1,\ldots,d\}\setminus\{l\}}\int_{\Omega_1\cap \{x\in\R^{d}: \ |x_l|= \|x\|_\infty\}\cap \{x\in\R^{d}: \ |x_\mu|= \min_{1\leq j\leq d}|x_j|\}}\frac{(4p)^d}{\phi_1(\|x\|_\infty)^{d}}\dd x \nonumber \\
&\leq d(d-1) (64p)^{d}\int_{p}^\infty \frac{x_l^{d-2}\phi(x_l)}{(x_l-p+1)^{d-\tfrac{1}{2}+d\epsilon}}\dd x_l \nonumber \\
&\leq \frac{d(d-1) (64p)^{d}}{8}\int_{1}^\infty \frac{(x_l+p-1)^{d-2}\sqrt{x_l}}{x_l^{d-\tfrac{1}{2}+d\epsilon}}\dd x_l \nonumber \\
&\leq \frac{C'p^{d}}{\epsilon},
\end{align}
with a constant $C'>0$, independent of $V_1$ and $p$. Therefore, Lemma \ref{Lemma_kernel_set_distribution} yields a constant $C_1>0$, independent of $V_1$ and $p$, such that
\begin{equation}\label{Lemma-commutation-operator-HS-estimate-1}
\big\|\mathbf{1}_{\Omega_1}X_1\mathbf{1}_{V_1\setminus\Gamma_1}\big\|_2\leq C_1\sqrt{\frac{p^{d}}{\epsilon}}.
\end{equation}
We continue with the intermediate terms. Let $m\in \{1,\ldots,p-2\}$, then we want to find a bound for the Hilbert-Schmidt norm of the operators $\mathbf{1}_{\Gamma_m}X_{m+1}\mathbf{1}_{V_{m+1}\setminus\Gamma_{m+1}}$. Let $x\in\Gamma_m$ and $y\in B_{\tfrac{\phi_{m+1}(\|x\|_\infty)}{8p}}(x)$, then we have
\begin{equation}
\|y\|_\infty > \|x\|_\infty-\tfrac{\phi_{m+1}(\|x\|_\infty)}{8p}\geq \|x\|_\infty - \tfrac{\|x\|_\infty-p+m+1}{32p} =\tfrac{(32p-1)\|x\|_\infty+(p-m-1)}{32p},
\end{equation}
where we used that $\|x\|_\infty>p-m$.
A short computation yields 
\begin{equation}
\frac{1+\tfrac{m+1}{p}}{8}\frac{32p-1}{32p}=\frac{32p+32(m+1)-1-\tfrac{m+1}{p}}{256p}> \frac{1+\tfrac{m}{p}}{8}+\frac{1}{8p}\frac{1+\tfrac{m+1}{p}}{8}.
\end{equation}
With this, the monotonicity of $\phi_{m+1}$ and its definition, we obtain the bound
\begin{align}
\phi_{m+1}(\|y\|_\infty) &\geq \phi_{m+1}\big(\tfrac{(32p-1)\|x\|_\infty+(p-m-1)}{32p}\big)\nonumber\\
&\geq \Big(\tfrac{1+m/p}{8}+\tfrac{1}{8p}\tfrac{1+(m+1)/p}{8}\Big)\tfrac{32p}{32p-1}\Big(\tfrac{(32p-1)\|x\|_\infty+(p-m-1)}{32p}-(p-m-1)\Big)^{1-\tfrac{1}{2d^{m+1}}+\epsilon} \nonumber\\
&\geq \tfrac{1+m/p}{8}(\|x\|_\infty-p+m+1)^{1-\tfrac{1}{2d^{m+1}}+\epsilon}+\tfrac{1}{8p}\tfrac{1+(m+1)/p}{8}(\|x\|_\infty-p+m+1)^{1-\tfrac{1}{2d^{m+1}}+\epsilon}\nonumber\\ 
&> \phi_{m}(\|x\|_\infty)+\tfrac{\phi_{m+1}(\|x\|_\infty)}{8p}.
\end{align}
We conclude 
\begin{equation}
\dist(y,\partial H_k)\leq \dist(x,\partial H_k)+\tfrac{\phi_{m+1}(\|x\|_\infty)}{8p}< \phi_{m}(\|x\|_\infty)+\tfrac{\phi_{m+1}(\|x\|_\infty)}{8p}<\phi_{m+1}(\|y\|_\infty),
\end{equation}
i.e. $y\in \big(\partial H_{k}\big)_{\phi_{m+1}}$. We also have
\begin{align}
\|y\|_\infty &< \|x\|_\infty+\tfrac{\phi_{m+1}(\|x\|_\infty)}{8p}\leq \tfrac{32p+1}{32p}\|x\|_\infty\leq \tfrac{32p+4+(m+1)/p}{32p}\|x\|_\infty-(3+\tfrac{m+1}{p})\tfrac{\phi_{m+1}(\|x\|_\infty)}{8p} \nonumber\\
&\leq \tfrac{96p+12+3(m+1)/p+32m+4m/p+m(m+1)/p^2}{32p}|x_k|-(3+\tfrac{m+1}{p})\tfrac{\phi_{m+1}(\|x\|_\infty)}{8p}\nonumber\\&\leq (3+\tfrac{m+1}{p})\big(|x_k|-\tfrac{\phi_{m+1}(\|x\|_\infty)}{8p}\big)<(3+\tfrac{m+1}{p})|y_k|,
\end{align}
i.e. $y\in M_{\alpha_{m+1}}$. Therefore, for every $x\in\Gamma_m$, we have $\dist(x,V_{m+1}\setminus \Gamma_{m+1})\geq \tfrac{\phi_{m+1}(\|x\|_\infty)}{8p}$. As $\|x\|_\infty>p-m$, for all $x\in \Gamma_m$, we know in particular that $\dist(\Gamma_m,V_{m+1}\setminus \Gamma_{m+1})>\tfrac{1}{64p}$. We again apply Lemma \ref{Lemma_kernel_set_distribution}. Now with $M=\Gamma_m$, $N=V_{m+1}\setminus \Gamma_{m+1}$ and $\varphi$ given by $\varphi(x)=\tfrac{\phi_{m+1}(\|x\|_\infty)}{8p}$. For the corresponding integral, we compute
\begin{align}\label{Lemma-commutation-integral-2}
\int_{\Gamma_m}&\frac{(8p)^d}{\phi_{m+1}(\|x\|_\infty)^{d}}\dd x  \nonumber \\
&\leq\sum_{l=1}^{d}\sum_{\mu\in\{1,\ldots,d\}\setminus\{l\}}\int_{\big(\partial H_{k}\big)_{\phi_m}\cap \{x\in\R^{d}: \ |x_l|= \|x\|_\infty\}\cap \{x\in\R^{d}: \ |x_\mu|= \min_{1\leq j\leq d}|x_j|\}}\frac{(8p)^d}{\phi_{m+1}(\|x\|_\infty)^{d}}\dd x \nonumber \\
&\leq d(d-1) (128p)^{d}\int_{p-m}^\infty \frac{(x_l)^{d-2}\phi_m(x_l)}{(x_l-p+m+1)^{d-\tfrac{1}{2d^{m}}+d\epsilon}}\dd x_l \nonumber \\
&\leq \frac{d(d-1) (128p)^{d}}{4}\int_{1}^\infty \frac{(x_l+p-m-1)^{d-2}x_l^{1-\tfrac{1}{2d^{m}}+\epsilon}}{x_l^{d-\tfrac{1}{2d^{m}}+d\epsilon}}\dd x_l \nonumber \\
&\leq \frac{C'p^{d}}{\epsilon},
\end{align}
with a constant $C'>0$, independent of $V_{m}$, $V_{m+1}$ and $p$. Therefore, Lemma \ref{Lemma_kernel_set_distribution} yields a constant $C_{m+1}>0$, independent of $V_{m}$, $V_{m+1}$ and $p$, such that
\begin{equation}\label{Lemma-commutation-operator-HS-estimate-2}
\big\|\mathbf{1}_{\Gamma_{m}}X_{m+1}\mathbf{1}_{V_{m+1}\setminus\Gamma_{m+1}}\big\|_2\leq C_{m+1}\sqrt{\frac{p^{d}}{\epsilon}}.
\end{equation}
It remains to estimate the last operator $\mathbf{1}_{\Gamma_{p-1}}X_{p}(\mathbf{1}_{H_{k-1}}-\mathbf{1}_{H_{k}})$.
For every 
 $x\in \Gamma_{p-1}$, we have that $x_k\geq 0$, as $\Gamma_{p-1}\subset H_k$. Therefore,  we have, for every $x\in \Gamma_{p-1}$, that $\dist(x,H_{k-1}\setminus H_k)\geq x_k$. In particular we have $\dist(\Gamma_{p-1}, H_{k-1}\setminus H_k)> \frac{1}{4}$, as in the case $x\in \Gamma_{p-1}$ with $x_k\leq \frac{1}{4}$, we would have $\|x\|_\infty\leq 1$ and therefore $x\notin \big(\partial H_{k}\big)_{\phi_{p-1}}$. We apply Lemma \ref{Lemma_kernel_set_distribution} with $M=\Gamma_{p-1}$, $N=H_{k-1}\setminus H_k$ and $\varphi$ given by $\varphi(x)=x_k$. We recall that $\dist(x,\partial H_k)\geq \min_{1\leq j \leq d}|x_j|$ and compute
\begin{align}
\int_{\Gamma_{p-1}}\frac{1}{x_k^{d}}\dd x &= \sum_{l=1}^{d}\int_{\big(\partial H_{k}\big)_{\phi_{p-1}}\cap M_{4} \cap \{x\in\R^{d}: \ |x_l|= \min_{1\leq j\leq d}|x_j|\}}\frac{1}{x_k^{d}}\dd x \nonumber\\
&\leq (d-1) \int_{\tfrac{1}{4}}^\infty \frac{2^{d}(4x_k)^{d-2}\phi_{p-1}(4x_k)}{x_k^{d}}\dd x_k \nonumber\\
&\leq 8^{d}(d-1) \int_{\tfrac{1}{4}}^\infty x_k^{-1-\tfrac{1}{2d^{p-1}}+\epsilon}\dd x_k \leq C'\frac{1}{\epsilon},
\end{align}
where the constant $C'>0$ is independent of $V_{p-1}$ and $p$.
Therefore, Lemma \ref{Lemma_kernel_set_distribution} yields a constant $C_p>0$, independent of $V_{p-1}$ and $p$, such that
\begin{equation}\label{Lemma-commutation-operator-HS-estimate-3}
\big\|\mathbf{1}_{\Gamma_{p-1}}X_{p}(\mathbf{1}_{H_{k-1}}-\mathbf{1}_{H_{k}})\big\|_2\leq C_p\sqrt{\frac{1}{\epsilon}}.
\end{equation}
Repeated use of the triangle inequality, followed by H\"older's inequality, yields
\begin{align}\label{Lemma-commutation-Hölder}
\big\|&\mathbf{1}_{\Omega_1}X_1\Big(\prod_{m=1}^{p-1}\mathbf{1}_{V_m}X_{m+1}\Big)(\mathbf{1}_{H_{k-1}}-\mathbf{1}_{H_{k}})\big\|_2 \nonumber \\
&\leq \sum_{m=0}^{p-2}\Big\|\mathbf{1}_{\Omega_1}X_1\Big(\prod_{j=1}^{m}\mathbf{1}_{\Gamma_j}X_{j+1}\Big)\mathbf{1}_{V_{m+1}\setminus \Gamma_{m+1}}X_{m+2}\Big(\prod_{j=m+2}^{p-1}\mathbf{1}_{V_{j}}X_{j+1}\Big)(\mathbf{1}_{H_{k-1}}-\mathbf{1}_{H_{k}})\Big\|_2
\nonumber \\
& \qquad\quad  + \Big\|\mathbf{1}_{\Omega_1}X_1\Big(\prod_{j=1}^{p-1}\mathbf{1}_{\Gamma_{j}}X_{j+1}\Big)(\mathbf{1}_{H_{k-1}}-\mathbf{1}_{H_{k}})\Big\|_2 \nonumber \\
&\leq  \big\|\mathbf{1}_{\Omega_1}X_1\mathbf{1}_{V_{1}\setminus \Gamma_{1}}\big\|_2 +\sum_{m=1}^{p-2}\big\|\mathbf{1}_{\Gamma_m}X_{m+1}\mathbf{1}_{V_{m+1}\setminus \Gamma_{m+1}}\big\|_2+\big\|\mathbf{1}_{\Gamma_{p-1}}X_{p}(\mathbf{1}_{H_{k-1}}-\mathbf{1}_{H_{k}})\big\|_2  \nonumber\\
&\leq C d^{\tfrac{p}{2}} p^{\tfrac{d+2}{2}},
\end{align}
where we combined estimates \eqref{Lemma-commutation-operator-HS-estimate-1}, \eqref{Lemma-commutation-operator-HS-estimate-2} and \eqref{Lemma-commutation-operator-HS-estimate-3} and used the definition of $\epsilon$ in the last line. The constant $C$ is independent of $p$ and $V_m$, for every $m\in\{1,\ldots,p-1\}$. This proves \eqref{Lemma-commutation-HS-bound-1} for $p\geq 2$.

The bound \eqref{Lemma-commutation-HS-bound-2} reduces to \eqref{Lemma-commutation-HS-bound-1} through the reflection $S:\R^d\rightarrow\R^d$ along the hyperplane orthogonal to the $k$th coordinate, as we have $S(H_k)=H_{k-1}\setminus H_k$ and $S(M_\alpha)=M_\alpha$, for every $\alpha>0$.

We continue with the proof of \eqref{Lemma-commutation-HS-bound-3}. We again begin with the case $p=1$. Let $x\in \Omega_3\subset (M_2)^c$ and $y\in B_{\tfrac{\|x\|_\infty}{4}}(x)$. Then we have
\begin{equation}
\|y\|_\infty>\tfrac{3}{4}\|x\|_\infty=\tfrac{1}{2}\|x\|_\infty+\tfrac{\|x\|_\infty}{4}>|x_k|+\tfrac{\|x\|_\infty}{4}>|y_k|,
\end{equation}
i.e. $y\notin M_{1}$. Therefore, we have $\dist(x,\Omega')\geq \dist(x,M_{1})\geq \tfrac{\|x\|_\infty}{4}$. In particular we have $\dist(\Omega_3,\Omega')>\tfrac{1}{4}$.  We apply Lemma \ref{Lemma_kernel_set_distribution} with $M=\Omega_3$, $N=\Omega'$ and $\varphi$ given by $\varphi(x)=\tfrac{\|x\|_\infty}{4}$. We recall that $\dist(x,\partial H_k)\geq \min_{1\leq j \leq d}|x_j|$ and compute
\begin{multline}
\int_{\Omega_3}\frac{4^{d}}{\|x\|_\infty^{d}}\dd x =\sum_{l=1}^{d}\sum_{\mu\in\{1,\ldots,d\}\setminus\{l\}}\int_{\Omega_3\cap \{x\in\R^{d}: \ |x_l|= \|x\|_\infty\}\cap \{x\in\R^{d}: \ |x_\mu|= \min_{1\leq j\leq d}|x_j|\}}\frac{4^{d}}{\|x\|_\infty^{d}}\dd x \\
\leq d(d-1) \int_{1}^\infty \frac{4^{d}2^{d}x_l^{d-2}\phi(x_l)}{x_l^{d}}\dd x_l \leq \frac{C}{2}\int_{1}^\infty \frac{x_l^{d-\tfrac{3}{2}}}{x_l^{d}}\dd x_l = C,
\end{multline}
with a constant $C>0$, independent of $\Omega'$. Therefore, Lemma \ref{Lemma_kernel_set_distribution} yields the desired constant $C'>0$, independent of $\Omega'$, such that
\begin{equation}
\big\|\mathbf{1}_{\Omega_3}X_{1}\mathbf{1}_{\Omega'}\big\|_2\leq C'.
\end{equation}
This proves \eqref{Lemma-commutation-HS-bound-3} in the case $p=1$. We now turn to the case $p\geq 2$. For $m\in\{1,\ldots,p-1\}$, let $V_m\in\{H_k,H_{k-1}\setminus H_k\}$ be the set corresponding to the projection $P_m$. For $m\in\{1,\ldots,p-1\}$, we define the sets 
\begin{equation}
\Gamma_m:= V_{m}\cap\big(\partial H_{k}\big)_{\phi_m}\cap M_{\alpha_m}^c, 
\end{equation}
with $\alpha_m:=2-\tfrac{m}{2p}$ and $\phi_m:[0,\infty[\,\rightarrow [0,\infty[\,$ given by
\begin{equation}
\phi_m(x)=\tfrac{1}{8}\big(1+\tfrac{m}{p}\big)\max\big(0,x-p+m\big)^{1-\tfrac{1}{2d^{m}}+\epsilon},
\end{equation}
as in the proof of \eqref{Lemma-commutation-HS-bound-1}. We recall $\epsilon=\tfrac{1}{2d^{p}}$.
Again, we begin with the Hilbert-Schmidt norm of the operator $\mathbf{1}_{\Omega_3}X_1\mathbf{1}_{V_1\setminus\Gamma_1}$. Let $x\in\Omega_3$ and $y\in B_{\tfrac{\phi_1(\|x\|_\infty)}{4p}}(x)$, then we have, as in the proof of \eqref{Lemma-commutation-HS-bound-1}, that $y\in \big(\partial H_{k}\big)_{\phi_1}$. It remains to check that  also $y\in M_{\alpha_1}^c$. We estimate
\begin{multline}
\|y\|_\infty > \|x\|_\infty-\tfrac{\phi_1(\|x\|_\infty)}{4p}\geq \tfrac{(16p-1)\|x\|_\infty}{16p}\geq \tfrac{16p-3+1/(2p)}{16p}\|x\|_\infty+\big(2-\tfrac{1}{2p}\big)\tfrac{\phi_1(\|x\|_\infty)}{4p} \\
\geq \tfrac{32p-6+1/p}{16p}|x_k|+\big(2-\tfrac{1}{2p}\big)\tfrac{\phi_1(\|x\|_\infty)}{4p}\geq\big(2-\tfrac{1}{2p}\big)\big(|x_k|+\tfrac{\phi_1(\|x\|_\infty)}{4p}\big)\geq\big(2-\tfrac{1}{2p}\big)|y_k|,
\end{multline}
where we used that $\|x\|_\infty>p$.
Therefore, for every $x\in\Omega_3$, we have $\dist(x,V_1\setminus \Gamma_1)\geq \tfrac{\phi_1(\|x\|_\infty)}{4p}$. As $\|x\|_\infty>p$, for all $x\in \Omega_3$, we know in particular that $\dist(\Omega_3,V_1\setminus \Gamma_1)>\tfrac{1}{32p}$.  We now apply Lemma \ref{Lemma_kernel_set_distribution} with $M=\Omega_3$, $N=V_1\setminus \Gamma_1$ and $\varphi$ given by $\varphi(x)=\tfrac{\phi_1(\|x\|_\infty)}{4p}$. For the corresponding integral we compute, in the same way as in \eqref{Lemma-commutation-integral-1}, that
\begin{multline}
\int_{\Omega_3}\frac{(4p)^d}{\phi_1(\|x\|_\infty)^{d}}\dd x   \\
=\sum_{l=1}^{d}\sum_{\mu\in\{1,\ldots,d\}\setminus\{l\}}\int_{\Omega_3\cap \{x\in\R^{d}: \ |x_l|= \|x\|_\infty\}\cap \{x\in\R^{d}: \ |x_\mu|= \min_{1\leq j\leq d}|x_j|\}}\frac{(4p)^d}{\phi_1(\|x\|_\infty)^{d}}\dd x  
\leq \frac{Cp^{d}}{\epsilon},
\end{multline}
with a constant $C>0$, independent of $V_1$ and $p$. Therefore, Lemma \ref{Lemma_kernel_set_distribution} yields a constant $C_1'>0$, independent of $V_1$ and $p$, such that
\begin{equation}\label{Lemma-commutation-operator-HS-estimate-4}
\big\|\mathbf{1}_{\Omega_3}X_1\mathbf{1}_{V_1\setminus\Gamma_1}\big\|_2\leq C_1'\sqrt{\frac{p^{d}}{\epsilon}}.
\end{equation}
We continue with the intermediate terms. Let $m\in \{1,\ldots,p-2\}$, then we want to find a bound for the Hilbert-Schmidt norm of the operators $\mathbf{1}_{\Gamma_m}X_{m+1}\mathbf{1}_{V_{m+1}\setminus\Gamma_{m+1}}$. Let $x\in\Gamma_m$ and $y\in B_{\tfrac{\phi_{m+1}(\|x\|_\infty)}{8p}}(x)$, then we have, as in the proof of \eqref{Lemma-commutation-HS-bound-1}, that $y\in \big(\partial H_{k}\big)_{\phi_{m+1}}$. It remains to check that  also $y\in M_{\alpha_{m+1}}^c$. We estimate
\begin{multline}
\|y\|_\infty > \|x\|_\infty-\tfrac{\phi_{m+1}(\|x\|_\infty)}{8p}\geq \tfrac{(32p-1)\|x\|_\infty}{32p}\geq \tfrac{32p-3+(m+1)/(2p)}{32p}\|x\|_\infty+\big(2-\tfrac{m+1}{2p}\big)\tfrac{\phi_{m+1}(\|x\|_\infty)}{8p} \\
\geq \tfrac{64p-16m-6}{32p}|x_k|+\big(2-\tfrac{m+1}{2p}\big)\tfrac{\phi_{m+1}(\|x\|_\infty)}{8p} 
\geq\big(2-\tfrac{m+1}{2p}\big)\big(|x_k|+\tfrac{\phi_{m+1}(\|x\|_\infty)}{8p}\big)\geq\big(2-\tfrac{m+1}{2p}\big)|y_k|,
\end{multline}
where we used that $\|x\|_\infty>p-m$.
Therefore, for every $x\in\Gamma_m$, we have $\dist(x,V_{m+1}\setminus \Gamma_{m+1})\geq \tfrac{\phi_{m+1}(\|x\|_\infty)}{8p}$. As $\|x\|_\infty>p-m$, for all $x\in \Gamma_m$, we know in particular that $\dist(\Gamma_m,V_{m+1}\setminus \Gamma_{m+1})>\tfrac{1}{64p}$. We again apply Lemma \ref{Lemma_kernel_set_distribution}. Now with $M=\Gamma_m$, $N=V_{m+1}\setminus \Gamma_{m+1}$ and $\varphi$ given by $\varphi(x)=\tfrac{\phi_{m+1}(\|x\|_\infty)}{8p}$. We treat the corresponding integral as in \eqref{Lemma-commutation-integral-2} and obtain
\begin{equation}
\int_{\Gamma_m}\frac{(8p)^d}{\phi_{m+1}(\|x\|_\infty)^{d}}\dd x  
\leq \frac{Cp^{d}}{\epsilon},
\end{equation}
with a constant $C>0$, independent of $V_{m}$, $V_{m+1}$ and $p$. Therefore, Lemma \ref{Lemma_kernel_set_distribution} yields a constant $C_{m+1}'>0$, independent of $V_{m}$, $V_{m+1}$ and $p$, such that
\begin{equation}\label{Lemma-commutation-operator-HS-estimate-5}
\big\|\mathbf{1}_{\Gamma_{m}}X_{m+1}\mathbf{1}_{V_{m+1}\setminus\Gamma_{m+1}}\big\|_2\leq C_{m+1}'\sqrt{\frac{p^{d}}{\epsilon}}.
\end{equation}
It remains to estimate the last operator $\mathbf{1}_{\Gamma_{p-1}}X_{p}\mathbf{1}_{\Omega'}$.
Let $x\in \Gamma_{p-1}\subset \big(M_{3/2}\big)^c$ and $y\in B_{\tfrac{\|x\|_\infty}{6}}(x)$. Then we have
\begin{equation}
\|y\|_\infty>\tfrac{5}{6}\|x\|_\infty=\tfrac{4}{6}\|x\|_\infty+\tfrac{\|x\|_\infty}{6}>|x_k|+\tfrac{\|x\|_\infty}{6}>|y_k|,
\end{equation}
i.e. $y\notin M_{1}$. Therefore, we have $\dist(x,\Omega')\geq \dist(x,M_{1})\geq \tfrac{\|x\|_\infty}{6}$. In particular we have $\dist(\Gamma_{p-1},\Omega')>\tfrac{1}{6}$.  We apply Lemma \ref{Lemma_kernel_set_distribution} with $M=\Gamma_{p-1}$, $N=\Omega'$ and $\varphi$ given by $\varphi(x)=\tfrac{\|x\|_\infty}{6}$. We recall that $\dist(x,\partial H_k)\geq \min_{1\leq j \leq d}|x_j|$ and compute
\begin{multline}
\int_{\Gamma_{p-1}}\frac{6^{d}}{\|x\|_\infty^{d}}\dd x =\sum_{l=1}^{d}\sum_{\mu\in\{1,\ldots,d\}\setminus\{l\}}\int_{\Gamma_{p-1}\cap \{x\in\R^{d}: \ |x_l|= \|x\|_\infty\}\cap \{x\in\R^{d}: \ |x_\mu|= \min_{1\leq j\leq d}|x_j|\}}\frac{6^{d}}{\|x\|_\infty^{d}}\dd x \\
\leq d(d-1) \int_{1}^\infty \frac{12^{d}x_l^{d-2}\phi_{p-1}(x_l)}{x_l^{d}}\dd x_l \leq d(d-1)12^{d}\int_{1}^\infty x_l^{-1-\tfrac{1}{2d^{p-1}}+\epsilon}\dd x_l \leq C\frac{1}{\epsilon},
\end{multline}
with a constant $C>0$, independent of $V_{p-1}$, $\Omega'$ and $p$.
Therefore, Lemma \ref{Lemma_kernel_set_distribution} yields a constant $C_p'>0$, independent of $V_{p-1}$, $\Omega'$ and $p$, such that
\begin{equation}\label{Lemma-commutation-operator-HS-estimate-6}
\big\|\mathbf{1}_{\Gamma_{p-1}}X_{p}(\mathbf{1}_{H_{k-1}}-\mathbf{1}_{\Omega'})\big\|_2\leq C_p'\sqrt{\frac{1}{\epsilon}}.
\end{equation}
As in the proof of \eqref{Lemma-commutation-HS-bound-1}, we combine the estimates \eqref{Lemma-commutation-operator-HS-estimate-4}, \eqref{Lemma-commutation-operator-HS-estimate-5} and \eqref{Lemma-commutation-operator-HS-estimate-6} and use the definition of $\epsilon$ to obtain the bound \eqref{Lemma-commutation-HS-bound-3}. This proves \eqref{Lemma-commutation-HS-bound-3} for $p\geq 2$.

As in the proof of the bound \eqref{Lemma-commutation-HS-bound-2}, the bound \eqref{Lemma-commutation-HS-bound-4} reduces to \eqref{Lemma-commutation-HS-bound-3} through the reflection along the hyperplane orthogonal to the $k$th coordinate. This finishes the proof of the lemma.
\end{proof}

Now we combine the trace-class bound from Lemma \ref{Lemma-trace-class-sob} with the Hilbert-Schmidt bounds we just obtained and make use of the structure of the operator in question in order to commute the first $p-1$ occurrences of $\Op(\psi)$ to the right.
\begin{lem} \label{Lemma-commutation-combination}
Let $L\geq 1$ and $p\in\N$. Then there exists a constant $C>0$, independent of $L$ and $p$, such that
\begin{align}
\Big|\tr_{L^{2}(\R^{d})\otimes\C^n}\Big[\mathbf{1}_{H_{L,d,k}}\Big(&\big(\mathbf{1}_{H_{k}}\Op(\psi\mathcal{D})\mathbf{1}_{H_{k}}\big)^p-\big(\mathbf{1}_{H_{k-1}}\Op(\psi\mathcal{D})\mathbf{1}_{H_{k-1}}\big)^p \nonumber \\
&\ -\big(\mathbf{1}_{H_{k}}\Op(\mathcal{D})\mathbf{1}_{H_{k}}\big)^{p-1}\Op(\psi^{p}\mathcal{D})\mathbf{1}_{H_{k}} \nonumber \\
 &\ +\big(\mathbf{1}_{H_{k-1}}\Op(\mathcal{D})\mathbf{1}_{H_{k-1}}\big)^{p-1}\Op(\psi^{p}\mathcal{D})\mathbf{1}_{H_{k-1}}\Big)\Big]\Big|\leq C(2\sqrt{d})^{p-1}(p-1)^{5d+1}.
\end{align}
\end{lem}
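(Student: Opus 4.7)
Since $\Op(\psi)$ and $\Op(\mathcal{D})$ are both Fourier multipliers they commute, so $\Op(\psi\mathcal{D})=\Psi Y$ with $\Psi:=\Op(\psi)$ and $Y:=\Op(\mathcal{D})$. The plan is to use $\Psi\mathbf{1}_{H_j}=\mathbf{1}_{H_j}\Psi+[\Psi,\mathbf{1}_{H_j}]$ iteratively inside $(\mathbf{1}_{H_j}\Psi Y\mathbf{1}_{H_j})^p$ in order to transport each of the first $p-1$ factors of $\Psi$ past the intermediate cutoffs and combine them into a single $\Psi^p$ at the rightmost position. An induction on the number of $\Psi$'s already moved yields
\begin{equation*}
(\mathbf{1}_{H_j}\Psi Y\mathbf{1}_{H_j})^p
  =(\mathbf{1}_{H_j}Y\mathbf{1}_{H_j})^{p-1}\Psi^p Y\mathbf{1}_{H_j}
  +\sum_{r=1}^{p-1}F_r^{(j)},
\end{equation*}
where $F_r^{(j)}:=\mathbf{1}_{H_j}(Y\mathbf{1}_{H_j})^{r-1}Y\,[\Psi^r,\mathbf{1}_{H_j}]\,(\Psi Y\mathbf{1}_{H_j})^{p-r}$. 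The statement of the lemma thus reduces to bounding $\sum_{r=1}^{p-1}\bigl|\tr[\mathbf{1}_{H_{L,d,k}}(F_r^{(k)}-F_r^{(k-1)})]\bigr|$.

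To exploit the cancellation between $j=k$ and $j=k-1$, I will expand every one of the $O(p)$ occurrences of $\mathbf{1}_{H_{k-1}}$ inside $F_r^{(k-1)}$—including the occurrence inside the commutator—using $\mathbf{1}_{H_{k-1}}=\mathbf{1}_{H_k}+\mathbf{1}_{H_{k-1}\setminus H_k}$. The term with every $\mathbf{1}_{H_{k-1}}$ replaced by $\mathbf{1}_{H_k}$ is exactly $F_r^{(k)}$ and cancels; the remaining $O(2^p)$ terms each carry at least one explicit factor $\mathbf{1}_{H_{k-1}\setminus H_k}$, either replacing an interior projection or appearing inside a split commutator $[\Psi^r,Q]=Q^c\Psi^r Q-Q\Psi^r Q^c$ with $Q\in\{\mathbf{1}_{H_k},\mathbf{1}_{H_{k-1}\setminus H_k}\}$. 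The key geometric observation is that $\mathbf{1}_{H_{L,d,k}}$ is supported in $M_1\subset M_3$, while $\mathbf{1}_{H_{k-1}\setminus H_k}\subset\{x_k<0\}$ places the surviving boundary factor on a subset of $\partial H_k$ that outside the origin falls naturally into either the near-axis region $M_3\cap(\partial H_k)_\phi$ or the far-axis region $M_2^c\cap(\partial H_k)_\phi$ where the four Hilbert-Schmidt estimates of Lemma~\ref{Lemma-commutation-HS} apply, with $\phi$ as chosen there.

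For each such term I will apply H\"older's inequality, splitting at the position of $\Psi^r$. The left sub-product $\mathbf{1}_{H_j}(Y\mathbf{1}_{H_j})^{r-1}Y\cdot(\text{boundary projection})$ and the right sub-product $(\text{boundary projection})\cdot(\Psi Y\mathbf{1}_{H_j})^{p-r}$ are each estimated in Hilbert-Schmidt norm via the appropriate variant among \eqref{Lemma-commutation-HS-bound-1}--\eqref{Lemma-commutation-HS-bound-4} (with the adjoint used on the right block so that the boundary projection occupies the position of $\Omega_1$), contributing together a factor $Cd^{p/2}p^{d+2}$; the middle block carrying $\Psi^r$ is estimated in trace norm via Corollary~\ref{Lemma-trace-class-sob} with $\beta=2d-2$, producing $Cp^{2d+\beta+1}=Cp^{4d-1}$. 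All remaining intermediate operators are bounded by $\|Y\|,\|\Psi\|\leq 1$ in operator norm. Summing over the $O(2^{p-1})$ terms of the combinatorial expansion and over $r\in\{1,\dots,p-1\}$, and absorbing the residual $d$-dependent constant into $C$, yields the claimed bound $C(2\sqrt{d})^{p-1}(p-1)^{5d+1}$. The main obstacle is the bookkeeping in the second step: for each of the many terms one must correctly identify the subset of $(\partial H_k)_\phi$ on which the boundary factor lies, select the correct variant of Lemma~\ref{Lemma-commutation-HS} depending on whether the factor sits to the left or right of $\Psi^r$ and in $M_3$ or in $M_2^c$, and use the reflection symmetry $x_k\mapsto-x_k$ to reduce the estimates involving $\mathbf{1}_{H_{k-1}\setminus H_k}$ to those for $\mathbf{1}_{H_k}$ that are directly stated in the lemma.
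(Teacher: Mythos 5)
Your algebraic preparation is sound: since $\Op(\psi)$ and $\Op(\mathcal{D})$ are commuting Fourier multipliers, the identity $(\mathbf{1}_{H_j}\Psi Y\mathbf{1}_{H_j})^p=(\mathbf{1}_{H_j}Y\mathbf{1}_{H_j})^{p-1}\Psi^pY\mathbf{1}_{H_j}+\sum_{r}F_r^{(j)}$ is correct, and expanding every $\mathbf{1}_{H_{k-1}}$ in $F_r^{(k-1)}$ as $\mathbf{1}_{H_k}+\mathbf{1}_{H_{k-1}\setminus H_k}$ to cancel $F_r^{(k)}$ is just a reordering of the paper's bookkeeping (the paper expands the difference of the $p$-th powers in $\pi\in\{0,1\}^{p-1}$ first and then commutes the $\psi$'s; you commute first and then expand). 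The problem lies in the analytic core of your step 4.

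The estimate of ``the middle block carrying $\Psi^r$ in trace norm via Corollary \ref{Lemma-trace-class-sob} with $\beta=2d-2$'' does not exist. After splitting the commutator, that block has the form $\mathbf{1}_{Q^c}\Op(\psi^r\otimes\mathbb{1}_n)\mathbf{1}_{Q}$ with $Q\in\{H_k,\,H_{k-1}\setminus H_k\}$, and for $d\geq 2$ such an operator is not trace class (not even compact): $\partial H_k$ is an unbounded hypersurface, the operator is translation invariant along the free directions of that boundary, and the counting hypothesis \eqref{Lemma-trace-class-sob-prep-req} fails for every finite $\beta$ and $C_\beta$ because for each $r>0$ infinitely many unit cubes of $Q^c$ lie within distance $r$ of $Q$. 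The same objection applies if both flanking sets are replaced by neighbourhoods of the same unbounded boundary. Moreover, your assertion that $\mathbf{1}_{H_{k-1}\setminus H_k}$ ``places the surviving boundary factor on a subset of $\partial H_k$'' lying in $(\partial H_k)_\phi\cap M_3$ or $(\partial H_k)_\phi\cap M_2^c$ is false: $H_{k-1}\setminus H_k$ is a full unbounded region, not a thickened boundary, so the hypotheses on the sets $\Omega_i$ in Lemma \ref{Lemma-commutation-HS} are not satisfied by your left and right blocks as written. What is missing is exactly the boundary-localisation step that carries the paper's proof: the projections adjacent to $\Op(\psi^r)$ in the split commutator must first be replaced, up to trace-norm errors, by projections onto the thickened boundaries $(\partial H_k)_\phi$ and $(\partial(H_{k-1}\setminus H_k))_\phi$ with $\phi(x)\sim\sqrt{x}$, further split along $M_{5/2}$, $M_3$, $(M_2)^c$; Corollary \ref{Lemma-trace-class-sob} is applied only to the pairs (complement of the region, region minus its thickened boundary), for which the growing thickness makes the mutual distance increase with $\|x\|_\infty$ and the cube count polynomial in $r$ (there with $\beta=2d$ and $C_\beta\sim p^{3d}$, which is where the power $p^{5d+1}$ actually comes from). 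Only after this localisation can the four Hilbert--Schmidt bounds of Lemma \ref{Lemma-commutation-HS} be invoked, with the chains terminated at $\mathbf{1}_{H_{k-1}}-\mathbf{1}_{H_k}$ or, via cyclicity of the trace, at $\mathbf{1}_{H_{L,d,k}}\subseteq M_1$, while the operators between the two cut points are controlled in operator norm (there is no trace-norm middle factor at all). Without this step, neither your middle-block bound nor the applicability of Lemma \ref{Lemma-commutation-HS} to your flanking blocks is justified, so the proposal has a genuine gap precisely at the mechanism that produces the stated $(2\sqrt{d})^{p-1}(p-1)^{5d+1}$ bound.
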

\begin{proof} 
For $p=1$, there is nothing to show.
Similarly as in the proof of Lemma \ref{higher-order-monom}, we write
\begin{multline}
\mathbf{1}_{H_{L,d,k}}\Big(\big(\mathbf{1}_{H_{k-1}}\Op(\psi\mathcal{D})\mathbf{1}_{H_{k-1}}\big)^p-\big(\mathbf{1}_{H_{k}}\Op(\psi\mathcal{D})\mathbf{1}_{H_{k}}\big)^p\Big)\mathbf{1}_{H_{L,d,k}}\\
=\sum_{\pi=(\pi_{1},\ldots,\pi_{p-1})\in \{0,1\}^{p-1}, \pi\neq 0}\mathbf{1}_{H_{L,d,k}}\Big(\prod_{j=1}^{p-1}\Op(\psi\mathcal{D})P_{\pi(j)}\Big)\Op(\psi\mathcal{D})\mathbf{1}_{H_{L,d,k}},
\end{multline}
with $P_0=\mathbf{1}_{H_{k}}$ and $P_1=\mathbf{1}_{H_{k-1}\setminus H_{k}}$. For the remaining part of the proof, we will only consider one of these terms for a given $\pi\in \{0,1\}^{p-1}$ with $\pi\neq 0$. The estimates for the sum follow by the triangle inequality. The fact that $\pi\neq 0$ guarantees that there is at least one occurrence of the projection $\mathbf{1}_{H_{k-1}\setminus H_{k}}$. For the given $\pi$, we define $p_{1,\pi}:=\min \{j\in\{1,\ldots,p-1\}: \ \pi(j)=1\}$ and $p_{2,\pi}:=\max \{j\in\{1,\ldots,p-1\}: \ \pi(j)=1\}$.
The first step is to commute the first occurrence of $\psi$ to the right, i.e. we want to estimate 
\begin{equation}\label{Lemma-commutation-goal-1}
\Big|\tr_{L^{2}(\R^{d})\otimes\C^n}\Big[\mathbf{1}_{H_{L,d,k}}\Big(\Op(\psi\mathcal{D})P_{\pi(1)}-\Op(\mathcal{D})P_{\pi(1)}\Op(\psi)\Big)\Big(\prod_{j=2}^{p-1}\Op(\psi\mathcal{D})P_{\pi(j)}\Big)\Op(\psi\mathcal{D})\Big]\Big|.
\end{equation}
We write
\begin{align}
\Op(\psi\mathcal{D})P_{\pi(1)}&-\Op(\mathcal{D})P_{\pi(1)}\Op(\psi)\nonumber\\
&= \Op(\mathcal{D})\big[(\mathbf{1}-P_{\pi(1)}+P_{\pi(1)})\Op(\psi)P_{\pi(1)}-P_{\pi(1)}\Op(\psi)(\mathbf{1}-P_{\pi(1)}+P_{\pi(1)})\big]\nonumber\\
&= \Op(\mathcal{D})\big[(\mathbf{1}-P_{\pi(1)})\Op(\psi)P_{\pi(1)}-P_{\pi(1)}\Op(\psi)(\mathbf{1}-P_{\pi(1)})\big],
\end{align}
where $\mathbf{1}:=\mathbf{1}_{\R^d}$ is the corresponding multiplication operator on $L^2(\R^d)\otimes\C^n$.
With this at hand, we bound \eqref{Lemma-commutation-goal-1} from above by
\begin{align}\label{Lemma-commutation-goal-2}
&\,\Big|\tr_{L^{2}(\R^{d})\otimes\C^n}\Big[\mathbf{1}_{H_{L,d,k}}\Op(\mathcal{D})(\mathbf{1}-P_{\pi(1)})\Op(\psi)P_{\pi(1)}\Big(\prod_{j=2}^{p-1}\Op(\psi\mathcal{D})P_{\pi(j)}\Big)\Op(\psi\mathcal{D})\Big]\Big| \nonumber \\
+&\, \Big|\tr_{L^{2}(\R^{d})\otimes\C^n}\Big[\mathbf{1}_{H_{L,d,k}}\Op(\mathcal{D})P_{\pi(1)}\Op(\psi)(\mathbf{1}-P_{\pi(1)})\Big(\prod_{j=2}^{p-1}\Op(\psi\mathcal{D})P_{\pi(j)}\Big)\Op(\psi\mathcal{D})\Big]\Big|.
\end{align}
We will continue to estimate the first trace in \eqref{Lemma-commutation-goal-2}. The second trace works in the same way.
To do so, we choose the measurable function $\phi:[0,\infty[\,\rightarrow[0,\infty[\,$ with $\phi(x):=\tfrac{1}{8}\sqrt{\max(0,x-p)}$ and define the projections $P_{0,\phi}:=\mathbf{1}_{\big(\partial H_{k}\big)_{\phi}\cap H_k}$ and $P_{1,\phi}:=\mathbf{1}_{\big(\partial (H_{k-1}\setminus H_{k})\big)_{\phi}\cap (H_{k-1}\setminus H_{k})}$. Then Corollary \ref{Lemma-trace-class-sob} with $\beta=2d$ and $C_\beta=C_\beta'p^d$, with $C_\beta'>0$ independent of $p$, yields that 
\begin{equation}
\|(\mathbf{1}-P_{\pi(1)})\Op(\psi)\big(P_{\pi(1)}-P_{\pi(1),\phi}\big)\|_1\leq C_0 p^{d}
\end{equation}
with the constant $C_0>0$ being independent of $L$ and $p$. This allows us to replace the projection $P_{\pi(1)}$ on the right-hand side of $\Op(\psi)$ in the first line of \eqref{Lemma-commutation-goal-2} with the projection $P_{\pi(1),\phi}$ up to an error term of constant order. 
As explained earlier in this section, we split the sets $\big(\partial H_{k}\big)_{\phi}$ and $\big(\partial (H_{k-1}\setminus H_{k})\big)_{\phi}$ into two subsets, which are disjoint up to sets of measure zero. We choose the parameter $\alpha=\tfrac{5}{2}$ and define the corresponding projections
\begin{equation}
P_{\pi(1),\phi}^{(1)}:=P_{\pi(1),\phi}\mathbf{1}_{M_{5/2}} \qquad \text{and} \qquad P_{\pi(1),\phi}^{(2)}:=P_{\pi(1),\phi}\mathbf{1}_{(M_{5/2})^c}.
\end{equation}
We also define the projections
\begin{align}
&P_{0,\phi}^{(3)}:=\mathbf{1}_{\big(\partial H_{k}\big)_{\phi}}\mathbf{1}_{(H_k)^c}\mathbf{1}_{M_{3}}, \qquad \qquad \qquad \ \ P_{0,\phi}^{(4)}:=\mathbf{1}_{\big(\partial H_{k}\big)_{\phi}}\mathbf{1}_{(H_k)^c}\mathbf{1}_{(M_{2})^c}, \nonumber \\
&P_{1,\phi}^{(3)}:=\mathbf{1}_{\big(\partial (H_{k-1}\setminus H_{k})\big)_{\phi}}\mathbf{1}_{(H_{k-1}\setminus H_{k})^c}\mathbf{1}_{M_{3}}, \qquad P_{1,\phi}^{(4)}:=\mathbf{1}_{\big(\partial (H_{k-1}\setminus H_{k})\big)_{\phi}}\mathbf{1}_{(H_{k-1}\setminus H_{k})^c}\mathbf{1}_{(M_{2})^c}
\end{align}
and claim that the trace norms of the operators
\begin{align}\label{Lemma-commutation-smooth-decay-operators}
&\big(\mathbf{1}_{(H_{k})^c}-P_{0,\phi}^{(3)}\big)\Op(\psi)P_{0,\phi}^{(1)}, \qquad \qquad \qquad \big(\mathbf{1}_{(H_{k})^c}-P_{0,\phi}^{(4)}\big)\Op(\psi)P_{0,\phi}^{(2)}, \nonumber \\
&\big(\mathbf{1}_{(H_{k-1}\setminus H_{k})^c}-P_{1,\phi}^{(3)}\big)\Op(\psi)P_{0,\phi}^{(1)}, \qquad \qquad \big(\mathbf{1}_{(H_{k-1}\setminus H_{k})^c}-P_{1,\phi}^{(4)}\big)\Op(\psi)P_{1,\phi}^{(2)},
\end{align}
are bounded by a constant times $p^{d}$, which is independent of $L$ and $p$. We will only check this for the first operator, the other cases work in the same way. In order to apply Corollary \ref{Lemma-trace-class-sob}, we need to check the condition \eqref{Lemma-trace-class-sob-prep-req} for the sets $M=(H_k)^c\setminus \big(\big(\partial H_{k}\big)_{\phi}\cap(H_k)^c\cap M_{3}\big)$ and $N=M_{5/2} \cap \big(\partial H_{k}\big)_{\phi}\cap H_k$. We only do so in the case $\rho=1$, as another choice of $\rho$ would only yield an additional dependence on $\rho=b+1$ in the constant $C_\beta$ but we are not concerned with the dependence on $b$ in the present lemma.
Let $x\in (H_k)^c\setminus \big(\big(\partial H_{k}\big)_{\phi}\cap(H_k)^c\cap M_{3}\big)$, then we consider two cases. In the first case we have $x\in (H_k)^c$ and $x\notin \big(\partial H_{k}\big)_{\phi}$ and therefore
\begin{equation}\label{Lemma-commutation-smooth-decay-2}
\dist(x,M_{5/2} \cap \big(\partial H_{k}\big)_{\phi}\cap H_k)\geq\dist(x,H_k)=\dist(x,\partial H_k)\geq\phi(\|x\|_\infty)=\frac{\sqrt{\max(0,\|x\|_\infty-p)}}{8}.
\end{equation}
In the second case we have $x\notin M_3$. Suppose now that $y\in B_{\tfrac{\|x\|_\infty}{21}}(x)$, then we have 
\begin{equation}
\|y\|_\infty>\tfrac{20}{21}\|x\|_\infty=\tfrac{5}{6}\|x\|_\infty+\tfrac{5}{2}\tfrac{\|x\|_\infty}{21}>\tfrac{5}{2}\big(|x_k|+\tfrac{\|x\|_\infty}{21}\big)>\tfrac{5}{2}|y_k|,
\end{equation}
i.e. $y\notin M_{5/2}$. Therefore, in this case  we have
\begin{equation}\label{Lemma-commutation-smooth-decay-3}
\dist(x,M_{5/2} \cap \big(\partial H_{k}\big)_{\phi}\cap H_k)\geq\dist(x,M_{5/2})\geq\tfrac{\|x\|_\infty}{21}.
\end{equation}
Combining \eqref{Lemma-commutation-smooth-decay-2} and \eqref{Lemma-commutation-smooth-decay-3}, we find a constant $C_\beta'>0$, independent of $L$ and $p$, such that the first operator in \eqref{Lemma-commutation-smooth-decay-operators} satisfies the requirements of Corollary \ref{Lemma-trace-class-sob} with $\beta=2d$ and $C_\beta=C_\beta'p^d$. Checking this for the remaining operators in \eqref{Lemma-commutation-smooth-decay-operators}, Corollary \ref{Lemma-trace-class-sob} yields that the trace norms of the operators in \eqref{Lemma-commutation-smooth-decay-operators} are indeed bounded by a constant times $p^{d}$ independent of $L$ and $p$.
With this at hand, we estimate the first trace in \eqref{Lemma-commutation-goal-2} by 
\begin{multline}\label{Lemma-commutation-goal-3}
\sum_{l=1}^{2}\Big|\tr_{L^{2}(\R^{d})\otimes\C^n}\Big[\mathbf{1}_{H_{L,d,k}}\Op(\mathcal{D})P_{\pi(1),\phi}^{(l+2)}\Op(\psi)P_{\pi(1),\phi}^{(l)}\Big(\prod_{j=2}^{p-1}\Op(\psi\mathcal{D})P_{\pi(j)}\Big)\Op(\psi\mathcal{D})\Big]\Big| \\
+C_0'p^{d}.
\end{multline}
with $C_0'$, independent of $L$ and $p$.

We now estimate the individual terms in \eqref{Lemma-commutation-goal-3} by using the cyclic property of the trace to rewrite them as products of Hilbert-Schmidt norms and then applying Lemma \ref{Lemma-commutation-HS}. Here, we have to distinguish between several different cases. In order to apply Lemma \ref{Lemma-commutation-HS}, we note that we have $\big(\partial H_{k}\big)_{\phi}\subset \big(\partial H_{k}\big)_{\phi_m}$ and $\big(\partial (H_{k-1}\setminus H_{k})\big)_{\phi}\subset \big(\partial (H_{k-1}\setminus H_{k})\big)_{\phi_m}$ for all $m\in\{1,\ldots,p-1\}$, where $\phi_m:[0,\infty[\,\rightarrow[0,\infty[\,$ with $\phi_m(x):=\tfrac{1}{8}\sqrt{\max(0,x-m)}$. We also recall the definitions $p_{1,\pi}=\min \{j\in\{1,\ldots,p-1\}: \ \pi(j)=1\}$ and $p_{2,\pi}=\max \{j\in\{1,\ldots,p-1\}: \ \pi(j)=1\}$.
\begin{enumerate}[(i)]
\item If $l=1$ and $\pi(1)=0$, we estimate the corresponding term in \eqref{Lemma-commutation-goal-3} by
\begin{multline}
\Big\|P_{\pi(p_{2,\pi})}\Big(\prod_{j=p_{2,\pi}+1}^{p-1}\Op(\psi\mathcal{D})P_{\pi(j)}\Big)\Op(\psi\mathcal{D})\mathbf{1}_{H_{L,d,k}}\Op(\mathcal{D})P_{0,\phi}^{(3)}\Big\|_2 \\ 
\times \quad \Big\|P_{0,\phi}^{(1)}
\Big(\prod_{j=2}^{p_{1,\pi}-1}\Op(\psi\mathcal{D})P_{\pi(j)}\Big)\Op(\psi\mathcal{D})P_{\pi(p_{1,\pi})}\Big\|_2
\end{multline}
and apply \eqref{Lemma-commutation-HS-bound-1} to obtain the bound
\begin{equation}
C_1 d^{\tfrac{p-1}{2}}(p-1)^{d+2},
\end{equation}
where the constant $C_1>0$ is independent of $L$ and $p$.
\item If $l=2$ and $\pi(1)=0$, we estimate the corresponding term in \eqref{Lemma-commutation-goal-3} by
\begin{equation}
\big\|\mathbf{1}_{H_{L,d,k}}\Op(\mathcal{D})P_{0,\phi}^{(4)}\big\|_2  \quad \times \quad \Big\|P_{0,\phi}^{(2)}
\Big(\prod_{j=2}^{p-1}\Op(\psi\mathcal{D})P_{\pi(j)}\Big)\Op(\psi\mathcal{D})\mathbf{1}_{H_{L,d,k}}\Big\|_2
\end{equation}
and apply \eqref{Lemma-commutation-HS-bound-3} to obtain the same bound as in (i).
\item If $l=1$ and $\pi(1)=1$, we estimate the corresponding term in \eqref{Lemma-commutation-goal-3} by
\begin{equation}
\big\|\mathbf{1}_{H_{L,d,k}}\Op(\mathcal{D})P_{1,\phi}^{(3)}\big\|_2  \quad \times \quad \Big\|P_{1,\phi}^{(1)}
\Big(\prod_{j=2}^{p_{1,\pi,0}-1}\Op(\psi\mathcal{D})P_{\pi(j)}\Big)\Op(\psi\mathcal{D})P_{\pi(p_{1,\pi,0})}\Big\|_2,
\end{equation}
where $p_{1,\pi,0}$ is the smallest $j\in \{2,\ldots,p-1\}$ with $\pi(p_{1,\pi,0})=0$, in the case that it exists. Otherwise we set $p_{1,\pi,0}=p$ and $P_{p_{1,\pi,0}}=\mathbf{1}_{H_{L,d,k}}$. Then we apply \eqref{Lemma-commutation-HS-bound-2} to obtain the same bound as in (i).
\item If $l=2$ and $\pi(1)=1$, we estimate the corresponding term in \eqref{Lemma-commutation-goal-3} by
\begin{equation}
\big\|\mathbf{1}_{H_{L,d,k}}\Op(\mathcal{D})P_{1,\phi}^{(4)}\big\|_2  \quad \times \quad \Big\|P_{1,\phi}^{(2)}
\Big(\prod_{j=2}^{p-1}\Op(\psi\mathcal{D})P_{\pi(j)}\Big)\Op(\psi\mathcal{D})\mathbf{1}_{H_{L,d,k}}\Big\|_2
\end{equation}
and apply \eqref{Lemma-commutation-HS-bound-4} to obtain the same bound as in (i).
\end{enumerate}
As we obtain the same bound in all of the cases, we see that \eqref{Lemma-commutation-goal-3}, and with it \eqref{Lemma-commutation-goal-1}, are bounded from above by 
\begin{equation}
C_1' d^{\tfrac{p-1}{2}}(p-1)^{d+2},
\end{equation}
with $C_1'>0$ independent of $L$ and $p$. 
The commutation to the right of the following $p-2$ occurrences of $\Op(\psi^n)$, for $n\in\{2,\ldots,p-1\}$ works in an analogous way. 
Although the application of Corollary \ref{Lemma-trace-class-sob} now yields a higher power in $p$, as we now apply it to the function $\psi^j$ for $j\in\{2,\ldots,p-1\}$.
Therefore, we obtain the bound
\begin{equation}\label{Lemma-commutation-goal-4}
\Big|\tr_{L^{2}(\R^{d})\otimes\C^n}\Big[\mathbf{1}_{H_{L,d,k}}\Big(\prod_{j=1}^{p-1}\Op(\mathcal{D})P_{\pi(j)}\Big)\Op(\psi^{p}\mathcal{D})\Big]\Big|\leq C d^{\tfrac{p-1}{2}}(p-1)^{5d+1},
\end{equation}
with a constant $C>0$, independent of $L$ and $p$. Summing up the contributions of all $\pi\in \{0,1\}^{p-1}$ with $\pi\neq 0$, we obtain a constant $C>0$, independent of $L$ and $p$, such that
\begin{align}
\Big|\tr_{L^{2}(\R^{d})\otimes\C^n}\Big[\mathbf{1}_{H_{L,d,k}}\Big(&\big(\mathbf{1}_{H_{k}}\Op(\psi\mathcal{D})\mathbf{1}_{H_{k}}\big)^p-\big(\mathbf{1}_{H_{k-1}}\Op(\psi\mathcal{D})\mathbf{1}_{H_{k-1}}\big)^p \nonumber \\
&\ -\big(\mathbf{1}_{H_{k}}\Op(\mathcal{D})\mathbf{1}_{H_{k}}\big)^{p-1}\Op(\psi^{p}\mathcal{D})\mathbf{1}_{H_{k}} \nonumber \\
 &\ +\big(\mathbf{1}_{H_{k-1}}\Op(\mathcal{D})\mathbf{1}_{H_{k-1}}\big)^{p-1}\Op(\psi^{p}\mathcal{D})\mathbf{1}_{H_{k-1}}\Big)\Big]\Big|\leq C(2\sqrt{d})^{p-1}(p-1)^{5d+1}.
\end{align}
This concludes the proof of the lemma.
\end{proof}
In order to find an upper bound for \eqref{Commutation-section-goal}, it remains to commute $\Op(\psi^p)$ with the projection onto the set $H_{L,d,k}$. To do so, we will first split the set $H_{L,d,k}=\bigcup_{x_k\in [0,L]} [0,x_{k}]^{k-1}\times \{x_{k}\}\times [0,x_{k}]^{d-k}$ into the two disjoint parts
\begin{equation}
H_{L,d,k}= \{x\in H_{L,d,k}: x_k\leq \tfrac{L}{2}\}\cup \{x\in H_{L,d,k}: x_k>\tfrac{L}{2}\}=:H_{L,d,k}^{(1)}\cup H_{L,d,k}^{(2)}.
\end{equation}
We note that the set $H_{L,d,k}^{(2)}$ has a measure of order $L^{d}$ and a distance of order $L$ to the set $H_{k-1}\setminus H_{k}$. Therefore, we later deal with it using Lemma \ref{Lemma-HS-distance-L}. For the set  $H_{L,d,k}^{(1)}$, we define, given a measurable function $\phi:[0,\infty[\,\rightarrow[0,\infty[\,$, a "thickened up" version of the boundary of its infinite version, i.e.~of the set $H_{\infty,d,k}$, by 
\begin{align}
\big(\partial H_{\infty,d,k}\big)_{\phi}:= &\bigcup_{x_k\in [0,\infty[\,}\,]-\phi(x_{k}),x_k+\phi(x_{k})[^{k-1}\times \{x_{k}\}\times \,]-\phi(x_{k}),x_k+\phi(x_{k})[^{d-k}\nonumber \\
&\quad\setminus \bigcup_{x_k\in [0,\infty[\,}\,]\phi(x_{k}),x_k-\phi(x_{k})[^{k-1}\times \{x_{k}\}\times \,]\phi(x_{k}),x_k-\phi(x_{k})[^{d-k}.
\end{align}
We define the part of this "thickened up" boundary associated to $H_{L,d,k}^{(1)}$ by 
\begin{equation}
\big(\partial H_{L,d,k}\big)_{\phi}:=\big\{x\in\big(\partial H_{\infty,d,k}\big)_{\phi}:\ x_k\in [0,\tfrac{L}{2}]\big\}.
\end{equation}
Next, we provide an estimate for a Hilbert-Schmidt norm involving this set.

\begin{lem}\label{Lemma-commutation-last}
Let $p\in\N$ and $X_1,\ldots, X_p$ be bounded translation-invariant integral operators, each satisfying the estimate \eqref{kernel_bound_distribution} and such that $\|X_m\|\leq 1$ for each $m\in\{1,\ldots ,p\}$. Let $k\in\{1,\ldots,d\}$. Further let $\phi:[0,\infty[\,\rightarrow[0,\infty[\,$ be given by $\phi(x):=\tfrac{1}{p^2}\sqrt{\max(0,px-p)}$. Then, for every measurable $\Omega\subseteq \big(\partial H_{\infty,d,k}\big)_{\phi}$, there exists a constant $C>0$, independent of $\Omega$ and $p$, such that
\begin{equation}\label{Lemma-commutation-last-bound}
\big\|\mathbf{1}_{\Omega}X_1\Big(\prod_{m=1}^{p-1}\mathbf{1}_{H_k}X_{m+1}\Big)(\mathbf{1}_{H_{k-1}}-\mathbf{1}_{H_{k}})\big\|_2\leq Cd^{\tfrac{p}{2}} p^{\tfrac{3d+2}{2}}.
\end{equation}
\end{lem}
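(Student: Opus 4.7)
The plan is to adapt the strategy of Lemma \ref{Lemma-HS-decay} and Lemma \ref{Lemma-commutation-HS}: construct a chain of intermediate thickened boundaries of $H_{\infty,d,k}$ whose widths are power functions of $x_k$, and transport the off-diagonal decay of each kernel from $\Omega$ to $H_{k-1}\setminus H_{k}$ through $p-1$ applications of Lemma \ref{Lemma_kernel_set_distribution}.

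For $p=1$, I would apply Lemma \ref{Lemma_kernel_set_distribution} directly with $M=\Omega$, $N=H_{k-1}\setminus H_{k}$, and $\varphi(x)=x_k$, using that $x_k\geq 0$ on $\big(\partial H_{\infty,d,k}\big)_{\phi}$. The integral $\int_\Omega x_k^{-d}\,\d x$ is then evaluated by decomposing the domain according to which of the coordinates $x_j$, $j\neq k$, lies within $\phi(x_k)$ of either $0$ or $x_k$; at height $x_k$ the measure of $\Omega$ is bounded by $2(d-1)\,\phi(x_k)\,(x_k+2\phi(x_k))^{d-2}$, so with $\phi(x_k)\leq\sqrt{x_k/p}/p$ for $x_k\geq 1$ the remaining one-dimensional integral in $x_k$ produces a constant independent of $p$.

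For $p\geq 2$, I would introduce, for $n\in\{1,\dots,p-1\}$, the sets $\Gamma_n:=H_k\cap\big(\partial H_{\infty,d,k}\big)_{\phi_n}$ with increasing widths of the form
\begin{equation*}
\phi_n(x)=c_n\,p^{-\beta}\max(0,px-p)^{1-\tfrac{1}{2d^{n}}+\epsilon},
\end{equation*}
for suitable constants $c_n>0$, an exponent $\beta$ chosen so that $\phi\leq\phi_1$, and $\epsilon:=\tfrac{1}{2d^{p}}$; the sequence is arranged so that $\phi\leq\phi_1\leq\cdots\leq\phi_{p-1}$ and so that each gap $\phi_{n+1}-\phi_n$ dominates a controlled fraction of $\phi_{n+1}/p$. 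The same Lipschitz and monotonicity argument as in Lemma \ref{Lemma-HS-decay} would then yield $\dist(\Omega,H_k\setminus\Gamma_1)\geq c\,\phi_1(\|x\|_\infty)/p$ on $\Omega$, the analogous bound $\dist(\Gamma_n,H_k\setminus\Gamma_{n+1})\geq c\,\phi_{n+1}(\|x\|_\infty)/p$ on $\Gamma_n$ for $n\in\{1,\dots,p-2\}$, and $\dist(\Gamma_{p-1},H_{k-1}\setminus H_k)\geq x_k$ on $\Gamma_{p-1}$. Applying Lemma \ref{Lemma_kernel_set_distribution} to each of the $p$ resulting factors, with the associated integrals bounded as in the $p=1$ case via the volume growth $2(d-1)\phi_n(x_k)\,x_k^{d-2}$ of $\Gamma_n$ at height $x_k$, gives a Hilbert--Schmidt bound of order $\sqrt{p^{3d}/\epsilon}$ at each step. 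A $(p-1)$-fold use of the triangle inequality followed by H\"older's inequality, together with the choice $\epsilon=1/(2d^{p})$, then produces the claimed estimate $Cd^{p/2}p^{(3d+2)/2}$.

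The main obstacle is the careful choice of the prefactors $c_n$ and the exponent $\beta$ controlling the $p$-dependence of the widths $\phi_n$. They have to simultaneously (i) dominate the much smaller starting width $\phi$, which carries a $p^{-3/2}$ factor in front of $\sqrt{x_k}$ rather than being $p$-independent as in Lemma \ref{Lemma-commutation-HS}, (ii) maintain the step-distance estimates above under the Lipschitz continuity of each $\phi_n$, and (iii) account for the additional family of boundary faces $\{x_j=x_k\}$ of the cone $H_{\infty,d,k}$, which is absent in the half-space geometry of Lemma \ref{Lemma-commutation-HS} and is precisely the source of the extra factor $p^{d}$ in the exponent, turning $p^{(d+2)/2}$ into $p^{(3d+2)/2}$.
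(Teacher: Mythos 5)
Your proposal is correct and follows essentially the same route as the paper: a chain of thickened cone boundaries $\Gamma_n=H_k\cap\big(\partial H_{\infty,d,k}\big)_{\phi_n}$ with widths of exactly the form you describe (the paper takes $\phi_n(x)=\tfrac{n+1}{p^2}\max(0,px-p+n)^{1-\frac{1}{2d^{n}}+\epsilon}$, i.e.\ $c_n=n+1$, $\beta=2$, with an extra shift $+n$ inside the max so that $\Gamma_n$ reaches down to $x_k>\tfrac{p-n}{p}$), followed by Lemma \ref{Lemma_kernel_set_distribution} at each of the $p$ steps and the triangle/H\"older argument, giving $p\sqrt{p^{3d}/\epsilon}$ with $\epsilon=\tfrac{1}{2d^p}$. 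Only your closing remark (iii) is slightly off: the upgrade from $p^{(d+2)/2}$ to $p^{(3d+2)/2}$ comes from the $p^{-2}$ prefactor carried by $\phi$ and hence by all $\phi_n$ (the point you already make in (i)), which contributes $p^{2d}$ through $\varphi(x)^{-d}\sim p^{d}\phi_{n+1}(x_k)^{-d}$ in the integrals, while the additional faces $\{x_j=x_k\}$ of the cone only affect dimension-dependent constants.
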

\begin{proof}
The proof is similar to the one of Lemma \ref{Lemma-commutation-HS}.
As usual we start with the case $p=1$. For every 
$x\in \Omega$, we clearly have $x_k\geq 0$ and therefore $\dist(x,H_{k-1}\setminus H_k)\geq x_k$. In particular we have $\dist(\Omega, H_{k-1}\setminus H_k)> 1$, as in the case $x\in \Omega$ with $x_k\leq 1$, we would have $x\notin \big(\partial H_{\infty,d,k}\big)_{\phi}$. We apply Lemma \ref{Lemma_kernel_set_distribution} with $M=\Omega$, $N=H_{k-1}\setminus H_k$ and $\varphi$ given by $\varphi(x)=x_k$. We note that we have $\phi(x_k)\leq \sqrt{x_k}$ and compute
\begin{equation}
\int_{\Omega}\frac{1}{x_k^{d}}\dd x 
\leq C_0\int_{1}^\infty \frac{x_k^{d-2}\phi(x_k)}{x_k^{d}}\dd x_k = C_0',
\end{equation}
where the constants $C_0$, $C_0'$ are independent of $\Omega$.
Therefore, Lemma \ref{Lemma_kernel_set_distribution} yields a constant $C>0$, independent of $\Omega$, such that
\begin{equation}
\big\|\mathbf{1}_{\Omega}X_{1}(\mathbf{1}_{H_{k-1}}-\mathbf{1}_{H_{k}})\big\|_2\leq C.
\end{equation}
This proves the lemma in the case $p=1$.
We now turn to the case $p\geq 2$. For $m\in\{1,\ldots,p-1\}$, we define the sets 
\begin{equation}
\Gamma_m:= H_k\cap\big(\partial H_{\infty,d,k}\big)_{\phi_m}, 
\end{equation}
with $\phi_m$ given by
\begin{equation}
\phi_m(x):=\frac{m+1}{p^2}\max\big(0,px-p+m\big)^{1-\tfrac{1}{2d^{m}}+\epsilon},
\end{equation} 
where we set $\epsilon:=\tfrac{1}{2d^{p}}$ for the remaining part of the proof.
We first consider the Hilbert-Schmidt norm of the operator $\mathbf{1}_{\Omega}X_1\mathbf{1}_{H_k\setminus\Gamma_1}$. Let $x\in\Omega$ and $y\in B_{\tfrac{\phi_1(x_k)}{2p}}(x)$, then we have
\begin{equation}
y_k > x_k-\tfrac{\phi_1(x_k)}{2p}\geq x_k - \tfrac{px_k-p+1}{2p^2} =\tfrac{(2p^2-p)x_k+(p-1)}{2p^2},
\end{equation}
where we used that $x_k>1$.
A short computation yields 
\begin{equation}
\tfrac{2}{p^2}\tfrac{2p^2-p}{2p^2}=\tfrac{4p^2-2p}{2p^4}> \tfrac{1}{p^2}+\tfrac{2}{2p^3}.
\end{equation}
With this, the monotonicity of $\phi_1$ and its definition, we obtain the bound
\begin{multline}
\phi_1(y_k) \geq \phi_1\big(\tfrac{(2p^2-p)x_k+(p-1)}{2p^2}\big)\geq \big(\tfrac{1}{p^2}+\tfrac{2}{2p^3}\big)\tfrac{2p^2}{2p^2-p}\max\big(0,\tfrac{(2p^2-p)x_k+(p-1)}{2p}-(p-1)\big)^{1-\tfrac{1}{2d}+\epsilon} \\
\geq \tfrac{1}{p^2}(px_k-p+1)^{1-\tfrac{1}{2d}+\epsilon}+\tfrac{2}{2p^3}(px_k-p+1)^{1-\tfrac{1}{2d}+\epsilon}\geq \phi(x_k)+\tfrac{\phi_1(x_k)}{2p}.
\end{multline}
We conclude 
\begin{equation}
\dist(y,\partial H_{\infty,d,k})\leq \dist(x,\partial  H_{\infty,d,k})+\tfrac{\phi_1(x_k)}{2p}< \phi(x_k)+\tfrac{\phi_1(x_k)}{2p}<\phi_1(y_k),
\end{equation}
i.e. $y\in \big(\partial H_{\infty,d,k}\big)_{\phi_1}$. Therefore, for every $x\in\Omega$, we have $\dist(x,H_k\setminus \Gamma_1)\geq \tfrac{\phi_1(x_k)}{2p}$. As $x_k>1$ for all $x\in \Omega$, we know in particular that $\dist(\Omega,H_k\setminus \Gamma_1)>\tfrac{1}{p^3}$.  We now apply Lemma \ref{Lemma_kernel_set_distribution} with $M=\Omega$, $N=H_k\setminus \Gamma_1$ and $\varphi$ given by $\varphi(x)=\tfrac{\phi_1(x_k)}{2p}$. Again we have $\phi(x_k)\leq \sqrt{x_k}$ and we estimate the corresponding integral by
\begin{equation}
\int_{\Omega}\frac{(2p)^d}{\phi_1(x_k)^{d}}\dd x  \leq C' p^d\int_{1}^\infty \frac{x_k^{d-2}\phi(x_k)}{\phi_1(x_k)^{d}}\dd x_k 
\leq C' p^{3d}\int_{1}^\infty x_k^{d-\tfrac{3}{2}-d+\tfrac{1}{2}-d\epsilon}\dd x_k  
 \leq  C''\frac{p^{3d}}{\epsilon},
\end{equation}
with constants $C',C''>0$, independent of $\Omega$ and $p$. Therefore, Lemma \ref{Lemma_kernel_set_distribution} yields a constant $C_1>0$, independent of $\Omega$ and $p$, such that
\begin{equation}\label{Lemma-commutation-last-operator-HS-estimate-1}
\big\|\mathbf{1}_{\Omega}X_1\mathbf{1}_{H_k\setminus\Gamma_1}\big\|_2\leq C_1\sqrt{\frac{p^{3d}}{\epsilon}}.
\end{equation}
We continue with the intermediate terms. Let $m\in \{1,\ldots,p-2\}$, then we want to find a bound for the Hilbert-Schmidt norm of the operators $\mathbf{1}_{\Gamma_m}X_{m+1}\mathbf{1}_{H_k\setminus\Gamma_{m+1}}$. Let $x\in\Gamma_m$ and $y\in B_{\tfrac{\phi_{m+1}(x_k)}{2p}}(x)$, then we have
\begin{equation}
y_k > x_k-\tfrac{\phi_{m+1}(x_k)}{2p}\geq x_k - \tfrac{px_k-p+m+1}{2p^2} =\tfrac{(2p^2-p)x_k+(p-m-1)}{2p^2},
\end{equation}
where we used that $x_k>\tfrac{p-m}{p}$.
A short computation yields 
\begin{equation}
\tfrac{m+2}{p^2}\tfrac{2p^2-p}{2p^2}=\tfrac{2(m+2)p^2-(m+2)p}{2p^4}>\tfrac{2(m+1)p^2+(m+2)p}{2p^4} = \tfrac{m+1}{p^2}+\tfrac{m+2}{2p^3}.
\end{equation}
With this, the monotonicity of $\phi_{m+1}$ and its definition, we obtain the bound
\begin{align}
\phi_{m+1}(y_k) &\geq \phi_{m+1}\big(\tfrac{(2p^2-p)x_k+(p-m-1)}{2p^2}\big) \nonumber \\
&\geq \Big(\tfrac{m+1}{p^2}+\tfrac{m+2}{2p^3}\Big)\tfrac{2p^2}{2p^2-p}\max\big(0,\tfrac{(2p^2-p)x_k+(p-m-1)}{2p}-(p-m-1)\big)^{1-\tfrac{1}{2d^{m+1}}+\epsilon} \nonumber\\
&\geq \tfrac{m+1}{p^2}(px_k-p+m+1)^{1-\tfrac{1}{2d^{m+1}}+\epsilon}+\tfrac{m+2}{2p^3}(px_k-p+m+1)^{1-\tfrac{1}{2d^{m+1}}+\epsilon}\nonumber\\
&> \phi_{m}(x_k)+\tfrac{\phi_{m+1}(x_k)}{2p}.
\end{align}
We conclude 
\begin{equation}
\dist(y,\partial H_{\infty,d,k})\leq \dist(x,\partial  H_{\infty,d,k})+\tfrac{\phi_{m+1}(x_k)}{2p}< \phi_{m}(x_k)+\tfrac{\phi_{m+1}(x_k)}{2p}<\phi_{m+1}(y_k),
\end{equation}
i.e. $y\in \big(\partial H_{\infty,d,k}\big)_{\phi_{m+1}}$. Therefore, for every $x\in\Gamma_m$, we have $\dist(x,H_k\setminus \Gamma_{m+1})\geq \tfrac{\phi_{m+1}(x_k)}{2p}$. As $x_k>\tfrac{p-m}{p}$, for all $x\in \Gamma_m$, we know in particular that $\dist(\Gamma_m,H_k\setminus \Gamma_{m+1})>\tfrac{1}{p^3}$.
We again apply Lemma \ref{Lemma_kernel_set_distribution}. Now with $M=\Gamma_m$, $N=H_k\setminus \Gamma_{m+1}$ and $\varphi$ given by $\varphi(x)=\tfrac{\phi_{m+1}(x_k)}{2p}$. We estimate the corresponding integral 
\begin{multline}
\int_{\Gamma_m}\frac{(2p)^d}{\phi_{m+1}(x_k)^{d}}\dd x  \leq C' p^d\int_{\tfrac{p-m}{p}}^\infty \frac{x_k^{d-2}\phi_{m}(x_k)}{\phi_{m+1}(x_k)^{d}}\dd x_k 
\leq C' p^{3d}\int_{\tfrac{p-m}{p}}^\infty x_k^{d-1-\tfrac{1}{2d^m}+\epsilon-d+\tfrac{1}{2d^m}-d\epsilon}\dd x_k \\ 
\leq C''\frac{p^{3d}}{\epsilon},
\end{multline}
with constants $C',C''>0$, independent of $\Omega$ and $p$. Therefore, Lemma \ref{Lemma_kernel_set_distribution} yields a constant $C_{m+1}>0$, independent of $\Omega$ and $p$, such that
\begin{equation}\label{Lemma-commutation-last-operator-HS-estimate-2}
\big\|\mathbf{1}_{\Gamma_{m}}X_{m+1}\mathbf{1}_{V_{m+1}\setminus\Gamma_{m+1}}\big\|_2\leq C_{m+1}\sqrt{\frac{p^{3d}}{\epsilon}}.
\end{equation}
It remains to estimate the last operator $\mathbf{1}_{\Gamma_{p-1}}X_{p}(\mathbf{1}_{H_{k-1}}-\mathbf{1}_{H_{k}})$.

For every 
 $x\in \Gamma_{p-1}$, we have $x_k\geq 0$ and therefore $\dist(x,H_{k-1}\setminus H_k)\geq x_k$. In particular we have $\dist(\Gamma_{p-1}, H_{k-1}\setminus H_k)> \tfrac{1}{p}$, as in the case $x\in \Gamma_{p-1}$ with $x_k\leq \tfrac{1}{p}$, we would have $x\notin \big(\partial H_{\infty,d,k}\big)_{\phi_{p-1}}$. We apply Lemma \ref{Lemma_kernel_set_distribution} with $M=\Gamma_{p-1}$, $N=H_{k-1}\setminus H_k$ and $\varphi$ given by $\varphi(x)=x_k$. We compute
\begin{equation}
\int_{\Gamma_{p-1}}\frac{1}{x_k^{d}}\dd x 
\leq C'\int_{\tfrac{1}{p}}^\infty \frac{x_k^{d-2}\phi_{p-1}(x_k)}{x_k^{d}}\dd x_k \leq C''\frac{1}{\epsilon},
\end{equation}
with constants $C',C''>0$, independent of $\Omega$ and $p$.
Therefore, Lemma \ref{Lemma_kernel_set_distribution} yields a constant $C_p>0$, independent of $\Omega$ and $p$, such that
\begin{equation}\label{Lemma-commutation-last-operator-HS-estimate-3}
\big\|\mathbf{1}_{\Gamma_{p-1}}X_{p}(\mathbf{1}_{H_{k-1}}-\mathbf{1}_{H_{k}})\big\|_2\leq C_p\sqrt{\frac{1}{\epsilon}}.
\end{equation}
As in \eqref{Lemma-commutation-Hölder}, we combine the estimates \eqref{Lemma-commutation-last-operator-HS-estimate-1}, \eqref{Lemma-commutation-last-operator-HS-estimate-2} and \eqref{Lemma-commutation-last-operator-HS-estimate-3} and use the definition of $\epsilon$ to obtain the bound \eqref{Lemma-commutation-last-bound}. This proves the lemma for $p\geq 2$.
\end{proof}
In a similar way as in Lemma \ref{Lemma-commutation-combination}, we now use the derived Hilbert-Schmidt bound  to also commute the last occurrence of $\Op(\psi^p)$ to the right.
\begin{lem}\label{Lemma-commutation-last-combination}
 Let $L\geq 1$ and $p\in\N$. Then there exists a constant $C>0$, independent of $L$ and $p$, such that
\begin{align}
\Big|\tr_{L^{2}(\R^{d})\otimes\C^n}\Big[&\mathbf{1}_{H_{V,L}}\Big(\big(\mathbf{1}_{H_{k}}\Op(\psi\mathcal{D})\mathbf{1}_{H_{k}}\big)^p-\big(\mathbf{1}_{H_{k-1}}\Op(\psi\mathcal{D})\mathbf{1}_{H_{k-1}}\big)^p\Big)\mathbf{1}_{H_{L,d,k}} \nonumber\\
& -\mathbf{1}_{H_{V,L}}\Big(\big(\mathbf{1}_{H_{k}}\Op(\mathcal{D})\mathbf{1}_{H_{k}}\big)^p-\big(\mathbf{1}_{H_{k-1}}\Op(\mathcal{D})\mathbf{1}_{H_{k-1}}\big)^p\Big)\mathbf{1}_{H_{L,d,k}}\Op(\psi^{p}\otimes\mathbb{1}_{n})\Big]\Big|\nonumber\\
& \qquad\qquad\qquad\qquad\qquad\qquad\qquad\qquad\qquad\qquad \leq C(2\sqrt{d})^{p-1}p^{7d+1}.
\end{align}
\end{lem}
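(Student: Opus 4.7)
The plan is to combine Lemma \ref{Lemma-commutation-combination} with the Hilbert--Schmidt estimate of Lemma \ref{Lemma-commutation-last}. First, by linearity it suffices to treat monomial test functions $g(x)=x^{p}$, and we focus on $p\geq 2$ (the case $p=1$ follows from a single direct application of Lemma \ref{Lemma-commutation-last}). By the cyclic property of the trace and the inclusion $H_{L,d,k}\subset H_{V,L}$, we have $\tr[\mathbf{1}_{H_{V,L}}(\mathbf{1}_{H_{k}}\Op(\psi\mathcal{D})\mathbf{1}_{H_{k}})^{p}\mathbf{1}_{H_{L,d,k}}]=\tr[\mathbf{1}_{H_{L,d,k}}(\mathbf{1}_{H_{k}}\Op(\psi\mathcal{D})\mathbf{1}_{H_{k}})^{p}]$, and analogously with $H_{k-1}$. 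Applying Lemma \ref{Lemma-commutation-combination} therefore reduces the task, up to the permitted error of order $(2\sqrt{d})^{p-1}(p-1)^{5d+1}$, to controlling
\begin{equation*}
\Delta := \tr\bigl[\bigl\{\Op(\psi^{p})\mathbf{1}_{H_{L,d,k}}-\mathbf{1}_{H_{L,d,k}}\Op(\psi^{p})\mathbf{1}_{H_{V,L}}\bigr\}(D_{p,k}-D_{p,k-1})\bigr],
\end{equation*}
where $D_{p,k}:=(\mathbf{1}_{H_{k}}\Op(\mathcal{D})\mathbf{1}_{H_{k}})^{p-1}\Op(\mathcal{D})$. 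Using $H_{L,d,k}\subset H_{V,L}$ we then split the prefactor as $\mathbf{1}_{H_{L,d,k}^{c}}\Op(\psi^{p})\mathbf{1}_{H_{L,d,k}}-\mathbf{1}_{H_{L,d,k}}\Op(\psi^{p})\mathbf{1}_{H_{V,L}\setminus H_{L,d,k}}$, and expand $D_{p,k}-D_{p,k-1}$ via $\mathbf{1}_{H_{k-1}}=\mathbf{1}_{H_{k}}+\mathbf{1}_{H_{k-1}\setminus H_{k}}$ into at most $2^{p-1}$ terms, each containing at least one factor $\mathbf{1}_{H_{k-1}\setminus H_{k}}$.

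Next, for each such term we proceed as in the proof of Lemma \ref{Lemma-commutation-combination}: identifying the first and last positions of the $\mathbf{1}_{H_{k-1}\setminus H_{k}}$ factor, we apply H\"older's inequality to bound the trace norm by the product of two Hilbert--Schmidt norms (at the ends, involving $\Op(\psi^{p})$ on one side) and $\|\Op(\mathcal{D})\|$ to a suitable power in between. The Hilbert--Schmidt factor not touching $\Op(\psi^{p})$ is of precisely the form $\|(\mathbf{1}_{H_{k-1}}-\mathbf{1}_{H_{k}})\Op(\mathcal{D})(\mathbf{1}_{H_{k}}\Op(\mathcal{D}))^{\cdot}\mathbf{1}_{\Omega}\|_{2}$ covered by Lemma \ref{Lemma-commutation-last} (or the analogue following from a reflection argument), provided $\Omega$ lies in the thickened boundary $(\partial H_{\infty,d,k})_{\phi}$ with $\phi(x)=\tfrac{1}{p^{2}}\sqrt{\max(0,px-p)}$. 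To bring the spatial cut-offs $\mathbf{1}_{H_{L,d,k}^{c}}$ and $\mathbf{1}_{H_{L,d,k}}$ (respectively $\mathbf{1}_{H_{L,d,k}}$ and $\mathbf{1}_{H_{V,L}\setminus H_{L,d,k}}$) into this form, we insert the decomposition $\mathbf{1}=\mathbf{1}_{(\partial H_{\infty,d,k})_{\phi}}+\mathbf{1}_{\R^{d}\setminus(\partial H_{\infty,d,k})_{\phi}}$ (for the first contribution; a corresponding "diagonal" boundary $\{x\in H_{V,L}:\exists j\neq k,\ x_{j}=x_{k}\}$ is used for the second). On the $\phi$-thickened region Lemma \ref{Lemma-commutation-last} yields the bound $Cd^{p/2}p^{(3d+2)/2}$; on its complement the smooth operator $\Op(\psi^{p})$ is squeezed between sets whose distance is comparable to $\phi(\|x\|_{\infty})$, and Corollary \ref{Lemma-trace-class-sob} with $\beta=2d$ gives a trace-norm bound of order $p^{4d+1}$ uniform in $L$.

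The main obstacle is that the naive trace-norm bound $\|\mathbf{1}_{H_{L,d,k}^{c}}\Op(\psi^{p})\mathbf{1}_{H_{L,d,k}}\|_{1}$ is \emph{not} uniform in $L$, since $\partial H_{L,d,k}$ has $(d-1)$-dimensional area of order $L^{d-1}$: Corollary \ref{Lemma-trace-class-sob} applied to this boundary alone would give a constant $C_{\beta}$ growing with $L$. The dichotomy between a $\phi$-neighbourhood (where the geometry becomes $L$-uniform because only lattice points in a fixed-width tube around the diagonal edges of $H_{\infty,d,k}$ are counted) and its complement (where the $\phi$-separation is sufficient to give Hilbert--Schmidt decay against $\mathbf{1}_{H_{k-1}\setminus H_{k}}$) is precisely what removes this $L$-dependence, and the same reasoning, applied to the diagonal boundary between $H_{L,d,k}$ and $H_{V,L}\setminus H_{L,d,k}$, handles the second contribution to $\Delta$. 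Combining the Hilbert--Schmidt bounds from Lemma \ref{Lemma-commutation-last} with the trace-norm bounds from Corollary \ref{Lemma-trace-class-sob}, multiplying by $d^{(p-1)/2}$ from the $p-1$ intermediate $\Op(\mathcal{D})$-factors, and summing over the $2^{p-1}$ expansion terms yields the desired bound $C(2\sqrt{d})^{p-1}p^{7d+1}$.
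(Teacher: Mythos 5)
Your overall architecture matches the paper's: reduce to monomials, apply Lemma \ref{Lemma-commutation-combination}, rewrite the remainder as the commutator term $\Delta$ (your identification of $\Delta$ via cyclicity is correct), expand in $\mathbf{1}_{H_{k-1}}=\mathbf{1}_{H_k}+\mathbf{1}_{H_{k-1}\setminus H_k}$, and then trade the spatial cut-offs adjacent to $\Op(\psi^p)$ for thickened-boundary projections, using Corollary \ref{Lemma-trace-class-sob} for the trace-norm errors and Lemma \ref{Lemma-commutation-last} for the Hilbert--Schmidt factors. However, your dichotomy is not exhaustive, and this is a genuine gap. For the contribution $\mathbf{1}_{H_{L,d,k}^c}\Op(\psi^p)\mathbf{1}_{H_{L,d,k}}$, the interface between the two sets is $\partial H_{L,d,k}$, which contains the ``top'' face $\{x\in H_{L,d,k}:x_k=L\}$; this face lies in the \emph{interior} of the infinite cone $H_{\infty,d,k}$ and is therefore not captured by $(\partial H_{\infty,d,k})_{\phi}$. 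Consequently your claim that on $\R^d\setminus(\partial H_{\infty,d,k})_{\phi}$ the operator $\Op(\psi^p)$ is squeezed between sets whose distance is comparable to $\phi(\|x\|_\infty)$ is false: a point $x$ deep inside the cone with $x_k$ slightly below $L$ has distance $\approx L-x_k$ to $H_{L,d,k}^c$, arbitrarily small, while $\phi(x_k)\sim p^{-2}\sqrt{pL}$. Since this face has $(d-1)$-dimensional area of order $L^{d-1}$, the lattice-count hypothesis \eqref{Lemma-trace-class-sob-prep-req} fails to hold with an $L$-independent constant there, so Corollary \ref{Lemma-trace-class-sob} cannot close the estimate; and trying instead to exploit kernel decay towards $\mathbf{1}_{H_{k-1}\setminus H_k}$ for the whole deep interior only yields a $\log L$ bound, which is also insufficient for an $L$-independent constant. (Your diagonal-boundary dichotomy for the second contribution $\mathbf{1}_{H_{L,d,k}}\Op(\psi^p)\mathbf{1}_{H_{V,L}\setminus H_{L,d,k}}$ does work, because closeness to the deep interior of the cone from across a diagonal face forces $\|x\|_\infty$ to be small.)

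The paper closes exactly this hole with a third regime: the projections $P_\phi^{(1)},P_\phi^{(2)}$ exclude not only the $\phi$-thickened boundary but also the region $x_k>L/2$ (respectively $L/2<x_k<2L$), so the Corollary \ref{Lemma-trace-class-sob} errors are $L$-uniform, and the excised large-$x_k$ pieces reappear in the Hilbert--Schmidt factors $P_\phi^{(3)},P_\phi^{(4)}$, where they are handled by Lemma \ref{Lemma-HS-distance-L}, since they sit at distance of order $L$ from $H_{k-1}\setminus H_k$. Your proposal contains no analogue of this step, so as written it does not produce a bound independent of $L$. Two smaller points: your exponent $p^{4d+1}$ for the Corollary \ref{Lemma-trace-class-sob} step overlooks that the constant $C_\beta$ itself must grow like $p^{3d}$ (the paper's count gives $p^{7d+1}$), and for $p=1$ the expression in the lemma is identically zero, so no appeal to Lemma \ref{Lemma-commutation-last} is needed there.
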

\begin{proof}
For $p=1$, there is nothing to show.
After an application of Lemma \ref{Lemma-commutation-combination} it only remains to commute the remaining occurrence of $\Op(\psi^d)$ with $\mathbf{1}_{H_{L,d,k}}$, i.e. we want to find a bound for
\begin{multline}\label{Lemma-commutation-last-combination-goal-1}
\Big|\tr_{L^{2}(\R^{d})\otimes\C^n}\Big[\Big(\big(\mathbf{1}_{H_{k}}\Op(\mathcal{D})\mathbf{1}_{H_{k}}\big)^{p-1}\Op(\psi^{p}\mathcal{D}) -\big(\mathbf{1}_{H_{k-1}}\Op(\mathcal{D})\mathbf{1}_{H_{k-1}}\big)^{p-1}\Op(\psi^{p}\mathcal{D})\Big)\mathbf{1}_{H_{L,d,k}} \\
 -\mathbf{1}_{H_{V,L}}\Big(\big(\mathbf{1}_{H_{k}}\Op(\mathcal{D})\mathbf{1}_{H_{k}}\big)^p-\big(\mathbf{1}_{H_{k-1}}\Op(\mathcal{D})\mathbf{1}_{H_{k-1}}\big)^p\Big)\mathbf{1}_{H_{L,d,k}}\Op(\psi^{p}\otimes\mathbb{1}_{n})\Big]\Big|.
\end{multline}
As in the proof of Lemma \ref{Lemma-commutation-combination} we write
\begin{multline}
\mathbf{1}_{H_{V,L}}\Big(\big(\mathbf{1}_{H_{k-1}}\Op(\mathcal{D})\mathbf{1}_{H_{k-1}}\big)^{p-1}-\big(\mathbf{1}_{H_{k}}\Op(\mathcal{D})\mathbf{1}_{H_{k}}\big)^{p-1}\Big)\Op(\psi^{p}\mathcal{D})\mathbf{1}_{H_{L,d,k}}\\
=\sum_{\pi=(\pi_{1},\ldots,\pi_{p-1})\in \{0,1\}^{p-1}, \pi\neq 0}\mathbf{1}_{H_{V,L}}\Big(\prod_{j=1}^{p-1}\Op(\mathcal{D})P_{\pi(j)}\Big)\Op(\psi^p\mathcal{D})\mathbf{1}_{H_{L,d,k}},
\end{multline}
with $P_0=\mathbf{1}_{H_{k}}$ and $P_1=\mathbf{1}_{H_{k-1}\setminus H_{k}}$. For the remaining part of the proof, we will only consider one of these terms for a given $\pi\in \{0,1\}^{p-1}$ with $\pi\neq 0$. The estimates for the sum follow by the triangle inequality. The fact that $\pi\neq 0$ guarantees that there is at least one occurrence of the projection $\mathbf{1}_{H_{k-1}\setminus H_{k}}$. For the given $\pi$, we define $p_{1,\pi}:=\min \{j\in\{1,\ldots,p-1\}: \ \pi(j)=1\}$ and $p_{2,\pi}:=\max \{j\in\{1,\ldots,p-1\}: \ \pi(j)=1\}$. In order to commute the operator $\Op(\psi^d)$ to the right for the given $\pi$ we need to estimate
\begin{equation}\label{Lemma-commutation-last-combination-goal-2}
\Big|\tr_{L^{2}(\R^{d})\otimes\C^n}\Big[\mathbf{1}_{H_{V,L}}\Big(\prod_{j=1}^{p-1}\Op(\mathcal{D})P_{\pi(j)}\Big)\Op(\mathcal{D})\Big(\Op(\psi^{p}\otimes\mathbb{1}_{n})\mathbf{1}_{H_{L,d,k}}-\mathbf{1}_{H_{L,d,k}}\Op(\psi^{p}\otimes\mathbb{1}_{n})\Big)\Big]\Big|.
\end{equation}
We write
\begin{align}
\Op(\psi^p &\mathcal{D})\mathbf{1}_{H_{L,d,k}}-\Op(\mathcal{D})\mathbf{1}_{H_{L,d,k}}\Op(\psi^{p}\otimes\mathbb{1}_{n})\nonumber\\
&= \Op(\mathcal{D})\big[(\mathbf{1}-\mathbf{1}_{H_{L,d,k}})\Op(\psi^{p}\otimes\mathbb{1}_{n})\mathbf{1}_{H_{L,d,k}}-\mathbf{1}_{H_{L,d,k}}\Op(\psi^{p}\otimes\mathbb{1}_{n})(\mathbf{1}-\mathbf{1}_{H_{L,d,k}})\big].
\end{align}
By the triangle inequality it suffices to only study one of these terms, as the other one works in an analogous way. Therefore, we continue by a finding a bound for
\begin{equation}\label{Lemma-commutation-last-combination-goal-3}
\Big|\tr_{L^{2}(\R^{d})\otimes\C^n}\Big[\mathbf{1}_{H_{V,L}}\Big(\prod_{j=1}^{p-1}\Op(\mathcal{D})P_{\pi(j)}\Big)\Op(\mathcal{D})(\mathbf{1}-\mathbf{1}_{H_{L,d,k}})\Op(\psi^{p}\otimes\mathbb{1}_{n})\mathbf{1}_{H_{L,d,k}}\Big]\Big|.
\end{equation}
We now replace the projections $\mathbf{1}_{H_{L,d,k}}$ and $(\mathbf{1}-\mathbf{1}_{H_{L,d,k}})$ by projections which correspond to "thickened up" versions of $\partial H_{L,d,k}$ with the help of Corollary \ref{Lemma-trace-class-sob}. To do so, let $\phi:=\phi_p:[0,\infty[\,\rightarrow[0,\infty[\,$ be given by $\phi(x):=\phi_p(x):=\tfrac{1}{p^2}\sqrt{\max(0,px-p)}$. We define the projections
\begin{equation}
P_\phi^{(1)}:= \mathbf{1}_{H_{L,d,k}}-\mathbf{1}_{\{x\in H_{L,d,k}: \   x\in (\partial H_{L,d,k})_{\phi} \ \text{or} \ L/2<x_k\}}
\end{equation}
and
\begin{equation}
P_\phi^{(2)}:= \mathbf{1}_{(H_{L,d,k})^c}-\mathbf{1}_{\{x\in (H_{L,d,k})^c: \   x\in (\partial H_{L,d,k})_{\phi} \ \text{or} \ L/2<x_k<2L\}}.
\end{equation}
Then Corollary \ref{Lemma-trace-class-sob} yields a constant $C_1$, independent of $L$ and $p$, such that
\begin{align}\label{Lemma-commutation-last-combination-trace-bounds}
&\|(\mathbf{1}-\mathbf{1}_{H_{L,d,k}})\Op(\psi^{p}\otimes\mathbb{1}_{n})P_\phi^{(1)}\|_1\leq C_1p^{7d+1}, \nonumber \\
 &\|P_\phi^{(2)}\Op(\psi^{p}\otimes\mathbb{1}_{n})(\mathbf{1}_{H_{L,d,k}}-P_\phi^{(1)})\|_1\leq C_1p^{7d+1}.
\end{align}
We will only check this for the first trace norm. The second one works in a similar way. As in the proof of Lemma \ref{Lemma-commutation-combination}, we only verify the condition \eqref{Lemma-trace-class-sob-prep-req} in the case $\rho=1$. Let $x\in H_{L,d,k}\setminus \big(\partial H_{L,d,k}\big)_{\phi}$ such that $x_k\leq\tfrac{L}{2}$. Then we have 
\begin{equation}
\dist(x,(H_{L,d,k})^c)=\dist(x,\partial H_{L,d,k})\geq \phi(x_k)=\tfrac{1}{p^2}\sqrt{\max(0,px_k-p)}.
\end{equation}
Suppose now that $r> 0$ and that $\dist(x,(H_{L,d,k})^c)\leq r$, then we have
\begin{equation}
\max(0,px_k-p)\leq p^4r^2 \ \Rightarrow \ x_k\leq 1+p^3r^2
\end{equation}
As $x\in  H_{L,d,k}$, we have $x_k=\|x\|_\infty$ and we find a constant $C_\beta=C_\beta'p^{3d}$ such that the requirements of Corollary \ref{Lemma-trace-class-sob} hold with $\beta=2d$. The desired bound in \eqref{Lemma-commutation-last-combination-trace-bounds} follows.
We define the projections
\begin{equation}
P_\phi^{(3)}:=  \mathbf{1}_{H_{L,d,k}}-P_\phi^{(1)} = \mathbf{1}_{\{x\in H_{L,d,k}: \   x\in (\partial H_{L,d,k})_{\phi} \ \text{or} \ L/2<x_k\}}
\end{equation}
and
\begin{equation}
P_\phi^{(4)}:=  \mathbf{1}_{(H_{L,d,k})^c}-P_\phi^{(2)}= \mathbf{1}_{\{x\in (H_{L,d,k})^c: \   x\in (\partial H_{L,d,k})_{\phi} \ \text{or} \ L/2<x_k<2L\}}.
\end{equation}
With this at hand, we estimate
 \eqref{Lemma-commutation-last-combination-goal-3} by
\begin{equation}\label{Lemma-commutation-last-combination-goal-4}
\Big|\tr_{L^{2}(\R^{d})\otimes\C^n}\Big[\mathbf{1}_{H_{V,L}}\Big(\prod_{j=1}^{p-1}\Op(\mathcal{D})P_{\pi(j)}\Big)\Op(\mathcal{D})P_\phi^{(4)}\Op(\psi^{p}\otimes\mathbb{1}_{n})P_\phi^{(3)}\Big]\Big|+C_1p^{7d+1}.
\end{equation}
Using the cyclic property of the trace, we estimate the trace in \eqref{Lemma-commutation-last-combination-goal-4} by the following product of Hilbert-Schmidt norms
\begin{equation}\label{Lemma-commutation-last-combination-goal-5}
\Big\|P_\phi^{(3)}\Big(\prod_{j=1}^{p_{1,\pi}-1}\Op(\mathcal{D})P_{0}\Big)\Op(\mathcal{D})P_{1}\Big\|_2 \quad \times \quad \Big\|P_{1}\Big(\prod_{j=p_{1,\pi}+1}^{p-1}\Op(\mathcal{D})P_{0}\Big)\Op(\mathcal{D})P_\phi^{(4)}\Big\|_2.
\end{equation}
We split the sets corresponding to the projections $P_\phi^{(3)}$ and $P_\phi^{(4)}$ into two parts. One of distance $\tfrac{L}{2}$ to $H_{k-1}\setminus H_k$, the set corresponding to $P_1$. We estimate this part with Lemma \ref{Lemma-HS-distance-L}. The other part is contained in $\big(\partial H_{L,d,k}\big)_{\phi}\subset \big(\partial H_{L,d,k}\big)_{\phi_m}$, for $m\in\{1,\ldots,p-1\}$, and we estimate it with Lemma \ref{Lemma-commutation-last}. In total we obtain a constant $C_2>0$, independent of $L$ and $p$, such that \eqref{Lemma-commutation-last-combination-goal-5} is bounded from above by
\begin{equation}
C_2 d^{\tfrac{p-1}{2}}(p-1)^{3d+2}.
\end{equation}
With this we obtain a constant $C>0$, independent of $L$ and $p$, such that \eqref{Lemma-commutation-last-combination-goal-4} and with it \eqref{Lemma-commutation-last-combination-goal-2} are bounded from above by
\begin{equation}
C d^{\tfrac{p-1}{2}}p^{7d+1}.
\end{equation}
It remains to collect the contributions from all $\pi\in \{0,1\}^{p-1}$ with $\pi\neq 0$ and the contribution from the application of Lemma \ref{Lemma-commutation-combination} to obtain the desired bound
\begin{multline}
\Big|\tr_{L^{2}(\R^{d})\otimes\C^n}\Big[\mathbf{1}_{H_{V,L}}\Big(\big(\mathbf{1}_{H_{k}}\Op(\psi\mathcal{D})\mathbf{1}_{H_{k}}\big)^p-\big(\mathbf{1}_{H_{k-1}}\Op(\psi\mathcal{D})\mathbf{1}_{H_{k-1}}\big)^p\Big)\mathbf{1}_{H_{L,d,k}} \\
 -\mathbf{1}_{H_{V,L}}\Big(\big(\mathbf{1}_{H_{k}}\Op(\mathcal{D})\mathbf{1}_{H_{k}}\big)^p-\big(\mathbf{1}_{H_{k-1}}\Op(\mathcal{D})\mathbf{1}_{H_{k-1}}\big)^p\Big)\mathbf{1}_{H_{L,d,k}}\Op(\psi^{p}\otimes\mathbb{1}_{n})\Big]\Big|\\
 \leq C(2\sqrt{d})^{p-1}p^{7d+1}.
\end{multline}
This concludes the proof of the lemma.
\end{proof}
\subsection{Proof of Theorem \ref{Theorem-commutation}}
\label{subsec_proof_comm}
We are now ready to prove Theorem \ref{Theorem-commutation} by extending the results from the previous section to entire functions.
\begin{proof}[Proof of Theorem \ref{Theorem-commutation}]
Let $L\geq 1$. We denote the $p$th monomial by $g_p$ and choose a natural number $N_{0}\in\N$ such that $ p^{7d+1}\leq 2^{p}$ for all $p\geq N_{0}$. As described in the beginning of Section \ref{subsec_commutation}, we write
\begin{multline}
\tr_{L^{2}(\R^{d})\otimes\C^n}\bigg[\sum_{V\in\mathcal{F}^{(0)}}\mathbf{1}_{H_{V,L}}X_{LV,g}\bigg]  \\
= 2^{d}\tr_{L^{2}(\R^{d})\otimes\C^n}\bigg[\sum_{k=1}^{d}c_k\mathbf{1}_{H_{V,L}}\Big(g\big(\mathbf{1}_{H_{k}}X\mathbf{1}_{H_{k}}\big)-g\big(\mathbf{1}_{H_{k-1}}X\mathbf{1}_{H_{k-1}}\big)\Big) \mathbf{1}_{H_{L,d,k}}\bigg],
\end{multline}
with the constants $c_k:=(-1)^{d-k}d\frac{(d-1)!}{(k-1)!(d-k)!}$.
In the same way we write
\begin{multline}
\tr_{L^{2}(\R^{d})\otimes\C^n}\bigg[\sum_{V\in\mathcal{F}^{(0)}}\mathbf{1}_{H_{V,L}}\Op\big(\mathcal{D}\big)_{LV,g}\mathbf{1}_{H_{V,L}}\Op\big(g(\psi\otimes\mathbb{1}_{n})\big)\bigg] \\
=2^{d}\tr_{L^{2}(\R^{d})\otimes\C^n}\Big[\sum_{k=1}^{d}c_k\mathbf{1}_{H_{V,L}}\Big(g\big(\mathbf{1}_{H_{k}}\Op(\mathcal{D})\mathbf{1}_{H_{k}}\big)-g\big(\mathbf{1}_{H_{k-1}}\Op(\mathcal{D})\mathbf{1}_{H_{k-1}}\big)\Big)\\
\times\ \mathbf{1}_{H_{L,d,k}}\Op\big(g(\psi\otimes\mathbb{1}_{n})\big)\Big].
\end{multline}
By the triangle inequality, it suffices to estimate the individual terms, i.e. the terms
\begin{multline}
\mathbf{1}_{H_{V,L}}\Big(g\big(\mathbf{1}_{H_{k}}X\mathbf{1}_{H_{k}}\big)-g\big(\mathbf{1}_{H_{k-1}}X\mathbf{1}_{H_{k-1}}\big)\Big)\mathbf{1}_{H_{L,d,k}}\\
-\mathbf{1}_{H_{V,L}}\Big(g\big(\mathbf{1}_{H_{k}}\Op(\mathcal{D})\mathbf{1}_{H_{k}}\big)
-g\big(\mathbf{1}_{H_{k-1}}\Op(\mathcal{D})\mathbf{1}_{H_{k-1}}\big)\Big)\mathbf{1}_{H_{L,d,k}}\Op\big(g(\psi)\big)=:T_g.
\end{multline}
The bound for polynomial test functions is given in Lemma \ref{Lemma-commutation-last-combination}. For the analytic function $g$, we obtain:
\begin{equation}
\big|\tr_{L^{2}(\R^{d})\otimes\C^n}\big[T_g\big]\big| 
= \bigg|\tr_{L^{2}(\R^{d})\otimes\C^n}\bigg[\sum_{p=1}^{\infty}\omega_{p}T_{g_p}\bigg]\bigg|
\leq  C_{1}+C_{2}\sum_{p=N_{0}}^{\infty}|\omega_{p}|(4\sqrt{d})^p\leq C_{3},
\end{equation}
with constants $C_1, C_2, C_3>0$, independent of $L$. Taking the limit $L\rightarrow\infty$, we obtain \eqref{Theorem-commutation-bound}. This concludes the proof of the theorem.
\end{proof}

\section{Local asymptotic formula}
\label{sec_local_asymptotics}

After the commutation in the last section it remains to analyse the trace of the operator
\begin{equation}\label{op_loc}
\sum_{V\in\mathcal{F}^{(0)}}\mathbf{1}_{H_{V,L}}\Op\big(\mathcal{D}\big)_{LV,g}\mathbf{1}_{H_{V,L}}\Op\big(g(\psi\otimes\mathbb{1}_{n})\big).
\end{equation}
As before, all terms corresponding to the different vertices $V$ reduce to the case $V=V_0=\{0\}$ and $H_{V,L}=[0,L]^d$, up to suitable rotation and translation where we write $\Op\big(\mathcal{D}\big)_{LV_0,g}=\Op\big(\mathcal{D}\big)_{g}$, see \eqref{def_op_designated_corner}. It will be convenient to slightly modify this operator by replacing the second projection onto $H_{V,L}$ with the projection onto the slightly smaller set $(B_+)_L$, where $B_+:=\{y\in \,]0,\infty[^d: \ |y|< 1 \}$. As the set $\,]0,L[^d\setminus (B_+)_L$ has distance $L$ to the negative quadrant $\,]-\infty,0]^d$, the structure of the operator $\Op\big(\mathcal{D}\big)_{g}$ can be used to deduce, from Lemma \ref{Lemma-HS-distance-L}, that this replacement only yields an error of constant order. Due to the similarity of the proof to the one of Lemma \ref{Lemma-localisation}, we will omit a proof of this fact. We also replace the first projection onto $[0,L]^d$ by the projection onto its interior $\,]0,L[^d$.

In order to establish an asymptotic formula, we interpret the operator 
\begin{equation}
\mathbf{1}_{\,]0,L[^d}\Op\big(\mathcal{D}\big)_{g}\mathbf{1}_{(B_+)_L}\Op\big(g(\psi\otimes\mathbb{1}_{n})\big)
\end{equation}
as a multi-dimensional version of the one-dimensional localised operator studied in \cite{Widom1982} and try to adapt the one-dimensional proof to our case. In order to see the similarity of the operators, we note that both the symbols $\mathcal{D}$ and $\psi$ only depend on momentum space. Therefore, by the unitary dilatation $U_L$ on $L^2(\R^d)\otimes \C^n$, with $(U_L u)(x):=L^{\frac{d}{2}} u(Lx)$ for all $u\in L^2(\R^d)\otimes \C^n$, we have
\begin{multline}
\tr_{L^{2}(\R^{d})\otimes\C^n}\big[\mathbf{1}_{\,]0,L[^d}\Op\big(\mathcal{D}\big)_{g}\mathbf{1}_{(B_+)_L}\Op\big(g(\psi\otimes\mathbb{1}_{n})\big)\big] \\
=\tr_{L^{2}(\R^{d})\otimes\C^n}\big[\mathbf{1}_{\,]0,1[^d}\Op\big(\mathcal{D}\big)_{g}\mathbf{1}_{B_+}\Op_L\big(g(\psi\otimes\mathbb{1}_{n})\big)\big],
\end{multline}
where we used the fact that the symbol $\mathcal{D}$ is homogeneous of degree $0$ and therefore $\Op_L(\mathcal{D})=\Op(\mathcal{D})$, where $\Op_L$ denotes the operator with semi-classical parameter $L$, see e.g. \cite[Sec.~2.3]{BM-Widom}. Following the proof of the one-dimensional case in \cite{Widom1982}, the first step would be to determine the integral kernel of the operator $\Op\big(\mathcal{D}\big)_{g}$. In \cite{Widom1982} the kernel is derived explicitly with the use of the Mellin transform and its interaction with the Hilbert transform, which is the integral kernel of $\Op(\mathcal{D})$ in dimension $1$. Unfortunately, it seems difficult to generalise this step to our multi-dimensional case. This is on the one hand due to the far more complicated structure of $\Op\big(\mathcal{D}\big)_{g}$ in higher dimensions and on the other hand due to the fact that, instead of simply being the Hilbert transform, the integral kernel of $\Op(\mathcal{D})$ is then composed of its more complicated multi-dimensional generalisations, the Riesz transforms. The alternative chosen here is to establish the properties of the integral kernel, which are required for the second step, the local asymptotic formula, by hand. Unfortunately, we are only able to do this in the special case that $g$ is given by $g_2$ the monomial of second degree. This is the reason for the restriction on $g$ in Theorem \ref{Theorem-Asymptotics-Log}. Establishing these properties for $g_2$ is the content of the following section.

\subsection{Integral kernel properties for the second moment}
\begin{lem}\label{Lemma-kernel-prop}
Denote by $g_2$ the second monomial and let $K:\R^d\times\R^d\rightarrow\C^{n}$ be the corresponding integral kernel of the operator $\Op\big(\mathcal{D}\big)_{g_2}$. Then $K$ is continuous on $\,]0,\infty[^d\times \,]0,\infty[^d $, homogeneous of degree $-d$, has non-vanishing trace on $\,]0,\infty[^d\times \,]0,\infty[^d $ and satisfies the bound
\begin{equation}\label{polynomial-kernel-hölder}
\big|\tr_{\C^{n}}[K(z,z)-K(x,z)]\big|\leq C_d\sqrt{|x-z|} \ \big|\tr_{\C^{n}}[K(x,z)]\big|,
\end{equation}
for every $z\in\,]0,\infty[^d$ with $|z|=1$, $x\in  [z-\delta_z,z+\delta_z]^d$, with $\delta_z:=\min_{j=1,\ldots,d}\big(\frac{z_j}{2}\big)^2$, and a constant $C_d>0$, which only depends on the dimension $d$.
Furthermore, $K$ satisfies the bound
\begin{equation}\label{polynomial-kernel-bound}
\|K(x,z)\|_2\leq C_K (|x||z|)^{-\tfrac{d}{2}},
\end{equation}
for every $x,z\in \,]0,\infty[^d$, with a constant $C_K$ independent of $x$ and $z$.
\end{lem}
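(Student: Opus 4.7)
The plan is to derive an explicit integral representation of the kernel $K$ on $]0,\infty[^d\times\,]0,\infty[^d$, from which every claim can be read off. A direct computation using the anticommutation relations \eqref{sigma-anticomm} gives $D^2/E^2=\mathbb{1}_n$ and hence $\mathcal{D}^2=\mathcal{D}$, so $P:=\Op(\mathcal{D})$ is a self-adjoint orthogonal projection. For each $\mathcal{M}\subseteq\{1,\dots,d\}$ we may therefore rewrite
\begin{equation*}
\mathbf{1}_{H_\mathcal{M}}P\mathbf{1}_{H_\mathcal{M}}P\mathbf{1}_{H_\mathcal{M}}
=\mathbf{1}_{H_\mathcal{M}}P\mathbf{1}_{H_\mathcal{M}}-\mathbf{1}_{H_\mathcal{M}}P\mathbf{1}_{H_\mathcal{M}^c}P\mathbf{1}_{H_\mathcal{M}}.
\end{equation*}
For $x,z\in\,]0,\infty[^d$ every prefactor $\mathbf{1}_{H_\mathcal{M}}(x)$ and $\mathbf{1}_{H_\mathcal{M}}(z)$ equals one, and an inclusion--exclusion on the negative coordinates of $u$ yields the identity $\sum_\mathcal{M}(-1)^{|\mathcal{M}|}\mathbf{1}_{H_\mathcal{M}}(u)=(-1)^d\mathbf{1}_{]-\infty,0[^d}(u)$. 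Collapsing the sum over $\mathcal{M}$ in the definition of $\Op(\mathcal{D})_{g_2}$ and noting that the $\mathcal{M}$-independent terms cancel for $d\ge 1$, this gives
\begin{equation*}
K(x,z)=(-1)^d\int_{]-\infty,0[^d}\mathcal{K}(x-u)\mathcal{K}(u-z)\,\d u,
\end{equation*}
where $\mathcal{K}$ denotes the Riesz (off-diagonal) part of the distributional kernel of $P$. The delta contribution in $\mathcal{K}$ plays no role since for $u$ in the negative quadrant both $x-u$ and $u-z$ stay away from the origin, and absolute convergence follows from the bound $\|\mathcal{K}(y)\|_2\le C|y|^{-d}$ of Section \ref{subsec_kernel_decay}.

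Continuity on $]0,\infty[^d\times\,]0,\infty[^d$ is immediate from this representation by dominated convergence, and homogeneity of degree $-d$ follows from the rescaling $u\mapsto\lambda u$ (the three factors contributing $\lambda^{-d}\cdot\lambda^{-d}\cdot\lambda^d$). For the upper bound \eqref{polynomial-kernel-bound} I would substitute $u\mapsto-u$ to integrate over $]0,\infty[^d$, apply Cauchy--Schwarz to split the product, and then rescale $w\mapsto|y|w$ in each factor to reduce to bounding $\int_{]0,\infty[^d}|\hat y+w|^{-2d}\,\d w$ uniformly for $\hat y\in S^{d-1}_+$; since $|\hat y+w|^2\ge 1+|w|^2$ whenever $\hat y,w$ both lie in the positive quadrant, this integral is dominated by a purely dimensional constant and yields $\|K(x,z)\|_2\le C_K(|x||z|)^{-d/2}$.

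For the non-vanishing trace I would use $\mathcal{K}(y)=\i c_d\sum_k\alpha_k y_k/|y|^{d+1}$ together with the Dirac identity $\tr_{\C^n}[\alpha_j\alpha_k]=n\delta_{jk}$, which follows from \eqref{sigma-anticomm}, to compute
\begin{equation*}
\tr_{\C^n}[\mathcal{K}(y)\mathcal{K}(y')]=-c_d^2n\,\frac{y\cdot y'}{|y|^{d+1}|y'|^{d+1}}.
\end{equation*}
After the substitution $u\mapsto-u$ this turns $\tr_{\C^n}[K(x,z)]$ into a definite-sign integral, which on the diagonal reduces to $(-1)^dc_d^2n\int_{]0,\infty[^d}|z+w|^{-2d}\,\d w$. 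The same inequality $|z+w|^2\ge|z|^2+|w|^2$ gives a dimensional upper bound, while restricting to $w\in B_1\cap\,]0,\infty[^d$ (where $|z+w|\le 2$ for $|z|=1$) yields a dimensional lower bound, so that $|\tr_{\C^n}K(z,z)|\ge c'_d>0$ uniformly on $S^{d-1}_+$.

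For the H\"older-type estimate \eqref{polynomial-kernel-hölder} I would differentiate the integral representation under the integral sign, use $|\partial\mathcal{K}(y)|\lesssim|y|^{-d-1}$ and another application of Cauchy--Schwarz to obtain $|\nabla_x\tr_{\C^n}K(w,z)|\le C_d|w|^{-(d+2)/2}|z|^{-d/2}$, and observe that the specific choice $\delta_z=\min_j(z_j/2)^2$ forces $x_j\ge z_j/2$ throughout the neighbourhood so that $|x|$ and $|z|=1$ remain comparable with dimensional ratios. The mean value inequality then gives $|\tr_{\C^n}[K(z,z)-K(x,z)]|\le C_d|x-z|$. Since $|x-z|\le\sqrt{d}\,\delta_z\le 1/4$ on $S^{d-1}_+$, we can iterate this bound to pull $|\tr_{\C^n}K(x,z)|\ge c'_d/2$ out of $|\tr_{\C^n}K(z,z)|$, and combining with $|x-z|\le\sqrt{|x-z|}$ delivers \eqref{polynomial-kernel-hölder} with a purely dimensional constant. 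The main obstacle will be keeping all constants uniform in $z\in S^{d-1}_+$: the positive sphere is not relatively compact in the open positive quadrant and kernel estimates deteriorate near the coordinate axes, which is exactly why $\delta_z$ has been chosen to shrink quadratically with the smallest coordinate of $z$, giving just enough room for the mean value argument while keeping the neighbourhood well inside the region of smoothness.
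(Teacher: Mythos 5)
Your proposal rests on the same foundation as the paper's own proof: the inclusion--exclusion identity $\sum_{\mathcal{M}}(-1)^{|\mathcal{M}|}\mathbf{1}_{H_{\mathcal{M}}}(u)=(-1)^d 1_{]-\infty,0[^d}(u)$ reduces $K(x,z)$, for $x,z\in\,]0,\infty[^d$, to $(-1)^d\int_{]-\infty,0[^d}K_0(x,u)K_0(u,z)\,\d u$ with $K_0$ the Riesz-type kernel of $\Op(\mathcal{D})$, which is exactly the paper's representation (your detour through $\mathcal{D}^2=\mathcal{D}$ is unnecessary but harmless, and your prefactor for the Riesz kernel is off by an inessential normalisation). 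Homogeneity, continuity and the definite sign of the trace are then handled essentially as in the paper, and your Cauchy--Schwarz-plus-rescaling proof of \eqref{polynomial-kernel-bound} is a legitimate alternative to the paper's inequality $|x+y|\,|y+z|\ge\tfrac12(|x||z|+|y|^2)$ followed by spherical coordinates. The genuine difference lies in \eqref{polynomial-kernel-hölder}: the paper writes the difference of traces as the positive integrand of $|\tr_{\C^{n}}[K(x,z)]|$ multiplied by the factor $\big|\tfrac{(z_j-y_j)|x-y|^{d+1}}{(x_j-y_j)|z-y|^{d+1}}-1\big|$ and bounds this factor pointwise by $C_d\sqrt{|x-z|}$ (using $x_j-y_j\ge\sqrt{\delta_z}$, $|x_j-z_j|\le\sqrt{\delta_z|x-z|}$ and $|z-y|\ge 1$), so it never needs any lower bound on the trace; you instead prove a mean-value (Lipschitz) bound $|\tr_{\C^{n}}[K(z,z)-K(x,z)]|\le C_d'|x-z|$ via a gradient estimate and then divide by a uniform lower bound for $|\tr_{\C^{n}}[K(x,z)]|$. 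Your gradient estimate is sound (the Cauchy--Schwarz bound only involves $|w|\ge\tfrac12$ and $|z|=1$, so nothing degenerates near the coordinate hyperplanes), and this route does work.

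The one step you must repair is the off-diagonal lower bound $|\tr_{\C^{n}}[K(x,z)]|\ge c_d'/2$: you obtain it by perturbing off the diagonal with the Lipschitz bound, but that requires $C_d'|x-z|\le c_d'/2$, and $|x-z|\le\sqrt{d}\,\delta_z\le\tfrac14$ does not by itself compare the two dimensional constants $C_d'$ and $c_d'$, which come from unrelated estimates. The fix is available inside your own framework: since every term of the $w$-integral representing $\tr_{\C^{n}}[K(x,z)]$ has the same sign, restrict the integration to, say, $w\in\,]1,2[^d$ and use $x_j\ge z_j/2$, $|x|\le 2$, $|z|=1$ to obtain a purely dimensional lower bound for $|\tr_{\C^{n}}[K(x,z)]|$ directly, uniformly for $z\in S^{d-1}_+$ and $x\in[z-\delta_z,z+\delta_z]^d$. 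With that replacement (and a word justifying differentiation under the integral sign by the same domination you invoke for continuity), your argument is complete and yields \eqref{polynomial-kernel-hölder}, in fact with $|x-z|$ in place of $\sqrt{|x-z|}$ in this regime.
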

\begin{proof}
We begin with the homogeneity of $K$. 
We write out the operator $\mathbf{1}_{\,]0,\infty[^d}\Op\big(\mathcal{D}\big)_{g_2}\mathbf{1}_{\,]0,\infty[^d}$
\begin{multline}
\mathbf{1}_{\R_+^d}\Op\big(\mathcal{D}\big)_{g_2}\mathbf{1}_{\,]0,\infty[^d}=\sum_{k=0}^{d}(-1)^{k}\sum_{\mathcal{M}\subseteq \{1,\ldots,d\}\ : \ |\mathcal{M}|=k}\mathbf{1}_{\,]0,\infty[^d}\big(\mathbf{1}_{H_{\mathcal{M}}}\Op(\mathcal{D})\mathbf{1}_{H_{\mathcal{M}}}\big)^2\mathbf{1}_{\,]0,\infty[^d} \\
=(-1)^d\mathbf{1}_{\,]0,\infty[^d}\Op(\mathcal{D})\mathbf{1}_{]-\infty,0[^d}\Op(\mathcal{D})\mathbf{1}_{\,]0,\infty[^d}.
\end{multline}
Therefore, the kernel at some point $(x,z)\in \,]0,\infty[^d\times \,]0,\infty[^d$ can be written as
\begin{equation}\label{formula-kernel-p=2}
K(x,z)=(-1)^d\int_{\,]-\infty,0[^d}K_0(x,y)K_0(y,z) \dd y,
\end{equation}
with $K_0$ being the kernel of $\Op(\mathcal{D})$. The homogeneity of $K$ follows from the homogeneity of $K_0$ and substitution in the variable $y$.

With the definition of $K_0$, equation \eqref{formula-kernel-p=2} immediately yields that the trace of $K$ is either strictly positive or strictly negative on $\,]0,\infty[^d\times\,]0,\infty[^d$.

We continue with the continuity of the kernel. We again write
\begin{equation}
K(x,z)=(-1)^d\int_{\,]-\infty,0[^d}K_0(x,y)K_0(y,z) \dd y.
\end{equation}
The function $K_0(x,y)K_0(y,z)$ is continuous on $\,]0,\infty[^d\times \,]0,\infty[^d$ for every $y\in\,]-\infty,0[^d$ and integrable for fixed $(x_0,z_0)\in \,]0,\infty[^d\times\, ]0,\infty[^d$. We choose $\frac{\min(|x_0|,|z_0|)}{2}>\delta>0$ such that additionally $B_\delta(x_0,z_0)\subset\,]0,\infty[^d\times ]0,\infty[^d$.
With the bound \eqref{kernel_bound_distribution}, we obtain 
\begin{equation}\label{polynomial-kernel-cont-step-1}
\|K_0(x,y)K_0(y,z)\|_2\leq C_{K_0}^2\frac{1}{|x-y|^d|y-z|^d}.
\end{equation}
Let $(x,z)\in B_\delta(x_0,z_0)$. Then, by the choice of $\delta$, we have
\begin{equation}
\frac{|x_0-y|}{|x-y|}\leq 1+\frac{|x-x_0|}{|x-y|}\leq 2,
\end{equation}
for arbitrary $y\in\, ]-\infty,0[^d$.
We bound $\frac{|y-z_0|}{|y-z|}$ in the same way and obtain the following bound for the right-hand side of \eqref{polynomial-kernel-cont-step-1}
\begin{equation}
4^dC_{K_0}^2\frac{1}{|x_0-y|^d|y-z_0|^d}.
\end{equation}
As this is integrable on $]-\infty,0]^d$, the continuity of the kernel $K$ follows.

We now turn towards the proof of the bound \eqref{polynomial-kernel-hölder}. We estimate
\begin{align}\label{polynomial-kernel-hölder-step-1}
\big|\tr_{\C^{n}}[K(z,z)-K(x,z)]\big| &\leq \big(\tfrac{c_d}{2}\big)^2 \sum_{j=1}^d \int_{\,]-\infty,0]^d}\bigg|\frac{z_j-y_j}{|z-y|^{d+1}} -\frac{x_j-y_j}{|x-y|^{d+1}}\bigg| \frac{z_j-y_j}{|y-z|^{d+1}} \dd y \nonumber\\ 
&= \big(\tfrac{c_d}{2}\big)^2 \sum_{j=1}^d \int_{\,]-\infty,0]^d}\bigg|\frac{(z_j-y_j)|x-y|^{d+1}}{(x_j-y_j)|z-y|^{d+1}} -1\bigg|\frac{x_j-y_j}{|x-y|^{d+1}} \frac{z_j-y_j}{|y-z|^{d+1}} \dd y,
\end{align}
where we used that $x_j-y_j>0$, for all $y\in\, ]-\infty,0[^d$ and $j\in\{1,\ldots,d\}$, as $x\in  [z-\delta_z,z+\delta_z]^d\subset \,]0,\infty[^d$. We recall $c_{d}=\frac{\Gamma[(d+1)/2]}{\pi^{(d+1)/2}}$.
With the triangle inequality we obtain
\begin{equation}
\tfrac{z_j-y_j}{x_j-y_j}\in \big[1-\tfrac{|x_j-z_j|}{x_j-y_j}, 1+\tfrac{|x_j-z_j|}{x_j-y_j} \big]\subseteq \big[1-\tfrac{\sqrt{\delta_z|x-z|}}{\sqrt{\delta_z}}, 1+\tfrac{\sqrt{\delta_z|x-z|}}{\sqrt{\delta_z}} \big],
\end{equation}
where, in the last step, we used that $x\in  [z-\delta_z,z+\delta_z]^d$, $|x_j-z_j|\leq |x-z|$ and $x_j-y_j \geq \sqrt{\delta_z}$, for all $y\in\, ]-\infty,0[^d$, by the definition of $\delta_z$. The triangle inequality also yields
\begin{equation}
\tfrac{|x-y|}{|z-y|} \in \big[1-\tfrac{|x-z|}{|z-y|}, 1+\tfrac{|x-z|}{|z-y|} \big]\subseteq \big[1-|x-z|, 1+|x-z| \big],
\end{equation}
where we used that $|z-y|\geq 1$ for all $y\in\, ]-\infty,0[^d$, as $|z|=1$. With these two estimates, we find a constant $C_d>0$, only depending on the dimension $d$, such that the right-hand side of \eqref{polynomial-kernel-hölder-step-1} is bounded from above by
\begin{equation}
 C_d\sqrt{|x-z|}\big(\tfrac{c_d}{2}\big)^2 \sum_{j=1}^d \int_{\,]-\infty,0[^d}\frac{x_j-y_j}{|x-y|^{d+1}} \frac{z_j-y_j}{|y-z|^{d+1}} \dd y= C_d\sqrt{|x-z|} \ \big|\tr_{\C^{n}}[K(x,z)]\big|.
\end{equation}
This concludes the proof of the bound \eqref{polynomial-kernel-hölder}.

It remains to verify the bound \eqref{polynomial-kernel-bound}. We again have
\begin{equation}
K(x,z)=(-1)^d\int_{\,]-\infty,0[^d}K_0(x,y)K_0(y,z) \dd y.
\end{equation}
With the bound \eqref{kernel_bound_distribution}, we obtain 
\begin{equation}\label{polynomial-kernel-bound-step-1}
\|K(x,z)\|_2\leq C_{K_0}^2\int_{\,]-\infty,0[^d}\frac{1}{|x-y|^d|y-z|^d} \dd y=C_{K_0}^2\int_{\,]0,\infty[^d}\frac{1}{|x+y|^d|y+z|^d} \dd y.
\end{equation}
Using the fact that $x,y,z$ are all in the same quadrant, as well as the inequality $\sqrt{a+b}\geq \frac{1}{\sqrt{2}} (\sqrt{a}+\sqrt{b})$ for $a,b>0$, we see that
\begin{equation}
|x+y||y+z|\geq \frac{1}{2} (|x|+|y|)(|y|+|z|).
\end{equation}
Therefore, the right-hand side of \eqref{polynomial-kernel-bound-step-1} is bounded from above by
\begin{equation}
2^d C_{K_0}^2\int_{\,]0,\infty[^d}\frac{1}{[(|x|+|y|)(|y|+|z|)]^d}\dd y\leq 2^d C_{K_0}^2 \int_{\,]0,\infty[^d}\frac{1}{(|x||z|+|y|^2)^d}\dd y.
\end{equation}
Introducing spherical coordinates, we write this as
\begin{equation}
C_{K_0}^2 |S^{d-1}|\int_{0}^\infty\frac{r^{d-1}}{(|x||z|+r^2)^d}\dd r 
\leq C_{K_0}^2 |S^{d-1}|\int_{0}^\infty\frac{r}{(|x||z|+r^2)^{\tfrac{d+2}{2}}}\dd r  \leq C_K (|x||z|)^{-\tfrac{d}{2}},
\end{equation}
where the constant $C_K>0$ is independent of $x$ and $z$. This concludes the proof of the bound \eqref{polynomial-kernel-bound} and with it the proof of the lemma.
\end{proof}

\subsection{A local asymptotic formula}
We now prove a local asymptotic formula in a similar way as done in the one-dimensional case in \cite{Widom1982}. We employ a similar scaling argument as in \cite{Widom1982} and use the homogeneity of the integral kernel. But instead of reducing the positive half-axis $\,]0,\infty[\,$ to the single point $\{1\}$, we reduce the positive quadrant $\,]0,\infty[^d$ to the $(d-1)$-dimensional set $S^{d-1}_+=\{y\in \,]0,\infty[\,^d: \ |y|=1 \}$. The higher-dimensional nature of this reduced set adds some additional challenges to the proof and leads to more extensive requirements. In particular, we require that the trace of the integral kernel of the operator does not vanish. This requirement could be easily avoided in the one-dimensional case.

\begin{thm}\label{Local-asymptotic-formula}
Let $X$ be an integral operator on $L^2(\R^d)\otimes\C^{n}$. Let its integral kernel $K$ be continuous on $\,]0,\infty[^d\times\,]0,\infty[^d$, homogeneous of degree $-d$, have non-vanishing trace on $\,]0,\infty[^d\times\,]0,\infty[^d$ and satisfy the bounds \eqref{polynomial-kernel-hölder} and \eqref{polynomial-kernel-bound}. Furthermore, let the symbol $\sigma\in C_c^\infty(\R^d)$ be smooth and compactly supported. Then we have 
\begin{equation}\label{Local-asymptotic-formula-trace}
\tr_{L^{2}(\R^{d})\otimes\C^n}\big[\mathbf{1}_{\,]0,1[^d}X\mathbf{1}_{B_+}\Op_L(\sigma\otimes \mathbb{1}_{n})\big]=\log L \ \sigma(0)\int_{S^{d-1}_+} \tr_{\C^{n}}[K(y,y)]\dd y+O(1),
\end{equation}
as $L\rightarrow\infty$.
\end{thm}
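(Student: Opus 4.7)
The plan is to extend Widom's one-dimensional argument from \cite{Widom1982} by combining a rescaling that exploits the $(-d)$-homogeneity of $K$ with a polar decomposition. The operator $\Op_L(\sigma\otimes\mathbb{1}_n)$ has scalar integral kernel $L^d k_\sigma(L(y-x))\mathbb{1}_n$, where $k_\sigma:=(2\pi)^{-d}\check\sigma$ is a Schwartz function with $\int_{\R^d}k_\sigma=\sigma(0)$. Substituting $u=Lx$, $v=Ly$ and invoking $K(u/L,v/L)=L^d K(u,v)$ transforms the trace into
\begin{equation*}
\int_{\,]0,L[^d}\int_{(B_+)_L}\tr_{\C^n}[K(u,v)]\,k_\sigma(v-u)\dd v\,\dd u.
\end{equation*}
After swapping the order of integration, I would parametrise $v=sw$ with $s=|v|\in\,]0,L[\,$, $w\in S^{d-1}_+$, and substitute $t=u-v$. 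Homogeneity gives $K(sw+t,sw)=s^{-d}K(w+t/s,w)$, and together with $\dd v=s^{d-1}\dd s\,\dd w$ the integrand becomes $s^{-1}\tr_{\C^n}[K(w+t/s,w)]\,k_\sigma(-t)$. The constraint $u=v+t\in\,]0,L[^d$ is essentially inactive away from $s\sim L$ thanks to the Schwartz decay of $k_\sigma$.

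\textbf{Main approximation and leading order.} The heart of the argument is to replace, in the main regime $s\gtrsim 1$, the factor $K(w+t/s,w)$ by $K(w,w)$. Since the first argument varies while the second $w$ lies on $S^{d-1}_+$, this is precisely the setup controlled by \eqref{polynomial-kernel-hölder}:
\begin{equation*}
|\tr_{\C^n}[K(w,w)-K(w+t/s,w)]|\le C_d\sqrt{|t|/s}\,|\tr_{\C^n}[K(w+t/s,w)]|,
\end{equation*}
valid for $|t|/s\le\delta_w$. Combined with the uniform bound \eqref{polynomial-kernel-bound}, the replacement error is integrable against $s^{-1}\,|k_\sigma(t)|$ in both variables, yielding a total $O(1)$ remainder; the complementary region $|t|/s>\delta_w$ is absorbed by the Schwartz decay of $k_\sigma$. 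After the replacement, integration of $s^{-1}$ over $s\in[s_0,L]$ produces $\log L+O(1)$, and integration in $w$ gives the claimed leading term
\begin{equation*}
\log L\cdot\sigma(0)\int_{S^{d-1}_+}\tr_{\C^n}[K(y,y)]\dd y+O(1).
\end{equation*}

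\textbf{Remaining estimates and main obstacle.} The small-$s$ region $s\le s_0$ is bounded by translating back into the original variables and using the pointwise bound \eqref{polynomial-kernel-bound}: combined with the local integrability of $|v|^{-d/2}$ in dimension $d$ and the Schwartz decay of $k_\sigma$, it yields a uniformly bounded contribution. The boundary layer at $s\sim L$ is also $O(1)$ because the Schwartz decay of $k_\sigma$ confines $|t|$ to an effectively bounded set, leaving only a thin shell in $s$ paired with the integrable prefactor $s^{-1}$. The principal technical obstacle is that the admissible radius $\delta_w=\min_j(w_j/2)^2$ in \eqref{polynomial-kernel-hölder} degenerates as $w$ approaches $\partial S^{d-1}_+$, so the Hölder replacement becomes useless near the angular boundary. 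I would resolve this by a further split of the $w$-integration at a shrinking neighbourhood of $\partial S^{d-1}_+$, discarding the Hölder bound there and applying only \eqref{polynomial-kernel-bound}, while using the Hölder replacement on the complement. The thickness of this neighbourhood must be tuned carefully so that both pieces yield $O(1)$ remainders after pairing with the $\log L$-factor coming from the $s$-integration — this delicate balancing is the key technical point.
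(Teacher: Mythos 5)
Your outline is, after the initial rescaling, essentially the paper's own argument: the paper keeps the oscillatory factor and integrates by parts to obtain the decay $(1+L|ry-x|)^{-d-1}$ (your Schwartz decay of $k_\sigma$ plays the same role), passes to polar coordinates in the second variable, and replaces the kernel on the diagonal via the weighted cut-off $\phi_y(x)=1_{[y-\delta_y,y+\delta_y]^d}(x)\,\tr_{\C^n}[K(y,y)]/\tr_{\C^n}[K(x,y)]$, whose error is controlled precisely by \eqref{polynomial-kernel-hölder}; your direct replacement of $K(w+t/s,w)$ by $K(w,w)$ with a H\"older error is the same mechanism, and your treatment of the small-$s$ region and of the layer $s\sim L$ is sound.

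The genuine gap is the point you yourself call the key technical point: the degeneration of $\delta_w$ near $\partial S^{d-1}_+$ is not actually resolved, and the shrinking-neighbourhood balancing you propose cannot give $O(1)$. If on a neighbourhood of $\partial S^{d-1}_+$ of thickness $\epsilon$ you discard the H\"older bound and estimate crudely via \eqref{polynomial-kernel-bound}, that piece is of size $\epsilon\log L$ (the full $s$-integral pairs with the measure of the neighbourhood), which forces $\epsilon\lesssim 1/\log L$; on the complement you only have $\delta_w\gtrsim\epsilon^2$, and the error from the region $\|t\|_\infty>s\delta_w$ already contains the term $|\tr_{\C^n}[K(w,w)]|\int_1^L s^{-1}\int_{\|t\|_\infty>s\delta_w}|k_\sigma(t)|\,\dd t\,\dd s\asymp\log(1/\delta_w)$, so a uniform bound on the complement is only $O(\log(1/\epsilon))=O(\log\log L)$. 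No choice of thickness makes both pieces $O(1)$. The repair is simpler than a split and is exactly what the paper does: keep the $w$-dependent bound, which is $O(1+|\log\delta_w|)$, note that $w\mapsto\log(1/\delta_w)=2\log(2/\min_j w_j)$ is integrable over $S^{d-1}_+$ while $|\tr_{\C^n}[K(w,w)]|$ is bounded there by \eqref{polynomial-kernel-bound}, and integrate in $w$; the paper's error estimates are of the form $C_1+C_2|\log\delta_y|$ and are then integrated over $y\in S^{d-1}_+$, with no shrinking neighbourhood at all. The same remark applies to the errors you dismiss as coming from the box constraint $sw+t\in\,]0,L[^d$, which likewise produce non-uniform but integrable $\log(1/w_j)$-type bounds rather than uniformly $O(1)$ ones.
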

\begin{proof}
The kernel of $\mathbf{1}_{\,]0,1[^d}X\mathbf{1}_{B_+}\Op_L(\sigma\otimes \mathbb{1}_{n})$ at a point $(x,y)\in \,]0,1[^d\times \R^d$ is given by
\begin{equation}
\frac{L^d}{(2\pi)^d}\int_{B_+}\int_{\R^d}K(x,z)e^{\i L\xi(z-y)}\sigma(\xi)\dd \xi \dd z.
\end{equation}
As this is continuous, we compute the trace on the left-hand side of \eqref{Local-asymptotic-formula-trace} by integrating this kernel along the diagonal, i.e. we have
\begin{multline}\label{Local-asymptotic-formula-step-1}
\tr_{L^{2}(\R^{d})\otimes\C^n}\big[\mathbf{1}_{\,]0,1[^d}X\mathbf{1}_{B_+}\Op_L(\sigma\otimes \mathbb{1}_{n})\big] \\
=\frac{L^d}{(2\pi)^d}\int_{\,]0,1[^d}\int_{B_+}\int_{\R^d}\tr_{\C^{n}}[K(x,z)]e^{\i L\xi(z-x)}\sigma(\xi)\dd \xi \dd z \dd x.
\end{multline}
We now need to evaluate this asymptotically as $L\rightarrow\infty$. The integrand on the right-hand side of \eqref{Local-asymptotic-formula-step-1} is clearly integrable. Hence, we can change the order of integration by Fubini's Theorem. We first introduce spherical coordinates in the variable $z$ and rewrite the right-hand side of \eqref{Local-asymptotic-formula-step-1} as
\begin{equation}
\frac{L^d}{(2\pi)^d}\int_{S^{d-1}_+}\int_{\,]0,1[^d}\int_{0}^1\int_{\R^d}r^{d-1}\tr_{\C^{n}}[K(x,ry)]e^{\i L\xi(ry-x)}\sigma(\xi)\dd \xi \dd r \dd x \dd y.
\end{equation}
Let now $\phi_y:\R^d\rightarrow\R$ be measurable functions, for every $y\in S^{d-1}_+$. We begin by estimating the expression
\begin{equation}\label{Local-asymptotic-formula-step-2}
\frac{L^d}{(2\pi)^d}\int_{S^{d-1}_+}\int_{\,]0,1[^d}\int_{0}^{1}\int_{\R^d}r^{d-1}\tr_{\C^{n}}[K(x,ry)]\big[1-\phi_y\big(\tfrac{x}{r}\big)\big]e^{\i L\xi(ry-x)}\sigma(\xi)\dd \xi \dd r \dd x \dd y.
\end{equation}
As we have $\sigma\in C_c^\infty(\R^d)$, integrating by parts $d+1$ times yields the bound
\begin{equation}
\bigg|\int_{\R^d}e^{\i L\xi(ry-x)}\sigma(\xi)\dd \xi\bigg|\leq C_\sigma (1+L|ry-x|)^{-d-1},
\end{equation}
with a constant $C_\sigma>0$ independent of $L$. With this, the absolute value of \eqref{Local-asymptotic-formula-step-2} is bounded from above by
\begin{equation}
\frac{L^d C_\sigma}{(2\pi)^d}\int_{S^{d-1}_+}\int_{\,]0,\infty[^d}\int_{0}^1 r^{d-1}\big|\tr_{\C^{n}}[K(x,ry)]\big|\big|1-\phi_y\big(\tfrac{x}{r}\big)\big|(1+L|ry-x|)^{-d-1} \dd r \dd x \dd y.
\end{equation}
With the substitution $x\mapsto rx$ and the homogeneity of $K$ this is equal to
\begin{equation}\label{Local-asymptotic-formula-step-3}
\frac{L^d C_\sigma}{(2\pi)^d}\int_{S^{d-1}_+}\int_{\,]0,\infty[^d}\int_{0}^1 r^{d-1}\big|\tr_{\C^{n}}[K(x,y)]\big|\big|1-\phi_y(x)\big|(1+Lr|y-x|)^{-d-1} \dd r \dd x \dd y.
\end{equation}
Carrying out the integration in the variable $r$, we obtain
\begin{multline}
\int_{0}^1 r^{d-1}(1+Lr|y-x|)^{-d-1} \dd r \leq \int_{0}^\infty r^{d-1}(1+Lr|y-x|)^{-d-1} \dd r \\
=\frac{r^d(Lr|y-x|+1)^{-d}}{d}\ \bigg|_{r=0}^{r=\infty}= \frac{1}{dL^d|y-x|^d}
\end{multline}
and therefore \eqref{Local-asymptotic-formula-step-3} is bounded from above by
\begin{equation}\label{Local-asymptotic-formula-step-4}
\frac{C_\sigma}{d(2\pi)^d}\int_{S^{d-1}_+}\int_{\,]0,\infty[^d}\big|\tr_{\C^{n}}[K(x,y)]\big|\big|1-\phi_y(x)\big||y-x|^{-d} \dd x \dd y.
\end{equation}
Due to the bound \eqref{polynomial-kernel-bound}, we have $\big|\tr_{\C^{n}}[K(x,y)]\big|\leq C_K|x|^{\tfrac{-d}{2}}$ for all $x\in \,]0,\infty[^d$. Choosing $\delta_y:=\big(\frac{\min_{1\leq j\leq d}y_j}{2}\big)^2$, we see that $ [y-\delta_y,y+\delta_y]^d\subset \,]0,\infty[^d$ and therefore $\tr_{\C^{n}}[K(x,y)]\neq 0$ for all $x\in [y-\delta_y,y+\delta_y]^d$, by assumption.
We choose the functions 
\begin{equation}
\phi_y(x):=\omega_y(x)\frac{\tr_{\C^{n}}[K(y,y)]}{\tr_{\C^{n}}[K(x,y)]}:=1_{[y-\delta_y,y+\delta_y]^d}(x)\frac{\tr_{\C^{n}}[K(y,y)]}{\tr_{\C^{n}}[K(x,y)]}.
\end{equation}
With these functions and the bound \eqref{polynomial-kernel-hölder}, the integral in the variable $x$ in \eqref{Local-asymptotic-formula-step-4} is bounded by a constant, which is independent of $y$, times
\begin{multline}
C_K\int_{\,]0,1/2]^d}|x|^{\tfrac{-d}{2}} \dd x+ \int_{\,]0,2]^d\setminus ([y-\delta_y,y+\delta_y]^d\cup \,]0,1/2]^d)}|y-x|^{-d}\dd x  \\
 + C_d\int_{[y-\delta_y,y+\delta_y]^d}|y-x|^{-d+1/2}\dd x + C_K\int_{\,]0,\infty[^d\setminus \,]0,2]^d}|x|^{\tfrac{-3d}{2}}\dd x .
\end{multline}
This is bounded from above by $C_1+C_2|\log(\delta_y)|$, where the constants $C_1, C_2>0$ are independent of $y$ and $L$. Carrying out the remaining integration in the variable $y$ in \eqref{Local-asymptotic-formula-step-4}, we see that \eqref{Local-asymptotic-formula-step-4} is bounded independently of $L$.

Hence, we reduced the trace in \eqref{Local-asymptotic-formula-trace} to
\begin{equation}\label{Local-asymptotic-formula-step-5}
\frac{L^d}{(2\pi)^d}\int_{S^{d-1}_+}\int_{\,]0,1[^d}\int_{0}^{1}\int_{\R^d}r^{d-1}\tr_{\C^{n}}[K(x,ry)]\phi_y\big(\tfrac{x}{r}\big) e^{\i L\xi(ry-x)}\sigma(\xi)\dd \xi \dd r \dd x \dd y
\end{equation}
up to an error term of constant order. 
With the homogeneity of $K$ we obtain
\begin{equation}
\tr_{\C^{n}}[K(x,ry)]\phi_y\big(\tfrac{x}{r}\big)=r^{-d}\omega_y\big(\tfrac{x}{r}\big)\tr_{\C^{n}}[K(y,y)]
\end{equation}
and with this, \eqref{Local-asymptotic-formula-step-5} reads
\begin{equation}
\frac{L^d}{(2\pi)^d}\int_{S^{d-1}_+}\tr_{\C^{n}}[K(y,y)]\int_{\R^d}\int_{0}^{1}r^{-1}\omega_y\big(\tfrac{x}{r}\big)\int_{\R^d} \sigma(\xi)e^{\i L\xi (ry-x)} \dd \xi  \dd r  \dd x\dd y.
\end{equation}
Here, we used that $\supp \phi_y\subset \,]0,1[^d$ and $r\leq 1$. Translating the variable $x$ by $ry$ and using the definition of $\omega_y$, this is equal to
\begin{equation}
\frac{L^d}{(2\pi)^d}\int_{S^{d-1}_+}\tr_{\C^{n}}[K(y,y)]\int_{\R^d}\int_{0}^{1}r^{-1}1_{[-\delta_y r,\delta_y r]^d}(x)\int_{\R^d} \sigma(\xi)e^{-\i L\xi x} \dd \xi  \dd r  \dd x\dd y.
\end{equation}
Substituting $x\mapsto \tfrac{x}{L}$ and carrying out the integration in the variable $\xi$, this is equal to
\begin{equation}\label{Local-asymptotic-formula-step-6}
(2\pi)^{-\tfrac{d}{2}}\int_{S^{d-1}_+}\tr_{\C^{n}}[K(y,y)]\int_{\R^d}\hat{\sigma}(x)\int_{0}^{1}1_{[-L\delta_y r,L\delta_y r]^d}(x)  \frac{\dd r}{r}  \dd x\dd y.
\end{equation}
We now consider the integral in the variable $r$. We see that it vanishes for $\|x\|_\infty\geq L\delta_y$. Therefore, we have
\begin{align}
\int_{0}^{1}1_{[-L\delta_y r,L\delta_y r]^d}(x)  \frac{\dd r}{r} &=1_{\{x\in\R^d \: \ \|x\|_\infty<L\delta_y \}}\int_{\frac{\|x\|_\infty}{L\delta_y}}^{1}\frac{\dd r}{r} \nonumber \\
&=1_{\{x\in\R^d \: \ \|x\|_\infty<L\delta_y \}}[\log L + \log \delta_y -\log\|x\|_\infty].
\end{align}
With this \eqref{Local-asymptotic-formula-step-6} reads
\begin{equation}
(2\pi)^{-\tfrac{d}{2}}\int_{S^{d-1}_+}\tr_{\C^{n}}[K(y,y)]\int_{\R^d}\hat{\sigma}(x)1_{\{x\in\R^d \: \ \|x\|_\infty<L\delta_y \}}[\log L + \log \delta_y -\log\|x\|_\infty]  \dd x\dd y.
\end{equation}
Rewriting this as
\begin{align}\label{Local-asymptotic-formula-step-7}
\log L &\ \sigma(0)\int_{S^{d-1}_+}\tr_{\C^{n}}[K(y,y)]\dd y \nonumber \\
&+  \sigma(0)\int_{S^{d-1}_+}\log \delta_y\tr_{\C^{n}}[K(y,y)]\dd y \nonumber \\
&- (2\pi)^{-\tfrac{d}{2}} \int_{S^{d-1}_+}\tr_{\C^{n}}[K(y,y)]\int_{\R^d}\hat{\sigma}(x)1_{\{x\in\R^d \: \ \|x\|_\infty<L\delta_y \}}\log\|x\|_\infty  \dd x\dd y\nonumber \\
&-(2\pi)^{-\tfrac{d}{2}} \int_{S^{d-1}_+}[\log L +\log \delta_y]\tr_{\C^{n}}[K(y,y)]\int_{\R^d}\hat{\sigma}(x)1_{\{x\in\R^d \: \ \|x\|_\infty\geq L\delta_y \}}  \dd x\dd y,
\end{align}
we have found the desired asymptotic coefficient. It only remains to verify that the other terms in \eqref{Local-asymptotic-formula-step-7} are of sufficiently low order. By the definition of $\delta_y$ and the bound \eqref{polynomial-kernel-bound}, the term in the second line of \eqref{Local-asymptotic-formula-step-7} is finite and it is clearly of constant order. As $\hat{\sigma}$ is a Schwartz function, the term in the third line of \eqref{Local-asymptotic-formula-step-7} is also finite and of constant order. For the terms in the fourth line of \eqref{Local-asymptotic-formula-step-7}, we also use that $\hat{\sigma}$ is Schwartz to obtain the bound
\begin{equation}
\bigg|\int_{\R^d}\hat{\sigma}(x)1_{\{x\in\R^d \: \ \|x\|_\infty\geq L\delta_y \}}  \dd x\bigg|\leq \frac{C_\sigma'}{\big(1+L\delta_y\big)^{\alpha}},
\end{equation}
with $C_\sigma'>0$, independent of $L$ and $\delta_y$, and  $0<\alpha<\tfrac{1}{2}$. This guarantees that the terms in the fourth line of \eqref{Local-asymptotic-formula-step-7} are both finite and of lower than constant order. With this, we see that \eqref{Local-asymptotic-formula-step-7} equals
\begin{equation}
\log L \ \sigma(0)\int_{S^{d-1}_+}\tr_{\C^{n}}[K(y,y)]\dd y+O(1),
\end{equation}
as $L\rightarrow\infty$. This concludes the proof of the theorem.
\end{proof}
\subsection{Proof of Theorem \ref{Theorem-Asymptotics-Log}}
\label{subsec_proof_asym_log}
We are now in a position to prove our second main result Theorem \ref{Theorem-Asymptotics-Log} by combining the results of Section \ref{sec_local_asymptotics} with the previous sections. For polynomials of degree $3$, we make use of the matrix structure of the Dirac matrices.
\begin{proof}[Proof of Theorem \ref{Theorem-Asymptotics-Log}]
In the case that $g$ is of degree $1$, there is nothing to show. We start with the case $g=g_2$. By Theorem \ref{theorem_higher_order_terms} we have
\begin{align}
\tr_{L^{2}(\R^{d})\otimes\C^n}\big[g_2\big(\mathbf{1}_{\Lambda_{L}}\Op(\psi\mathcal{D})\mathbf{1}_{\Lambda_{L}}\big)\big]=&\sum_{m=0}^{d-1}(2L)^{d-m}A_{m,g_2,b} \nonumber \\
&+\tr_{L^{2}(\R^{d})\otimes\C^n}\bigg[\sum_{V\in\mathcal{F}^{(0)}}\mathbf{1}_{H_{V,L}}\Op(\psi\mathcal{D})_{LV,g_2}\bigg] +O(1), 
\end{align}
as $L\rightarrow\infty$. It remains to compute the trace on the right-hand side. By Theorem \ref{Theorem-commutation} and the discussion at the beginning of Section \ref{sec_local_asymptotics} we have for $V=V_0=\{0\}$
\begin{equation}
\tr_{L^{2}(\R^{d})\otimes\C^n}\big[\mathbf{1}_{H_{V_0,L}}\Op(\psi\mathcal{D})_{LV_0,g_2}\big]=\tr_{L^{2}(\R^{d})\otimes\C^n}\big[\mathbf{1}_{\,]0,1[^d}\Op(\mathcal{D})_{g_2}\mathbf{1}_{B_+}\Op_L(\psi^2\otimes \mathbb{1}_{n})\big]+O(1),
\end{equation}
as $L\rightarrow\infty$. By Lemma \ref{Lemma-kernel-prop}, the operator $\Op(\mathcal{D})_{g_2}$ fulfils the requirements of Theorem \ref{Local-asymptotic-formula}. An application of Theorem \ref{Local-asymptotic-formula} yields
\begin{equation}
\tr_{L^{2}(\R^{d})\otimes\C^n}\big[\mathbf{1}_{\,]0,1[^d}\Op(\mathcal{D})_{g_2}\mathbf{1}_{B_+}\Op_L(\psi^2\otimes \mathbb{1}_{n})\big]=\log L \ \int_{S^{d-1}_+} \tr_{\C^{n}}[K_{g_2}(y,y)]\dd y+O(1),
\end{equation}
as $L\rightarrow\infty$, where $K_{g_2}$ is the integral kernel of the operator $\Op(\mathcal{D})_{g_2}$. Collecting the contributions for all $V\in\mathcal{F}^{(0)}$ concludes the proof of the Theorem in the case $g=g_2$.

We now turn to the third moment, i.e. the case $g=g_3$. The results from Sections \ref{sec_higher_order} and \ref{sec_commutation} apply to analytic functions, in particular also in the case $g=g_3$. Therefore, it only remains to justify an application of the results from Section \ref{sec_local_asymptotics}. As the Dirac matrices have vanishing trace, only the terms where the identity matrix occurs once or thrice contribute to the trace. Due to the structure of the operator $\Op\big(\mathcal{D}\big)_{g_3}$, the case where the identity occurs thrice vanishes. Therefore, we have
\begin{multline}
\tr_{L^{2}(\R^{d})\otimes\C^n}\big[\mathbf{1}_{\,]0,1[^d}\Op\big(\mathcal{D}\big)_{g_3}\mathbf{1}_{B_+}\Op_L(\psi^3\otimes \mathbb{1}_{n})\big] \\
=
\frac{3}{2}\tr_{L^{2}(\R^{d})\otimes\C^n}\big[\mathbf{1}_{\,]0,1[^d}\Op\big(\mathcal{D}\big)_{g_2}\mathbf{1}_{B_+}\Op_L(\psi^3\otimes \mathbb{1}_{n})\big].
\end{multline}
As $\psi^3(0)=\psi^2(0)=1$, an application of Theorem \ref{Local-asymptotic-formula} yields
\begin{equation}
\tr_{L^{2}(\R^{d})\otimes\C^n}\big[\mathbf{1}_{\,]0,1[^d}\Op(\mathcal{D})_{g_3}\mathbf{1}_{B_+}\Op_L(\psi^3\otimes \mathbb{1}_{n})\big]=\frac{3}{2}\log L \ \int_{S^{d-1}_+} \tr_{\C^{n}}[K_{g_2}(y,y)]\dd y+O(1),
\end{equation}
as $L\rightarrow\infty$. Using the structure of the Dirac matrices again, we see that 
\begin{equation}
\frac{3}{2}\tr_{\C^{n}}[K_{g_2}(y,y)]=\tr_{\C^{n}}[K_{g_3}(y,y)],
\end{equation}
which concludes the proof of the Theorem in the case $g=g_3$. The result for arbitrary polynomials of degree at most three follows by linearity.
\end{proof}

%%%%%%%%%%%%%%%%%%%%%%%%%%%%%%%%%%%%%%%%%%%%%%%%%%%%%%%%%%%%%%%%%%
%%%%%%%%%%%%%%%%%%%%%%%%%%%%%%%%%%%%%%%%%%%%%%%%%%%%%%%%%%%%%%%%%%

\section*{Acknowledgements}
The author is very grateful to his Ph.D. advisor Peter Müller for illuminating discussions on the topic, as well as helpful remarks on the first two sections of the present article. The author is also very grateful to Alexander Sobolev for bringing to his attention the article \cite{Pfirsch}.  
This work was partially supported by the Deutsche Forschungsgemeinschaft (DFG, German Research Foundation) 
	-- TRR 352 ``Mathematics of Many-Body Quantum Systems and Their Collective Phenomena" -- Project-ID 470903074.
	
%%%%%%%%%%%%%%%%%%%%%%%%%%%%%%%%%%%%%%%%%%%%%%%%%%%%%%%%%%%%%%%%%%
%%%%%%%%%%%%%%%%%%%%%%%%%%%%%%%%%%%%%%%%%%%%%%%%%%%%%%%%%%%%%%%%%%	

%\IfFileExists{mybib.bst}{\bibliographystyle{mybib}}{\bibliographystyle{../mybib}}
%\IfFileExists{references.bib}{\bibliography{references-Ruth,references}}{\bibliography{../references-Ruth,../references}}

\providecommand{\noopsort}[1]{} \providecommand{\singleletter}[1]{#1}
  \providecommand{\noopsort}[1]{} \providecommand{\singleletter}[1]{#1}

\end{document}